\titleformat{\subsection}[runin] 
{\normalfont\bfseries}{\thesubsection}{1em}{}
\titleformat{\subsubsection}[runin] 
{\normalfont\bfseries}{\thesubsubsection}{1em}{}
\titleformat{\paragraph}[runin] 
{\normalfont\bfseries}{\theparagraph}{1em}{}
\def\@seccntformat#1{\@ifundefined{#1@cntformat}%
	{\csname the#1\endcsname\quad}  
	{\csname #1@cntformat\endcsname}
}
\DeclareMathOperator{\re}{Re}
\DeclareMathOperator{\im}{Im}
\theoremstyle{plain}
\numberwithin{equation}{section}
\newtheorem{thm}{Theorem}
\newtheorem*{thmRW}{Theorem 7$^\prime$}
\newtheorem{lemma}{Lemma}[section]
\newtheorem{ThmKE}{Theorem}
\newtheorem{inThm}{Theorem}
\newtheorem*{theorem*}{Theorem}
\theoremstyle{definition}
\newtheorem{definition}{Definition}
\theoremstyle{plain}
\newtheorem{superlog example}[sublog]{Theorem}
\newtheorem{Real analytic}[sublog]{Theorem}
\newtheorem{corollary}{Corollary}
\newtheorem*{corollary'}{Corollary 1'}
\newtheorem{remark}{Remark}
\theoremstyle{definition}
\newtheorem{DefL*}[Def]{Definition}
\title{On Taylor coefficients of smooth functions}
\author{Avner Kiro
	\thanks{The author is partially supported by ERC Advanced Grant 692616,  ISF Grant~382/15 and BSF Grant~2012037.}
	\\
	School of Mathematical Sciences, Tel Aviv University\\
	Tel Aviv 69978, Israel
	\\
	E-mail address: {\tt avnerkiro@gmail.com}}
\begin{document}
	\maketitle
	\begin{abstract}
	We study the Borel map, which maps infinitely differentiable functions on an interval  to the jets of their Taylor coefficients at a given point in the interval. Our main results include a complete description of the image of the Borel map for Beurling classes of smooth functions and a moment-type summation method which allows one to recover a function from its Taylor jet. A surprising feature of this description is an unexpected threshold at the logarithmic class.  Another interesting finding is a ``duality'' between non-quasianalytic and quasianalytic classes, which reduces the description of the image of the Borel map for non-quasianalytic classes to the one for the corresponding quasianalytic classes, and complements  classical results of Carleson and Ehrenpreis.
	\end{abstract}
	\section{Introduction}\label{sec:Intro}
	\subsection{}\label{subsec:Intro.1}  We study the Borel map $\mathcal B: C^\infty (I)\to \mathbb{C}^{\mathbb{Z}_+}$, which maps infinitely differentiable functions on the interval $I\subseteq \mathbb{R}$ to the jets of their Taylor coefficients at a given point $x\in I$. From here on, we assume that $I$ contains the origin, and let $x=0$. Then $ \mathcal{B} f =\left(\widehat{f}(n)\right)_{n\geq 0}$,  where  $\widehat{f}(n)=\frac{f^{(n)}(0)}{n!}$.
	
	Given a class of smooth functions $A\subset C^\infty (I)$, the three classical problems that arise naturally, and go back to Borel and Hadamard, are as follows:
	\begin{enumerate}
		\item The uniqueness problem-- When is the restriction  $\mathcal{B}\vert_A$  injective?
		\item The punctual image problem-- What is the image $\mathcal{B}A$?
		\item The summation problem-- Given a sequence $a\in\mathcal{B}A$, to recover  a function $f\in A$ with $\widehat{f}=a$.
	\end{enumerate}
	These problems are usually  studied for Carleman and Beurling classes of smooth functions.
	
	\begin{def}\label{def:Beurling class}
		Let $I\subseteq \mathbb{R}$ be an interval and let $L:[0,\infty) \to [1,\infty)$ be a non--decreasing function with $\lim_{\rho\to\infty}{L(\rho)}=\infty$.
		\textit{The Carleman class} $C(L;I)$ consists of all $\displaystyle C^\infty(I)$--functions $f$ such that for every closed subinterval $J\subset I$, there exist constants  $C, C_1> 0 $ such  that 
		\[ \max_{J}|f^{(n)}|\leq C\cdot  \left(C_1 nL(n)\right)^n,\quad n\in\mathbb{Z}_+. \]
		\textit{The Beurling class} $C_0(L;I)$ consists of all $\displaystyle C^\infty(I)$--functions $f$ such that for every closed subinterval  $J\subset I$ and every $\delta>0$, there exists a constant $C>0$
		such that 
		\[ \max_{J}|f^{(n)}|\leq C\cdot  \left(\delta nL(n)\right)^n,\quad n\in\mathbb{Z}_+. \] 
	\end{def}
	Here and elsewhere, we assume that $0^0=1$.
	
	\subsection{}\label{subsec:Intro.2} 
		The classes $C_0(L;I)$ and $C(L;I)$ are called \textit{quasianalytic} if the Borel map is injective in these classes. 
	The solution of the uniqueness problem in these classes was given by Denjoy and Carleman \cite{carleman}: \textit{The classes $C(L;I)$ and $C_0(L;I)$ are quasianalytic if and only if  }
	\[ \int^\infty \frac{du}{uL(u)}=\infty. \]
	Throughout this work we will abuse terminology and refer to the  function $L$  as quasianalytic if the above integral diverges. 
	\subsection{} \label{subsec:Intro.3}
In 1925 \cite[p.162]{Vallee Poussin}   de la Vall\'ee Poussin wrote:
``The question of finding a criterion of consistence for the
initial values of a quasi-analytic function, of a certain class,
and its derivatives (\textit{i.e., a criterion for a given sequence to be the sequence of Taylor coefficients of a function in a given quasianalytic class } -- A.K. ), is thus before us. But it seems exceedingly
difficult to solve. We shall not undertake it here.'' Since that the  punctual image and the summation problems  were studied by many authors who obtained a number of non--trivial results. Here we mention  the  works  of Carleman \cite{carleman}, Bang \cite{Bang}, Carleson \cite{Carleson,Carleson2},  Ehrenpreis \cite{Ehrenpreis9}, Badalyan \cite{Badalyan}, and  {\'E}calle \cite{Ecalle2}.
However,	these works do not provide explicit answers to the punctual image and summation problems 	in  what is likely the  most interesting and delicate case, namely, when $L$ is slowly varying, i.e.,
	\begin{equation}
	\lim_{\rho\to\infty} \frac{L(\lambda \rho)}{L(\rho)}=1,\quad \forall \lambda>1.\label{slowlygrowing}
	\end{equation}
 On the other hand, if the function $L$ grows fast, that is, 
	\begin{equation}
	\liminf_{\rho\to\infty} \frac{L(2 \rho)}{L(\rho)}>1,\label{notslowlygrowing}
	\end{equation}
	a simple description of the punctual image follows from a more general result, proven independently by  Carleson \cite{Carleson}, Ehrenpreis \cite{Ehrenpreis9} and Mityagin \cite{Mitjagin}:
	\subsubsection{}\label{subsubsec:Intro.3.1} \textit{Put}
	\begin{equation}
	\mathcal{F}_0(L):=\big\{(a_n)_{n\geq 0}: |a_n|^{1/n}=o\left(L(n)\right),\: n\to \infty\big\}.\label{Fdef}
	\end{equation}
	\textit{Then}
	\begin{equation}
	\mathcal{B} C_0(L;I)= \mathcal{F}_0(L) \label{trivialeq}
	\end{equation}
	\textit{provided that  condition} \eqref{notslowlygrowing} \textit{is satisfied.} 
	
	Note that the inclusion
	\begin{equation}
	\mathcal{B} C_0(L;I)\subseteq \mathcal{F}_0(L) \label{trivialineq}
	\end{equation}
	follows from the definition of the Beurling classes $C_0(L;I)$, and that a statement similar to \eqref{trivialineq} also holds for the Carleman classes $C(L;I)$ with $L$ satisfying condition \eqref{notslowlygrowing}. 
	\subsubsection{}\label{subsec:Intro.3.2} On the other hand, for unbounded and slowly growing functions $L$ satisfying \eqref{slowlygrowing}, the inclusion in \eqref{trivialineq} is always proper. In the quasianalytic case, this  was shown by T\"{a}cklind \cite{tacklind1936classes} and Bang \cite{Bang2}, while in the non-quasianalytic case,  this follows from the above mentioned works of Carleson, Ehrenpreis and  Mityagin.
	
	\subsection{}\label{subsec:Intro.goal} Our goal is to give sufficiently explicit answers to the punctual image and summation problems, however, under rather restrictive smoothness assumptions imposed on the slowly growing functions $L$. Our inspiration comes from Beurling's work  \cite[pp. 420--429]{Beurling}, in which he gave a concise solution to these two problems in the logarithmic class when $L(\rho)=\log(\rho+e)$. 
	
	\subsection{Notation}\label{subsec:Intro.notation} We shall use the symbol  $C$ to denote   large positive constants which may change their values
	at different occurrences. If a constant $C$ depends on some  parameter $p$, we will write $C_p$ (again the value can change in different   occurrences). Given two functions $f$ and $g$ with the same domain of definition, we write $f\lesssim g$ whenever $f (x)\leq Cg(x)$ for some constant $C$. If the constant $C$ in the last inequality   depends on some parameter $p$, we will write $f\lesssim_p g$. We use the notation $f\asymp g$ ($f\asymp_p g$) if $f\lesssim g$ and $g\lesssim f$ ($f\lesssim_p g$ and $g\lesssim_p f$). If for some set $\Pi$ we have $f\vert_\Pi \lesssim g\vert_\Pi$, we will write $f\lesssim g$ in $\Pi$, and we will do the same for $\asymp$, $\lesssim_p$ and $\asymp_p$. If $\lim_{x\to\infty} \frac{f(x)}{g(x)}=1$, we will write $f\sim g$. For a function $f$ defined on the positive ray, we will say that a property $P$ is satisfied eventually, if there exists $\rho_0$ such that  $P$  is  satisfied in the interval $[\rho_0,\infty)$.

	\section{Basic notions}\label{sec:Basic notions}
	\subsection{Moment (Borel-type) summation of divergent series. }\label{subsec:moment summation}
Here we 	recall the classical moment summation method that goes back to Borel.  We follow   Hardy's treatise \cite{Hardy}. Let $(\gamma_n)_{n\geq 0}$ be a fast growing sequence of positive numbers, $\lim_{n\to\infty}\gamma_n^{1/n}=\infty$, and suppose it is a moment sequence for a function $K$ defined on $\mathbb{R}_+$, that is 
	\begin{equation}
	\int_0^\infty t^n K(t)dt= \gamma_{n+1},\quad \forall n\in\mathbb{Z}_+. \label{Kmoment}
	\end{equation}
	Let $(a_n)_{n\geq 0}$ be a sequence of complex numbers. If the series
	\[ A(t)=\sum_{n\geq 0} \frac{a_n t^n}{\gamma_{n+1}} \]
	is convergent for $0\leq t<t_0$, and has an analytic continuation on the whole positive ray $\mathbb{R}_+$ such that 
	\[\int_0^\infty A(t)K(t)=b,  \] 
	then the series $\sum_{n\geq 0} a_n$ is said to be $(\gamma)$\textit{--summable} to the value $b$. 
	\subsubsection{}\label{subsubsec:moment summation.1}The entire function
	\[  E(z)=\sum_{n\geq 0}\frac{z^n}{\gamma_{n+1}}\]
	plays an important role in the theory of  moment summation. Recall that a summation method is called \textit{regular} if it agrees with actual limits of convergent series, and is called \textit{stable} if the difference of the values it assigns to $\sum_{n\geq 0}a_n$ and $\sum_{n\geq 1}a_n$ is $a_0$. A necessary condition for regularity of the moment method is 
	\[ \int_0^\infty E(xt)K(t)dy=\frac{1}{1-x},\quad 0\leq x<1, \]
	while a necessary condition for its stability  is
	\[ \int_0^\infty E(t)K(t)dy=\infty. \]
	This hints at a strong connection between the growth of $E$ and the decay of $K$ on $\mathbb{R}_+$.
	
	\subsubsection{}\label{subsubsec:moment summation.2} Note that the sequence $\gamma_n=n!$ corresponds to the classical Borel summation. In this case, $K(t)=e^{-t}$ and $E(z)=e^z$, that is $E\cdot K= 1$. The sequences $\gamma_n=\Gamma(\alpha n)$, $\alpha>0$, and $\gamma_n=\log^n(n+e)$ correspond to the Mittag--Leffler and Beurling summations, respectively. In these cases, the asymptotics of the functions $K$ and $E$ are classical. In particular, there is also a very strong connection between the growth of $E$ and the decay of $K$ on $\mathbb{R}_+$ (in both  cases, $E(x)K(x)=O(\log(E(x))$ as $x\to\infty$, see Lindel\"of \cite[pp. 113--114]{Lindelof}).
	\subsection{Beurling's approach.}\label{subsec:Beurling's approach}
	In the preprint \cite{Beurling2} written in 1936 and reproduced in his collected works \cite[pp. 420--429]{Beurling}, Beurling applied the idea of  moment summation to the punctual image and summation problems in the logarithmic class $C_0(L;I)$  with $L(\rho)=\log(\rho+e)$. A somewhat similar approach to the  summation problem was independently developed by Moroz \cite{Moroz,Moroz2} in the context of divergent asymptotic series, which appear in mathematical physics  for functions analytic in cusp domains.
	
	\subsubsection{}\label{subsubsec:Beurling's approach.1} To explain Beurling's approach, we fix an increasing 
	function $L:[0,\infty) \to [1,\infty)$  with $\lim_{\rho\to\infty}{L(\rho)}=\infty$ and put $\gamma(\rho)=L(\rho)^\rho$. Given a function $f\in C_0(L;I)$, consider the Taylor series 
	\begin{equation}
	\sum_{n\geq 0} \frac{\widehat{f}(n)}{\gamma(n+1)}z^n \label{singtransdef}
	\end{equation}
	and note that, by the definition of the Beurling class $C_0(L;I)$, this series has an infinite radius of convergence. Following Beurling, we call this series \textit{the singular transform } of $f$ and denote it by $S_L f$.
	
	\subsubsection{}\label{subsubsec:Beurling's approach.2} The first observation  is that $S_L f$ depends on the sequence $\widehat{f}$ of Taylor coefficients of $f$ at the origin, but does not depend on the values of $f$ at other points of the interval $I$. Therefore, studying the image of $C_0(L;I)$ under the map $S_L$ is equivalent to studying the punctual image of $C_0(L;I)$. 
	
		\subsubsection{}We define the maps $\widehat{S}_L$ and $\widehat{R}_L$ that act on arbitrary sequences $(a_n)\in\mathbb{C}^{\mathbb{Z}_+}$ as  
	\[ \left(\widehat{S}_L a\right)(n)=\frac{a_n}{\gamma(n+1)},\quad n \in\mathbb{Z}_+, \]
	and
	\[ \left(\widehat{R}_L a\right)(n)=a_n\gamma(n+1),\quad n \in\mathbb{Z}_+. \]
	Then, $\widehat{R}_L =\widehat{S}_L^{-1}$ and $\widehat{S}_L \mathcal{B}=\mathcal{B}S_L$. Note that  if $a$ is an  sequence in   $$\mathcal{F}_0(L):=\left\{(a_n)_{n\geq 0}: |a_n|^{1/n}=o\left(L(n)\right),\: n\to \infty\right\},$$  then $\widehat{S}_L a$ are the Taylor coefficients of an arbitrary entire function.  However, as Beurling observed, there are certain restrictions on the growth of entire functions in $S_L C_0(L;I)$ in horizontal strips. In principle, these restrictions can be used to characterize the punctual image of $C_0(L;I)$.
	
	\subsubsection{}\label{subsubsec:Beurling's approach.3} Note that a similar idea can also be  used for the Carleman classes. However, in that case, the Taylor series \eqref{singtransdef} may have a finite radius of convergence. Thus, the description of the class $S_L C(L;I)$ will include the fact that analytic functions from this class must have an analytic continuation to a horizontal strip. To fix ideas, we will discuss only the more transparent case of Beurling classes $C_0(L;I)$.
	
	\subsubsection{}\label{subsubsec:Beurling's approach.4} To approach the summation problem, one needs to define the kernel $K$ that solves the moment problem 
	\[ \int_0^\infty t^n K(t)dt= \gamma(n+1),\quad n\in\mathbb{Z}_+. \]
	If the function $\gamma$ is nice (in particular, is analytic in the right half-plane), we can define $K$ as the inverse Mellin transform 
	\begin{equation}
	K(t)=\frac{1}{2\pi i}\int_{c-i\infty}^{c+i\infty} t^{-s}\gamma(s)ds ,\quad c>0.\label{eq:K def}
	\end{equation}
	Then, one needs to check that for any entire function $F=S_L f$, $f\in C_0(L;I)$,
	\[ \int_0^\infty F(xt)K(t)dt=f(x),\quad x\in I  \]
	(cf. equation \eqref{Kmoment}  in  Section 2.1). Following Beurling, we call the integral 
	\[ \int_0^\infty F(xt)K(t)dt\]
	\textit{the regular transform} of the function $F$, and denote it by $R_L F$. Note that  $\mathcal{B}R_L=\widehat{R}_L \mathcal{B}$.
	
	\subsubsection{}\label{subsubsec:Beurling's approach.5}  To make this approach work, we need estimates on the asymptotic behavior of the functions $K$ and $E$. To do so, we  impose certain regularity conditions on the weight function $L$  (a systematic study of the asymptotic behavior of these functions can be found in \cite{KiroSodin}).
	
	These regularity conditions are quite technical and different in each part of this work. In order to
	overcome  technical issues,    we will first describe our results only for a particular choice of weight functions $L$ (the so called Denjoy weights)
	which will satisfy all the regularity assumptions that  will be imposed below. Then, before the proof of each result, we will restate it under the   more general regularity assumptions.

		\subsubsection{Denjoy weights.}\label{subsubsec:weight functions.Denjoy weights}    A \textit{Denjoy weight} is a function of the  form 
	\begin{equation}
	L_\alpha(s)=e^{\log^{\alpha_0}(s+1)}\prod_{k\geq 1}\log_k^{\alpha_k}(s+\exp_k(1)), \label{Denjoy}
	\end{equation}
	where $\alpha=(\alpha_0,\alpha_1,\alpha_2,\ldots)\in\mathbb{R}_+^{\mathbb{Z}_+} $ is a multi--index with finitely many nonzero components  $\alpha_k$ and with $0\leq \alpha_0<1$. Here 
 $\log_k$ is the $k$-th iterate of the logarithm function $s\mapsto \log s$, and
 $\exp_k$ is the $k$-th iterate of   the exponential  function $x \mapsto e^x$. We consider only $\alpha_0<1$ since, for $\alpha_0\geq 1$, the function $L_\alpha$ is growing fast, that is, relation \eqref{notslowlygrowing} holds. 
	
		\section{Main results for Denjoy weights}
	 \subsection{The class $A(L;I)$ of entire functions.}\label{subsec: the space A} Following Beurling, we introduce a class of entire functions. This class is defined in terms of the function 
	 \[ E(z)=\sum_{n\geq 0} \frac{z^n}{\gamma(n+1)},\quad \text{where} \quad \gamma(n)=L(n)^n. \]
The asymptotics of $E(z) $ as $z\to\infty$  will be used repeatedly in this work. Here we will only mention that if $L$ is a Denjoy weight, then the corresponding function $E$  grows very rapidly on the positive half-line ($\lim_{x\to\infty} x^{-a}\log E(x)=+\infty $ for any $a>0$), but it is bounded  on any infinite sector that does not meet the positive  half-line (see Section 6 for the exact asymptotics of the function $E$).
	 \begin{definition}\label{def: the space  A}
	 	 Let  $L:[0,\infty)\to [1,\infty)$ be a continuous and eventually increasing function such that $\lim_{\rho\to\infty}L(\rho)=\infty$. Put $\gamma(\rho)=L(\rho)^\rho$  and $E(z)=\sum_{n \geq 0 } \frac {z^n}{\gamma(n+1)}$. Let $I$ be an interval  containing $0$. \textit{The class  $A(L;I)$}  consists of all entire functions $F$ with the property that, for every $c_-\in I\cap (-\infty,0),\: c_+\in I\cap (0,\infty)$, and $Y>0$,
	 	\begin{equation}
	 	\max_{|v|\leq Y} |F(u+iv)|\lesssim_{c_\pm,Y}E(u/c_\pm),\quad u \to\pm\infty.  \label{inq1}
	 	\end{equation}
	 \end{definition}

	 Note that if  $L(\rho)=o(L_1(\rho))$ as $\rho\to\infty$, then   $A(L_1;I)\subset A(L,\mathbb{R})$ for any open interval $I$. Moreover, if $L$ is slowly growing (in particular, if $L$ is a Denjoy weight), then the RHS of \eqref{inq1} can be replaced by $\mu(u/c_\pm)$, where $\mu(r)=\max_{n\geq 0} \frac{r^n}{\gamma(n+1)}$, as well as by  $\exp(L^{-1}(u/c_\pm))$,  where  $L^{-1}$ is the inverse function to $L$ (cf.  Lemma \ref{lem: K and E ray lemma} and its proof).
	 
 	 \subsection{Extension of Beurling's theorem.} 
 	 Our first two results are as follows. 
 	 \begin{inThm}\label{thm: Main thm,reg}
 	 	Suppose  $L$ is a Denjoy weight and  $I$  is an interval containing the origin, such that $I\cap(0,\infty)$ and $I\cap(-\infty,0)$ are open. Then, the  regular transform $R_L$ maps $A(L;I)$ into $C_0(L;I)$.	
 	 \end{inThm}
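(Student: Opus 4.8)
The plan is to set $g:=R_LF$, so $g(x)=\int_0^\infty F(xt)K(t)\,dt$ with $K$ as in \eqref{eq:K def}, and to prove $g\in C_0(L;I)$ by: (a) recording the basic properties of $K$ together with a ``submultiplicativity'' estimate for $\gamma$ stemming from the slow growth of $L$; (b) turning the strip bounds \eqref{inq1} into pointwise estimates for $F^{(n)}$ on $\mathbb R$ via Cauchy's inequality; and (c) estimating $g^{(n)}(x)=\int_0^\infty t^nF^{(n)}(xt)K(t)\,dt$, driving the constant below any prescribed $\delta$ by taking a large Cauchy radius. For step (a) I will invoke the analysis of Section~6 (see also \cite{KiroSodin}): $K$ is smooth on $(0,\infty)$, has the sign and decay that make every integral below absolutely convergent, and solves $\int_0^\infty t^nK(t)\,dt=\gamma(n+1)$; expanding $E$ and integrating term by term then gives, for $0\le\lambda<1$,
\[ \int_0^\infty t^nE(\lambda t)K(t)\,dt=\sum_{m\ge0}\frac{\lambda^m}{\gamma(m+1)}\,\gamma(n+m+1). \]
Since $L$ is slowly growing (as is every Denjoy weight), a direct estimate with $\ell=\log L$ shows that for each $\e>0$ there is $C_\e$ with $\gamma(j+k)\le C_\e\,e^{\e(j+k)}\gamma(j)\gamma(k)$ for all $j,k\ge0$; feeding this in with $\e=\tfrac12\log(1/\lambda)$ bounds the last sum by $B_\lambda\,\lambda^{-n/2}\gamma(n+1)$, which is finite and has $B_\lambda$ depending only on $\lambda$.

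For steps (b) and (c): fix a closed subinterval $J\subset I$ and put $x_0^+=\sup(J\cap[0,\infty))$, $x_0^-=\inf(J\cap(-\infty,0])$. Since $I\cap(0,\infty)$ and $I\cap(-\infty,0)$ are open, I choose $c_+\in I\cap(0,\infty)$ with $c_+>x_0^+$ and $c_-\in I\cap(-\infty,0)$ with $c_-<x_0^-$ (each requirement vacuous when the corresponding endpoint is $0$). For $Y>0$ and $\xi\ge0$, Cauchy's estimate on $\{|w-\xi|=Y\}$, using \eqref{inq1} where $\re w$ is large (there only the $c_+$-bound appears) and the boundedness of $F$ on the complementary compact rectangle, yields $|F^{(n)}(\xi)|\le\frac{n!}{Y^n}\,C_{c_+,Y}\,E\!\big((\xi+Y)/c_+\big)$; symmetrically for $\xi\le0$ with $c_-$. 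Moreover, for $x\in[0,x_0^+]$ and $\lambda':=\tfrac12(1+x_0^+/c_+)\in(0,1)$, one has $E\!\big((xt+Y)/c_+\big)\le C_{J,Y}\,E(\lambda't)$ for all $t\ge0$, since $E$ is increasing, $(xt+Y)/c_+\le\lambda't$ once $t$ is large, and the ratio is bounded on the remaining bounded range of $t$. Taking $Y=1$, this provides a $t$-integrable majorant for $t^nF^{(n)}(xt)K(t)$, uniform for $x\in J\cap[0,\infty)$, namely a constant times $t^nE(\lambda't)K(t)$; hence $g\in C^\infty(I)$ with $g^{(n)}(x)=\int_0^\infty t^nF^{(n)}(xt)K(t)\,dt$, and the integral defining $g$ converges locally uniformly on $I$.

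Now, given $\delta>0$, $x\in J\cap[0,\infty)$ and any $Y>0$, the bounds just obtained give
\[ |g^{(n)}(x)|\le\frac{n!}{Y^n}C_{c_+,Y}\int_0^\infty t^nE\!\Big(\frac{xt+Y}{c_+}\Big)K(t)\,dt\le C'_{J,Y}\frac{n!}{Y^n}\int_0^\infty t^nE(\lambda't)K(t)\,dt\le C'_{J,Y}\,B_{\lambda'}\,\frac{n!}{Y^n}\,(\lambda')^{-n/2}\gamma(n+1). \]
Using $n!\le n^n$, $\gamma(n+1)=L(n+1)^{n+1}$, and the elementary (eventually valid) bounds $L(n+1)\le 2L(n)$ and $L(n+1)\le e^{n}$ — the latter because $\log L(s)=o(\log s)$ — one obtains $\frac{n!}{Y^n}\gamma(n+1)\le\big(\tfrac{2e}{Y}\,nL(n)\big)^n$, so that, with $A:=2e(\lambda')^{-1/2}$,
\[ |g^{(n)}(x)|\le C'_{J,Y}\,B_{\lambda'}\,\Big(\frac{A}{Y}\,nL(n)\Big)^n\qquad\text{for all large }n. \]
Choosing $Y=Y(J,\delta)$ so large that $A/Y\le\delta$, and absorbing the finitely many small $n$ into the constant, one gets $\max_{J\cap[0,\infty)}|g^{(n)}|\le C(\delta\,nL(n))^n$; running the same argument with $c_-$ controls $\max_{J\cap(-\infty,0]}|g^{(n)}|$. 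Hence $g=R_LF\in C_0(L;I)$.

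The main obstacle will be the bookkeeping in the first and third paragraphs: one must exploit the slow growth of $L$ — through $\gamma(j+k)\lesssim_\e e^{\e(j+k)}\gamma(j)\gamma(k)$ — to control $\int t^nE(\lambda t)K(t)\,dt$ up to the harmless geometric factor $\lambda^{-n/2}$, and then see that this factor, together with $n!$, is outweighed by the gain $Y^{-n}$ coming from a large Cauchy radius; this is exactly what yields the arbitrarily small $\delta$ demanded by the Beurling class. A secondary but essential point is that in \eqref{inq1} the bound as $\re w\to+\infty$ involves only $c_+$ (and as $\re w\to-\infty$, only $c_-$), so that no exponential of the ``wrong'' sign contaminates the Cauchy estimate for $F^{(n)}(\xi)$, $\xi\ge0$; this is what keeps the final estimate uniform over $J$, including over subintervals containing the origin.
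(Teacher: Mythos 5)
Your proof is correct, and it reaches the conclusion by a genuinely different route at the heart of the estimate. The paper, after the same Cauchy bound $|F^{(n)}(u)|\lesssim \tfrac{n!}{Y^n}E(u/(c_+\eta))$, invokes Lemma~\ref{lem: KE-A} — the matching between the growth of $E$ and the decay of $K$ — to majorize $\int_0^\infty t^n E(t/\eta)|K(t)|\,dt$ by $\delta_1^{-n-1}\gamma(n+3)$, and then lets $Y\to\infty$. You instead expand $E(\lambda t)$ in its Taylor series, integrate term by term against $K$ using the moment identity $\int_0^\infty t^k K(t)\,dt=\gamma(k+1)$, and absorb the resulting series $\sum_m \lambda^m\gamma(n+m+1)/\gamma(m+1)$ through the submultiplicativity estimate $\gamma(j+k)\lesssim_\e e^{\e(j+k)}\gamma(j)\gamma(k)$, which does hold for slowly varying $L$ (cf.\ Lemma~\ref{lem: widetildeE}(4) and its proof, whose argument in fact gives the $o(j+k)$ bound you need). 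This bypasses Lemma~\ref{lem: KE-A} and the precise $K\cdot E$ asymptotics in favour of a more combinatorial argument, which is a real difference and somewhat more elementary. Two small points to tighten: in the chain of inequalities for $|g^{(n)}(x)|$ you should write $|K(t)|$ rather than $K(t)$, and you should say explicitly that the interchange of $\sum$ and $\int$ and the passage from $K$ to $|K|$ are justified because $K$ is eventually positive (Theorem~\ref{TheoremK}), the bounded range where $K$ might change sign contributing only $O(t_0^n)=o(\gamma(n+1))$; as written these steps lean silently on the sign of $K$.
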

 	 \begin{inThm} \label{thm: Main thm,sing}
 	 	Suppose  $L$ is a Denjoy weight and  $I$  is an open interval containing the origin. Then,
 	 	the singular transform $S_L$ maps $C_0(L;I)$ into $A(L;I)\cup A(\tfrac{1}{\varepsilon};\mathbb{R})$,
 	 	where $\varepsilon(\rho):=\frac{\rho L'(\rho)}{L(\rho)}$.
 	 \end{inThm}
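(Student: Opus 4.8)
The plan is to bound the entire function $S_Lf$ on horizontal strips directly from $f\in C_0(L;I)$. Fix $Y>0$; by the reflection $x\mapsto-x$ (which interchanges $C_0(L;I)$ and $C_0(L;-I)$ and replaces $S_Lf$ by $z\mapsto S_Lf(-z)$) it suffices to control $\max_{|v|\le Y}|S_Lf(u+iv)|$ as $u\to+\infty$ in terms of $E(u/c_+)$ for each $c_+\in I\cap(0,\infty)$. That $S_Lf$ is entire and that $|S_Lf(z)|\le\sum_n\frac{|\widehat{f}(n)|}{\gamma(n+1)}|z|^n$ are immediate from $\widehat{f}\in\mathcal{F}_0(L)$; but this last estimate is much too crude off the positive ray, and the whole difficulty is to improve it using the fact that $\widehat{f}$ is not an arbitrary element of $\mathcal{F}_0(L)$ but the Taylor jet at $0$ of a function with controlled derivatives on an entire subinterval --- i.e.\ to exploit the proper inclusion $\mathcal{B}C_0(L;I)\subsetneq\mathcal{F}_0(L)$.

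I would organize this around a Taylor truncation at a level $N=N(z)$: write $S_Lf=P_N+T_N$ with $P_N(z)=\sum_{n<N}\frac{\widehat{f}(n)}{\gamma(n+1)}z^n$. The tail is easy: the Beurling estimates $\max_J|f^{(n)}|\le C_\delta(f)(\delta nL(n))^n$ (one for each $\delta>0$ and each closed $J\subset I$), together with $\gamma(n+1)=L(n+1)^{n+1}$, give $|T_N(z)|\lesssim C_{\delta(z)}(f)\,2^{-N}$ as soon as $\delta(z)\asymp|z|^{-1}$, so the tail is negligible once $N$ exceeds a fixed multiple of $\log C_{\delta(z)}(f)$. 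The polynomial part $P_N=S_LQ_{N-1}$, where $Q_{N-1}$ is the Taylor polynomial of $f$ of degree $N-1$, is the heart of the matter. Natural tools are: localizing the Beurling estimates to a subinterval $J_N\subset I$ tuned to the degree cap (so that $L(n)\le L(N)$ for all $n<N$ and $Q_{N-1}$ stays controlled on $J_N$); a Bernstein--Walsh / iterated-Markov estimate to pass from the size of $Q_{N-1}$ on $J_N$ to its Taylor coefficients and then to $P_N(u+iv)$ for $|v|\le Y$; and the asymptotics of $E$ and $K$ on $\mathbb{R}_+$ from Section~6, the increments of the weights $\gamma(n+1)^{-1}$ being handled through the logarithmic derivative $\gamma'(\rho)/\gamma(\rho)=\log L(\rho)+\varepsilon(\rho)$. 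Carrying this out reduces the estimate of $P_N(u+iv)$ to a one-variable comparison between $\exp\!\big(O(N(z))\big)$ and $E(u/c_+)$.

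Optimizing $N(z)$ then yields the alternative in the statement. The admissible truncation level $N(z)$ is dictated by how fast $C_\delta(f)\to\infty$ as $\delta\to0$, equivalently by how deeply $\widehat{f}$ lies inside $\mathcal{F}_0(L)$ (roughly, by the rate at which $\max_J|f^{(n)}|^{1/n}/(nL(n))\to0$). When this rate is definite, $N(z)$ grows slowly, the bound matches $E(u/c_+)$ for every $c_+$, and $S_Lf\in A(L;I)$. In the opposite, borderline regime $N(z)$ is larger and the $A(L;I)$ bound degrades; what now governs the growth is not the leading rate $\log L(\rho)$ but the ``second-order'' rate $\varepsilon(\rho)=\rho L'(\rho)/L(\rho)$ from $\gamma'/\gamma$, and tracking it shows that $\max_{|v|\le Y}|S_Lf(u+iv)|$ is then dominated by the function $E$ associated with $1/\varepsilon$ in place of $L$, evaluated at $u/c$, for every $c>0$ --- that is, $S_Lf\in A(\tfrac1\varepsilon;\mathbb{R})$. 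When $1/\varepsilon\asymp L$, which holds precisely for $L(\rho)=\log(\rho+e)$ and its relatives, the two alternatives coincide and one recovers Beurling's theorem; when $1/\varepsilon\gg L$, e.g.\ $L=\log_k$ with $k\ge2$, one has $A(\tfrac1\varepsilon;\mathbb{R})\subset A(L;I)$ and the first alternative always applies; the second is genuinely needed only when $1/\varepsilon\ll L$, as for $L(\rho)=\exp(\log^{\alpha_0}\rho)$ with $0<\alpha_0<1$.

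The main obstacle, I expect, is exactly the step of beating the crude termwise bound: the localization and Markov/Bernstein--Walsh estimates above, applied naively, only reproduce a bound of the order of $C_{\asymp1/|z|}(f)$ again, so one must genuinely extract, from the boundedness of $f$ and all its derivatives on a whole interval, a usable cancellation in $\sum_{n<N}\frac{\widehat{f}(n)}{\gamma(n+1)}z^n$ --- i.e.\ make quantitative the constraint that $\widehat{f}$ be an extendable jet --- and do so without circularly invoking $R_LS_Lf=f$ (which presupposes $S_Lf\in A(L;I)$ and is therefore unavailable here). The second delicate point is to confirm that the loss in the borderline regime is measured exactly by $1/\varepsilon$ and nothing coarser; this relies on the fine asymptotics of $E$ and $K$ from Section~6 (relations of the shape $\log E(u)\asymp L^{-1}(u)\,\varepsilon(L^{-1}(u))$) and on the interplay of the elasticities of $L$ and of $1/\varepsilon$. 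I would anticipate that quasianalytic and non-quasianalytic Denjoy weights are handled uniformly, the convergence or divergence of $\int^\infty\!\frac{du}{uL(u)}$ being irrelevant to the punctual image problem here.
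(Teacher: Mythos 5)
There is a genuine gap, and you have actually located it yourself in your last paragraph without supplying the idea that closes it. Your plan is to truncate the Taylor series of $f$ at level $N(z)$ and control $P_N=S_LQ_{N-1}$ by localizing the Beurling bounds to a small interval $J_N$ (so that the Taylor polynomial $Q_{N-1}$ of degree $N-1$ is $O(1)$ there) and then applying a Bernstein--Walsh estimate. The trouble is that the interval $J_N$ on which $Q_{N-1}$ is bounded must shrink with $N$ (its length is forced to be of order $1/L(N)$ by the Taylor remainder bound), so any Bernstein--Walsh inequality will degrade with $N$ exactly at the rate needed to cancel the gain, and you land back at the termwise bound $\sum|\widehat f(n)|/\gamma(n+1)\,|z|^n$. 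You flag this correctly as ``the main obstacle,'' but you do not say how to beat it, and the optimization over $N(z)$ that you describe cannot be carried out without some additional input.

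The paper's resolution is a different decomposition entirely: instead of truncating the Taylor series at $0$, rescale $I$ to $[-1,1]$ and expand $f$ in \emph{Chebyshev polynomials}, $f\circ\chi=\sum_n c_n T_n$. This has two decisive features absent from the Taylor truncation. First, every $T_n$ is bounded by $1$ on the \emph{fixed} interval $[-1,1]$, so the shrinking-interval problem disappears: Bernstein's ellipse inequality (Lemma~\ref{lemma:Bernstein}) and the Cauchy-integral contour argument (Lemma~\ref{lemma:estimatpsi}) give estimates for $S_L(T_n\circ\chi^{-1})$ with an \emph{explicit, summable} $n$-dependence (Lemma~\ref{lemma:Co1}), uniformly over $n$. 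Second, the Beurling-class membership of $f$ is encoded not as a decay of Taylor coefficients but as the sub-Carleman decay of Chebyshev coefficients, $|c_n|\lesssim_{\delta}e^{-\delta^{-1}\Lambda_L(n)}$ for every $\delta>0$ (Lemma~\ref{lemma:estimatcoef2}); this is the precise, usable form of the ``cancellation'' you say must be extracted, and it is strong enough to absorb the $e^{C\Lambda_L(n)}$ factors in Lemma~\ref{lemma:Co1}. The $A(\tfrac1\varepsilon;\mathbb{R})$ alternative then emerges not from an optimization over a truncation level but from the explicit factor $\widetilde E(\delta|u|)$ (the $E$-function built from $1/\varepsilon$) in Lemma~\ref{lemma:Co1}. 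Your heuristic discussion of when $1/\varepsilon\lesssim L$ versus $1/\varepsilon\gg L$ governs the outcome is correct in spirit, but without the Chebyshev device the estimates you would actually obtain are too lossy to produce the dichotomy stated in the theorem.
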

\subsubsection{Functions of at most logarithmic and of super-logarithmic growth.}   
  We say that a  function    $L$ has \textit{at most logarithmic growth} if  $L(\rho)=O(\log \rho)$ as $\rho\to \infty $, and that $L$ has \textit{a super-logarithmic  growth} if $\log \rho= o\left(L(\rho)\right)$ as $\rho\to \infty $. Note that any Denjoy weight  must have either at most logarithmic or super-logarithmic growth (unlike more general  functions $L$). Theorems \ref{thm: Main thm,reg} and \ref{thm: Main thm,sing} exhibit an essential difference between these two cases.
 	 
 	 \begin{corollary}\label{cor: subsuplog}
Suppose that $L$ is a Denjoy weight and that $I$ is an open interval containing the origin.
\begin{enumerate}
	\item If  $L$ has at most  logarithmic growth, then the singular transform $S_L$ maps $C_0(L;I)$ bijectively onto the space $A(L;I)$. Moreover, if $f\in C_0(L;I)$, then $R_L S_L f=f$.
	\item If $L$ has super-logarithmic growth, then the singular transform $S_L$ maps $C_0(L;I)$ into $A(\tfrac{1}{\varepsilon};\mathbb{R})$.
\end{enumerate}
 	 \end{corollary}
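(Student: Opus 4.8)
The plan is to read the corollary off from Theorems~\ref{thm: Main thm,reg} and~\ref{thm: Main thm,sing}; the only genuinely new ingredient is a comparison of the two weights $L$ and $\tfrac1\varepsilon$ that appear in Theorem~\ref{thm: Main thm,sing}, which decides which of the classes $A(L;I)$ and $A(\tfrac1\varepsilon;\mathbb R)$ swallows the other. I would phrase this through the elementary identity $\tfrac1{\varepsilon(\rho)}=\tfrac{L(\rho)}{\rho L'(\rho)}$: thus ``$L(\rho)\le\tfrac1{\varepsilon(\rho)}$ eventually'' is the same as ``$\rho L'(\rho)\le1$ eventually'', while ``$\tfrac1{\varepsilon(\rho)}=o(L(\rho))$'' is the same as ``$\rho L'(\rho)\to\infty$''. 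A short computation from the explicit formula~\eqref{Denjoy} then shows that a Denjoy weight of at most logarithmic growth has $\rho L'(\rho)\le1$ eventually (in fact $\rho L'(\rho)\to0$, except for $L(\rho)=\log(\rho+e)$, where $\rho L'(\rho)=\rho/(\rho+e)<1$), whereas a Denjoy weight of super-logarithmic growth has $\rho L'(\rho)\to\infty$; in particular, in the latter case $\tfrac1\varepsilon$ is itself an admissible weight and $\tfrac1\varepsilon=o(L)$.

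For part~(2) this is essentially all there is to it. Given a super-logarithmic Denjoy weight, the observation following Definition~\ref{def: the space  A}, applied with the larger weight $L$ and the smaller weight $\tfrac1\varepsilon$, gives $A(L;I)\subseteq A(\tfrac1\varepsilon;\mathbb R)$, so the union $A(L;I)\cup A(\tfrac1\varepsilon;\mathbb R)$ in Theorem~\ref{thm: Main thm,sing} collapses to $A(\tfrac1\varepsilon;\mathbb R)$, and the conclusion $S_L\bigl(C_0(L;I)\bigr)\subseteq A(\tfrac1\varepsilon;\mathbb R)$ follows at once.

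For part~(1), fix a Denjoy weight with $L(\rho)\le C\log\rho$ eventually. First I would note that $\int^\infty\frac{du}{uL(u)}\ge C^{-1}\int^\infty\frac{du}{u\log u}=\infty$, so by the Denjoy--Carleman theorem $C_0(L;I)$ is quasianalytic, i.e.\ $\mathcal B$ is injective on it. Next, from $\rho L'(\rho)\le1$ one gets $L(n)^n\le\bigl(\tfrac1{\varepsilon(n)}\bigr)^n$ for all large $n$, hence (comparing the defining power series of $E$ coefficient by coefficient, a finite polynomial discrepancy being harmless since $E$ outgrows every polynomial on $\mathbb R_+$) $A(\tfrac1\varepsilon;\mathbb R)\subseteq A(L;\mathbb R)\subseteq A(L;I)$; so Theorem~\ref{thm: Main thm,sing} now yields $S_L\bigl(C_0(L;I)\bigr)\subseteq A(L;I)$. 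It remains to invert. For $f\in C_0(L;I)$ we have $S_Lf\in A(L;I)$, hence $R_LS_Lf\in C_0(L;I)$ by Theorem~\ref{thm: Main thm,reg}; using $\mathcal BR_L=\widehat R_L\mathcal B$, $\widehat S_L\mathcal B=\mathcal BS_L$ and $\widehat R_L=\widehat S_L^{-1}$ we get $\mathcal B(R_LS_Lf)=\widehat R_L\widehat S_L\mathcal Bf=\mathcal Bf$, and quasianalyticity forces $R_LS_Lf=f$. This already shows $S_L$ is injective. For surjectivity, given $F\in A(L;I)$ set $f=R_LF\in C_0(L;I)$ (Theorem~\ref{thm: Main thm,reg}); then $S_Lf$ is an entire function whose Taylor coefficients at $0$ are $\widehat S_L\widehat R_L\mathcal BF=\mathcal BF$, so $S_Lf=F$ because an entire function is determined by its Taylor jet at a point. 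Thus $S_L\colon C_0(L;I)\to A(L;I)$ is a bijection with inverse $R_L\vert_{A(L;I)}$.

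The main obstacle I anticipate is concentrated in the first paragraph, namely controlling $\rho L'(\rho)=L(\rho)\varepsilon(\rho)$. For the Denjoy weights of~\eqref{Denjoy} this is routine, but to state the corollary under the general regularity hypotheses used elsewhere in the paper one will also need the appropriate monotonicity of $\varepsilon$ and of $L/\varepsilon$, and obtaining those cleanly is the delicate point. By contrast the rest of part~(1) is formal: it uses only that $C_0(L;I)$ is quasianalytic (which is precisely why $L=O(\log)$ is needed) together with the fact that $\widehat S_L$ and $\widehat R_L$ are mutually inverse maps of $\mathbb{C}^{\mathbb{Z}_+}$ and the uniqueness of Taylor expansions for entire functions.
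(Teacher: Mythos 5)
Your proof is correct and follows the route the paper itself indicates in the remark immediately after Corollary~1 (namely, that $L\varepsilon=\rho L'$ is bounded in the at-most-logarithmic case and tends to infinity in the super-logarithmic case, which collapses the union $A(L;I)\cup A(\tfrac1\varepsilon;\mathbb{R})$ to one of the two classes). The paper leaves the bijectivity and the identity $R_LS_Lf=f$ implicit; you supply exactly the right formal argument, using $\mathcal{B}R_L=\widehat R_L\mathcal{B}$, $\widehat S_L\mathcal{B}=\mathcal{B}S_L$, $\widehat R_L=\widehat S_L^{-1}$, the quasianalyticity of $C_0(L;I)$ when $L\lesssim\log$, and the uniqueness of Taylor expansions of entire functions. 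Your verification that a Denjoy weight of at most logarithmic growth has $\rho L'(\rho)\le1$ eventually, and of super-logarithmic growth has $\rho L'(\rho)\to\infty$, is the only computation needed and is handled correctly.
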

 Notice that in the case where $L$ has at most logarithmic growth 
 we always have $L(\rho)\varepsilon(\rho)=O(1)$ as $\rho\to\infty$ and therefore, $A(L;I)\supset A(\tfrac{1}{\varepsilon};\mathbb{R})$. However, in the super-logarithmic case,  we always have $\tfrac1{\varepsilon(\rho)}=o(L(\rho))$ as $\rho\to\infty$, and therefore $A(\tfrac{1}{\varepsilon};\mathbb{R})\supset A(L;I)$.
  
The  first part of the corollary gives a full description of the punctual image and  a summation method for (a divergent) Taylor series of functions in $C_0(L;I)$ when $L$ has at most logarithmic growth. In the case $L(\rho)=\log(\rho+e)$, this is the aforementioned result of Beurling. On the other hand, in the super-logarithmic case,  these results do not give full answers, but still provide a non-trivial information about the punctual image and summation problem.

 \subsubsection{Sharpness of Theorem 2.} Our next  result shows that the second statement of Corollary~\ref{cor: subsuplog} cannot be essentially improved.
 \begin{inThm}\label{thm: example3}
Suppose that $L$ is a Denjoy weight with super--logarithmic growth and  that  $I$ is an open interval containing 0. Then for any function $L_2$ satisfying 	$\tfrac{1}{\varepsilon(\rho)}=o(L_2(\rho))$ as $\rho\to\infty$ and any $\delta>0$,  $S_L C_0(L;(-\delta,\delta))\nsubseteq A(L_2;\mathbb{R})$. In particular  $S_L C_0(L;(-\delta,\delta))\nsubseteq A(L;\mathbb{R})$.
 \end{inThm}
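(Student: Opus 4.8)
The plan is to construct, for each prescribed $L_2$ with $\tfrac1{\varepsilon(\rho)}=o(L_2(\rho))$, an explicit function $f\in C_0(L;(-\delta,\delta))$ whose singular transform $F=S_Lf$ violates the growth bound \eqref{inq1} defining $A(L_2;\mathbb{R})$; that is, $F$ must be large somewhere in a fixed horizontal strip along a sequence $u\to\infty$, where ``large'' means it beats $E_{L_2}(u/c)$ for every $c>0$. The natural candidate is a lacunary-type series $f(x)=\sum_{k} c_k e^{i\lambda_k x}$ (or a close variant), where $\lambda_k\to\infty$ is a rapidly growing frequency sequence and the coefficients $c_k$ decay just fast enough to keep $f$ in the Beurling class $C_0(L;(-\delta,\delta))$ but no faster. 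Since $\widehat{f}(n)=\sum_k c_k (i\lambda_k)^n/n!$, the Beurling condition $\max_J|f^{(n)}|\le C_{\delta'}(\delta' n L(n))^n$ for all $\delta'>0$ translates, after optimizing in $n$, into a condition tying $|c_k|$ to $\exp(-\lambda_k/(\text{something like } L^{-1})(\lambda_k))$ — i.e. the coefficients are governed by the same function $\mu$ / $\exp(L^{-1}(\cdot))$ that controls $E$, as noted in the remark after Definition 3. The key point is that this decay is \emph{slow} compared to what membership in $A(L_2;\mathbb{R})$ would force on $S_Lf$, precisely because $\tfrac1\varepsilon\ll L_2$ fails to hold.

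The key steps, in order: (1) Fix the frequencies $\lambda_k$ growing fast — e.g. so sparse that in the series for $\widehat f(n)$ one term dominates for each relevant $n$ — and set $c_k$ to be the largest coefficients consistent with the Beurling estimates on $J=[-\delta',\delta']$, uniformly in $\delta'<\delta$; verify $f\in C_0(L;(-\delta,\delta))$ by a saddle-point / maximum-term estimate on $\widehat f(n)$. (2) Compute $F(z)=S_Lf(z)=\sum_n \widehat f(n)z^n/\gamma(n+1)$ and, again by a maximum-term analysis, relate its growth along the real axis to $\sum_k c_k\,E_L(i\lambda_k\,\cdot)$-type expressions; since $E_L$ is bounded off the positive ray but $E_L(x)\sim\exp(L^{-1}(x))$ on $\mathbb R_+$, the relevant growth of $F$ on horizontal lines $u+iv$ is dictated by the interplay between the $c_k$ and the function $E_L$ evaluated at arguments rotated into the right half-plane — here Theorem 2 (or its proof via the asymptotics of Section 6) tells us $F$ genuinely sits in $A(\tfrac1\varepsilon;\mathbb R)$ and is ``as large as that allows'' along a suitable sequence. (3) Use the hypothesis $\tfrac1{\varepsilon(\rho)}=o(L_2(\rho))$: because $1/\varepsilon$ is the inverse-type exponent $L^{-1}$ composed appropriately (recall $\varepsilon=\rho L'/L$ so that $\log L^{-1}$ is an antiderivative of $1/(\rho\varepsilon)$-type), this asymptotic forces $E_{1/\varepsilon}(x)$ to grow strictly faster than $E_{L_2}(x)$ on $\mathbb R_+$, along a sequence of $x\to\infty$; pull this back through the correspondence in step (2) to exhibit points $u_k+iv_k$ with $|v_k|\le Y$ (in fact one can keep $v_k$ bounded, even $v_k=0$ after a rotation trick) at which $|F(u_k+iv_k)|$ exceeds $C\,E_{L_2}(u_k/c)$ for every fixed $c>0$, contradicting \eqref{inq1} for $A(L_2;\mathbb R)$. (4) The final sentence, $S_LC_0(L;(-\delta,\delta))\nsubseteq A(L;\mathbb R)$, is immediate by taking $L_2=L$, since in the super-logarithmic case $\tfrac1{\varepsilon(\rho)}=o(L(\rho))$ always holds (as observed just before the theorem).

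The main obstacle I expect is step (2)–(3): making the correspondence between the coefficient decay of $f$ and the \emph{precise} real-axis growth of $F=S_Lf$ quantitative enough to compare against an arbitrary competitor $L_2$, not just against $1/\varepsilon$. One must be careful that the extremal $f$ is chosen so that the maximum-term in $F$'s defining series is attained at an index $n_k$ that matches the frequency $\lambda_k$, so that $|F|$ inherits the full growth of the relevant $E_L$-value rather than a lossy version of it; this requires the lacunarity of $(\lambda_k)$ to be tuned against the modulus of continuity built into the Denjoy weight's regularity (the quantities controlled in Section 6). A secondary technical point is handling the horizontal-strip aspect honestly — ensuring the large values of $F$ occur at bounded imaginary part — which for lacunary exponential sums is standard but needs the frequencies real and the strip width $Y$ treated as fixed; alternatively one rotates the construction so the bad behavior lands on $\mathbb R$ itself, giving the sharper ``in particular'' conclusion directly. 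Routine verifications (convergence, that $f$ is genuinely only in the Beurling and not a smaller Carleman class, uniformity over $\delta'<\delta$) I would defer to the detailed proof.
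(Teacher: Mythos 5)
Your proposal follows essentially the same route as the paper: a lacunary Fourier series $f(x)=\sum_k c_k e^{i n_k x}$ with rapidly increasing integer frequencies $n_k$ and coefficients $c_k=e^{-\omega_{n_k}\Lambda_L(n_k)}$ (with $\omega_n\to\infty$ to land in the Beurling rather than Carleman class), whose singular transform is the sum of rotated copies $F(z)=\sum_k c_k E_1(i n_k z)$ of $E_1=S_L(\exp)$, and one then shows $|F|$ is too large along a sequence of real points to lie in $A(L_2;\mathbb{R})$. The ``main obstacle'' you identify is exactly what the paper resolves with two technical ingredients: (i) Lemma~\ref{lem: widetilde E 1st}, the two-sided estimate $\log|E_1(ix)|\asymp\Lambda_L(x)\varepsilon(x)$, which quantifies the maximum-term growth you gesture at; and (ii) an auxiliary weight $L_3$ sandwiched so that $\tfrac1\varepsilon=o(L_3)$, $L_3=o(L_2\circ\Lambda_L)$ and $L_3=o(L)$, used to pick test radii $r_n=L_3(n)$ and tune $\omega_n\asymp\tfrac{\Lambda_L(n r_n)\varepsilon(n r_n)}{\Lambda_L(n)}$ so that at $z=r_{n_m}$ the $m$-th summand dominates and $\log|F(r_{n_m})|\gtrsim\Lambda_L(n_m)\gg L_2^{-1}(M r_{n_m})$. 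One small slip in your first paragraph: you say the construction works ``precisely because $\tfrac1\varepsilon\ll L_2$ fails to hold,'' but that relation \emph{is} the hypothesis; your step~(3) has the logic right. Finally, your ``rotation trick'' to exhibit the bad growth on $\mathbb{R}$ itself is exactly what the paper does (the rotation being the factor $i n_k$ inside $E_1$), and the ``in particular'' conclusion by taking $L_2=L$ matches the paper.
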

  \subsection{Duality between non-quasianalytic and quasianalytic classes.} Given a non-quasianalytic function $L$, put
 \[ \widetilde{L}(\rho)=L(\rho)\int_{\rho }^{\infty}\frac{du}{uL(u)}, \quad \rho>1. \]
 Note that the function $\widetilde{L}$ is always quasianalytic.
 Indeed, integration by parts yields
 \[ \int_{\rho_0}^{\rho}\frac{du}{u\widetilde{L}(u)}= \log \left(\int_{\rho_0 }^{\infty}\frac{du}{uL(u)}\right)-\log \left(\int_{\rho }^{\infty}\frac{du}{uL(u)}\right) \]
 and therefore
 \[ \int_{\rho_0}^{\infty}\frac{du}{u\widetilde{L}(u)}=\infty. \]
 
  In the next table, we give some examples for non--quasianalytic functions $L$ and their duals $\widetilde{L}$ (given here up to the asymptotic equivalence $\asymp$). 

 \renewcommand{\arraystretch}{1.2} 
 \begin{center}
 	\begin{tabular}{|l|c|c|l|}
 	\hline
 	& $L$ & $\widetilde{L}$ & Parameters\\
 	\hline
 	I & $ \log^{\alpha }(\rho+e)$& $ \log(\rho+e)$&$\alpha>1$ \\ \hline
 	II & $\log(\rho+e)\log^{\beta}\log(\rho+e^e)$& $ \log(\rho+e)\log\log(\rho+e^e)$&$\beta>1$ \\ \hline
 	III & $\exp[\log^\alpha (\rho)]$& $\log^{(1-\alpha)}(\rho+e)$&$0<\alpha<1$\\ \hline
 	IV& $\exp\left[\frac{\log (\rho)}{\log^\beta \log(\rho+e)}\right]$ &$\log^{\beta}\log (\rho+e^e)$&$\beta>0$\\
 	\hline V& $\rho^\alpha$ &$1$&$\alpha>0$\\
 	\hline
 \end{tabular}
 \newline $ $\newline 
 \end{center}
 \noindent
 Our main results regarding non-quasianalytic classes are the following.  
 \begin{inThm}\label{thm:NqaBeu}
 	Suppose  $L$ is a non-quasianalytic Denjoy weight and  $I$ is an open interval containing the origin. Then
 	\[ S_L C_0(L;I)=S_{\widetilde{L}} C_0(\widetilde{L};\mathbb{R}). \]
  \end{inThm}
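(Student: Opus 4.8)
The plan is to convert the asserted equality of classes of entire functions into an equivalent statement about Taylor jets, and then to realize each inclusion by a regular transform, the analytic work being the convergence and smoothness of those transforms.

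\emph{Reduction.} Put $m(\rho)=\int_\rho^{\infty}\frac{du}{uL(u)}$, so that by definition $\widetilde L=L\cdot m$ and hence $\gamma_{\widetilde L}(\rho)=\gamma_L(\rho)\,m(\rho)^{\rho}$; differentiating $-\log m$ also gives $(-\log m)'(\rho)=1/(\rho\widetilde L(\rho))$, whence $m(\rho)^{\rho}=c_0^{\,\rho}\exp\!\bigl(-\rho\int_{\rho_0}^{\rho}\frac{du}{u\widetilde L(u)}\bigr)$, a quantity governed by the quasianalytic weight $\widetilde L$. Let $\widehat M$ be the diagonal operator $(\widehat M a)(n)=a_n\,m(n+1)^{n+1}$ on $\mathbb C^{\mathbb Z_+}$. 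Then $\widehat S_L=\widehat S_{\widetilde L}\circ\widehat M$, and since $\widehat S_{\widetilde L}$ is injective and an entire function is determined by its jet, the identity $\widehat S_L\mathcal B=\mathcal BS_L$ reduces the theorem to
\[
\widehat M\bigl(\mathcal B C_0(L;I)\bigr)=\mathcal B C_0(\widetilde L;\mathbb R).
\]

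For the inclusion $\subseteq$, take $f\in C_0(L;I)$ and set $F=S_Lf$. A non-quasianalytic Denjoy weight cannot have at most logarithmic growth (else $\int^{\infty}\frac{du}{uL(u)}$ would diverge), so $L$ has super-logarithmic growth, and Theorem~\ref{thm: Main thm,sing} together with part~(2) of Corollary~\ref{cor: subsuplog} yields $F\in A(\tfrac1{\varepsilon};\mathbb R)$, $\varepsilon(\rho)=\rho L'(\rho)/L(\rho)$. Put $g=R_{\widetilde L}F$. From $\mathcal BR_{\widetilde L}=\widehat R_{\widetilde L}\mathcal B$, $\widehat R_{\widetilde L}=\widehat S_{\widetilde L}^{-1}$ and $\gamma_{\widetilde L}(\rho)=\gamma_L(\rho)m(\rho)^{\rho}$ one gets $\mathcal Bg=\widehat R_{\widetilde L}\mathcal BF=\widehat M\widehat f$ \emph{whenever $g$ is well defined}; so the whole content is that $R_{\widetilde L}$ converges on $F$ and that $g\in C_0(\widetilde L;\mathbb R)$. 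The ingredients are: a verification that $\widetilde L$ meets the regularity hypotheses under which Theorems~\ref{thm: Main thm,reg}--\ref{thm: Main thm,sing} hold; and the asymptotics of $E$ and $K$ from Section~6, in the form $\log E_{\widetilde L}(x)\asymp\widetilde L^{-1}(x)$, $K_{\widetilde L}(x)\asymp\log E_{\widetilde L}(x)/E_{\widetilde L}(x)$ and $\log E_{1/\varepsilon}(x)\asymp(1/\varepsilon)^{-1}(x)$ on $\mathbb R_+$. From these one estimates $g^{(n)}$ on compact subintervals against $R_{\widetilde L}$ applied to the majorant of $F$ and recovers the Carleman bounds of $C_0(\widetilde L;\mathbb R)$, roughly as in the proof of Theorem~\ref{thm: Main thm,reg} but with the enlarged input class $A(\tfrac1{\varepsilon};\mathbb R)$ in place of $A(L;\cdot)$.

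For the inclusion $\supseteq$, take $g\in C_0(\widetilde L;\mathbb R)$, set $F=S_{\widetilde L}g$ --- so $F\in A(\widetilde L;\mathbb R)$ or $F\in A(\tfrac1{\widetilde\varepsilon};\mathbb R)$, $\widetilde\varepsilon(\rho)=\rho\widetilde L'(\rho)/\widetilde L(\rho)$, according as $\widetilde L$ has at most logarithmic or super-logarithmic growth, and both alternatives genuinely occur (rows~I and~II of the table) --- and put $f=R_LF$; again $\mathcal Bf=\widehat M^{-1}\widehat g$ as soon as $f$ is well defined, so it remains to show $R_LF$ converges and lies in $C_0(L;I)$, keeping careful track of how the scale parameters $c_{\pm}$ transform ($\mathbb R$ on the $\widetilde L$-side versus a sub-interval of $I$ on the $L$-side). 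The step I expect to be the real obstacle, in both directions, is precisely this convergence: by Theorem~\ref{thm: example3}, $S_LC_0(L;I)\not\subseteq A(\widetilde L;\mathbb R)$, so when $\widetilde L$ has super-logarithmic growth the integral defining $R_{\widetilde L}F$ need not converge as an ordinary Lebesgue integral, and symmetrically $R_LF$ may diverge in the first row. Overcoming this will require either interpreting the regular transform through the analytic-continuation and summation device of Section~2.1, or dispensing with $R$ altogether and deriving the Carleman bounds for $g$ (respectively $f$) directly from sharp two-sided estimates for $E_L,E_{\widetilde L},K_L,K_{\widetilde L}$ and the multiplier $m(n+1)^{n+1}$, uniformly in the strip parameters.
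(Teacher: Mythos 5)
Your reduction to the jet-level statement $\widehat M\bigl(\mathcal B C_0(L;I)\bigr)=\mathcal B C_0(\widetilde L;\mathbb R)$ is a correct observation, and you have accurately diagnosed where the naive ``apply $R_{\widetilde L}$ / apply $R_L$'' plan breaks down: the regular transform integral need not converge, as Theorem~\ref{thm: example3} essentially forces. But the proposal stops there; it names the obstruction and offers two speculative alternatives (``reinterpret $R$ via summation'' or ``dispense with $R$ and match estimates'') without carrying either out. That is the entirety of the hard analytic content of the theorem, so what you have is a correct reduction plus a statement of the difficulty, not a proof.

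The paper's actual argument uses different machinery in each direction, and neither is a repair of the $R$-based plan. For $S_L C_0(L;I)\subseteq S_{\widetilde L} C_0(\widetilde L;\mathbb R)$, one does not compose $R_{\widetilde L}$ with $S_L$ at all; instead one expands $f$ in Chebyshev polynomials, applies the intermediate singular transform $S_{L/\widetilde L}$ to each $T_k$, uses Lemma~\ref{lemma:estimatpsi--for nqa} to bound these on a small complex rectangle, and concludes from Cauchy estimates that $g_A:=\sum c_{A,k}S_{L/\widetilde L}T_k$ lies in the \emph{Carleman} class $C(\widetilde L;[-\tfrac12,\tfrac12])$ --- then the crucial step is to invoke the \emph{quasianalyticity} of $\widetilde L$ to propagate this local Carleman bound to $C(\widetilde L;\mathbb R)$, and a rescaling in $A$ promotes it to the Beurling class $C_0(\widetilde L;\mathbb R)$. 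For the reverse inclusion one uses the Ehrenpreis integral representation of $g\in C_0(\widetilde L;\mathbb R)$, splits $g=g_e+g_++g_-$ according to the location of the spectral variable, invokes the Carleson--Ehrenpreis theorem (in the form~\eqref{eq: C&E2}, via Lemma~\ref{lem: L_2}) for the entire piece $g_e$, and defines $f_\pm$ by integrating $K_*$ against $g_\pm$ along the complex rays $\Psi_{\pm\pi/3}$ rather than $\mathbb R_+$ --- it is precisely this contour deformation that circumvents the divergence you flagged, and it depends on the decay estimate~\eqref{eq: K assymptotic } for $K_*$ off the real axis together with Lemma~\ref{lem: K and E last}. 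None of these ingredients (Lemma~\ref{lemma:estimatpsi--for nqa}, quasianalyticity propagation, the Ehrenpreis representation, Carleson--Ehrenpreis, $\Psi_{\pm\pi/3}$) appear in your sketch, so the gap is not a detail but the entire mechanism.
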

There is also an analogous result for Carleman classes. We will make an exception and state it here because  this is the only place in this work where the treatment of Beurling and Carleman classes  requires different techniques.

We introduce the notation
\[ C(L;0):=\bigcup_{\delta>0} C(L;(-\delta,\delta)).\]
That is, $C(L;0)$ consists of germs around the origin of the corresponding Carleman class. 
\begin{inThm}\label{thm:NqaCar}
	Suppose  $L$ is a non-quasianalytic Denjoy weight  and  $I$ is an open interval containing the origin.  Then
	\[ S_L C(L;I)=S_{\widetilde{L}} C(\widetilde{L};0). \]
  \end{inThm}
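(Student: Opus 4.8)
Since $\widehat{S}_L$ and $\widehat{R}_L$ are mutually inverse diagonal multipliers and $\mathcal{B}$ is injective on germs of analytic functions, the identity to be proved is equivalent to the coefficient identity
$w(n+1)^{\,n+1}\,\mathcal{B}C(L;I)=\mathcal{B}C(\widetilde{L};0)$,
where $w(\rho)=\int_\rho^\infty\frac{du}{uL(u)}$ and $\widetilde\gamma(\rho)/\gamma(\rho)=w(\rho)^{\rho}$ (multiplication meant coordinatewise on sequences). I would record three preliminary facts. First, a non-quasianalytic Denjoy weight is automatically super-logarithmic (an at most logarithmic weight is quasianalytic), and $C(L;\cdot)$ is non-quasianalytic, hence contains cutoff functions; in particular $\mathcal{B}C(L;(-\delta,\delta))=\mathcal{B}C(L;I)$ for every $\delta>0$, so I may work on whatever interval is convenient and then transport germs back to $I$. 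Second, the comparison lemma $\tfrac1{\varepsilon(\rho)}\le\widetilde{L}(\rho)$ eventually: if $\varepsilon$ is eventually non-increasing (a regularity property of Denjoy weights), then $L(u)\le L(\rho)(u/\rho)^{\varepsilon(\rho)}$ for $u\ge\rho$, whence $w(\rho)\ge\frac1{\varepsilon(\rho)L(\rho)}$, i.e.\ $\widetilde L(\rho)=L(\rho)w(\rho)\ge\tfrac1{\varepsilon(\rho)}$. Together with $\widetilde L=o(L)$ this fixes the relative sizes of the classes $A(L;\cdot)\subset A(\widetilde L;\cdot)\subset A(\tfrac1\varepsilon;\mathbb{R})$ and, through the asymptotics of \cite{KiroSodin}, of the kernels $K_L,K_{\widetilde L}$ and of $E_L,E_{\widetilde L},E_{1/\varepsilon}$ on $\mathbb{R}_+$.

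\paragraph{The inclusion $S_L C(L;I)\subseteq S_{\widetilde L}C(\widetilde{L};0)$.}
Take $f\in C(L;I)$ and set $F=S_L f$. By the Carleman counterpart of Theorem~\ref{thm: Main thm,sing} --- which I would first state and prove under the general regularity assumptions (the singular transform of a Carleman function is analytic only in a horizontal strip) --- $F$ extends analytically to such a strip and obeys there the growth of $A(\tfrac1\varepsilon;\mathbb{R})$, the super-logarithmic alternative. The candidate dual germ is $g=R_{\widetilde L}F$. The point is that this must be given a meaning: on $\mathbb{R}_+$ the growth of $F$ can be as large as $E_{1/\varepsilon}$, which is \emph{not} dominated by the decay of $K_{\widetilde L}$, so neither the integral $\int_0^\infty F(xt)K_{\widetilde L}(t)\,dt$ nor the power series $\sum\widehat f(n)x^n w(n+1)^{n+1}$ converges in general. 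Here the Carleman-specific input enters: $E$, and hence $F$, is bounded on sectors avoiding $\mathbb{R}_+$, so $R_{\widetilde L}F$ is to be realized via a contour integral exploiting the analyticity of $F$ in the strip, where $F$ is harmless while $K_{\widetilde L}$ (continued off $\mathbb{R}_+$ through \eqref{eq:K def}) still decays; this should converge for $x$ in a small interval $(-\delta,\delta)$. Re-running the estimates from the (general) proof of Theorem~\ref{thm: Main thm,reg} applied to $\widetilde L$ along that contour then gives $g\in C(\widetilde L;(-\delta,\delta))\subseteq C(\widetilde L;0)$, and $S_{\widetilde L}g=F=S_L f$ by the multiplier identity plus injectivity of $\mathcal{B}$.

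\paragraph{The inclusion $S_{\widetilde L}C(\widetilde{L};0)\subseteq S_L C(L;I)$.}
Take $g\in C(\widetilde L;(-\delta,\delta))$ and set $G=S_{\widetilde L}g$; by Theorem~\ref{thm: Main thm,sing} (Carleman form) applied to $\widetilde L$, $G$ is analytic in a strip with the $A(\widetilde L;(-\delta,\delta))$ (or $A(\tfrac1{\widetilde\varepsilon};\mathbb{R})$) bounds there. Define $f=R_L G$ by the same contour device; the role of the comparison lemma is that $\tfrac1{\widetilde\varepsilon}$ and $\widetilde L$ are both $O$-comparable to $\tfrac1\varepsilon$, so the growth of $G$ off $\mathbb{R}_+$ is compensated by $K_L$ on the deformed contour, making $f$ well defined and $C^\infty$ on some $(-\delta',\delta')$. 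Re-running the (general) proof of Theorem~\ref{thm: Main thm,reg} with the weight $L$ gives $f\in C(L;(-\delta',\delta'))$; finally, using that $C(L;\cdot)$ is non-quasianalytic, multiply by a global cutoff to extend the germ of $f$ at $0$ to $\widetilde f\in C(L;I)$ with the same jet, so $S_L\widetilde f=S_L f=G=S_{\widetilde L}g$.

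\paragraph{Where the work is.}
I expect the main obstacle to be exactly the convergence and mapping properties of the cross-compositions $R_{\widetilde L}\circ S_L$ and $R_L\circ S_{\widetilde L}$. These cannot be read off from Theorems~\ref{thm: Main thm,reg}--\ref{thm: Main thm,sing} as stated, because the image of $S_L$ may escape $A(\widetilde L;\cdot)$ (in the spirit of the Carleman analogue of Theorem~\ref{thm: example3}); one has to re-derive the relevant mapping statements along contours pushed off the positive axis, using the precise asymptotics of $E$ and $K$ from \cite{KiroSodin} and the comparison $\tfrac1\varepsilon\lesssim\widetilde L=o(L)$. A secondary subtlety is the bookkeeping of intervals: strip widths and the half-intervals $\delta,\delta'$ shrink at each step, and it is the non-quasianalyticity of $C(L;\cdot)$ that repairs this on the $C(L)$ side, whereas on the $C(\widetilde L)$ side only a germ is produced --- which is why the statement is phrased with $C(\widetilde L;0)$ and why, in contrast with the rest of the paper, the Beurling and Carleman cases are handled separately.
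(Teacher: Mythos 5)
Your reductions, the inequality $\tfrac1{\varepsilon}\lesssim\widetilde L=o(L)$, and above all your identification of the central obstruction are on target: $S_L f$ is generically \emph{not} in $A(\widetilde L;\cdot)$ (cf.\ Theorem~\ref{thm: example3}), so the cross-composition $R_{\widetilde L}\circ S_L$ cannot be controlled by Theorems~\ref{thm: Main thm,reg}--\ref{thm: Main thm,sing}. But the fix you sketch --- deform the $t$-contour in $R_{\widetilde L}F$ off $\mathbb{R}_+$, where $F$ is small --- is not established, and in the Carleman setting there is a concrete reason to doubt it. For $f\in C(L;I)$ the function $F=S_L f$ is only analytic in a fixed horizontal strip, whereas the curves $\Psi_\pm$ on which $K_{\widetilde L}$ enjoys the decay $\exp\bigl(-\tfrac\pi2\rho^2|\varepsilon_{\widetilde L}'(\rho)|\bigr)$ have $\operatorname{Im} z\asymp\rho\,\widetilde L'(\rho)=\varepsilon_L(\rho)\widetilde L(\rho)-1$, which is unbounded whenever $1/\varepsilon_L=o(\widetilde L)$ (for instance $L(\rho)=\log\rho\,\log^{\beta}\!\log\rho$ with $\beta>1$). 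For such weights $xz$ leaves the strip of analyticity of $F$ along $\Psi_\pm$ for every fixed $x\neq0$; contours kept inside the strip gain analyticity of $F$ but give up exactly the extra decay of $K_{\widetilde L}$ you were counting on, and you do not carry out the estimates showing the two effects balance. The same gap affects your treatment of $R_L\circ S_{\widetilde L}$, and ``re-running Theorem~\ref{thm: Main thm,reg}'' does not help, because that argument starts from the hypothesis $F\in A(L;\cdot)$.

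The paper avoids cross-compositions altogether. For $S_L C(L;I)\subseteq S_{\widetilde L}C(\widetilde L;0)$ it expands $f$ in Chebyshev polynomials and \emph{constructs} the dual germ directly as $g=\sum_k c_k\,S_{L/\widetilde L}(T_k)$, where Lemma~\ref{lemma:estimatpsi--for nqa} bounds each $S_{L/\widetilde L}(T_k)$ in a strip of half-width $\varepsilon(\Lambda_{L/\widetilde L}(k))$, and Lemma~\ref{lem: widetildeE}(4) turns these into $C(\widetilde L;\cdot)$ bounds on derivatives. For the converse, since the Ehrenpreis representation is unavailable for Carleman classes, the paper instead uses Dynkin's almost-holomorphic extension: Lemma~\ref{lem: borichev} splits $g\in C(\widetilde L;0)$ into $g_1\in C^\omega(0)$ plus a $g_2$ with a \emph{global} bound $\sup_{\mathbb{R}}|g_2^{(n)}|\lesssim C^{n+1}n!\,\widetilde\gamma(n+1)$. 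Then $g_1$ is handled by the Carleson--Ehrenpreis inclusion~\eqref{eq: C&E3}, and for $g_2$ the \emph{undeformed} moment integral $f_2(t)=\int_0^\infty g_2(tx)K_*(x)\,dx$ converges outright, because the global bound on $g_2$ is matched exactly by $\int_0^\infty x^n|K_*(x)|\,dx\asymp\gamma(n+1)/\widetilde\gamma(n+1)$. That structural decomposition --- and not a contour deformation --- is what the separate Carleman treatment consists of.
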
 
As we will discuss in Section \ref{sec: Car and Ehr thm}, Theorems \ref{thm:NqaBeu} and \ref{thm:NqaCar} are closely related to classical results by Carleson \cite{Carleson} and Ehrenpreis \cite{Ehrenpreis9} and can be viewed as an improvement of their results but for a restricted class of weights.
  \subsubsection{} Theorems \ref{thm:NqaBeu} and \ref{thm:NqaCar}  allows us to reduce the punctual image problem in the non--quasianalytic case to the same problem in the  quasianalytic case. It particular, together with Corollary \ref{cor: subsuplog}, we obtain the following:
 \begin{corollary}
 Suppose  $L$ is a Denjoy weight  and  $I$ is an open interval containing the origin. If  $\log^a \rho\lesssim L(\rho)$ for some $a>1$, 
 	then the singular transform $S_L$ maps $C_0(L;I)$  onto  $A(\widetilde{L};\mathbb{R})$.
 \end{corollary}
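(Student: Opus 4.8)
\textit{Strategy.} The corollary will follow by chaining Theorem~\ref{thm:NqaBeu} with the first part of Corollary~\ref{cor: subsuplog}; the link between the two is the fact that the hypothesis $\log^{a}\rho\lesssim L(\rho)$, $a>1$, is exactly what makes the dual weight $\widetilde{L}$ have at most logarithmic growth. First, the hypothesis gives $\int^{\infty}\frac{du}{uL(u)}\lesssim\int^{\infty}\frac{du}{u\log^{a}u}<\infty$, so $L$ is non-quasianalytic (and, necessarily, super-logarithmic, since there is no $a>1$ with $\log^{a}\rho\lesssim L(\rho)\lesssim\log\rho$), and $\widetilde{L}$ is well defined. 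By Theorem~\ref{thm:NqaBeu},
\[
S_{L}C_{0}(L;I)=S_{\widetilde{L}}C_{0}(\widetilde{L};\mathbb{R}),
\]
so it suffices to prove $S_{\widetilde{L}}C_{0}(\widetilde{L};\mathbb{R})=A(\widetilde{L};\mathbb{R})$, which by the first part of Corollary~\ref{cor: subsuplog} applied to $\widetilde{L}$ holds provided $\widetilde{L}$ is, up to the equivalence $\asymp$, a Denjoy weight of at most logarithmic growth. (Note that since $L$ is non-quasianalytic, $\mathcal{B}\vert_{C_{0}(L;I)}$, hence $S_{L}\vert_{C_{0}(L;I)}$, fails to be injective — this is why the statement asserts that $S_{L}$ is onto, not bijective onto, $A(\widetilde{L};\mathbb{R})$.)

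\textit{The point: $\widetilde{L}(\rho)=O(\log\rho)$.} Writing $L=L_{\alpha}$ and substituting $u=e^{v}$ one has $\int_{\rho}^{\infty}\frac{du}{uL(u)}=\int_{\log\rho}^{\infty}\frac{dv}{L(e^{v})}$, where $L(e^{v})$ is $e^{v^{\alpha_{0}}}$ times a slowly varying factor. Since for a Denjoy weight $t\mapsto\log L(e^{t})$ is eventually concave and slowly varying, a Watson/Laplace-type estimate of this tail integral (equivalently, the asymptotics of the kernel $K$ and of $E$, cf.\ Section~\ref{subsec: the space A} and \cite{KiroSodin}) gives
\[
\int_{\rho}^{\infty}\frac{du}{uL(u)}\asymp\frac{1}{L(\rho)\,\varepsilon(\rho)},
\qquad\text{hence}\qquad
\widetilde{L}(\rho)\asymp\frac{1}{\varepsilon(\rho)},\qquad\varepsilon(\rho)=\frac{\rho L'(\rho)}{L(\rho)}.
\]
One now reads off $\varepsilon$ from the explicit form of $L_{\alpha}$: if $\alpha_{0}>0$ then $\varepsilon(\rho)\asymp\log^{\alpha_{0}-1}\rho$, so $\widetilde{L}(\rho)\asymp\log^{1-\alpha_{0}}\rho$ with $1-\alpha_{0}\in(0,1)$; if $\alpha_{0}=0$, the condition $\log^{a}\rho\lesssim L(\rho)$ with $a>1$ forces the first nonzero index to satisfy $\alpha_{1}>1$, and then $\varepsilon(\rho)\asymp 1/\log\rho$, so $\widetilde{L}(\rho)\asymp\log\rho$. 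In either case $\widetilde{L}$ is asymptotically equivalent to a Denjoy weight and $\widetilde{L}(\rho)=O(\log\rho)$, which with the reduction above proves the corollary. (One may instead avoid $\varepsilon$ and estimate $\widetilde{L}=L\cdot\int_{\rho}^{\infty}\frac{du}{uL}$ directly: for $\alpha_{0}>0$ by the Laplace method on $\int e^{-v^{\alpha_{0}}}Q(v)\,dv$, and for $\alpha_{0}=0$, $\alpha_{1}>1$ from $\int_{V}^{\infty}v^{-\alpha_{1}}Q(v)\,dv\asymp V^{1-\alpha_{1}}Q(V)/(\alpha_{1}-1)$ with $Q$ slowly varying.)

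\textit{Main obstacle.} Everything else is assembly; the genuine content is the asymptotic identity $\widetilde{L}(\rho)\asymp 1/\varepsilon(\rho)$. It is false for an arbitrary non-decreasing $L$ — a weight with long ``almost flat'' stretches can have $\widetilde{L}(\rho)\gg\log\rho$ even though $\log^{a}\rho\lesssim L(\rho)$ — so the proof must exploit the regularity of Denjoy weights (concavity and slow variation of $t\mapsto\log L(e^{t})$). The accompanying step is the elementary check that ``$\log^{a}\rho\lesssim L$ for some $a>1$'' is precisely the condition ruling out the super-logarithmic alternative for $\widetilde{L}$, i.e.\ forcing $\alpha_{1}>1$ when $\alpha_{0}=0$. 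Finally, $\widetilde{L}$ need not literally be a Denjoy weight, only equivalent to one and regular, so one invokes Corollary~\ref{cor: subsuplog} in the general (regularity-assumption) form in which it is proved, which covers $\widetilde{L}$, rather than its Denjoy-weight statement.
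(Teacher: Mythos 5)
Your proposal is correct and follows exactly the route the paper intends: chain Theorem~\ref{thm:NqaBeu} with the first part of Corollary~\ref{cor: subsuplog} applied to $\widetilde{L}$. The only content the paper leaves implicit — that $\log^{a}\rho\lesssim L$ with $a>1$ forces $\widetilde{L}(\rho)=O(\log\rho)$ — you supply correctly via the asymptotic $\widetilde{L}\asymp 1/\varepsilon$ (consistent with rows I and III of the paper's table), together with the appropriate caveat that $\widetilde{L}$ is only $\asymp$-equivalent to a Denjoy weight, so one invokes the regularity-assumption versions (Theorems~\ref{thm': Main thm,reg} and~\ref{thm': Main thm,sing}) or uses the $\asymp$-invariance of $C_{0}(\cdot;\mathbb{R})$ and $A(\cdot;\mathbb{R})$.
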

 \subsection{The splitting.}
  The  results  presented so far describe fully  the punctual image of  Beurling classes, only in the cases that the Denjoy weight function $L$ satisfies  $   L \lesssim \log$  or	$\log^a\lesssim L$, for some $a>1$. On the other hand, when the function $L$ is closer to the quasianalyticity threshold, such as when $L(\rho)=\log \rho \log^\beta \log \rho$, with $\beta>0$, our results, so far,  do not give a full description of the punctual image of  $C_0(L;I)$. Note that in the latter case, the inclusion $S_L C_0(L;I)\subset A(\log ;\mathbb{R})$ of Corollary \ref{cor: subsuplog}, part 2, is proper.  So, to treat this case, new ideas are needed. 
 
In order to overcome these difficulties, we will decompose the class $C_0(L;I)$ into the sum of two classes $C^\pm_0(L;I)$, in a way somewhat reminiscent to the decomposition of a Fourier series into the sum of its analytic and anti--analytic parts. Then, we will describe the image of the singular transform on each of the parts $C_0^\pm(L;I)$. This description is valid for any Denjoy weights $L$, both quasianalytic and non-quasianalytic, satisfying a very mild growth bound $L(\rho)\lesssim\exp(\log^a \rho)$ with some $a<\tfrac{1}{2}$ (see the discussion after Theorem \ref{thm:spliting}).
 \begin{figure}[h!]
	\centering
	\includegraphics[scale=0.7]{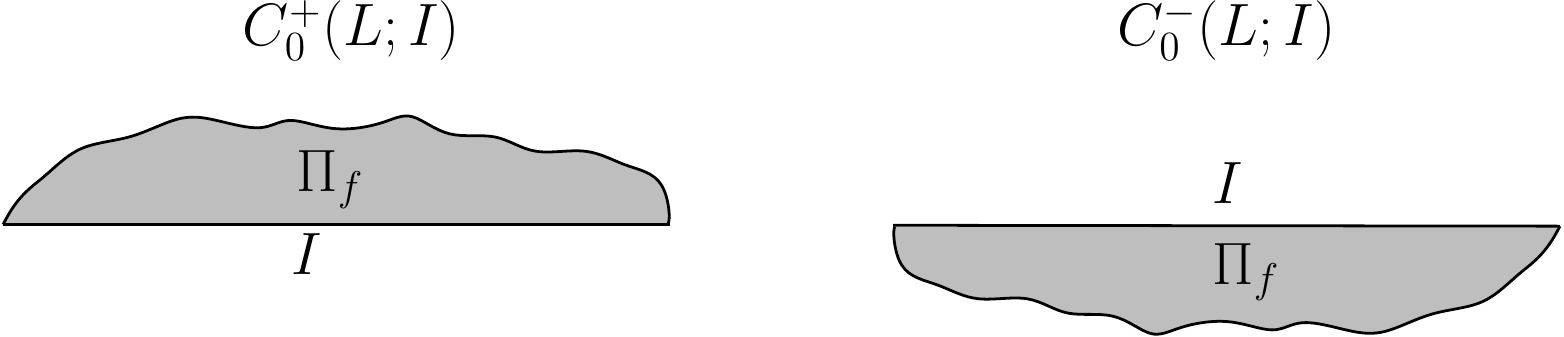}
	\caption{The classes $C_0^\pm(L;I)$}
	\label{fig: The classes Cpm}
\end{figure}
 \begin{definition}
 	Let  $I$ be  an interval and  $L:[0,\infty) \to [1,\infty)$  a non--decreasing function with $\lim_{\rho\to\infty}{L(\rho)}=\infty$. Then, 
 	$C_0^+(L;I)$ is the class of all functions $f\in C_0(L;I)$ for which there exists a domain $\Pi=\Pi_f\subset\{z:\; \im(z)>0\} $ with $\mathbb{R}\cap \partial \Pi=\text{clos}(I)$ such that $f\in C^\infty(\Pi\cup I)\cap \text{Hol}(\Pi).$  Similarly, $C_0^-(L;I)$  is the class of all functions $f\in C_0(L;I)$ for which there exists a domain $\Pi=\Pi_f\subset\{z:\; \im(z)<0\} $ with $\mathbb{R}\cap \partial \Pi=\text{clos}(I)$ such that $f\in C^\infty(\Pi\cup I)\cap \text{Hol}(\Pi).$
 \end{definition} 
Note that the Borel map may be  injective in the  classes $C_0^\pm(L;I)$ even if in the original Beurling class $C_0(L;I)$ it is not. In fact  (see \cite{Korenbljum}, \cite{Carleson3} and \cite{Salinas}), the Borel map is   injective in the  classes $C_0^\pm(L;I)$ if and only if 
\[ \int^\infty \frac{du}{\sqrt{u L(u)}}=\infty. \]

 Also note that  $C_0^+(L;I)\cap C_0^-(L;I)=C^\omega(I)$ is the class of all real--analytic functions on $I$, and  that it is not hard  to show (cf. Section \ref{subsubsec: dec} below) that 
 \[ C_0(L;I)=C_0^+(L;I)+C_0^-(L;I). \]
\subsubsection{Modifying  the regular transform.} Fix a Denjoy weight function $L$ and put 
	\[ \gamma(s)=L(s)^s,\quad K(t)=\frac{1}{2\pi i}\int_{c-i\infty}^{c+i\infty} t^{-s}\gamma(s)ds ,\quad c>0.\] 
	It follows from \cite{KiroSodin} (see also Section 6 below) that there exists $r_0>0$ such that   $K$ is analytic in the set 
 \[ \Omega:=\left\{z: \log z=\log L(s)+\varepsilon(s), |\arg (s)|\leq \tfrac{\pi}{2}, |z|>r_0\right\}, \quad \varepsilon(s)=\frac{s L'(s)}{L(s)} \]
 and is $o(|z|^{-n})$ as $z\to\infty$ uniformly therein, for any $n>0$. 
 Therefore, by Cauchy's Theorem,  if $\Psi$ is any curve in the right half-plane, joining 0 and $\infty$, such that $\Psi\cap\{|z|>r_0\}\subset \Omega$, 
 then 
\begin{equation}
\int_0^\infty t^n K(t)dt=\int_{\Psi} z^n K(z) dz=\gamma(n+1), \quad n\in\mathbb{Z}_+.\label{eq: genrlaized moments}
\end{equation}
 
 We denote by $\Psi_+$ and by $\Psi_-$ two curves joining 0 and $\infty$ in the first and fourth quadrants respectively, such that   $\Psi_\pm\cap\{|z|>r_0\}\subset\partial \Omega$ (i.e., $\Psi_\pm$  coincide with the upper and lower parts of $\partial{\Omega}$) (see Figure \ref{fig: The curves PsiPm}).  
 \begin{figure}[h]
 	\centering
 	\includegraphics[scale=0.7]{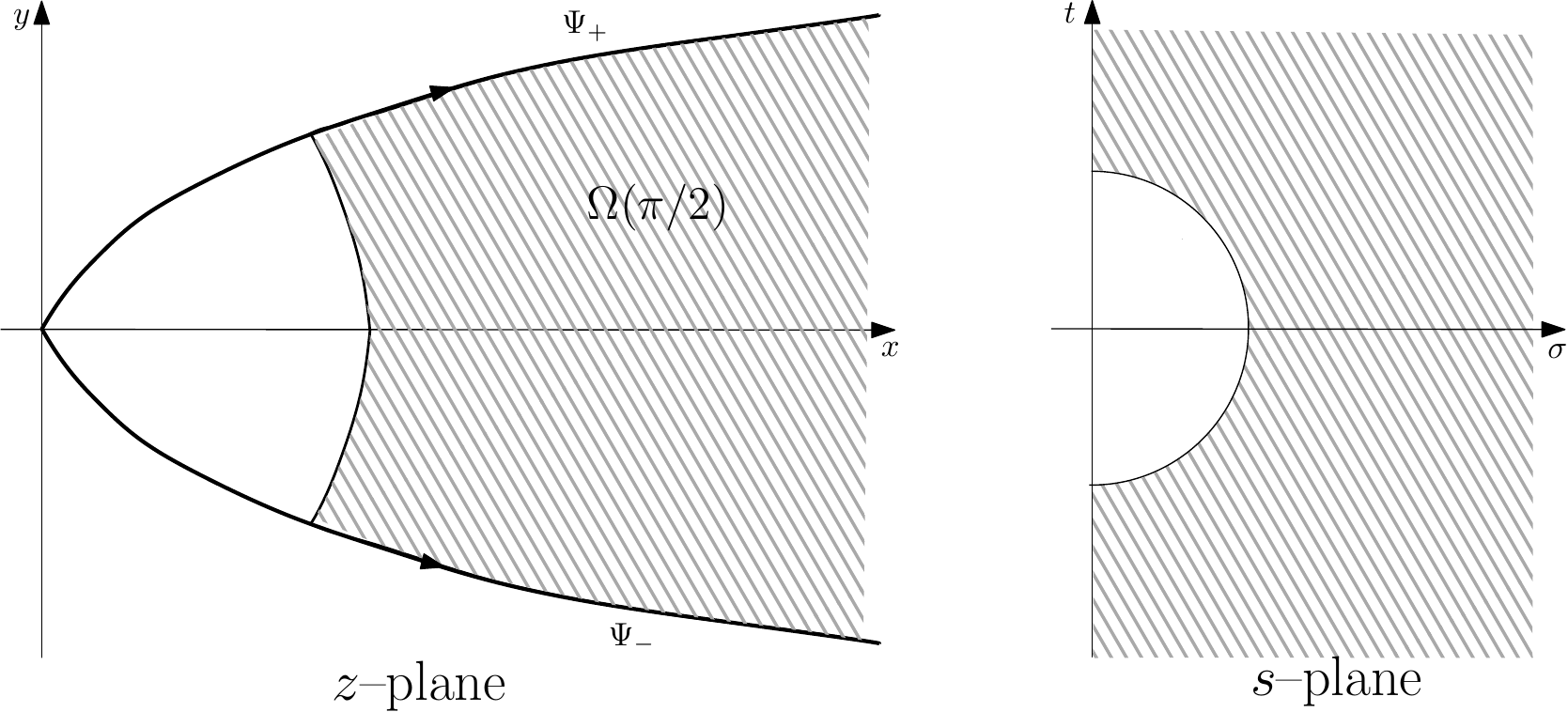}
 	\caption{The curves $\Psi_\pm$.}
 	\label{fig: The curves PsiPm}
 \end{figure}
 We modify the  regular transform, putting
\[  (R_L^+ F)(t):=\int_{\Psi_\pm} F(tz)K(z)dz,\quad \pm t\geq 0,\]
and 
\[  (R_L^- F)(t):=\int_{\Psi_\mp} F(tz)K(z)dz,\quad \pm t\geq 0.\]
whenever the integrals on the  right-hand sides converge.

It follows from \eqref{eq: genrlaized moments} that 
 for any polynomial $P$, $$R_L P=R_L^+ P=R_L^- P.$$
So,   we have $R_L^\pm S_L =\text{Id}$ in the space of all polynomials. We will see that $R_L^\pm S_L =\text{Id}$ in $C_0^\pm(L;I)$ as well.
\subsubsection{The spaces of entire functions $A^\pm(L;I)$.}
 Fix a Denjoy weight  $L$.   It is easy to see, that there exists $\delta>0$ and $\rho_0>0$ such that for $0<\psi<2\delta$, there exist a unique solution to the equation
 \[ \psi=\im\left(\log L(i\rho)+\varepsilon(i\rho)\right),\quad \rho>\rho_0. \]
We denote this solution by $\rho=\rho(\psi)$, and put
\[H(\psi)=\exp\left[\re\left(i\rho(\psi)\varepsilon\left(i\rho(\psi)\right)\right)\right],\quad 0<\psi<2\delta.  \]

By choosing the above  $\delta$ sufficiently small, we can take $H$ to be  positive, $C^1$, decreasing and with with $H(0^+)=+\infty$.
By changing the values of $H$ in the interval $(\delta,2\delta)$, if necessary,  we extend $H$ to a positive, $C^1$ and decreasing   function defined in   $(0,\tfrac{\pi}{2})$, and then we extend the domain of definition of   $H$ to  $(0,\pi)$, putting $H(\psi)=H(\pi-\psi)$.
For example, if $L(\rho)=\log^\alpha \rho$, with $\alpha>0$, then
\[ \psi\sim \frac{\pi}{2}\cdot\frac{a}{\log \rho}\quad \text{and} \quad \log \log H(\psi)\sim  \frac{\pi}{2} \cdot\frac{a}{\psi} ,\quad \psi \to 0^+.\]
  \begin{definition}
  	Suppose that $I$ is an open interval containing the origin. The class $A^+(L;I)$ consists of all entire functions $F$ such that  for any $B>0$, $c_\pm\in I$ with $c_-<0<c_+$, there exists  $\Delta>0$ with 
 \begin{equation}
 |F(re^{i\psi}) |\lesssim_{B,c_\pm} H\left(\psi\pm\frac{2B}{r}\right)+E\left(\frac{r}{|c_\pm|}+\Delta r\sin\psi\right)\label{eq:A^+def}
 \end{equation}
  	whenever $0\leq \pm (\tfrac{\pi}{2}-\psi) \leq \tfrac{\pi}{2}+\tfrac{B}{r}$ (asymptotically this is the  upper half-plane ${\im z> -B}$).
  	We also put $A^-(L;I)=\{F: \bar{F}(\bar{z})\in A^+(L;I)\}$.
  \end{definition}
Note that in the case where $L$ has super--logarithmic growth (i.e., $\rho L'(\rho)\to\infty$ as $\rho\to\infty$), in the right-hand side of \eqref{eq:A^+def} the second term dominates above the curve $\Psi_+$, the first term dominates below the curve $\Psi_+$, while on $\Psi_+$ both terms have roughly the same growth. We also mention that in the logarithmic and sub--logarithmic cases, the first term on the RHS of \eqref{eq:A^+def} is always small compared with the second term and therefore can be discarded.

\subsubsection{}
The main result of this part reads as follows.
\begin{inThm}\label{thm:spliting}
	Suppose  $L$ is a Denjoy weight with $L(\rho)\lesssim\exp(\log^a \rho)$ for some $a<1/2$, and   $I$ is an open interval containing the origin. Then, the singular transform $S_L$ maps $C_0^\pm(L;I)$ bijectively onto the space $A^\pm(L;I)$ with  inverse $R_L^\pm$. 
\end{inThm}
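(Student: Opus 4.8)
\emph{Overall strategy.} The plan is the standard Beurling-type scheme, carried out separately for the two signs. The ``$-$'' case reduces to the ``$+$'' case by applying the latter to $z\mapsto\overline{f(\bar z)}$: this map sends $C_0^-(L;I)$ to $C_0^+(L;I)$ and $A^-(L;I)$ to $A^+(L;I)$, conjugates the Taylor data, and, since $\gamma$ is real on $\mathbb R_+$ (so $\overline{K(\bar z)}=K(z)$) and $\Psi_-=\overline{\Psi_+}$, it intertwines $S_L$ with itself and $R_L^-$ with $R_L^+$. As is done for the other theorems of the paper, I would first restate the statement with the Denjoy hypothesis replaced by the general regularity conditions under which the asymptotics of $E$ and $K$ from \cite{KiroSodin} and Section~6 are available, and then prove, for the ``$+$'' sign, the four assertions: (i)~$S_L\big(C_0^+(L;I)\big)\subseteq A^+(L;I)$; (ii)~$R_L^+\big(A^+(L;I)\big)\subseteq C_0^+(L;I)$; (iii)~$R_L^+S_L=\mathrm{Id}$ on $C_0^+(L;I)$; (iv)~$S_LR_L^+=\mathrm{Id}$ on $A^+(L;I)$. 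Together with the symmetry these give the theorem, since (i)--(iv) force $S_L$ to be a bijection $C_0^+(L;I)\to A^+(L;I)$ inverse to $R_L^+$.

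\emph{A Cauchy--Green representation of $S_Lf$.} Fix $f\in C_0^+(L;I)$, holomorphic in a domain $\Pi\subset\{\im z>0\}$ with $\mathbb R\cap\partial\Pi=\mathrm{clos}(I)$, and fix $c_-<0<c_+$ in $I$. Build an almost-analytic extension $\tilde f$ of $f$ to a complex neighbourhood of $[c_-,c_+]$: on the closed upper side take $\tilde f=f$, so $\bar\partial\tilde f\equiv0$ there; below $\mathbb R$ the Beurling bound $\max_{[c_-,c_+]}|f^{(n)}|\le C_\delta(\delta nL(n))^n$ lets one arrange, for every $\delta>0$, $|\bar\partial\tilde f(w)|\lesssim_\delta\exp\!\big(-L^{-1}(c_0/(\delta|\im w|))\big)$. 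The one-sidedness of this $\bar\partial$-estimate is precisely what distinguishes $C_0^+(L;I)$ from $C_0(L;I)$. Differentiating the Pompeiu formula for $\tilde f$ at $0$ gives $\widehat f(n)=\frac1{2\pi i}\int_{\Gamma}\frac{\tilde f(w)}{w^{n+1}}\,dw-\frac1\pi\int_{D}\frac{\bar\partial\tilde f(w)}{w^{n+1}}\,dA(w)$ for any positively oriented contour $\Gamma=\partial D$ around $0$ inside this neighbourhood; multiplying by $z^n/\gamma(n+1)$ and summing (the inner series being $w^{-1}E(z/w)$) yields
\[
S_Lf(z)=\frac1{2\pi i}\int_{\Gamma}\frac{\tilde f(w)}{w}\,E\Big(\frac zw\Big)\,dw-\frac1\pi\int_{D}\frac{\bar\partial\tilde f(w)}{w}\,E\Big(\frac zw\Big)\,dA(w),
\]
the area integral being supported in $D\cap\{\im w<0\}$ and converging once $\delta$ is taken small relative to $|z|$.

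\emph{Proof of (i), the main obstacle.} To bound $F(re^{i\psi})=S_Lf(re^{i\psi})$ in the admissible range $0\le\pm(\tfrac\pi2-\psi)\le\tfrac\pi2+\tfrac Br$ I would, for each $B$, deform $(\Gamma,D)$ depending on $(r,\psi)$: keep a piece of $\Gamma$ along a slight upward push of the segment $[c_-,c_+]$ and push the remainder of $\Gamma$ into $\Pi$, away from the direction $\psi$ (legitimate since $f$ is holomorphic there) and as far as $\Pi$ permits. On the segment piece one invokes the ray asymptotics of $E$ and, using $\sin\psi\ge0$ above $\mathbb R$, bounds the contribution by $E\big(r/|c_\pm|+\Delta r\sin\psi\big)$, the extra room $\Delta r\sin\psi$ reflecting the upward push; this part is essentially the mechanism behind Theorem~\ref{thm: Main thm,sing}. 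On the part of $\Gamma$ inside $\Pi$ and on the (lower-half) support of $\bar\partial\tilde f$, the ratio $z/w$ stays in the sector where $E$ is controlled, and combining the near-$\mathbb R$ decay of $\bar\partial\tilde f$ with the saddle-point identification $\log|E(\zeta)|\asymp\re\big(\varrho\,\varepsilon(\varrho)\big)$ on $\Omega$, where $\log\zeta=\log L(\varrho)+\varepsilon(\varrho)$ (so that for $\varrho=i\rho(\psi)$ one reads off $\log H(\psi)$), produces the bound $H(\psi\pm 2B/r)$, the displacement $2B/r$ recording how far into $\Pi$ the contour was pushed. Summing gives \eqref{eq:A^+def}. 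This step is the heart of the proof: one must carry out the deformation inside a possibly thin $\Pi$ and attain the bound for arbitrary $B$, control uniformly all error terms in the asymptotics of $E$ and $K$, and match the geometry of $\partial\Omega$ against horizontal strips. It is here that $L(\rho)\lesssim\exp(\log^a\rho)$ with $a<\tfrac12$ is used: for such $L$ the second-order terms in the saddle-point analysis of $E$ and $K$ and in the inversion of $\zeta\mapsto\log L(\zeta)+\varepsilon(\zeta)$ near the imaginary axis involve only powers $\log^{2a-1}$ of iterated logarithms, hence stay negligible, and $H$ is stable under $\psi\mapsto\psi\pm2B/r$.

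\emph{Proof of (ii)--(iv).} For $F\in A^+(L;I)$ the integral $(R_L^\pm F)(t)=\int_{\Psi_\pm}F(tz)K(z)\,dz$ ($\pm t\ge0$) converges: on $\Psi_+$ (a piece of $\partial\Omega$) the saddle-point asymptotics give $|K(z)|\asymp(\log|E(z)|)/|E(z)|$, which along $\Psi_+$ is $(\log H)/H$ at the angle of $\Psi_+$, while for small $t>0$ the bound \eqref{eq:A^+def} controls $|F(tz)|$ by $H$ at a strictly larger angle --- hence, $H$ being decreasing, by a super-exponentially smaller value --- plus an $E$-term that is $o(H)$ there, so the product is integrable (again using $a<\tfrac12$). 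Rotating $\Psi_\pm$ with $\arg t$ shows $R_L^\pm F$ extends holomorphically to a domain in $\{\pm\im z>0\}$ and is $C^\infty$ up to $I$; differentiating under the integral, $(R_L^\pm F)^{(n)}(t)=\int_{\Psi_\pm}z^nF^{(n)}(tz)K(z)\,dz$, whence the Beurling bounds follow as in the proof of Theorem~\ref{thm: Main thm,reg}. This proves (ii). Setting $t=0$ in the last formula gives $\widehat{R_L^\pm F}(n)=\widehat F(n)\,\gamma(n+1)$, hence $S_LR_L^\pm F=F$, which is (iv). For (iii), substitute the representation of $S_Lf$ into $R_L^+$, interchange the $\Psi_+$-integration with the $(\Gamma,D)$-integrations (justified by the convergence above together with the boundedness of $E(tz/w)$ for $w\in\Gamma\cup D$, $z\in\Psi_+$, $t$ small), and use $\int_{\Psi_+}E(xz)K(z)\,dz=\tfrac1{1-x}$ for $0\le x<1$ (a consequence of \eqref{eq: genrlaized moments}); the inner integral collapses to $(w-t)^{-1}$ and the Pompeiu formula yields $\frac1{2\pi i}\int_{\Gamma}\frac{\tilde f(w)}{w-t}\,dw-\frac1\pi\int_{D}\frac{\bar\partial\tilde f(w)}{w-t}\,dA(w)=\tilde f(t)=f(t)$ for real $t$ inside $\Gamma$, so $R_L^+S_Lf=f$ on all of $\Pi\cup I$ by analytic continuation. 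Assembling (i)--(iv), $S_L$ and $R_L^+$ are mutually inverse bijections $C_0^+(L;I)\leftrightarrow A^+(L;I)$, and the ``$-$'' case follows by conjugation. \QEDB
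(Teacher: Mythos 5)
Your framework --- splitting into (i)--(iv) and reducing the $-$ case to $+$ by conjugation --- is a sensible way to organize the proof, and parts (ii)--(iv) track the paper's argument reasonably closely (the paper's Section~9.6 proves direction (ii) in three steps: first the Beurling bounds along $I$ using Lemmas~\ref{lem: loglog H} and \ref{lem:spliting H intgral K small} and \ref{lem:KE strong A for spliting.}, then analyticity in the interior $I_\pm$, then continuity across $0$ via Morera). Where you genuinely diverge is part (i), $S_L\big(C_0^+(L;I)\big)\subseteq A^+(L;I)$, which is indeed the heart of the theorem, and here the sketch does not close.

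You propose to bound $S_Lf(re^{i\psi})$ directly from a Pompeiu representation on a one-sided Dynkin extension $\tilde f$, deforming $(\Gamma,D)$ depending on $(r,\psi)$. There are two concrete obstacles. First, for $z=re^{i\psi}$ with $\psi>0$ the dangerous arc of $\Gamma$ is where $\arg w\approx\psi>0$, since there $\arg(z/w)$ approaches $0$ and $E(z/w)$ peaks; pushing that arc \emph{into} $\Pi$ (upward and outward) does not move $z/w$ away from the positive ray in any uniform way, and the amount of room available depends on the shape of the (possibly thin) domain $\Pi_f$. Your sketch does not show how the displacement $\psi\mapsto\psi+2B/r$ in the target bound \eqref{eq:A^+def} actually arises from this geometry, nor how it is achieved for \emph{arbitrary} $B$. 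Second, the area integral $\int_D \bar\partial\tilde f(w)\,E(z/w)\,dA(w)/w$ converges near $w=0$ only if $\delta$ in the $\bar\partial$-estimate is taken $\lesssim 1/|z|$, so the implied Dynkin constant $C_\delta$ becomes $z$-dependent and uncontrolled; the resulting estimate is not of the required form (a single implicit constant valid for all $r$). The paper sidesteps both issues by first passing to the Fourier side: it uses the Dynkin representation once, in Lemma~\ref{lem: dec} and Corollary~\ref{cor: dec+}, to write $f=f_\omega+\int_1^\infty e^{ixt}p(t)\,dt$ with $f_\omega$ real-analytic and $p$ decaying faster than $e^{-\delta^{-1}\Lambda_L(t)}$ for every $\delta$. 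Then $S_Lf=S_Lf_\omega+\int_1^\infty E_1(itz)\,p(t)\,dt$ with $E_1=S_L(\exp)$, and the $H(\psi+2B/r)$ term comes from Lemma~\ref{lem: mainSpilitEstimate+}, $\big|E_1(itre^{i\psi})\big|\leq e^{C_B(\Lambda_L(t)+1)}H(\psi+B/r)$; the $B$-dependent blow-up $e^{C_B\Lambda_L(t)}$ is absorbed by the superpolynomial decay of $p(t)$, while the $f_\omega$ piece is controlled by Theorem~\emph{7}$^\prime$. This decouples the arbitrary parameter $B$ from the point $z$ --- precisely the decoupling your direct contour deformation lacks. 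You correctly identify that (R7), i.e.\ $a<1/2$, enters in the saddle-point control along $\Psi_\pm$; in the paper this is isolated in Lemmas~\ref{lem:KE strong A for spliting.}--\ref{lem: loglog H} and Lemma~\ref{lem: mainSpilitEstimate+}, which would need to be re-derived in one form or another for your approach as well.
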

We will use the assumption $L(\rho)\lesssim\exp(\log^a \rho)$ for some $a<1/2$ to guarantee that the estimate $\int_{\Psi_+
}|z^n K(z)dz|\leq C^{n+1}\gamma(n+1)$  holds for $n\in \mathbb{Z}_+$. This assumption is essential for our techniques.

  The definition of the classes $A^\pm(L;I)$ can be simplified if we restrict the growth of $L$. For instance, if $L(\rho)\lesssim \log^2 \rho$, then  the inequality \eqref{eq:A^+def}  can be replaced by 
  \[ |F(x+iy)|\lesssim h\left(\frac{\pi}{2}\frac{|x|}{2B+y}\right)+ E\left(\frac{x}{c_\pm } +\Delta|y|\right),\quad y>-B, \;\pm x>0,\]
  where  $h$ is the inverse function to $x\mapsto \frac{1}{\varepsilon(\log(x))}$.  Even more explicitly, if  $L(\rho)=\log^a\rho$ with $1<a<2$, then the inequality \eqref{eq:A^+def}  can be replaced by
\[  |F(x+iy)|\lesssim \exp\exp\left[{\frac{a\pi}{2}\frac{|x|}{2B+y}}\right]+\exp\exp\left[{\left(\frac{x}{c_\pm } +\Delta|y|\right)^{1/a}}\right] ,\quad y>-B, \;\pm x>0, \]
while for $0<a\leq 1$, inequality \eqref{eq:A^+def}  can be replaced by
\[  |F(x+iy)| \lesssim\exp\exp\left[{\left(\frac{x}{c_\pm } +\Delta|y|\right)^{1/a}}\right] ,\quad y>-B, \;\pm x>0. \]
\subsubsection{}
Combined with  the decomposition $C_0(L;I)= C_0^+(L;I)+C_0^-(L;I)$, Theorem \ref{thm:spliting} immediately yields
\begin{corollary}
Suppose that $L$ satisfies the assumptions of Theorem  \emph{\ref{thm:spliting}} and  $I$ is an open interval containing the origin. Then 
\[ S_L C_0(L;I)= A^+(L;I)+A^-(L;I). \] 
Moreover, if $f=f_+ +f_-\in C_0(L;I)$, with $f_\pm \in C_0^\pm(L;I)$, then 
\[ f=R_L^+ S_L f_+ +R_L^- S_L f_- . \]  
\end{corollary}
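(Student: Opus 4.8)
The plan is to deduce the corollary formally from Theorem~\ref{thm:spliting}, using two facts recorded just above it: the singular transform $S_L$ is linear on $C_0(L;I)$ (immediate from $(\widehat{S}_L a)(n)=a_n/\gamma(n+1)$), and $C_0(L;I)=C_0^+(L;I)+C_0^-(L;I)$ (established in Section~\ref{subsubsec: dec}). No further analysis is needed; all of the substance is contained in Theorem~\ref{thm:spliting}.

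First I would prove $S_L C_0(L;I)\subseteq A^+(L;I)+A^-(L;I)$: given $f\in C_0(L;I)$, write $f=f_++f_-$ with $f_\pm\in C_0^\pm(L;I)$ via the decomposition; then $S_L f=S_L f_++S_L f_-$ by linearity, and $S_L f_\pm\in A^\pm(L;I)$ by Theorem~\ref{thm:spliting}. For the reverse inclusion, take $F=F_++F_-$ with $F_\pm\in A^\pm(L;I)$; Theorem~\ref{thm:spliting} provides $f_\pm:=R_L^\pm F_\pm\in C_0^\pm(L;I)$ with $S_L f_\pm=F_\pm$, and since $C_0(L;I)$ is a linear space containing $C_0^\pm(L;I)$ we get $f:=f_++f_-\in C_0(L;I)$ with $S_L f=F_++F_-=F$. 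This establishes the first display.

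For the ``moreover'' part I would fix a decomposition $f=f_++f_-$ with $f_\pm\in C_0^\pm(L;I)$ (such a decomposition exists by hypothesis) and invoke the other half of Theorem~\ref{thm:spliting}, namely that $R_L^\pm$ is a two-sided inverse of $S_L$ on $C_0^\pm(L;I)$; hence $R_L^\pm S_L f_\pm=f_\pm$, and adding the two identities gives $R_L^+S_L f_++R_L^-S_L f_-=f_++f_-=f$.

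There is essentially no obstacle here, since the corollary is a bookkeeping consequence of results already in hand. The one point that deserves care is that $R_L^\pm S_L$ is \emph{not} the identity on all of $C_0(L;I)$, only on $C_0^\pm(L;I)$ respectively; consequently the reconstruction formula $f=R_L^+S_L f_++R_L^-S_L f_-$ genuinely depends on the chosen splitting $f=f_++f_-$, which is not unique — its ambiguity is precisely $C_0^+(L;I)\cap C_0^-(L;I)=C^\omega(I)$. The statement, however, only asserts that the identity holds for \emph{any} such splitting, so no difficulty arises.
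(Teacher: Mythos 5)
Your proposal is correct and coincides with the paper's own treatment: the paper simply states that Theorem~\ref{thm:spliting} combined with the decomposition $C_0(L;I)=C_0^+(L;I)+C_0^-(L;I)$ ``immediately yields'' the corollary, and what you have written is exactly the routine bookkeeping (linearity of $S_L$, surjectivity of $S_L\vert_{C_0^\pm}$ onto $A^\pm(L;I)$, and $R_L^\pm S_L=\mathrm{Id}$ on $C_0^\pm(L;I)$) that this phrase compresses. Your closing remark about the non-uniqueness of the splitting being absorbed by $C^\omega(I)$ is also correct and a useful clarification.
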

\noindent
Note  that the last equality holds even if the class $C_0(L;I)$ is non-quasianalytic. 
\subsection{Structure of this work.} 
 The rest of this paper is organized as follows. In the next section we give some applications and examples of our results.  In Section 5 we discuss a variety of regularity assumptions on the function $L$, which are used in the rest of this paper.  Section 6 is devoted  to the functions $K$ and $E$. There, we summarize the results of \cite{KiroSodin}, and state additional estimates under different  regularity assumptions from  Section 5. In Section 7, we introduce the function $E_1$ which is the singular transform of the exponential function, and give a variety of bounds on it, to be used in the proofs of Theorems 3, 4 and 6. Section 8 is devoted to estimates of the singular transform of polynomials which are used in the proofs of Theorems 2, 4, 5 and 7. Finally in Section 9,  we restate our main results in a more general form using the regularity assumptions of Section 5, and present proofs of these results. 
\section{Applications and examples}\label{sec: applictions}
Here we present some applications and  examples to our results. 
\subsection{Real-analytic functions.} As a byproduct of our techniques used in the proof of Theorems 1 and 2, we also provide a  description of the image of the class $C^\omega(I)$, consisting  of real-analytic functions,  under the singular transform.  
\begin{definition}
	Let  $L:[0,\infty)\to [1,\infty)$ be a continuous and eventually increasing function such that $\lim_{\rho\to\infty}L(\rho)=\infty$. Put $\gamma(\rho)=L(\rho)^\rho$  and $E(z)=\sum_{n \geq 0 } \frac {z^n}{\gamma(n+1)}$. Let $I$ be an interval  containing $0$. \textit{The class } 	$A^\omega(L;I)$ consists of all entire functions $F$  such that, for every $c_-\in I\cap (-\infty,0),\: c_+ \in  I\cap (0,\infty)$   there exists  $\Delta=\Delta_{c_\pm}>0$ so that 
	\begin{equation}
	|F(u+iv)|\lesssim_{c_+,c_-}  E \left(\frac{u}{c_\pm}+\Delta|v|\right),\quad \pm u\geq 0.\label{eq:real analytic image def}
	\end{equation}
\end{definition}
\begin{inThm}\label{thm: Real analytic}
	Suppose that $L$ is a Denjoy weight, and that   $I$ is an open interval containing \emph{0}. Then the singular transform maps the class of real-analytic functions $C^\omega(I)$ bijectively onto the set
	$A^\omega(L;I)$. 
	In particular, $R_L S_L f=f$ for any $f\in C^\omega(I)$.
\end{inThm}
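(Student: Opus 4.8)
The plan is to establish the two inclusions $S_LC^\omega(I)\subseteq A^\omega(L;I)$ and $A^\omega(L;I)\subseteq S_LC^\omega(I)$ together with the identities $R_LS_L=\mathrm{Id}$ on $C^\omega(I)$ and $S_LR_L=\mathrm{Id}$ on $A^\omega(L;I)$; the asserted bijection then follows formally, since every $F\in A^\omega(L;I)$ equals $S_L(R_LF)$ with $R_LF\in C^\omega(I)$, while injectivity of $S_L$ on $C^\omega(I)$ is immediate from $R_LS_L=\mathrm{Id}$.

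\emph{The forward inclusion $S_LC^\omega(I)\subseteq A^\omega(L;I)$.} Fix $f\in C^\omega(I)$. Since a real-analytic function satisfies $\max_J|f^{(n)}|\lesssim C_1^nn!$ on each closed $J\subset I$, we have $f\in C_0(L;I)$ and $S_Lf$ is entire. Fix $c_-<0<c_+$ in $I$; as $I$ is open, choose $c_-',c_+'\in I$ with $c_-'<c_-<0<c_+<c_+'$, so that $[c_-',c_+']$ is compact in $I$ and $f$ extends holomorphically to the $\eta$-neighbourhood $\Pi_\eta$ of this segment for some small $\eta>0$. Writing $\widehat f(n)$ as a Cauchy integral over $\Gamma:=\partial\Pi_\eta$ and summing the resulting series — legitimate because $\Gamma$ is compact and stays at distance $\eta$ from $0$ — gives
\[ (S_Lf)(z)=\frac{1}{2\pi i}\int_\Gamma \frac{f(w)}{w}\,E\!\left(\frac{z}{w}\right)dw. \]
I would estimate this using three properties of $E$ from Section 6: $|E(\zeta)|\le E(|\zeta|)$, $E$ is increasing on $\mathbb{R}_+$, and for each $\sigma>0$ it is bounded on $\{|\arg\zeta|\ge\sigma\}$. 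Split $\Gamma$ into the part with $|\arg(z/w)|\ge\sigma$, where $|E(z/w)|\le C_\sigma$, and a short arc near the point $w_0$ at which the ray $\mathbb{R}_+\cdot z$ leaves the convex stadium $\Pi_\eta$, where $|E(z/w)|\le E\!\big((1+O(\sigma))\,|z|/|w_0|\big)$. A computation with the stadium geometry shows that for $z=u+iv$ with $u\ge0$ one has $|z|/|w_0|\le u/c_+'+C\eta^{-1}|v|$ — the two regimes being ``$|\arg z|$ small'', where $|w_0|\ge c_+'-\eta$, and its complement, where $|w_0|=\eta|z|/|v|$ — and symmetrically with $c_-'$ when $u\le0$. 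Since $c_+'>c_+$ and $|c_-'|>|c_-|$, choosing $\eta$ small makes $u/c_+'+C\eta^{-1}|v|\le u/c_\pm+\Delta|v|$ with $\Delta\asymp\eta^{-1}$; as $E\ge E(0)>0$, the bounded term $C_\sigma$ and all multiplicative constants (the length of $\Gamma$, $\sup_\Gamma|f|$, $\eta^{-1}$) are absorbed into $\lesssim$. This yields $|S_Lf(u+iv)|\lesssim_{c_\pm}E(u/c_\pm+\Delta|v|)$, i.e.\ $S_Lf\in A^\omega(L;I)$.

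\emph{The regular transform and the two inversions.} For $F\in A^\omega(L;I)$ and $z$ in a small enough disc about a point $x_0\in I$, choosing $c_\pm\in I$ suitably gives $|F(zt)|\lesssim E(\kappa t)$ with $\kappa<1$; since $\int_0^\infty E(\kappa t)|K(t)|\,dt<\infty$ for $\kappa<1$ (from the asymptotics of $K$ and $E$ in Section 6), the integral $(R_LF)(z)=\int_0^\infty F(zt)K(t)\,dt$ converges locally uniformly and, by Morera's theorem, is holomorphic in a complex neighbourhood of $I$, so $R_LF\in C^\omega(I)$. For $f\in C^\omega(I)$, the function $R_LS_Lf$ is thus real-analytic on $I$, and near the origin, exchanging sum and integral by means of $\int_0^\infty t^nK(t)\,dt=\gamma(n+1)$ and the majorant $\int_0^\infty t^n|K(t)|\,dt\le C^{n+1}\gamma(n+1)$ of Section 8, one computes $(R_LS_Lf)(x)=\sum_{n\ge0}\widehat f(n)x^n=f(x)$; hence $R_LS_Lf=f$ on $I$. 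Dually, differentiating $(R_LF)(x)=\int_0^\infty F(xt)K(t)\,dt$ under the integral at $x=0$ (justified as above) gives $\widehat{R_LF}(n)=\widehat F(n)\gamma(n+1)$, whence $S_LR_LF=\sum_{n\ge0}\widehat F(n)z^n=F$.

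\emph{The main obstacle.} Everything apart from the forward inclusion is routine once the asymptotics of $E$ and $K$ (Sections 6 and 8) are at hand. The real difficulty is getting, in the first step, the bound with the \emph{exact} exponent $u/c_\pm$ rather than the $u/(c_\pm\mp C\eta)$ that a crude contour estimate produces; this is exactly why one first enlarges $[c_-,c_+]$ inside the open interval $I$ and then controls precisely, through the convex geometry of $\Pi_\eta$, the modulus of the exit point $w_0$ of the ray through $z$, separating the ``cap'' and ``edge'' regimes. I expect this uniform geometric estimate to be the only step requiring genuine work.
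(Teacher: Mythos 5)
Your argument is correct, but for the forward inclusion it takes a genuinely different route from the paper. The paper expands $f\circ\chi$ in Chebyshev polynomials with $|c_n|\lesssim e^{-\delta n}$ (Lemma 8.1), then applies Lemma 8.6 (Co2), which gives $|S_L(T_n\circ\chi^{-1})(u+iv)|\lesssim e^{\delta n/2}E(u/c_\pm+\Delta|v|)$, and sums the geometric series; Lemma 8.6 is itself proved by a contour estimate together with Bernstein's inequality for polynomials. You bypass the Chebyshev machinery entirely, representing $S_Lf$ directly as $\frac{1}{2\pi i}\int_\Gamma \frac{f(w)}{w}E(z/w)\,dw$ over the boundary of a stadium on which $f$ is holomorphic, and then estimating $|E(z/w)|$ by splitting the contour according to whether $\arg(z/w)$ is small. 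This is more elementary and self-contained, and it makes the role of the ``open interval'' hypothesis (enlarging $[c_-,c_+]$ to $[c_-',c_+']$ inside $I$, which is what absorbs the losses from the geometry) completely transparent. What the paper's route buys is modularity: Lemma 8.5 and 8.6 are shared with the proof of Theorem 2$'$, where $f$ is not analytic and no Cauchy representation of $S_Lf$ is available, so the Chebyshev expansion is essential there. For the backward inclusion you use Morera's theorem to get holomorphy of $R_LF$ in a complex neighbourhood of $I$, while the paper estimates $|f^{(n)}(x)|\lesssim n!/(\delta_1 x)^n$ directly and also invokes Theorem 1$'$; both are fine.

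Two small points worth tightening. First, the loss factor on your short arc should be $1+O(\sigma/\eta)$, not $1+O(\sigma)$: on the stadium boundary $|d\log|w|/d\arg w|$ is of order $\max(c_+',|c_-'|)/\eta$ near the caps, which is where the estimate is delicate. This is harmless because $\eta$ is fixed by the domain of holomorphy of $f$ and the gap $c_+'-c_+$ before $\sigma$ is chosen, so you can take $\sigma$ small relative to $\eta$; but the dependence should be stated in the right order. Second, the bound $\int_0^\infty t^n|K(t)|\,dt\le C^{n+1}\gamma(n+1)$ is not explicitly a lemma of Section 8 --- it follows from the moment identity \eqref{Kmoment} together with the fact (from Theorem \ref{TheoremK}) that $K$ is eventually positive on $\mathbb{R}_+$; worth saying so rather than pointing at Section 8.
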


The proof of Theorem \ref{thm: Real analytic} is given in Section 9.
 \subsubsection{Summation in the Mittag--Leffler star.}
Recall that a domain $\Omega\subseteq\mathbb{C}$ containing the origin is called \textit{star-shaped} (with respect to the origin), if for any $z\in \Omega$, $[0,z]\subset \Omega$. If $f(z)=\sum_{z\geq 0} \widehat{f}(n)z^n$ is analytic in a neighborhood of the origin, we denote by $\Omega_f$ the largest star-shaped domain to which $f$ can be analytically continued , and call $\Omega_f$ the \textit{Mittag--Leffler star} of $f$. Next, we introduce a class of entire functions. 
\begin{definition}
	Let  $L:[0,\infty)\to [1,\infty)$ be a continuous and eventually increasing function such that $\lim_{\rho\to\infty}L(\rho)=\infty$. Put $\gamma(\rho)=L(\rho)^\rho$  and $E(z)=\sum_{n \geq 0 } \frac {z^n}{\gamma(n+1)}$. Let $\Omega\subseteq\mathbb{C}$ be a star-shaped domain with respect to the origin. \textit{The class } $A^\omega(L;\Omega)$ consists of all entire functions $F$ such that for any $\delta>0$,
	\[ |F(w)|\lesssim_\delta E\left(H_\Omega(w)+\delta|w| \right), \]
	where $H_\Omega(w)=\inf\{\lambda>0:w\in \lambda \Omega\}$ is the Minkowski functional of $\Omega$.
\end{definition}
The next result follows  from Theorem~\ref{thm: Real analytic}.
\begin{inThm}\label{thm:  Mittag-Leffler star.}
	Suppose  $L$ is a Denjoy weight,   and   $\Omega\subseteq\mathbb{C}$ is a star--shape domain with respect to the origin. Then the singular transform  maps the class $\emph{Hol}(\Omega)$ bijectively onto the class
	$A^\omega(L;\Omega)$. Moreover, if $f\in\emph{Hol}(\Omega)$, then $R_L S_L f\equiv f$ in $\Omega$.
\end{inThm}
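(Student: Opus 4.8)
The plan is to deduce the statement from its real-analytic counterpart, Theorem~\ref{thm: Real analytic}, using the fact that $S_L$ and (the natural complexification of) $R_L$ commute with rotations about the origin. For $\theta\in\mathbb{R}$ put $f_\theta(z):=f(e^{i\theta}z)$; since $\widehat{f_\theta}(n)=e^{in\theta}\widehat{f}(n)$, one has $S_Lf_\theta(z)=(S_Lf)(e^{i\theta}z)$, and, writing $(R_LF)(w):=\int_0^\infty F(wz)K(z)\,dz$, one has $(R_LS_Lf_\theta)(t)=(R_LS_Lf)(e^{i\theta}t)$ whenever the integrals converge. Moreover, for $w_0=t_0e^{i\theta_0}\in\Omega$ with $t_0>0$, the set $I_{\theta_0}:=e^{-i\theta_0}\Omega\cap\mathbb{R}$ is an open interval containing both $0$ and $t_0$ (it is the trace on $\mathbb{R}$ of the open star-shaped domain $e^{-i\theta_0}\Omega$), so $f_{\theta_0}\in C^\omega(I_{\theta_0})$ and Theorem~\ref{thm: Real analytic} applies.

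The main step is the inclusion $S_L\,\mathrm{Hol}(\Omega)\subseteq A^\omega(L;\Omega)$. Fix $\delta>0$; for each direction $\phi$ set $h(\phi):=H_\Omega(e^{i\phi})$, choose $c_+=c_+(\phi)\in I_\phi\cap(0,\infty)$ with $1/c_+\le h(\phi)+\delta/2$ (possible since $H_\Omega(c_+e^{i\phi})=c_+h(\phi)<1$) and some $c_-\in I_\phi\cap(-\infty,0)$, and let $\Delta_\phi,C_\phi$ be the constants Theorem~\ref{thm: Real analytic} provides in the bound $|S_Lf_\phi(u+iv)|\le C_\phi E(u/c_++\Delta_\phi|v|)$, $u\ge0$. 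Putting $u+iv=e^{-i\phi}w$ with $w=re^{i\psi}$, using $\cos\le1$, $|\sin(\psi-\phi)|\le|\psi-\phi|$ and the monotonicity of $E$, one gets $|S_Lf(w)|\le C_\phi E\bigl(r(h(\phi)+\tfrac\delta2+\Delta_\phi|\psi-\phi|)\bigr)$, while $H_\Omega(w)+\delta|w|=r(h(\psi)+\delta)$. Since $\phi\mapsto H_\Omega(e^{i\phi})$ is lower semicontinuous (the sublevel set $\{h<c\}$ is the trace of the open set $c\Omega$), there is a circular neighbourhood $N_\phi\ni\phi$, inside $(\phi-\tfrac\pi2,\phi+\tfrac\pi2)$, on which $h(\psi)>h(\phi)-\tfrac\delta4$ and $\Delta_\phi|\psi-\phi|\le\tfrac\delta4$; there the first exponent does not exceed the second, so $|S_Lf(w)|\le C_\phi E(H_\Omega(w)+\delta|w|)$ for every $w$ with $\arg w\in N_\phi$ (the origin being trivial as $E(0)>0$). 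A finite subcover of the circle and the largest of the constants give the global bound, i.e.\ $S_Lf\in A^\omega(L;\Omega)$.

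The rest is a mechanical transfer. The bound just obtained, together with $H_\Omega(wz)=zH_\Omega(w)$ ($z>0$) and the fact that $H_\Omega$ stays bounded away from $1$ on compact subsets of $\Omega$, shows that $(R_LS_Lf)(w)=\int_0^\infty S_Lf(wz)K(z)\,dz$ converges locally uniformly on $\Omega$ — using $\int_0^\infty E(\lambda z)|K(z)|\,dz<\infty$ for $\lambda<1$, which underlies Theorem~\ref{thm: Real analytic} (cf.\ the regularity of the moment method in~\S\ref{subsubsec:moment summation.1}) — hence is holomorphic in $\Omega$; and for $w_0=t_0e^{i\theta_0}\in\Omega$, Theorem~\ref{thm: Real analytic} gives $R_LS_Lf_{\theta_0}=f_{\theta_0}$ on $I_{\theta_0}$, which upon evaluation at $t_0$ and use of the rotation identity yields $(R_LS_Lf)(w_0)=f(w_0)$; since the rays through $0$ exhaust $\Omega$, $R_LS_Lf\equiv f$ there. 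Injectivity of $S_L$ on $\mathrm{Hol}(\Omega)$ is immediate (and follows from this identity). For surjectivity, given $F\in A^\omega(L;\Omega)$ I would choose an interval $I=(-a,a)$ with $a<\mathrm{dist}(0,\partial\Omega)$; then $H_\Omega(u+iv)\le|u+iv|/a$ forces $F\in A^\omega(L;I)$, so Theorem~\ref{thm: Real analytic} yields $g:=R_LF\in C^\omega(I)$ with $S_Lg=F$; the same convergence estimate shows $R_LF(w)=\int_0^\infty F(wz)K(z)\,dz$ is holomorphic on all of $\Omega$ and extends $g$, and since its Taylor coefficients at $0$ are $\gamma(n+1)\widehat{F}(n)$ we get $S_L(R_LF)=F$.

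I expect the only genuine obstacle to be the uniformity over directions in the second paragraph: converting the family of interval estimates from Theorem~\ref{thm: Real analytic} into one global bound $|F(w)|\lesssim_\delta E(H_\Omega(w)+\delta|w|)$. This is precisely where lower semicontinuity of the Minkowski functional on the circle and compactness enter; all other points reduce to the interval case through the rotation identities and elementary facts about $H_\Omega$.
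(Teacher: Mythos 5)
Your overall strategy — reducing to Theorem~\ref{thm: Real analytic} via rotations and then a compactness argument over directions — is the route the paper intends, and the rotation identities, the convergence estimate for $R_L$, the identity $R_LS_Lf=f$, injectivity, and surjectivity are all handled correctly. The gap is in the compactness step. You assert that $\phi\mapsto h(\phi):=H_\Omega(e^{i\phi})$ is \emph{lower} semicontinuous because $\{h<c\}$ is the trace of the open set $c\Omega$; but openness of the sublevel sets $\{h<c\}$ is precisely \emph{upper} semicontinuity. Your argument needs $h(\psi)>h(\phi)-\delta/4$ for $\psi$ near $\phi$, which is lower semicontinuity at $\phi$, and for a general open star-shaped $\Omega$ the Minkowski functional is USC but need not be LSC. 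For instance, with $\Omega=\{|z|<1\}\cup\{re^{i\theta}:0<r<2,\ |\theta|<\pi/4\}$ one has $h\equiv\tfrac12$ on $(-\pi/4,\pi/4)$ and $h\equiv1$ elsewhere; at $\phi_0=\pi/4$ the values $h(\psi)=\tfrac12$ for $\psi<\phi_0$ stay strictly below $h(\phi_0)-\delta/4$ whenever $\delta<2$, so no neighbourhood $N_{\phi_0}$ works, and on such $\psi$ the exponent furnished by the $\phi_0$-bound is about $r(1+\tfrac\delta2)$ while the one demanded by $A^\omega(L;\Omega)$ is $r(\tfrac12+\delta)$ — the patching fails.

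This is not merely a bookkeeping issue: when $h$ is discontinuous the conclusion itself needs care. For the slit plane $\Omega=\mathbb{C}\setminus\{ta:t\ge1\}$ with $a=\tfrac32 e^{i\pi/4}$ and $f(z)=(z-a)^{-1}\in\mathrm{Hol}(\Omega)$, one has $S_Lf(w)=-a^{-1}E(w/a)$, which is $\asymp E(|w|/|a|)$ in the shrinking cusp $|\arg w-\pi/4|\lesssim\varepsilon(L^{-1}(|w|/|a|))$, whereas $H_\Omega(w)=0$ at every $w$ off the slit, so the majorant $E(\delta|w|)$ prescribed by the definition of $A^\omega(L;\Omega)$ is far smaller there once $\delta<1/|a|$. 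The remedy is either to add the hypothesis that the radial function $\phi\mapsto 1/h(\phi)$ of $\Omega$ is continuous — with which your compactness argument is correct as written — or to replace the direction-by-direction Chebyshev argument by a Cauchy-kernel representation $S_Lf(w)=\frac{1}{2\pi i}\int_\Gamma f(\zeta)\,E(w/\zeta)\,\frac{d\zeta}{\zeta}$ on a contour $\Gamma\subset\Omega$ adapted to the gauge, which controls the angular uniformity directly.
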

In particular, the above theorem shows that if $f(z)=\sum_{n\geq0 } \widehat{f}(n) z^n$ has a positive radius of convergence, then $R_L S_L f(z)$ is the analytic continuation of $f$ to its Mittag--Leffler star 
$\Omega_f$.  This  fact is in contrast to the classical  Borel and Mittag--Leffler  moment summations methods (see Section 2.1.2), which usually do not converge to $f$ in the whole Mittag--Leffler star 
$\Omega_f$. For example, the series $\sum_{n\geq 0} z^n$ is Mittag--Leffler $\Gamma(\alpha n+1)$-summable to the function $\frac{1}{1-z}$ only in the domain
\[ \left\{z: \arg(z-1)>\frac{\pi}{2}\alpha\right\}, \]
while, if $L$  is any function satisfying  the conditions of Theorem \ref{thm:  Mittag-Leffler star.}, then
\[ R_L S_L\left(\sum_{n\geq 0} x^n\right)(z)=\frac{1}{1-z},\quad \text{for} \quad  0<\arg(z-1)<2\pi.\]
\subsection{Examples.} \label{subsec: applictions.examples}
\subsubsection{} \label{subsec: applictions.examples.rotation}
Suppose that $L_1$ and $L_2$ are two Denjoy weights satisfying $L_1(\rho)=o\left(L_2(\rho)\right)$, as $\rho\to\infty.$
Put $\gamma_j(\rho)=L_j(\rho)^\rho$, $j=1,2$. Then the sequence 
\[ \widehat{f_\theta}(n)=e^{i n \theta}\frac{\gamma_2(n+1)}{\gamma_1 (n+1)},\quad n\geq 0,\]  
belongs to the punctual image of the class $C_0(L_2;\mathbb{R})$ for  $0<\theta<\pi$, and to the punctual image of the class $C_0(L_2;[0,\infty))$ for $\theta=\pi$. 
\begin{proof}
Put
	\[ E(z)=\sum_{n\geq 0} \frac{z^n}{\gamma_1(n+1)},\quad E_\theta(z)=E(ze^{i\theta}). \]
	Since $L_1$ is a Denjoy weight, the function $E$ is bounded on any sector that does not meet the positive ray $\{z: \delta<\arg (z)<2\pi-\delta\}$ (see Theorem \ref{TheoremE} and Lemma \ref{lem: E complex lemma} in Section 6 below). Thus, for any $0<\theta<\pi$ (respectively  $\theta=\pi$) the function $z\mapsto E_\theta(z)$ is bounded on any horizontal strip $\{x+iy:|y|<C\}$ (respectively, on  any half-strip $\{x+iy:x>0,|y|<C\}$). In particular, $E_\theta\in A(L_2;\mathbb{R})$ for any $0<\theta<\pi$ and $E_\pi\in A(L_2;[0,\infty))$. By  Theorem \ref{thm: Main thm,reg}, for any $0<\theta<\pi$,
\[ f_\theta := R_{L_2} E_\theta \in C_0(L_2;\mathbb{R}),\quad f_\pi := R_{L_2} E_\pi\in C_0(L_2;[0,\infty)). \]
By the definition of the regular transform, $f_\theta$ are the desired functions.
\end{proof}
\subsubsection{}Suppose that $L$ is a Denjoy weight. Then for any entire functions $F$ with a real period, the sequence
\[ \widehat{f}(n)=\frac{F^{(n)}(0)}{n!}\gamma(n+1),\quad n\geq 0, \]
belongs to the punctual image of the class $C_0(L;\mathbb{R})$.

\begin{proof}
	Let $F$ entire function with a real period. Clearly $F$ is bounded on any horizontal strip and therefore it belongs to $A(L;\mathbb{R})$. By  Theorem \ref{thm: Main thm,reg}, $f:=R_L F\in C_0(L;\mathbb{R})$.
\end{proof}
\subsection{Non-extendable Beurling classes.}
Any real-analytic function $f$ in the interval $[0,1)$ can be analytically extended to an interval $(-\delta_f,1)$, where $\delta_f>0$. Moreover, it is known (see for instance  \cite{Sodin}) that for any quasianalytic  function $L$, there exists $f\in C_0(L;[0,1))$ such that $f$ cannot be extended to a function in $C_0(L_1;(-\delta,1))$ with any $\delta>0$ and any other quasianalytic weight function $L_1$.

Using Theorems \ref{thm: Main thm,reg} and  \ref{thm: Main thm,sing},  we can show that  for any Denjoy weight  $L$, quasianalytic or not,  \textit{there exists a function  $f\in C(L;[0,1))$ such that $f$ cannot be extended to a function in $ C(L;(-\delta,1))$ with any $\delta>0$, or, which is the same,
	\[\bigcup_{\delta>0}C_0(L;(-\delta,1))\nsupseteq C_0(L;[0,1)). \]
}This  shows that the extendability question     for  Beurling or Carleman classes is not trivial also in non-quasianalytic classes.
\begin{proof}
	Fix a Denjoy weight  $L$, and put $\varepsilon(\rho)=\tfrac{\rho L'(\rho)}{L(\rho)}$. Let $L_1$ be  another Denjoy weight  satisfying  $L_1(\rho)=o\left(L(\rho)\right)$ and $L_1(\rho)=o\left(\tfrac{1}{\varepsilon(\rho)}\right)$ as $\rho\to\infty$. Put
	\[ E_1(z)=\sum_{n\geq 0}\frac{z^n}{L_1(n+1)^{n+1}},\quad E(z)=\sum_{n\geq 0}\frac{z^n}{L(n+1)^{n+1}} ,\quad \widetilde{E}(z)=\sum_{n\geq 0} \varepsilon(n+1)^{n+1}z^n. \]
For any $\delta >0$ we then  have, 
	\[E(x)+\widetilde{E}(x)\lesssim_\delta E_1(\delta x), \quad x>0.  \]
	 In particular,  $E_1(-z)\notin A(L;I)\cup A(\tfrac{1}{\varepsilon};\mathbb{R})$ for any open interval $I$ containing 0. On the other hand, as we have already mentioned in Section \ref{subsec: applictions.examples.rotation}, it follows from 
	 Theorem \ref{TheoremE} below that the function $E_1(-z)$ is bounded in the  right half-plane. In particular,  $E_1(-z)\in A(L;[0,\infty))$. Put $f=R_L(E(-z))$. By Theorem \ref{thm: Main thm,reg}, $f\in C_0(L;[0,\infty))$, and by  Theorem \ref{thm: Main thm,sing},   $f\notin C_0(L;I)$ for any open interval containing $0$.
\end{proof}
\begin{remark}
	The above assertion is true not only for Denjoy weights. In fact, it is true for any Beurling class $C_0(L,[0,1))$, with 
	\begin{equation}
\log L(\rho)=o(\log \rho),\quad \rho\to\infty. \label{eq: svl}
	\end{equation}
	This  follows from Theorems \emph{\ref{thm': Main thm,reg}}   and \emph{\ref{thm': Main thm,sing}} of Section \emph{9}, since for any  function $L$ satisfying \eqref{eq: svl}, we can find a function $L_1$ satisfying the assumptions of  Theorems \emph{\ref{thm': Main thm,reg}}   and \emph{\ref{thm': Main thm,sing}} and such that $C_0(L;I)\subseteq C_0(L_1;I)$. The proof of this fact is the same as of the preceding assertion.
\end{remark}
\subsection{Functions of class $C_0(L;I)$ with positive or sparse Taylor series.}\label{subsec: postive taylor} It is well known that if the class $C_0(L;I)$ is quasianalytic and a function  $f\in C_0(L;I)$ has  positive (see for instance \cite{Sodin} or \cite{tacklind1936classes}) or lacunary Taylor series (see \cite{carleman} and \cite{Carleson2}), then  $f$ must be analytic in some neighborhood of the origin, i.e., there exists $C>0$ such that $|\widehat{f}(n)|\lesssim C^{n} $. 
We are going to show that  similar (at least in spirit) phenomena occur for  arbitrary Denjoy   weights.
\subsubsection{} Let $I=(-a,a)$ and let  $L$ be a Denjoy weight. Put 
\[ L_{*}(\rho)=\frac{L(\rho)}{\rho L'(\rho)+1}\quad \text{and} \quad \gamma_{*}(\rho)=L_{*}(\rho)^{\rho}
 \]
 (i.e., $L_{*}$ is the harmonic mean of the functions $L$ and $\tfrac{1}{\varepsilon}$).
 Suppose $f\in C_0(L;I)$ satisfies one of the following two conditions:
\begin{enumerate}
	\item $\widehat{f}(n)\geq 0,\quad n\in \mathbb{Z}_+.$
	\item $\displaystyle f\sim \sum_{n\geq 0} {\widehat{f}}(\lambda_n)x^{\lambda_n},\quad \sum_{n\geq 0} \lambda_n^{-1}<\infty.$
\end{enumerate}
Then,
\[ |\widehat{f}(n)|\lesssim a^{-n}\frac{\gamma(n+1)}{\gamma_{*}(n+1)},\quad n\in \mathbb{Z}_+. \]

\begin{proof}
Fix $f\in C_0(L;I)$ and put $F=S_L f$. Since $L_*(\rho)\leq L(\rho)$, we have for any $x>0$
\[ E(x)\leq E_{*}(x), \quad  \text{where} \quad E_{*}(x):=\sum_{n\geq  0}\frac{x^n}{\gamma_{*}(n+1)}. \]
By  Theorem \ref{thm: Main thm,sing}, for any $\delta>0$,
\[ |F(x)|\lesssim_\delta E\left(\frac{1+\delta}{a}x\right)\leq E_{*}\left(\frac{1+\delta}{a}x\right)\quad \]
Assume now that $f$ satisfies one of the  conditions 1 or 2. Therefore, 
 so does $F$, which in turns yields
 \[ |F(z)|\lesssim_\delta E_{*}\left(\frac{1+\delta}{a}|z|\right),\quad z\in\mathbb{C}.\]
 In the first case this follows from the fact that $|F(z)|\leq F(|z|)$ for any entire function with positive Taylor coefficients, while in the second case the estimate follows from a theorem of Anderson and  Binmore~\cite{Anderson}. 
 
 Applying Theorem~\ref{thm:  Mittag-Leffler star.} to the Denjoy weight $L_{*}$, we find that
 $$g:=R_{L_{*}}F\in \text{Hol}(\{|z|< a\}),$$
 which in turn yields $|\widehat{g}(n)|\lesssim a^{-n}$.
 The assertion follows from the relations $$\displaystyle \widehat{g}(n)=\gamma_{*}(n+1)\widehat{F}(n),\quad \widehat{f}(n)=\gamma(n+1)\widehat{F}(n),\quad n\geq 0.$$

\end{proof}

\subsection{}\label{subsec: Ehrenpries}
So far most of our results and  applications may be interpreted  as telling us how ``small'' the punctual image of $C_0(L;I)$ can be as a subset of $\mathcal{F}_0(L)$ (defined in \eqref{Fdef}). The next result goes in some sense in the opposite direction.
\subsubsection{ } Suppose that $L$ is a Denjoy weight. Then for any sequence $(a(n))_{n\geq 1}\in \mathcal{F}_0(L)$ there exist functions $f_1,f_2\in C_0(L;\mathbb{R})$ such that 
\[ a(n)=\widehat{f_1}(n)+i^n\widehat{f_2}(n) \quad n\geq 0.\] 
\begin{proof}
	Put $A(z):=\sum_{n\geq 0} \frac{a(n)}{\gamma(n+1)}z^n$. By definition, this series has infinite radius of convergence, i.e., $A$ is entire. By a theorem of Ehrenpreis  \cite[p.  131]{Ehrenpreis9}, there exist entire functions $F_1$ and $F_2$, bounded on any horizontal strip, such that $A(z)=F_1(z)+F_2(iz)$. In particular $F_1, F_2\in A(L;\mathbb{R})$. Put $f_1=R_L F_1$ and $f_2=R_L F_2$. By Theorem \ref{thm: Main thm,reg}, $f_1,f_2\in C_0(L;\mathbb{R})$. The equality $A(z)=F_1(z)+F_2(iz)$ implies that
	\[ \frac{a(n)}{\gamma(n+1)}=\widehat{F_1}(n)+i^n\widehat{F_2}(n),\quad n\geq 0, \]
		which in turn yields the assertion.
\end{proof}
\subsection{Non-quasianalytic classes with the same image under the singular transform.}
Fix  a non-quasianalytic Denjoy weight $L$ and  denote by $\widetilde{L}$ its quasianalytic dual, i.e.
\[ \widetilde{L}(\rho):=\int_{\rho}^{\infty}\frac{du}{uL(u)},\quad \rho\geq 1. \]
As we mentioned, $\widetilde{L}$ is always quasianalytic. Differentiating the definition of $\widetilde{L}$ yields
\[ \frac{\rho\widetilde{L}'(\rho)+1}{\widetilde{L}(\rho)}=\frac{\rho L'(\rho)}{L(\rho)}, \]
and thus,
\[ \widetilde{L}(\rho)\exp\left(\int_1^\rho\frac{du}{u\widetilde{L}(u)}\right)=C L(\rho),  \]
where $C^{-1}=\int_{1}^{\infty}\frac{du}{uL(u)}$. We note that starting with an arbitrary quasianalytic function $\widetilde{L}$ and defining $L$ by the above equality will always yield a non-quasianalytic $L$.  
 
For $a>0$, consider the function 
\[ L_a= \widetilde{L}\left(\frac{L}{\widetilde{L}}\right)^a. \]
   It follows from the above considerations that  $L_a$ is again a non-quasianalytic and satisfies
   \[ \widetilde{L_a}=a^{-1}\widetilde{L}. \]
   Notice that the Beurling class $C_0(\widetilde{L};\mathbb{R})$ and the Carleman class $C(\widetilde{L};0) $ (defined in Section 3.3) remain unchanged when we replace $\widetilde{L}$ with $a^{-1}\widetilde{L}$. Theorems \ref{thm:NqaBeu}  and \ref{thm:NqaCar}  then admit.
   \begin{corollary}
   	Suppose  $L$  is a non--quasianalytic function  that satisfies the assumptions of Theorem \emph{\ref{thm:NqaBeu}}. Then for any $a>0$,
   	\[S_{L_a} C_0(L_a;I)= S_{L} C_0(L;I),\quad S_{L_a} C(L_a;I)= S_{L} C(L;I).\]
   \end{corollary}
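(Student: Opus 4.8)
The claim to establish is that for a non-quasianalytic Denjoy weight $L$ and the family $L_a = \widetilde L (L/\widetilde L)^a$ with $a>0$, we have $S_{L_a} C_0(L_a;I) = S_L C_0(L;I)$ and the analogous identity for Carleman classes. The key point, already assembled in the preceding discussion, is the pair of identities $L_a$ is non-quasianalytic with $\widetilde{L_a} = a^{-1}\widetilde L$. The plan is to feed this into Theorems~\ref{thm:NqaBeu} and~\ref{thm:NqaCar}, which reduce the image of the singular transform for a non-quasianalytic Denjoy weight to that of its quasianalytic dual, and then observe that the quasianalytic dual only enters up to a multiplicative constant, under which the relevant Beurling and Carleman classes are invariant.

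**Step 1: Verify $L_a$ is a Denjoy weight.** Before invoking Theorems~\ref{thm:NqaBeu} and~\ref{thm:NqaCar}, I must check that $L_a$ falls under their hypotheses, i.e.\ that $L_a$ is itself a non-quasianalytic Denjoy weight (of the form \eqref{Denjoy}, with the bookkeeping index still finitely supported and $\alpha_0 < 1$). Writing $L$ in the Denjoy normal form and using that $\widetilde L(\rho) = L(\rho)\int_\rho^\infty \frac{du}{uL(u)}$, one computes $\widetilde L$ and hence the ratio $L/\widetilde L$ explicitly (the table of examples after the definition of $\widetilde L$ illustrates this: $L/\widetilde L$ is again an iterated-logarithm product, so $L_a = \widetilde L(L/\widetilde L)^a$ is too, with the same leading behavior shifting $\alpha_0 \mapsto \alpha_0$ and only the coefficients rescaled). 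Non-quasianalyticity of $L_a$ and the identity $\widetilde{L_a} = a^{-1}\widetilde L$ are exactly the facts derived in the paragraph preceding the corollary (via differentiating the definition of $\widetilde{\phantom L}$ and the formula $\widetilde L \exp(\int_1^\rho \frac{du}{u\widetilde L}) = C L$).

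**Step 2: Apply the duality theorems and close.** By Theorem~\ref{thm:NqaBeu} applied to the non-quasianalytic Denjoy weight $L_a$, we get $S_{L_a} C_0(L_a;I) = S_{\widetilde{L_a}} C_0(\widetilde{L_a};\mathbb R) = S_{a^{-1}\widetilde L} C_0(a^{-1}\widetilde L;\mathbb R)$. Now the Beurling class $C_0(M;\mathbb R)$ is unchanged under $M \mapsto cM$ for any constant $c>0$: indeed $(c\rho L(\rho))^n$ and $(\rho L(\rho))^n$ differ by the factor $c^n$, which is absorbed into the arbitrary $\delta$ in Definition~\ref{def:Beurling class} — so $C_0(a^{-1}\widetilde L;\mathbb R) = C_0(\widetilde L;\mathbb R)$, and likewise the singular transform $S_{cM}$ differs from $S_M$ only by rescaling $\gamma(n+1) = M(n+1)^{n+1}$ by $c^{n+1}$, which is again an automorphism of the space of entire functions of the relevant class (this can be checked directly, or deduced by noting that both sides of the asserted equality are computed against the same quasianalytic germ). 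Hence $S_{L_a}C_0(L_a;I) = S_{\widetilde L}C_0(\widetilde L;\mathbb R) = S_L C_0(L;I)$, the last equality being Theorem~\ref{thm:NqaBeu} for $L$ itself. The Carleman statement is identical, replacing Theorem~\ref{thm:NqaBeu} by Theorem~\ref{thm:NqaCar} and $C_0$ by $C(\,\cdot\,;0)$, and using that $C(cM;0) = C(M;0)$ for the same reason.

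**Main obstacle.** The only real content beyond bookkeeping is the invariance of $S_M C_0(M;\cdot)$ and $S_M C(M;\cdot)$ under $M \mapsto cM$ — more precisely, making sure that rescaling the weight $\gamma(n) = M(n)^n$ by a geometric factor $c^n$ does not change the image of the singular transform (as opposed to merely the class of smooth functions, which is manifestly invariant). I expect this is handled cleanly by observing that $S_{cM} f$ and $S_M f$ are related by $z \mapsto$ a linear change of the argument times an exponential-type factor, which preserves membership in $A(\widetilde L;\mathbb R)$ (resp.\ $A(\widetilde L;0)$); alternatively one sidesteps it entirely by writing both sides purely in terms of the common quasianalytic germ $C_0(\widetilde L;\mathbb R)$, whose $S_{\widetilde L}$-image is a fixed set independent of how we normalize $\widetilde L$. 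Verifying that $L_a$ stays within the Denjoy class (Step~1) is routine but must be stated, since Theorems~\ref{thm:NqaBeu} and~\ref{thm:NqaCar} are only asserted for Denjoy weights.
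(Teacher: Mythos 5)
Your proposal is correct and follows the same route the paper takes: apply Theorems~\ref{thm:NqaBeu} and~\ref{thm:NqaCar} to both $L$ and $L_a$, use $\widetilde{L_a}=a^{-1}\widetilde L$, and absorb the constant $a^{-1}$ in the dual weight. The paper presents the corollary as an immediate consequence of the paragraph preceding it and is even terser than what you wrote, so your explicit verification that $L_a$ inherits the regularity hypotheses is a welcome addition, not a departure.

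One imprecision worth correcting in your ``main obstacle'' paragraph: $S_{cM}f$ and $S_M f$ are \emph{not} related by an exponential-type factor. Since $(cM(n+1))^{n+1}=c^{n+1}M(n+1)^{n+1}$, one has exactly
\[
\left(S_{cM}f\right)(z)=\frac{1}{c}\left(S_M f\right)\!\left(\frac{z}{c}\right),
\]
a pure dilation of the argument with a constant prefactor. The invariance of the image under this then follows because $C_0(\widetilde L;\mathbb R)$ (respectively $C(\widetilde L;0)$) is closed under the substitution $f(x)\mapsto f(ax)$ (the interval is all of $\mathbb R$, resp.\ a union of shrinking neighbourhoods of $0$), the singular transform intertwines dilations, i.e.\ $S_{\widetilde L}\bigl(f(a\cdot)\bigr)(z)=(S_{\widetilde L}f)(az)$, and the image is a linear space so that the prefactor $a$ is harmless. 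This is exactly the content of your ``alternative'' route, which as stated is slightly question-begging (``whose $S_{\widetilde L}$-image is a fixed set independent of how we normalize $\widetilde L$'' is the thing to be proved); the explicit dilation identity above closes the gap.
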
 
For example, consider $L(\rho)=\log^2(\rho+e)$. In this case,  $\widetilde{L}(\rho)\asymp \log \rho$ and $L_a(\rho)\asymp \log^{1+a}\rho$. Thus 
\[ S_{\log^{1+a}}C_0(\log^{1+a};I)=S_{\log^2}C_0(\log^2;I)\quad \text{for any}\quad a>0. \]
Note  that, by Corollary 2,  the  classes $S_{\log^{1+a}}C_0(\log^{1+a};I)$, $0<a$,  are in fact $A(\log;\mathbb{R})$.
\subsection{A Phragmén--Lindel\"of type theorem.}
So far  our applications used function theory in order to obtain results about Beurling classes. The next one goes in the opposite direction and provides a non-trivial result about the growth of entire functions in the case where $L$ is quasianalytic with super-logarithmic growth. 
\begin{corollary}\label{cor: cor5}
Suppose  $L$ is a function   satisfying the assumptions of Theorem \emph{\ref{thm:spliting}}, and  $I$ is an open interval containing the origin. Then, $L$ is quasianalytic if and only if 
\[ A^+(L;I)\cap A^-(L;I)=A^\omega(L;I).\]	
\end{corollary}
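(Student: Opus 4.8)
The plan is to use the decomposition $C_0(L;I) = C_0^+(L;I) + C_0^-(L;I)$ together with the two bijections supplied by Theorems \ref{thm:spliting} and \ref{thm: Real analytic}. Recall that $C_0^+(L;I)\cap C_0^-(L;I) = C^\omega(I)$, that $S_L$ is a bijection from $C_0^\pm(L;I)$ onto $A^\pm(L;I)$, and that $S_L$ is a bijection from $C^\omega(I)$ onto $A^\omega(L;I)$. Since $S_L$ is injective on all of $C_0(L;I)$ (it depends only on the Taylor jet, and one may restrict attention to the jet), it intertwines intersections: $S_L\bigl(C_0^+(L;I)\cap C_0^-(L;I)\bigr) = S_L C_0^+(L;I)\cap S_L C_0^-(L;I) = A^+(L;I)\cap A^-(L;I)$. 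Hence the identity $A^+(L;I)\cap A^-(L;I) = A^\omega(L;I)$ is equivalent to $S_L C^\omega(I) = A^\omega(L;I)$ holding \emph{simultaneously with} $C_0^+(L;I)\cap C_0^-(L;I) = C^\omega(I)$; but the latter is always true, so the corollary reduces to comparing the function $f = f_+ + f_-$ with its ``analytic part.'' The real content is therefore: \emph{$L$ is quasianalytic $\iff$ whenever $F_+\in A^+(L;I)$ and $F_-\in A^-(L;I)$ satisfy $F_+ \in A^-(L;I)$ (equivalently their common preimages agree as germs), the corresponding function is real-analytic.}

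First I would prove the ``only if'' direction. Assume $L$ is quasianalytic and take $F\in A^+(L;I)\cap A^-(L;I)$. By Theorem \ref{thm:spliting}, $f := R_L^+ F = S_L^{-1}F$ lies in $C_0^+(L;I)$, and likewise $R_L^- F \in C_0^-(L;I)$; both have the same Taylor jet $\widehat{R_L^\pm F} = \widehat{R}_L \mathcal B F$, and since $L$ is quasianalytic the Borel map is injective on $C_0(L;I)$, so $R_L^+ F = R_L^- F$ as an element of $C_0(L;I)$. This function extends holomorphically to a domain in the upper half-plane touching $I$ and to one in the lower half-plane touching $I$; hence it is real-analytic on $I$, i.e. $f\in C^\omega(I)$, and then $F = S_L f \in A^\omega(L;I)$ by Theorem \ref{thm: Real analytic}. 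The reverse inclusion $A^\omega(L;I)\subseteq A^+(L;I)\cap A^-(L;I)$ is immediate from the definitions: the bound \eqref{eq:real analytic image def} with the term $E(u/c_\pm + \Delta|v|)$ dominates the right-hand side of \eqref{eq:A^+def} in both the upper and lower half-plane regions (in the super-logarithmic case, the $H(\psi\pm 2B/r)$ term is only needed away from $\Psi_\pm$, where the $E$-term already controls it; this is exactly the remark after the definition of $A^\pm$). Alternatively, since $C^\omega(I)\subset C_0^\pm(L;I)$ and $S_L$ maps these into $A^\pm(L;I)$ respectively, we get $A^\omega(L;I) = S_L C^\omega(I) \subseteq A^+(L;I)\cap A^-(L;I)$ for free.

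For the ``if'' direction, argue contrapositively: suppose $L$ is \emph{not} quasianalytic. Then the Borel map fails to be injective on $C_0(L;I)$, so there is a nonzero $f\in C_0(L;I)$ with $\widehat f \equiv 0$. Decompose $f = f_+ + f_-$ with $f_\pm\in C_0^\pm(L;I)$; then $f_+$ and $-f_-$ have the same (vanishing) Taylor jet. Set $F := S_L f_+ = S_L(-f_-)$, which is a single entire function lying in $A^+(L;I)$ and, because $-f_-\in C_0^-(L;I)$, also in $A^-(L;I)$; thus $F\in A^+(L;I)\cap A^-(L;I)$. If this intersection equaled $A^\omega(L;I)$, then $F = S_L g$ for some $g\in C^\omega(I)$, forcing $f_+ = g$ (injectivity of $S_L$ on the jet together with $R_L^+ S_L = \mathrm{Id}$ on $C_0^+$, or the uniqueness built into Theorem \ref{thm: Real analytic}), so $f_+\in C^\omega(I)$ and similarly $f_-\in C^\omega(I)$; but then $f = f_+ + f_-$ is real-analytic with vanishing Taylor jet at $0$, hence $f\equiv 0$, a contradiction. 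So $A^+(L;I)\cap A^-(L;I)\supsetneq A^\omega(L;I)$ when $L$ is non-quasianalytic. The main obstacle, and the point that needs a little care, is ensuring that a function decomposed as $f_+ + f_-$ with matching jets really produces a \emph{single-valued} entire $F$ in \emph{both} $A^+$ and $A^-$ — i.e. that $S_L f_+$ and $S_L(-f_-)$ coincide as entire functions, not merely have the same Taylor coefficients; but $S_L$ is defined entrywise on the Taylor jet via $\widehat S_L$, so equal jets give literally the same power series, and convergence to an entire function is guaranteed because $f_\pm\in C_0(L;I)$. One should also double-check that $C_0^+(L;I)\cap C_0^-(L;I) = C^\omega(I)$ and the decomposition $C_0(L;I) = C_0^+(L;I)+C_0^-(L;I)$ are available in the generality of Theorem \ref{thm:spliting}'s hypotheses — these are asserted in Section \ref{subsubsec: dec}, so I would simply cite them.
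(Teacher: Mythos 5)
Your opening paragraph contains a false claim that would, if true, trivialize the corollary: $S_L$ is \emph{not} injective on all of $C_0(L;I)$. Injectivity of the Borel map (equivalently of $S_L$, since $\widehat{S}_L$ is a bijection on sequences and $S_L$ factors through $\mathcal{B}$) on $C_0(L;I)$ is precisely the definition of quasianalyticity, which the corollary treats as a hypothesis that may fail. The parenthetical ``it depends only on the Taylor jet'' shows that $S_L$ \emph{factors through} $\mathcal{B}$, not that it is injective. If ``$S_L$ intertwines intersections'' held unconditionally, you would get $A^+(L;I)\cap A^-(L;I)=S_L\bigl(C_0^+(L;I)\cap C_0^-(L;I)\bigr)=S_LC^\omega(I)=A^\omega(L;I)$ for every $L$, contradicting the very dichotomy the corollary asserts. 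You seem to notice this and restart with ``the real content is therefore...,'' after which the wrong step is never actually used.

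The remaining two paragraphs are correct and are, in substance, the same argument the paper gives, simply written out in both directions. The paper condenses it as follows: $L$ is quasianalytic iff $S_L$ is injective on $C_0(L;I)$; since $S_L$ is linear and injective on each of $C_0^\pm(L;I)$ (Theorem \ref{thm:spliting}) and on $C^\omega(I)$ (uniqueness of analytic continuation), and since $C_0(L;I)=C_0^+(L;I)+C_0^-(L;I)$ with $C_0^+(L;I)\cap C_0^-(L;I)=C^\omega(I)$, injectivity of $S_L$ on $C_0(L;I)$ is equivalent to $S_LC_0^+(L;I)\cap S_LC_0^-(L;I)=S_LC^\omega(I)$; then Theorems \ref{thm:spliting} and \ref{thm: Real analytic} translate this to $A^+(L;I)\cap A^-(L;I)=A^\omega(L;I)$. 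Your ``only if'' direction (via $R_L^\pm F$ having the same jet, quasianalyticity, and $C_0^+\cap C_0^-=C^\omega$) and your ``if'' direction (producing $F=S_Lf_+=S_L(-f_-)$ from a nonzero flat $f$ and deriving a contradiction from injectivity of $S_L$ on $C_0^+(L;I)$) are exactly the two halves of this equivalence. Remove or correct the first paragraph and the proof stands.
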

For instance, consider the case, $L(\rho)=\log(\rho+e)\log\log(\rho+e^e)$, and  $I=(-1,1)$. Suppose that  $F\in A^+(L;I)\cap A^-(L;I)$, i.e., for any $\delta>0$ and $B>0$, there exists  $\Delta>0$, such that 
\[ |F(x+iy)|\lesssim_{\delta,B} \exp\exp\left( {\frac{\pi}{2}\cdot\frac{|x|}{|y|+B}}\right)+\exp\exp\left({\frac{(1-\delta)|x|+\Delta|y|}{\log(|x|+|y|)}}\right),\quad |x|+|y|\geq 2.\]
 Corollary \ref{cor: cor5} implies that in fact  $F\in A^\omega(L;I)$, i.e.,
for any $\delta>0$ there exists  $\Delta>0$, such that 
\[ |F(x+iy)|\lesssim_\delta \exp\exp\left({\frac{(1-\delta)|x|+\Delta|y|}{\log(|x|+|y|)}}\right),\quad |x|+|y|\geq 2.\]
This conclusion is no longer true if we replace $L$ with $\log(\rho+e)\log^\beta\log(\rho+e^e)$ for some $\beta>1$ (i.e., if we replace the $\log$ term in the right-hand side of the above majorants with $\log^\beta$).

\begin{proof}[Proof of Corollary \ref{cor: cor5}]
The Beurling class $C_0(L;I)$ is quasianalytic if and only if $S_L: C_0(L;I)\to \text{Hol}(\mathbb{C})$ is injective. We have already mentioned that 
\[ C_0(L;I)=C_0^+(L;I)+C_0^-(L;I),\quad  C_0^+(L;I)\cap C_0^-(L;I) = C^\omega(I).\]
 By the uniqueness theorem for analytic functions, the restriction of $S_L$ to $ C^\omega(I)$ is clearly injective. Since $S_L$ is also linear, we conclude that $S_L: C_0(L;I)\to \text{Hol}(\mathbb{C})$ is injective if and only if
\[ S_L C_0^+(L;I) \cap S_L C_0^-(L;I)=S_L C^\omega(I). \]
By Theorems and \ref{thm:spliting} and \ref{thm: Real analytic}, the latter holds if and only if 
\[ A^+(L;I)\cap A^-(L;I)=A^\omega(I).\]	
\end{proof}
\section{Regularity assumptions}
Here we present a list of regularity assumptions on the weight $L$ which  will be used below.  These  assumptions are divided into two sets: the first set gathers  assumptions on the behavior  of $L$ for large positive numbers, while the second concerns the behavior of $L$ in the complex plane. We begin with the first set.

Let $L:[0,\infty)\to[1,\infty)$ be a $C^3$, unbounded and eventually increasing function with $L(0)=1$. Put $\ell(t)=\log L(e^t)$ and consider the following regularity assumptions:
\begin{itemize}
	\item[(R1)]$ \ell'(t)=o(1),\quad t\to+\infty$.
	\item[(R2)]  The function $\ell$ is eventually concave.
	\item[(R3)]  The function $\ell'$ is bounded from above and $ \ell''(t)=o(\ell'(t)),\quad t\to+\infty$.
	\item[(R4)]  $ \ell''(t)\ell(t)=o(\ell'(t)),\quad t\to+\infty$.
\item[(R5)] The function $|\ell''|$ is eventually decreasing $ \ell'''(t)\ell(t)=o(|\ell''(t)|),\quad t\to+\infty$.
\item[(R6)]  $ \ell''(t)\log\tfrac{1}{\ell'(t)}=o\left(\ell'(t)\right),\quad t\to+\infty$.
\item[(R7)]  $ \ell'(t)\ell(t)=o(1),\quad t\to+\infty$.
\end{itemize}
Note that assumptions  (R1)--(R6) are met for any Denjoy weight $L$, while assumption (R7)  is fulfilled if and only if $L(\rho)\lesssim\exp(\log^a\rho)$ for some $a<\tfrac{1}{2}$. 

Assumptions (R1)--(R3) are standard. Assumption  (R1) is equivalent to fact that the function $L$ is slowly varying (i.e., $L$ satisfies  \eqref{slowlygrowing}), assumption (R2) to the fact that the function $\varepsilon(\rho)=\frac{\rho L'(\rho)}{L(\rho)}$ is eventually non-increasing, and assumption (R3)  to the fact that the function $\varepsilon$ is slowly varying. On the other hand, assumptions (R4)--(R7) are less standard and are related to the notion of \textit{super-slow variation} (see \cite[\S3.12.2]{Bingham}). We will use them  in the context of Lemma \ref{lem: subReplacmentLem}. 

We will use  assumptions (R5) and (R7)  in the proof of Theorem 6. These assumptions  are the only ones that restrict the growth of $L$ (apart from the natural requirement that $L$ is slowly growing). Namely, if $L$ is an eventually increasing and unbounded  function such that $L(\rho)\lesssim_\delta \rho^\delta$ for any $\delta>0$, then there exists a function $L_{\tt{n}}$ satisfying assumptions (R1)--(R4)  and (R6) and such that $L\lesssim L_{\tt{n}}$.

For the second set of regularity assumptions, we assume that $L$ is analytic and non-vanishing in an  angle
$\{s: |\arg(s)|<\alpha_0\}$ with $\frac{\pi}{2}<\alpha_0\leq\pi$. Put $\varepsilon(s)=\frac{sL'(s)}{L(s)}$, and consider the following assumptions:
\begin{itemize}
	\item[(R8)] $\varepsilon(s)=(1+o(1))\varepsilon(|s|)$ uniformly in $\{s: |\arg(s)|<\alpha_0\}$ as $s\to\infty$.
	\item[(R9)] $s\varepsilon'(s)=(1+o(1))|s|\varepsilon'(|s|)$ uniformly in $\{s: |\arg(s)|<\alpha_0\}$ as $s\to\infty$.  
\end{itemize}	
Note that  assumptions  (R8)--(R9) are met for any Denjoy weight. These  assumptions can always be satisfied by regularization  of the original weight function $L$ as described in \cite{KiroSodin}. For instance, if $L$ satisfies assumptions (R1) and (R3), then the function 
$$ L_{\tt{a}}(s)=\exp\left(s\int_0^\infty\frac{\log L(u)}{(s+u)^2}du\right) $$
satisfies assumptions (R1), (R3) and (R8), with $ L_{\tt{a}}\sim L$ (in particular the corresponding Beurling and Carleman classes coincide). Moreover, if $L$ satisfies any of the assumptions (R1)--(R7),  so does $L_{\tt{a}}$. 
\subsection{Some lemmas about regular functions.}  
Given a function $L:\mathbb{R}_+\to\mathbb{R}_+$, we denote by $\Lambda_L$ the logarithm of the corresponding Ostrowski function, i.e. 
\[ 	{\Lambda}_L(r):=\log \sup_{x>0}\frac{r^x}{\gamma(x)}.\]

Here,  we summarize some properties of the functions $L$ and  $\Lambda_L$  that depend on our regularity assumptions.
\begin{lemma}\label{lem: widetildeE}
	Suppose that the function $L$ satisfy assumptions \emph{(R1)} and \emph{(R3)}. Then the following hold:
	\begin{enumerate}
		\item[\emph{(1)}] $\Lambda_L(e rL(r))\sim  r,\quad r\to\infty;$
		\item[\emph{(2)}] $\Lambda_L(r t)\leq \Lambda_L(r) t,\quad r>r_0, t>1;$
	\item[\emph{(3)}] $\Lambda_L(\lambda r)\sim \Lambda_L(r) ,\quad r\to\infty, \lambda>0; $
		\item[\emph{(4)}] for any $0\leq n\leq k $, $ n \left(\log L(k)-\log L (n)\right)\lesssim k+n.$
	\end{enumerate}
\end{lemma}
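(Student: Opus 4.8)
The plan is to work throughout with the substitution $t=\log r$ and the function $\ell(t)=\log L(e^t)$, which turns the weight $\gamma(x)=L(x)^x$ into $\log\gamma(e^\tau)=e^\tau \ell(\tau)$. Then $\Lambda_L(r)=\sup_{x>0}\bigl(x\log r - x\log L(x)\bigr)$ becomes, after writing $x=e^\tau$,
\[
\Lambda_L(e^t)=\sup_{\tau}\,e^\tau\bigl(t-\ell(\tau)\bigr).
\]
The first step is to locate the maximizing $\tau=\tau(t)$: differentiating, the critical point satisfies $t-\ell(\tau)=\ell'(\tau)$, i.e. $t=\ell(\tau)+\ell'(\tau)$. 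Assumption (R1) ($\ell'(t)=o(1)$) gives $t=\ell(\tau)(1+o(1))$ and, more precisely, $t-\ell(\tau)=\ell'(\tau)\to$ something small relative to the scale on which $\ell$ varies; assumption (R3) ($\ell''=o(\ell')$, $\ell'$ bounded above) guarantees this critical point is unique for large $t$ and that $\tau(t)$ is a nice increasing function. Once $\tau(t)$ is pinned down, $\Lambda_L(e^t)=e^{\tau(t)}\ell'(\tau(t))$.

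For part (1), I would plug $r=erL(r)$—that is, evaluate $\Lambda_L$ at $e^{t'}$ with $t'=1+\log r+\ell(\log r)$—and show the maximizing $\tau$ is $\log r + o(\cdots)$, so that $\Lambda_L(erL(r))=e^{\tau}\ell'(\tau)\sim r$; the point is that the extra summand $1+\ell(\log r)$ in $t'$ exactly compensates the discrepancy $\ell(\tau)+\ell'(\tau)$ at the critical point, using (R1) to absorb the $\ell'$ term and $\ell(\tau)\sim\ell(\log r)$ from slow variation. Part (3) is the statement that $\Lambda_L$ is slowly varying: replacing $r$ by $\lambda r$ changes $t$ by an additive constant $\log\lambda$, which perturbs $\tau(t)$ by a quantity that is $o(1/\ell'(\tau))$ because $\tau'(t)=1/(\ell'+\ell'')\to\infty$ but is controlled by (R3), and hence changes $e^{\tau}\ell'(\tau)$ only by a factor $1+o(1)$; alternatively (3) follows formally from (1) once one knows $L(\lambda r)/L(r)\to 1$, which is (R1). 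Part (2), the submultiplicativity $\Lambda_L(rt)\le t\,\Lambda_L(r)$ for $t>1$, is soft and does not need the critical-point analysis: from the definition, for each fixed $x$,
\[
\frac{(rt)^x}{\gamma(x)}=t^x\cdot\frac{r^x}{\gamma(x)}\le t^{x}\cdot e^{\Lambda_L(r)},
\]
wait—this is not immediately $\le e^{t\Lambda_L(r)}$; instead I would use the Legendre-transform characterization: $\Lambda_L$ is a convex function of $\log r$ with $\Lambda_L\ge 0$, $\Lambda_L(1)=0$ (or bounded near $1$), and a convex function vanishing at the origin satisfies $\phi(cs)\le c\,\phi(s)$ for $c\ge 1$; applying this with $\phi=\Lambda_L\circ\exp$, $s=\log r$, $c=$ suitable, one gets (2) possibly after adjusting $r_0$. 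I expect this is where one must be a little careful about the additive constant coming from $\Lambda_L$ not vanishing exactly at $r=1$.

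Finally, part (4) asks for $n(\log L(k)-\log L(n))\lesssim k+n$ when $0\le n\le k$. I would split according to whether $\log L(k)-\log L(n)$ is controlled by the derivative of $\log L$ on $[n,k]$: writing $\log L(k)-\log L(n)=\int_n^k \tfrac{L'}{L}=\int_n^k \tfrac{\varepsilon(u)}{u}\,du$ with $\varepsilon(u)=uL'(u)/L(u)$, and using that (R1) forces $\varepsilon(u)\to0$, hence $\varepsilon(u)\le C$; then $\log L(k)-\log L(n)\le C\log(k/n)$, so $n(\log L(k)-\log L(n))\le Cn\log(k/n)\le C'k$ since $s\log(1/s)$ is bounded for $s=n/k\in(0,1]$. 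For small $n$ one has instead to note $\log L(k)-\log L(n)\le \log L(k)\lesssim \log k$ by slow variation, and $n\log k\lesssim k$ when $n$ is small; a uniform bound falls out by combining the two regimes.

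The main obstacle is part (1): one must show not just the leading-order asymptotic $\Lambda_L(erL(r))\asymp r$ but the sharp constant $\sim r$, which requires a genuinely careful two-term analysis of the Legendre transform at the critical point, controlling the error $t-\ell(\tau)-\ell'(\tau)=0$ against the curvature; assumptions (R1) and (R3) are exactly tuned so that the correction terms are $o(r)$, and the bookkeeping of these error terms is the part that cannot be shortcut. The other three parts are, by comparison, formal consequences of convexity (part 2), slow variation of $L$ (parts 3 and 4), once part (1) and the critical-point picture are in hand.
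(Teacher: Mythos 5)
Your Legendre reduction drops a term: the lemma (and the paper's proof of it) works with $\Lambda_L(r)=\sup_{x>0}\bigl[x\log r - x\log\bigl(xL(x)\bigr)\bigr]$, so the correct dual is $\sup_\tau e^\tau\bigl(t-\tau-\ell(\tau)\bigr)$, whose stationarity condition reads $t=\tau+\ell(\tau)+1+\ell'(\tau)$, i.e.\ $r=e\,x\,L(x)\,e^{\varepsilon(x)}$. (Section 5.1 does write $\log\sup_x r^x/\gamma(x)$ with $\gamma=L^x$, but everywhere the lemma is actually invoked---Lemma 8.2, the Carleson--Ehrenpreis statement, the appendix proof---it is the $x\log(xL(x))$ version that is meant, and only that version makes (1) true.) With your formula $\Lambda_L(e^t)=\sup_\tau e^\tau(t-\ell(\tau))$ the critical point equation is $t=\ell(\tau)+\ell'(\tau)$; plugging $t'=1+\log r+\ell(\log r)$ would force $\ell(\tau)\approx 1+\log r+\ell(\log r)$, i.e.\ $\tau$ vastly larger than $\log r$, contradicting your subsequent claim that $\tau\approx\log r$. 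That claim is exactly what holds for the corrected transform (where the saddle satisfies $rL(r)\approx xL(x)$, hence $x\approx r$ by slow variation, and the saddle value is $x(1+\varepsilon(x))\sim r$ directly by (R1)), so your instinct is right, but it cannot be reconciled with the equation you wrote. This is essentially the paper's one-line computation once the right critical-point equation is in hand.

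For part (2) the convexity argument as stated goes the wrong way: a convex $\phi$ with $\phi(0)\le 0$ and $c\ge 1$ satisfies $\phi(cs)\ge c\phi(s)$, not $\le$; moreover the bound is additive in $\log r$, not multiplicative, so what is needed is $\log\Lambda_L(rt)-\log\Lambda_L(r)\le\log t$, i.e.\ $(\log\Lambda_L)'(r)\le 1/r$, which is a genuine estimate and not a formal consequence of convexity. The paper obtains it by the substitution $t=v(u)$ (with $v$ the inverse of $r\mapsto r\Lambda_L'(r)$), writing $\Lambda_L(r)-\Lambda_L(r_0)=\int(1+\varepsilon(u)+u\varepsilon'(u))\,du$ and using (R3) to make $\varepsilon+u\varepsilon'>0$ eventually, hence $\Lambda_L(r)\ge r\Lambda_L'(r)+O(1)$. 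By contrast, your argument for (4)---$\log L(k)-\log L(n)=\int_n^k\varepsilon(u)\,du/u\le C\log(k/n)$ combined with boundedness of $s\mapsto s\log(1/s)$ on $(0,1]$---is correct, uniform in $0<n\le k$ without any case split, and is a clean alternative to the paper's concavity/tangent-line argument at $n=k/2$. Finally, beware that the paper's own proof of (3) actually yields $\Lambda_L(\lambda r)\sim\lambda\Lambda_L(r)$ (regular variation of index one), which is what (1) plus slow variation of $L$ gives and what is used later in the paper; the displayed statement ``$\Lambda_L(\lambda r)\sim\Lambda_L(r)$'' is a typo, and a critical-point perturbation argument will not produce it (nor does yours: $\tau$ shifts by roughly $\log\lambda$, so $e^\tau$ picks up a factor $\lambda$).
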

\begin{lemma}\label{lem: subReplacmentLem}
	Suppose that $L:[0,\infty)\to[1,\infty)$ is  $C^2$, unbounded and eventually increasing.
	\begin{enumerate}
\item[\emph{(1)}] If   \emph{(R1)}, \emph{(R2)} and \emph{(R4)} are satisfied, then\\ $\varepsilon(\Lambda_L(\rho))\sim \varepsilon(\rho)\sim \varepsilon(\rho L(\rho))$, as $\rho\to\infty$.
\item[\emph{(2)}] If   \emph{(R7)} is  satisfied, then $L(\Lambda_L(\rho))\sim L(\rho)\sim L(\rho L(\rho))$, as $\rho\to\infty$.
\item[\emph{(3)}] If   \emph{(R5)} is  satisfied, then $L(\rho^2|\varepsilon'(\rho)|)\sim L(\rho)$, as $\rho\to\infty$.
\item[\emph{(4)}] If   \emph{(R2)} and \emph{(R6)} are  satisfied, then $\varepsilon(\rho\varepsilon(\rho))\sim \varepsilon(\rho)\sim\varepsilon(\rho/\varepsilon(\rho))$, as $\rho\to\infty$.
	\end{enumerate}	
\end{lemma}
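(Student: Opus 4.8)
The plan is to prove each of the four asymptotic equivalences in Lemma \ref{lem: subReplacmentLem} by the same mechanism: show that the argument inside the relevant function ($L$ or $\varepsilon$) differs from $\rho$ by a factor that is subpolynomial in $\rho$ (indeed, by a factor like $L(\rho)^{O(1)}$ or $\varepsilon(\rho)^{\pm 1}$), and then invoke the fact that $L$ and $\varepsilon$ are slowly varying (this is exactly assumptions (R1) and (R3), which give \eqref{slowlygrowing} for $L$ and for $\varepsilon$). The one extra input is to control the size of the argument, and for that the key tool is part (1) of Lemma \ref{lem: widetildeE}, namely $\Lambda_L(erL(r))\sim r$, which pins down $\Lambda_L(\rho)$ up to the slowly varying factor $L$: inverting it gives $\Lambda_L(\rho)\asymp \rho/L(\rho)$ up to slowly varying corrections, so $\Lambda_L(\rho)$, $\rho$, and $\rho L(\rho)$ all lie within a bounded number of applications of ``multiply by $L$''.

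First I would establish the generic ``slow variation along a subpolynomial perturbation'' principle: if $g$ is slowly varying (in the sense of \eqref{slowlygrowing}) and $h(\rho)$ satisfies $\rho^{-\delta}\lesssim_\delta h(\rho)/\rho\lesssim_\delta \rho^\delta$ for every $\delta>0$, then $g(h(\rho))\sim g(\rho)$. For $L$ this is assumption (R1) restated (both $L$ and, by (R3), $\varepsilon$ are slowly varying), and $\log L(\rho)=o(\log\rho)$, $\log\varepsilon(\rho)=o(\log\rho)$ hold automatically for slowly varying functions bounded by a power. With this in hand, part (2) is immediate: under (R7) we have $\Lambda_L(\rho)\asymp\rho/L(\rho)$ and $\rho L(\rho)$, both subpolynomial perturbations of $\rho$, so $L(\Lambda_L(\rho))\sim L(\rho)\sim L(\rho L(\rho))$; strictly one uses (R7) only to know $L$ itself is slowly varying, but (R7) is the hypothesis the author chose to list, and it implies (R1) eventually. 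Part (3) is the same: (R5) guarantees $|\varepsilon'|$ is eventually decreasing with controlled log-derivative, which forces $\rho^2|\varepsilon'(\rho)|$ to be a subpolynomial perturbation of $\rho$ (write $\rho^2|\varepsilon'(\rho)| = \rho\cdot \rho|\varepsilon'(\rho)|$ and bound $\rho|\varepsilon'(\rho)|$ between $\rho^{-\delta}$ and $\rho^{\delta}$ using $\varepsilon$ slowly varying and $\ell'''\ell=o(|\ell''|)$), then apply the principle to $g=L$.

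Parts (1) and (4) are the substantive ones. For part (4) I would work with the quantities $\rho\varepsilon(\rho)$ and $\rho/\varepsilon(\rho)$: since $\varepsilon$ is slowly varying (assumption (R3), equivalently here (R2) plus (R6) give eventual monotonicity and the needed decay), $\varepsilon(\rho)$ lies between $\rho^{-\delta}$ and $\rho^{\delta}$, so $\rho\varepsilon(\rho)$ and $\rho/\varepsilon(\rho)$ are again subpolynomial perturbations of $\rho$, and the slow-variation principle applied to $g=\varepsilon$ yields $\varepsilon(\rho\varepsilon(\rho))\sim\varepsilon(\rho)\sim\varepsilon(\rho/\varepsilon(\rho))$. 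For part (1) the analogue of the displayed identity in Lemma \ref{lem: widetildeE}(1) gives $\Lambda_L(\rho)$ up to the factor $L$, and then $\varepsilon(\Lambda_L(\rho))\sim\varepsilon(\rho)$ follows from the principle applied to $g=\varepsilon$ (needing $\varepsilon$ slowly varying, i.e. (R1),(R3), the latter subsumed in the hypotheses (R1),(R2),(R4) listed — note (R4) with (R2) gives a slightly stronger control than (R3) and is what one needs to handle $\Lambda_L$ rather than just $\rho$); similarly $\varepsilon(\rho)\sim\varepsilon(\rho L(\rho))$ since $\rho L(\rho)$ is a subpolynomial perturbation.

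The main obstacle I anticipate is part (1), and specifically verifying that $\Lambda_L(\rho)$ is ``sufficiently close'' to $\rho/L(\rho)$ to feed into the slow-variation principle: Lemma \ref{lem: widetildeE}(1) as stated gives $\Lambda_L(erL(r))\sim r$, and to invert this into a two-sided bound on $\Lambda_L(\rho)$ one must check that $r\mapsto erL(r)$ is eventually increasing with subpolynomial inverse correction, which needs the slowly-varying bounds on $L$ together with (R2). The reason (R4) appears in the hypothesis of part (1) (rather than merely (R3)) is precisely that one is composing $\varepsilon$ with $\Lambda_L$, a function that itself carries a nontrivial correction factor of size $L(\rho)^{-1+o(1)}$, and (R4), i.e. $\ell''\ell=o(\ell')$, is exactly the statement that $\varepsilon$ stays asymptotically constant across a multiplicative window of width $L(\rho)^{O(1)}$ — a slightly stronger form of slow variation tailored to this composition. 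Once that is pinned down the rest is bookkeeping with the $\lesssim_\delta$ notation.
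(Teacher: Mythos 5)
There is a genuine gap, and it sits at the very center of your argument: the ``slow variation along a subpolynomial perturbation'' principle you propose is false. Karamata slow variation gives $g(\lambda\rho)\sim g(\rho)$ only for fixed $\lambda$ (uniformly on compact $\lambda$-sets); it does not survive $\lambda=\lambda(\rho)\to\infty$, even when $\lambda(\rho)\le\rho^{\delta}$ for every $\delta$. Concretely, take $L(\rho)=\exp(\log^{a}\rho)$ with $\tfrac12<a<1$: this $L$ is slowly varying and $L(\rho)\lesssim_\delta\rho^{\delta}$ for all $\delta$, yet
\[
\log\frac{L(\rho L(\rho))}{L(\rho)}=\bigl(\log\rho+\log^{a}\rho\bigr)^{a}-\log^{a}\rho= a\,\log^{2a-1}\rho\,(1+o(1))\longrightarrow\infty ,
\]
so $L(\rho L(\rho))\not\sim L(\rho)$. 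Your principle, applied with $g=L$ and $h(\rho)=\rho L(\rho)$, would ``prove'' part (2) for this $L$, which violates it; note that this $L$ satisfies (R1)--(R6) but not (R7), since $\ell'(t)\ell(t)=at^{2a-1}\to\infty$. So your reading of the hypotheses is exactly backwards: (R7) is not there merely ``to know $L$ is slowly varying'', and (R2)+(R6) in part (4) are not a stand-in for (R3); each of (R4)--(R7) is the substantive, non-removable condition that makes the corresponding equivalence true. The same objection applies to your treatment of parts (1), (3) and (4): composing a slowly varying function with an argument shifted by a factor $L(\rho)^{\pm1}$ or $\varepsilon(\rho)^{\pm1}$ requires \emph{super}-slow variation, not slow variation.

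The paper's proof runs through precisely this mechanism: it cites the Bingham-type lemma (Lemma A.\ref{lem: Bingham}, from \cite[\S3.12.2]{Bingham}) that $L_1(\rho L_2(\rho))\sim L_1(\rho)$ holds if and only if $\varepsilon_1(\rho)\log L_2(\rho)=o(1)$, and then observes that (R7) is this condition for $L_1=L_2=L$ (part 2), (R4) is it for $L_1=1/\varepsilon$, $L_2=L$ (part 1), (R6) is it for $L_1=L_2=1/\varepsilon$ (part 4), and (R5) supplies it for the pair used in part 3; the statements involving $\Lambda_L$ then follow from Lemma \ref{lem: widetildeE}, part 1, as you correctly anticipated. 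Your closing remark that (R4) says ``$\varepsilon$ stays asymptotically constant across a multiplicative window of width $L(\rho)^{O(1)}$'' is the right heuristic, but in your write-up it is asserted rather than proved, and everything else is routed through the false generic principle; to repair the proposal you would have to replace that principle by the ``if and only if'' criterion above (or prove the relevant instances directly by integrating $\varepsilon_1(u)\,du/u$ over the window $[\rho,\rho L_2(\rho)]$ and using the eventual monotonicity from (R2)/(R5)), at which point you recover the paper's argument.
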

\begin{lemma}\label{lem: LcomplexLemma}
If  $L$  satisfies assumption   \emph{(R8)}, then 
\[ \log L(s)=\log L(\rho)+i\theta \varepsilon(\rho)(1+o(1))\quad s=\rho e^{i\theta},\;|\theta|\leq  \alpha_0-\delta,\; \rho\to\infty. \]  
If  $L$  satisfies assumption \emph{(R9)}, then 
\[ \varepsilon(s)=\varepsilon(\rho)+i\theta \rho\varepsilon'(\rho)(1+o(1))\quad s=\rho e^{i\theta},\;|\theta|\leq  \alpha_0-\delta,\; \rho\to\infty. \] 
\end{lemma}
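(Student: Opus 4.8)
\textbf{Proof plan for Lemma \ref{lem: LcomplexLemma}.}

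The plan is to derive both asymptotic expansions by integrating the logarithmic derivative along rays from the positive real axis, using the uniform control provided by assumptions (R8) and (R9). For the first statement, recall that $\varepsilon(s) = sL'(s)/L(s) = s\,(\log L)'(s)$, so that $(\log L)'(s) = \varepsilon(s)/s$. Fixing $\rho$ large and $|\theta|\le \alpha_0-\delta$, I would integrate along the circular arc $s = \rho e^{it}$, $0\le t\le \theta$ (or $\theta\le t\le 0$), on which $ds = i\rho e^{it}\,dt = is\,dt$. This gives
\[ \log L(\rho e^{i\theta}) - \log L(\rho) = \int_0^\theta \frac{\varepsilon(\rho e^{it})}{\rho e^{it}}\cdot i\rho e^{it}\,dt = i\int_0^\theta \varepsilon(\rho e^{it})\,dt. \]
By (R8), $\varepsilon(\rho e^{it}) = (1+o(1))\varepsilon(\rho)$ uniformly for $|t|\le \alpha_0-\delta$ as $\rho\to\infty$, where the $o(1)$ does not depend on $t$; also $\varepsilon(\rho)\sim\varepsilon(|\rho e^{it}|)$ trivially since $|\rho e^{it}|=\rho$. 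Hence the integral equals $\theta\varepsilon(\rho)(1+o(1))$, which is the claimed identity. The same computation with $\varepsilon$ replaced by $s\varepsilon'(s)$ and using $(\varepsilon)'(s) = \varepsilon'(s)$ gives the second statement: along $s=\rho e^{it}$,
\[ \varepsilon(\rho e^{i\theta}) - \varepsilon(\rho) = \int_0^\theta \varepsilon'(\rho e^{it})\cdot i\rho e^{it}\,dt = i\int_0^\theta \rho e^{it}\,\varepsilon'(\rho e^{it})\,dt, \]
and (R9) says the integrand is $(1+o(1))\rho\varepsilon'(\rho)$ uniformly in $t$, so the integral is $\theta\rho\varepsilon'(\rho)(1+o(1))$.

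The only genuine subtlety is justifying that the uniform-in-argument estimates (R8), (R9) can be inserted under the integral sign to produce a uniform $o(1)$ error in the final expansion. This is where I would be careful: since (R8) is the statement that $\varepsilon(s)/\varepsilon(|s|)\to 1$ uniformly on the sector, for any $\eta>0$ there is $\rho_0=\rho_0(\eta,\delta)$ such that $|\varepsilon(\rho e^{it}) - \varepsilon(\rho)|\le \eta\,\varepsilon(\rho)$ for all $\rho\ge\rho_0$ and all $|t|\le\alpha_0-\delta$ simultaneously; integrating this bound over $[0,\theta]$ (of length $\le\alpha_0-\delta\le\pi$) gives
\[ \left|\,\log L(\rho e^{i\theta}) - \log L(\rho) - i\theta\varepsilon(\rho)\,\right| \le \int_0^{|\theta|} \eta\,\varepsilon(\rho)\,dt \le \pi\eta\,\varepsilon(\rho), \]
which is $o(\varepsilon(\rho))$ as claimed, with the implicit constant absorbed. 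The argument for (R9) is identical with $\varepsilon(\rho)$ replaced by $\rho\varepsilon'(\rho)$. One should also note that $L$ being analytic and non-vanishing on the angle $\{|\arg s|<\alpha_0\}$ legitimizes both the choice of the branch of $\log L$ (continuous on the simply connected sector, agreeing with the real logarithm on the positive axis) and the use of the path integral along the arc, by the fundamental theorem of calculus for the holomorphic primitive.

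I expect no serious obstacle here; the lemma is essentially a bookkeeping consequence of the regularity hypotheses, and the proof is a one-line integration plus the uniformity remark above. If one wanted to integrate along a radial segment instead of an arc, the same conclusion follows, but the arc is cleaner because $|s|$ is constant along it, so the reference quantities $\varepsilon(|s|)$ and $\rho\varepsilon'(|s|)$ appearing in (R8) and (R9) do not vary. I would present the arc version.
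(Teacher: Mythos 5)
Your proof is correct and follows essentially the same route as the paper's: both write $\log L(\rho e^{i\theta})-\log L(\rho)$ as the arc integral $i\int_0^\theta \varepsilon(\rho e^{it})\,dt$ (the paper phrases it as decomposing $\int_0^s \varepsilon(w)\,dw/w$ into a radial piece equal to $\log L(\rho)$ plus the arc piece), apply the uniform estimate of (R8) inside the integral, and repeat verbatim with $\varepsilon'$ and (R9) for the second statement. Your explicit spelling-out of the uniformity step is a small presentational improvement; the paper leaves that implicit.
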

The proofs of these lemmas are given in Appendix A. 
\section{The functions $K$ and $E$}
This section is devoted to the asymptotics of the functions $K$ and $E$ under different types of regularity assumptions on the function  $L$.  Let $L$ be a function that satisfies assumptions (R3) and (R8). We associate with  $L$ the functions
\[\gamma(s)=L(s)^s,\quad E(z)=\sum_{n \geq 0 }\frac{z^n}{\gamma(n+1)},\quad	K(t)=\frac{1}{2\pi i}\int_{c-i\infty}^{c+i\infty} t^{-s}\gamma(s)ds ,\quad c>0.\]
 Note that under assumption (R8) the asymptotic behavior in the angle $|\arg(s)|\leq \alpha_0$ is determined by the behavior on the positive ray.  Namely,  by Lemma \ref{lem: LcomplexLemma}, we have
\begin{equation}
\log L(s)=\log L(\rho)+i\theta \varepsilon(\rho)(1+o(1)),\quad s=\rho e^{i\theta},\;|\theta|\leq  \alpha_0-\delta,\; \rho\to\infty.\label{eq:1stLasym}
\end{equation}
In particular, $K$ is well defined under these assumptions. 
We begin this section with a summary of the results in \cite{KiroSodin}.
\subsection{The saddle point equation.} The asymptotics  of the functions $K$ and $E$ for large $z$ are determined by the  saddle--point of the function $s\mapsto \log \gamma(s)-s\log z=s\log L(s)-s\log z$, that is, by the equation
\begin{equation}
\log L(s)+s\frac{L'(s)}{L(s)}=\log z.\label{saddlePoint}
\end{equation}
We remark that under the  assumptions  (R3) and (R8), the saddle--point equation can be written more explicitly (see Lemma \ref{lem: LcomplexLemma}), namely
\begin{equation}
\log L(s)+s\frac{L'(s)}{L(s)} =\log L(\rho)+\varepsilon(\rho)+i\left(\theta+o(1)\right)\varepsilon(\rho),\quad s=\rho e^{i\theta}. \label{eq: L saddele point assym}
\end{equation}

For $0<\alpha<\alpha_0$ and $\rho_0>0$, put
\[ S(\alpha, \rho_0)=\{s: \; |\arg(s)|<\alpha,\; |s|>\rho_0\}. \]
Then,	it is not difficult to show that under the assumptions (R3) and (R8), the LHS of the saddle-point equation \eqref{saddlePoint} is a univalent function in $S(\alpha, \rho_0)$ (see \cite[\S1.3]{KiroSodin}).  From here on, we assume that this is the case, and put
\[ \Omega(\alpha)=\left\{z\colon \log z= \log L(s)+s\frac{L'(s)}{L(s)},\, s\in S(\alpha, \rho_0)\right\}. \]
In general, this  is a domain in the Riemann surface of $\log z$, but by   choosing $\rho_0$ sufficiently large, we can treat it as a subdomain of the slit plane $\mathbb{C}\setminus\mathbb{R}_-$, provided that
\[ \limsup_{\rho\to\infty} \varepsilon(\rho)<\frac{\pi}{\alpha}, \]
in particular, whenever $\varepsilon(\rho)=o(1)$, as $\rho\to\infty$ (which is equivalent to the fact that $L$ is slowly varying or to assumption (R1)).

In what follows,
  we  denote  by $s_z=\rho_z e^{{\rm i}\theta_z}$ the unique solution of the saddle-point equation \eqref{saddlePoint}.
\subsection{Asymptotic behavior of the functions  $K$ and $E$}
The next two theorems are proven in  \cite{KiroSodin}.
\begin{ThmKE}\label{TheoremK}
	Suppose that the function $L$ satisfies assumptions \emph{(R3)} and \emph{(R8)}. Then, for any $\delta>0$, the function $K$ is analytic in $\Omega(\alpha_0-\delta)$ and 
	\[K(z)=\left(1+o(1)\right)\sqrt{\frac{s}{2\pi \varepsilon(s)}}\exp\left(-s\varepsilon(s)\right), \quad z\to\infty,\]
	uniformly in $\Omega(\alpha_0-\delta)$. Here $s=s_z$ and the   branch of the  square root is  positive on the positive half-line.
\end{ThmKE}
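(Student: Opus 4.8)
The plan is to evaluate the Mellin-Barnes integral $K(z)=\frac{1}{2\pi i}\int_{c-i\infty}^{c+i\infty} z^{-s}\gamma(s)\,ds$ by the saddle-point method, where $\gamma(s)=L(s)^s$. First I would write the integrand as $\exp\!\big(\phi_z(s)\big)$ with $\phi_z(s)=s\log L(s)-s\log z$, and locate the critical point by solving $\phi_z'(s)=\log L(s)+s\frac{L'(s)}{L(s)}-\log z=0$, which is exactly the saddle-point equation \eqref{saddlePoint}. By the univalence of the LHS on $S(\alpha,\rho_0)$ (guaranteed under (R3) and (R8), as recalled above), for $z\in\Omega(\alpha_0-\delta)$ there is a unique solution $s=s_z$, and one checks $s_z\to\infty$ as $z\to\infty$ uniformly there. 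The second derivative is $\phi_z''(s)=\frac{d}{ds}\big(\log L(s)+s\frac{L'(s)}{L(s)}\big)$; using $\varepsilon(s)=sL'(s)/L(s)$ one gets $\phi_z''(s)=\frac{\varepsilon(s)}{s}(1+o(1))$ after invoking (R3), (R8), (R9) to control $s\varepsilon'(s)$ relative to $\varepsilon(s)$ (this is where Lemma \ref{lem: LcomplexLemma} and the super-slow-variation assumptions enter). At the saddle, $\phi_z(s_z)=s_z\log L(s_z)-s_z\log z=s_z\log L(s_z)-s_z\big(\log L(s_z)+\varepsilon(s_z)\big)=-s_z\varepsilon(s_z)$, which already produces the claimed exponential factor $\exp(-s\varepsilon(s))$.

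Next I would carry out the contour deformation. I would push the vertical line $\mathrm{Re}\,s=c$ to a steepest-descent contour through $s_z$, staying inside the angle $\{|\arg s|<\alpha_0\}$ where $\gamma$ is analytic and where the asymptotic \eqref{eq:1stLasym} for $\log L(s)$ holds; the absence of poles of $\gamma$ in this region (it is analytic and non-vanishing there) makes the deformation legitimate, and one must check the horizontal pieces joining the old and new contours contribute negligibly, using that $|z^{-s}|$ decays and $|\gamma(s)|$ is controlled by \eqref{eq:1stLasym}. On the local arc through $s_z$, the standard quadratic expansion $\phi_z(s)=\phi_z(s_z)+\tfrac12\phi_z''(s_z)(s-s_z)^2+\cdots$ gives a Gaussian integral evaluating to $\sqrt{\tfrac{2\pi}{-\phi_z''(s_z)}}\,e^{\phi_z(s_z)}=\sqrt{\tfrac{2\pi s_z}{\varepsilon(s_z)}}\,e^{-s_z\varepsilon(s_z)}(1+o(1))$; combining with the $\frac{1}{2\pi i}$ prefactor and the factor of $i$ from parametrizing the contour in the steepest-descent direction yields $K(z)=(1+o(1))\sqrt{\tfrac{s_z}{2\pi\varepsilon(s_z)}}\exp(-s_z\varepsilon(s_z))$, with the square root branch positive on the positive ray because there $s_z>0$ and $\varepsilon(s_z)>0$. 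Analyticity of $K$ in $\Omega(\alpha_0-\delta)$ follows since $s_z$ depends analytically on $z$ (inverse of a univalent map) and the integral converges locally uniformly.

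The main obstacle I expect is the uniformity of the remainder estimate over the whole unbounded region $\Omega(\alpha_0-\delta)$, not just along a fixed ray. Three things have to be controlled uniformly: (i) the cubic-and-higher tail $\sum_{k\ge 3}\frac{\phi_z^{(k)}(s_z)}{k!}(s-s_z)^k$ on the local arc must be $o(1)$ relative to the quadratic term, which amounts to showing $s_z^2|\phi_z'''(s_z)|/|\phi_z''(s_z)|^{3/2}$ (suitably scaled by the arc length $\sim\sqrt{s_z/\varepsilon(s_z)}$) tends to $0$ — precisely the role of assumption (R5) via Lemma \ref{lem: subReplacmentLem}(3), ensuring $\ell'''$ is negligible against $|\ell''|$; (ii) the contribution of the contour away from the local arc must be exponentially smaller than the main term, which requires a lower bound on $\mathrm{Re}(\phi_z(s_z)-\phi_z(s))$ along the steepest-descent path that is uniform in $z$, obtained from concavity/monotonicity properties of $\ell$ encoded in (R2); and (iii) the passage from $\theta$-dependence to $|s|$-dependence in $\log L(s)$ and $\varepsilon(s)$ must hold uniformly up to $|\arg s|=\alpha_0-\delta$, which is exactly (R8) and (R9). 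Since all of these are stated as hypotheses of Theorem A and proven in the cited companion paper \cite{KiroSodin}, I would organize the argument so that the analytic heavy lifting is quoted, and only the bookkeeping of the saddle-point expansion is reproduced. \QEDB
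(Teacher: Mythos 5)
The paper does not prove Theorem~A itself: it is introduced with ``The next two theorems are proven in \cite{KiroSodin}'' and quoted from the companion paper without any argument. Your sketch lays out the standard saddle-point evaluation of the Mellin--Barnes integral, which is indeed the method used there, and the core computations are right: $\phi_z(s_z)=-s_z\varepsilon(s_z)$ produces the exponential; $\phi_z''(s_z)=\varepsilon(s_z)/s_z+\varepsilon'(s_z)\sim\varepsilon(s_z)/s_z$ produces the prefactor; and the Gaussian factor $\tfrac{1}{2\pi}\sqrt{2\pi/\phi_z''(s_z)}$ recovers $\sqrt{s/(2\pi\varepsilon(s))}$.

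One factual slip worth correcting: you claim the technical controls in (i)--(iii) ``are stated as hypotheses of Theorem~A,'' and in (i) and (iii) you invoke (R5) and (R9). But Theorem~A assumes only (R3) and (R8); assumptions (R5) and (R9) appear in the paper only in the hypotheses of Theorem~$6'$ and in Lemmas~\ref{lem:KE strong A for spliting.}--\ref{lem: loglog H}, not here. This matters because it misrepresents the strength of the result: the whole point of the ``(R1)--(R9)'' taxonomy is to pin down the minimal regularity needed for each statement. It is not fatal to your sketch --- the cubic-to-quadratic ratio in the local arc is $\sim (s_z\varepsilon(s_z))^{-1/2}\to 0$ and can be obtained under (R3)+(R8) alone (e.g., by working with the inverse of the univalent map $s\mapsto\log L(s)+\varepsilon(s)$ rather than a term-by-term Taylor expansion involving $\varepsilon''$). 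But as written, a reader would wrongly conclude that the theorem requires more than (R3) and (R8); either remove the appeal to (R5)/(R9) or explain why the needed estimates already follow from the stated hypotheses.
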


\begin{ThmKE}\label{TheoremE}
	Suppose that the function $L$ satisfies assumptions \emph{(R3)} and \emph{(R8)}, and that
	\begin{equation}
	\limsup_{\rho\to\infty}\varepsilon(\rho)<2\,. \label{varepsilon}
	\end{equation}
	Then, given a sufficiently small $\delta>0$, we have
	\[zE(z)= \left(1+o(1)\right)\sqrt{2\pi\frac{s}{\varepsilon(s)}}
	\exp\left(s\varepsilon(s)\right)+o(1), \quad z\to\infty,\]
	uniformly in $\Omega(\pi/2+\delta)$, and
	\[zE(z) = o(1),\quad z\to\infty \]
	uniformly  in $\mathbb{C}\setminus \Omega(\pi/2+\delta)$.
	Here, also  $s=s_z$ and the   branch of the  square root is  positive on the positive half-line.
\end{ThmKE}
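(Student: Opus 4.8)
The plan is to represent $zE(z)=\sum_{m\ge 1}z^m/\gamma(m)$ as a Lindel\"of--type (Mellin--Barnes) contour integral and evaluate it by the saddle point method, in close parallel with the analysis of $K$ behind Theorem~\ref{TheoremK}. Passing to $zE$ gives the clean exponent $\psi(s):=s\log z-\log\gamma(s)=s\bigl(\log z-\log L(s)\bigr)$, for which $\psi'(s)=\log z-\log L(s)-\varepsilon(s)$; hence the unique critical point of $\psi$ is exactly the solution $s=s_z$ of the saddle--point equation \eqref{saddlePoint} --- the \emph{same} saddle that governs $K$. For $z$ in the cut plane $\mathbb{C}\setminus\mathbb{R}_{\ge 0}$ I would use a representation of the shape
\[ zE(z)=-\frac{1}{2i}\int_{c-i\infty}^{c+i\infty}\frac{(-z)^s}{\gamma(s)}\,\frac{ds}{\sin\pi s},\qquad 0<c<1, \]
whose convergence rests on $|\gamma(c+it)|^{-1}=e^{o(|t|)}$ as $|t|\to\infty$, a consequence of $\varepsilon\to 0$ (i.e.\ (R1)) via Lemma~\ref{lem: LcomplexLemma}. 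The role of the factor $\csc\pi s$ is that near a critical point with $\im s_z\ne 0$ it behaves like $\mp 2i\,e^{\pm i\pi s}$, and $\log(-z)\pm i\pi=\log z$ for $\pm\arg z>0$, so after this cancellation the effective exponent along the relevant contour is precisely $\psi$.

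Next I would deform the contour onto the steepest--descent path $\Gamma_z$ through $s_z$. Its direction there is $e^{i\theta_z/2}$: indeed $(\log\gamma)''(s)=\varepsilon(s)/s+\varepsilon'(s)$ with $\varepsilon'(s)=o(\varepsilon(s)/s)$ by the slow variation of $\varepsilon$ ((R3)), so $\psi''(s_z)=-(1+o(1))\varepsilon(s_z)/s_z$ has argument $\pi-\theta_z+o(1)$. Since $\psi(s_z)=s_z\bigl(\log z-\log L(s_z)\bigr)=s_z\varepsilon(s_z)$ by the saddle equation, and $\varepsilon(s_z)\sim\varepsilon(\rho_z)$ by (R8), the Gaussian integral over $\Gamma_z$ produces $e^{s_z\varepsilon(s_z)}\,e^{i\theta_z/2}\sqrt{2\pi\rho_z/\varepsilon(\rho_z)}\,(1+o(1))=(1+o(1))\sqrt{2\pi s_z/\varepsilon(s_z)}\,e^{s_z\varepsilon(s_z)}$, the sign and constant being pinned down by positivity of $E$ on $\mathbb{R}_+$. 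This deformation is admissible exactly when $s_z\in S(\pi/2+\delta,\rho_0)$, i.e.\ when $z\in\Omega(\pi/2+\delta)$; the hypothesis $\limsup_{\rho}\varepsilon(\rho)<2$ is what allows one to regard $\Omega(\pi/2+\delta)$ as a subdomain of the slit plane $\mathbb{C}\setminus\mathbb{R}_-$. The residual additive $o(1)$ in the statement collects the non--saddle part of the deformed contour and absorbs the transition near $\partial\Omega(\pi/2)$, where $\re(s_z\varepsilon(s_z))$ itself may tend to $-\infty$. On the positive ray, where the Lindel\"of integral degenerates, I would instead apply the elementary Laplace method directly to $\sum_m e^{\psi(m)}$ --- the summand varies on the scale $\sqrt{s_x/\varepsilon(s_x)}\to\infty$, so the sum is well approximated by $\int e^{\psi}$ --- and match it to the complex estimate by analyticity.

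For the complementary regime $z\to\infty$ with $z\notin\Omega(\pi/2+\delta)$, the same deformation applies but now $|\theta_z|>\pi/2+\delta$, so $\re\psi(s_z)=\rho_z\varepsilon(\rho_z)\cos\theta_z\le-\rho_z\varepsilon(\rho_z)\sin\delta\to-\infty$ (using $\rho_z\varepsilon(\rho_z)\to\infty$, which follows from $\varepsilon$ being slowly varying with $\varepsilon=o(1)$), so $|zE(z)|$ is exponentially small; in the part of the complement near the negative ray, where no admissible saddle exists in the angle of analyticity of $L$, one instead shifts the Lindel\"of contour to the right --- where $1/\gamma$ super--decays --- the connecting arcs vanishing by the same $e^{o(|t|)}$ bound, and reads off $zE(z)=o(1)$. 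The main obstacle throughout is \emph{uniformity} as $z\to\infty$ over the unbounded domains $\Omega(\pi/2+\delta)$ and its complement: one must control the error in the saddle expansion uniformly in $(\rho_z,\theta_z)$, in particular up to $|\theta_z|=\pi/2+\delta$, bound the tails and connecting arcs of $\Gamma_z$ uniformly, verify that the representation can be pushed onto $\Gamma_z$ without spurious pole contributions, and glue the Lindel\"of/complex regime to the real Laplace regime. This is precisely the analysis carried out in \cite{KiroSodin}, and it is somewhat more delicate here than for $K$ because of the extra $\csc\pi s$ factor and because the rapid growth of $E$ pushes the saddle far to the right of the initial contour.
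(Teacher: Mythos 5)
The paper does not give a proof of Theorem~\ref{TheoremE}; it cites \cite{KiroSodin}, where the asymptotics of both $K$ and $E$ are established by the Mellin--Barnes/Lindel\"of representation and steepest descent at the same saddle $s_z$ solving \eqref{saddlePoint}. Your sketch reconstructs exactly that route: the Lindel\"of formula $zE(z)=-\frac{1}{2i}\int_{c-i\infty}^{c+i\infty}\frac{(-z)^s}{\gamma(s)\sin\pi s}\,ds$ is correct, the reduction of $(-z)^s\csc\pi s$ to $\mp 2ie^{s\log z}$ in the half of the contour containing the saddle is the right mechanism, the identities $\psi(s_z)=s_z\varepsilon(s_z)$ and $\psi''(s_z)=-(1+o(1))\varepsilon(s_z)/s_z$ are correct under (R3)/(R8), the steepest-descent direction $e^{i\theta_z/2}$ correctly produces $\sqrt{2\pi s_z/\varepsilon(s_z)}$ (not merely $\sqrt{2\pi\rho_z/\varepsilon(\rho_z)}$), and the role of $\limsup\varepsilon<2$ as the condition making $\Omega(\pi/2+\delta)$ a subdomain of the slit plane is accurately identified. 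This matches the proof in the cited reference; the genuine technical content --- uniform control of the quadratic remainder up to $|\theta_z|=\pi/2+\delta$, the tails of $\Gamma_z$, the positive-ray matching, and the additive $o(1)$ near $\partial\Omega(\pi/2)$ --- is exactly the work done in \cite{KiroSodin}, and you correctly flag it rather than glossing over it.
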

We note that the conclusion of Theorem \ref{TheoremE} is valid for the values of $z$ on the positive ray  without the additional assumption  \eqref{varepsilon}. That is, if assumption (R3) holds, then  
\[rE(r) =(1+o(1)) \sqrt{2\pi\frac{\rho}{\varepsilon(\rho)}}
\exp\left(\rho\varepsilon(\rho)\right), \quad r\to\infty,\]
where $r=L(\rho)e^{\varepsilon(\rho)}$.
\subsection{Lemmata.}
Here we present   auxiliary results regrading the asymptotics of the functions $K$ and $E$ needed in this work. 
\begin{lemma}\label{lem: K and E ray lemma} 
	Suppose  the function  $L$  satisfies assumption \emph{(R1)}. 
	Then for  any $\eta<1$,
	$$L^{-1}(\eta r)\lesssim_\eta \log E( r)\lesssim L^{-1}(r),$$  where $L^{-1}$ is the inverse function to $L$ in $[C,\infty)$ for sufficiently large $C>0$. In particular, for any $\delta>0$,
	\[ E^2(r)\lesssim_\delta E((1+\delta)r)  \]
\end{lemma}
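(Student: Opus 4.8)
The plan is to estimate $\log E(r)$ from above and below by comparing the power series $E(r)=\sum_{n\ge 0} r^n/\gamma(n+1)$ with its largest term, i.e. with the Ostrowski function $\mu(r):=\max_{n\ge 0} r^n/\gamma(n+1)$, whose logarithm is essentially $\Lambda_L(r)$ from the previous section. The upper bound is the routine direction. For the lower bound one picks the near-optimal index $n$ in the series; because $\gamma(n+1)=L(n+1)^{n+1}$ and $L$ is slowly varying by (R1), the terms near the maximum decay only geometrically, so the full sum is comparable to a fixed constant times a single well-chosen term. The identification of $\Lambda_L(r)$ with $L^{-1}$ of something comparable to $r$ is exactly what Lemma \ref{lem: widetildeE} provides: part (1) says $\Lambda_L(erL(r))\sim r$, and parts (2)--(3) give the homogeneity and scaling that convert this into the two-sided bound $L^{-1}(\eta r)\lesssim_\eta \log E(r)\lesssim L^{-1}(r)$.

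In detail I would proceed as follows. First, for the upper bound: choose an integer $N$ slightly larger than $L^{-1}(r)$, so that for $n\ge N$ one has $L(n+1)\ge L(N)\gtrsim r\cdot(1+c)$ for some fixed $c>0$ (using that $L$ is increasing and, via (R1), slowly varying so $L(N)$ is not much larger than $L(L^{-1}(r))=r$). Then the tail $\sum_{n\ge N} r^n/\gamma(n+1)$ is dominated by a convergent geometric series with ratio $\le (1+c)^{-1}$, hence is $O(1)$ times its first term, while the head $\sum_{n<N} r^n/\gamma(n+1)\le N\cdot \mu(r)\le N e^{\Lambda_L(r)}$. Since $N\lesssim L^{-1}(r)$ grows only polynomially while $e^{\Lambda_L(r)}$ grows much faster (by Lemma \ref{lem: widetildeE}(1), $\Lambda_L(r)$ is of order $L^{-1}(r/L(\cdot))$ which is comparable to $L^{-1}(r)$ up to the slow variation), one gets $\log E(r)\le (1+o(1))\Lambda_L(r)\lesssim L^{-1}(r)$ after invoking Lemma \ref{lem: widetildeE}(1) and the scaling property (3) to replace $\Lambda_L(r)$ by $L^{-1}(r)$. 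For the lower bound, simply bound $E(r)\ge \mu(r)=e^{\Lambda_L(r)}$, and then apply Lemma \ref{lem: widetildeE}(1): writing $r=e\rho L(\rho)/e = $ — more precisely, given $\eta<1$, set $\rho=L^{-1}(\eta' r)$ for an intermediate $\eta<\eta'<1$; by (R1), $\rho L(\rho)\le r$ eventually, so $\Lambda_L(r)\ge \Lambda_L(\rho L(\rho))\gtrsim \rho=L^{-1}(\eta' r)\gtrsim_\eta L^{-1}(\eta r)$, where the middle step uses Lemma \ref{lem: widetildeE}(1) together with monotonicity of $\Lambda_L$.

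For the ``in particular'' statement, I would argue: by the two-sided bound just proved, $2\log E(r)\le 2L^{-1}(r)$ and $\log E((1+\delta)r)\gtrsim_\delta L^{-1}(\eta(1+\delta)r)$ for any $\eta<1$; choosing $\eta$ close enough to $1$ that $\eta(1+\delta)>1+\delta/2$, it suffices to check $L^{-1}(r)\le \tfrac12 L^{-1}((1+\delta/2)r)$ for large $r$, i.e. that $L^{-1}$ grows at least geometrically along the scale $r\mapsto(1+\delta/2)r$ — equivalently $L(2\rho)\le (1+\delta/2)L(\rho)+o(\cdot)$, which is immediate from (R1) (slow variation forces $L(\lambda\rho)/L(\rho)\to 1$, hence $L^{-1}(c r)/L^{-1}(r)\to\infty$ as... )—so in fact $L^{-1}((1+\delta/2)r)/L^{-1}(r)\to\infty$, which is far more than the factor $2$ needed.

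The main obstacle I expect is bookkeeping the slow-variation estimates cleanly: one must repeatedly pass between $r$, $L^{-1}(r)$, $\Lambda_L(r)$, and $\rho L(\rho)$, and make sure that replacing $L^{-1}(r)$ by $L^{-1}(\eta r)$ (or by $\Lambda_L(r)$) only costs multiplicative constants depending on $\eta$. All of this is bundled into Lemma \ref{lem: widetildeE}, so the proof should reduce to invoking parts (1)--(3) of that lemma at the right moments; the only genuinely analytic input beyond it is the geometric-tail summation for the upper bound, which relies on nothing more than $L$ being increasing and unbounded.
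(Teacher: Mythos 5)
Your plan — compare $E$ with its maximal term and translate the result into $L^{-1}$ — is the right idea, but the execution is broken by a confusion of two different maximal-term functions that live at very different scales. The maximal term of $E(r)=\sum_n r^n/\gamma(n+1)$, $\gamma(n)=L(n)^n$, is $\mu(r)=\sup_{\rho}r^\rho/L(\rho)^\rho$; its maximizer satisfies $\log r=\log L(\rho)+\varepsilon(\rho)$, so sits near $\rho\approx L^{-1}(r)$, and $\log\mu(r)\approx L^{-1}(r)\,\varepsilon(L^{-1}(r))$. But the $\Lambda_L$ of Lemma~\ref{lem: widetildeE} is \emph{not} $\log\mu$: its proof in Appendix~A optimizes $f(x,r)=x\log(erL(r))-x\log(xL(x))$, i.e.\ the Carleman-normalized Legendre transform with the extra $x\log x$, whose maximizer for argument $erL(r)$ sits near $x\approx r$ — hence the assertion $\Lambda_L(erL(r))\sim r$. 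The two quantities differ enormously (roughly $\Lambda_L(r)\asymp r/L(\cdot)$ versus $\log\mu(r)\asymp L^{-1}(r)\varepsilon(L^{-1}(r))$, and neither is $\asymp L^{-1}(r)$), so the chain ``$E(r)\ge\mu(r)=e^{\Lambda_L(r)}$, then apply Lemma~\ref{lem: widetildeE}(1)'' is invalid. Two auxiliary steps also fail as written: with $\rho=L^{-1}(\eta'r)$ you have $L(\rho)=\eta'r$ so $\rho L(\rho)=\eta'r\cdot L^{-1}(\eta'r)\gg r$, not $\le r$; and choosing $N$ ``slightly larger than $L^{-1}(r)$'' does not give $L(N)\gtrsim(1+c)r$, because $L^{-1}((1+c)r)/L^{-1}(r)\to\infty$, so the needed $N$ is vastly larger than $L^{-1}(r)$ and your head/tail bookkeeping collapses.

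The paper's proof is shorter and does not touch Lemma~\ref{lem: widetildeE} at all. For the lower bound it substitutes the single point $\rho=L^{-1}(r)$ into the supremum defining $\log\mu(\eta^{-1}r)=\sup_\rho[\rho\log(\eta^{-1}r)-\rho\log L(\rho)]$, giving $\log\mu(\eta^{-1}r)\ge L^{-1}(r)\log\eta^{-1}$ in one line, then passes from the continuous $\mu$ to the integer supremum (and hence to $E$) using only $\log L(n+2)/\log L(n)\to 1$. For the upper bound it estimates $L(r)E(L(r))=\sum_{n\ge 1}\bigl(L(r)/L(n)\bigr)^n$, splits at $n=r$, and uses the eventual monotonicity of $\rho\mapsto L(\rho)/\rho$ (to control the head) together with a Stirling-type tail bound to get $\log E(L(r))\lesssim r$, i.e.\ $\log E(s)\lesssim L^{-1}(s)$. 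Your argument for the ``in particular'' statement is correct in spirit — the real content is $L^{-1}((1+\delta)r)/L^{-1}(r)\to\infty$, which (R1) gives via $\log\bigl(L^{-1}((1+\delta)r)/L^{-1}(r)\bigr)=\int_r^{(1+\delta)r}\frac{du}{u\,\varepsilon(L^{-1}(u))}\to\infty$ — but it rests on the two-sided bound, which the earlier parts of your proposal do not establish.
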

\begin{lemma}\label{lem: KE-A}
	Suppose  the function  $L$  satisfies assumptions \emph{(R3)} and \emph{(R8)}. Then for any $\delta>0$ there exists $\delta_1>0$ such that 
	\[ E(x\delta_1)E\left(x(1-\delta)\right)|K(x)|\lesssim 1. \]
\end{lemma}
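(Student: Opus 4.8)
The goal is to establish Lemma \ref{lem: KE-A}: for any $\delta>0$ there exists $\delta_1>0$ such that $E(x\delta_1)E\left(x(1-\delta)\right)|K(x)|\lesssim 1$ as $x\to\infty$ on the positive ray.

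\medskip

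\textbf{Strategy.} The plan is to plug in the known asymptotics for $E$ and $K$ on the positive ray (Theorems \ref{TheoremK} and \ref{TheoremE}, in their sharpened ray-only forms) and track the exponential rates via the saddle-point parametrization. Recall that on the positive ray we have the parametrization $r = L(\rho)e^{\varepsilon(\rho)}$ with $s_z = \rho$, and then $\log E(r) \asymp \rho\varepsilon(\rho)$ up to subexponential factors, while $\log |K(r)| \asymp -\rho\varepsilon(\rho)$ up to subexponential factors. Each factor $E(cx)$ has its own saddle point, and the key point is that shifting the argument from $x$ to $cx$ shifts the corresponding saddle point, so the three exponential rates must be balanced against each other.

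\medskip

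\textbf{Key steps.} First I would introduce the three saddle parameters: let $\rho_0$ solve $L(\rho_0)e^{\varepsilon(\rho_0)} = x$ (so $x=x$, governing $K(x)$), let $\rho_1$ solve $L(\rho_1)e^{\varepsilon(\rho_1)} = (1-\delta)x$, and let $\rho_2$ solve $L(\rho_2)e^{\varepsilon(\rho_2)} = \delta_1 x$. By Theorems \ref{TheoremK}, \ref{TheoremE} and the ray-only remark, up to polynomially bounded prefactors we have
\[
\log\!\left[E(\delta_1 x)E((1-\delta)x)|K(x)|\right] = \rho_2\varepsilon(\rho_2) + \rho_1\varepsilon(\rho_1) - \rho_0\varepsilon(\rho_0) + O(\log x).
\]
Next I would analyze how $\rho\varepsilon(\rho)$ changes when the argument $r=L(\rho)e^{\varepsilon(\rho)}$ is multiplied by a constant. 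Using $(R1)$ (so $\varepsilon(\rho)=o(1)$, i.e., $L$ slowly varying) together with $(R3)$ (so $\varepsilon$ is slowly varying), the effect of replacing $r$ by $(1-\delta)r$ on the saddle parameter $\rho$ is a multiplicative perturbation: since $\frac{d}{d\log\rho}\log r = \frac{d}{d\log\rho}(\log L(\rho)+\varepsilon(\rho)) = \varepsilon(\rho)+\rho\varepsilon'(\rho)$, and by $(R3)$ the term $\rho\varepsilon'(\rho)=\ell''(t)$ is $o(\ell'(t))=o(\varepsilon(\rho))$, we get $d\log r \approx \varepsilon(\rho)\,d\log\rho$. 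Hence a bounded change in $\log r$ forces $\log\rho_1 = \log\rho_0 + \frac{\log(1-\delta)}{\varepsilon(\rho_0)}(1+o(1))$, so $\rho_1 = \rho_0\,(1-\delta)^{1/\varepsilon(\rho_0)}(1+o(1))$ — a genuinely large relative change. The crucial estimate is then
\[
\rho_1\varepsilon(\rho_1) - \rho_0\varepsilon(\rho_0) = \rho_0\varepsilon(\rho_0)\!\left[(1-\delta)^{1/\varepsilon(\rho_0)} - 1\right](1+o(1)) \le -c\,\rho_0\varepsilon(\rho_0)
\]
for some $c = c(\delta)>0$ once $\rho_0$ is large (here using $\varepsilon(\rho_0)=o(1)$, so $(1-\delta)^{1/\varepsilon(\rho_0)}\to 0$; in fact the bracket tends to $-1$). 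Meanwhile, choosing $\delta_1$ small enough, $\rho_2 = \rho_0\,\delta_1^{1/\varepsilon(\rho_0)}(1+o(1))$ is much smaller than $\rho_1$, and $\rho_2\varepsilon(\rho_2) = \rho_0\varepsilon(\rho_0)\,\delta_1^{1/\varepsilon(\rho_0)}(1+o(1)) = o(\rho_0\varepsilon(\rho_0))$, which is dwarfed by the gain $-c\rho_0\varepsilon(\rho_0)$. Combining, the exponent is $\le -\tfrac{c}{2}\rho_0\varepsilon(\rho_0) + O(\log x) \to -\infty$ (since $\rho_0\varepsilon(\rho_0)=\log E(x)+O(\log x)\to\infty$), which gives the desired bound, with room to spare.

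\medskip

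\textbf{Main obstacle.} The delicate point is controlling the $(1+o(1))$ errors uniformly when translating a bounded change in the argument $r$ into the relative change of the saddle parameter $\rho$, because we are dividing by $\varepsilon(\rho)\to 0$: an error of size $o(\varepsilon(\rho))$ in $\frac{d\log r}{d\log\rho}$ gets amplified. This is exactly what assumption $(R3)$ is designed to handle — $\ell''=o(\ell')$ says the logarithmic derivative of $r$ with respect to $\rho$ is $(1+o(1))\varepsilon(\rho)$ — and it is cleanest to invoke the replacement lemmas (Lemma \ref{lem: subReplacmentLem}, e.g. part (1) with $\varepsilon(\rho L(\rho))\sim\varepsilon(\rho)$, and Lemma \ref{lem: K and E ray lemma} relating $\log E(r)$ to $L^{-1}(r)$) rather than redo the saddle-point analysis. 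Once the relation $\rho_1\varepsilon(\rho_1)\le (1-c)\rho_0\varepsilon(\rho_0)$ is secured, the remaining steps are bookkeeping. I would double-check the edge case where $\varepsilon$ does not tend to $0$ but merely $\limsup\varepsilon<2$ is not needed here since we are on the ray, so $(R3)$ alone (plus $(R1)$ or its consequence) suffices — but the argument only needs $\varepsilon(\rho)$ bounded and the slow variation of $\varepsilon$, so even if $\varepsilon(\rho)\not\to 0$ one still gets $(1-\delta)^{1/\varepsilon(\rho_0)}$ bounded away from $1$, which is all that is used.
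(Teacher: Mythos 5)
Your overall architecture matches the paper's: reduce via Theorems \ref{TheoremK} and \ref{TheoremE} to showing
\[
\rho_2\varepsilon(\rho_2)+\rho_1\varepsilon(\rho_1)-\rho_0\varepsilon(\rho_0)\lesssim 1,
\]
and then argue that the middle term loses a fixed proportion of $\rho_0\varepsilon(\rho_0)$ while the first is negligible for small $\delta_1$. But the formula you use to quantify the loss,
\[
\rho_1=\rho_0\,(1-\delta)^{1/\varepsilon(\rho_0)}(1+o(1)),\qquad
\rho_1\varepsilon(\rho_1)-\rho_0\varepsilon(\rho_0)=\rho_0\varepsilon(\rho_0)\bigl[(1-\delta)^{1/\varepsilon(\rho_0)}-1\bigr](1+o(1)),
\]
is not correct. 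Passing from $x$ to $(1-\delta)x$ moves the saddle parameter across a multiplicatively \emph{long} interval $[\rho_1,\rho_0]$ (of length $\asymp 1/\varepsilon(\rho_0)$ on the $\log\rho$ scale), over which $\varepsilon$ itself changes by a bounded factor that is not $1+o(1)$. Already for $L(\rho)=\log\rho$ one has $\rho_1\sim\rho_0^{1-\delta}$, whereas your formula gives $\rho_1\sim\rho_0^{1+\log(1-\delta)}$ — a different power. The step that replaces $\varepsilon(\rho_1)$ by $\varepsilon(\rho_0)$ is exactly where this breaks; (R3) controls $\rho\varepsilon'(\rho)=o(\varepsilon(\rho))$ \emph{pointwise} but does not make $\varepsilon$ constant over an interval whose length blows up like $1/\varepsilon$. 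Your ``main obstacle'' paragraph flags the right symptom (an $o(1)$ amplified by dividing by $\varepsilon$) but locates the danger in the derivative $d\log r/d\log\rho$, which (R3) does tame; the real problem is the drift of $\varepsilon$ over the range.

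The paper avoids solving for $\rho_1$ altogether. It writes the quantity of interest exactly as an integral:
\[
\log\frac{\rho_0\varepsilon(\rho_0)}{\rho_1\varepsilon(\rho_1)}
=\int_{(1-\delta)x}^{x}\frac{\varepsilon(\rho(u))+\rho(u)\varepsilon'(\rho(u))}{\rho(u)\varepsilon(\rho(u))}\,\rho'(u)\,du
=(1+o(1))\int_{(1-\delta)x}^{x}\frac{du}{u\,\varepsilon(\rho(u))},
\]
using only $\rho|\varepsilon'|=o(\varepsilon)$ from (R3), and then bounds the integrand from below by $1/M$ with $M=\sup\varepsilon$ (the ``$\ell'$ bounded above'' clause of (R3)). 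This yields
$\rho_1\varepsilon(\rho_1)\le(1-\delta)^{\alpha}\rho_0\varepsilon(\rho_0)$ with $\alpha$ independent of $\delta$, uniformly in $\delta$, and applying the same inequality with $\delta_1$ in place of $1-\delta$ gives $\rho_2\varepsilon(\rho_2)\le\delta_1^{\alpha}\rho_0\varepsilon(\rho_0)$; choosing $\delta_1$ with $\delta_1^\alpha<1-(1-\delta)^\alpha$ closes the estimate. In short: your plan is right, but the ``solve for $\rho_1$ and substitute'' step should be replaced by the integral bound, which needs only the supremum of $\varepsilon$ and not any claim that $\varepsilon$ is roughly constant between the two saddle points.
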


\begin{lemma}\label{lem: E complex lemma} 
Suppose  the function  $L$  satisfies assumptions \emph{(R2)}, \emph{(R3)} and \emph{(R8)}. Then, there exists a $C>0$ such that  $\log |E(z)|=O(\log |z|)$, uniformly in the set
	\[ \big\{r e^{i\psi}\;:\; C\varepsilon\left(L^{-1}(r)\right)\leq |\psi|\leq\pi \big\}. \] 
\end{lemma}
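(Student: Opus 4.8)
The plan is to deduce the lemma from Theorem \ref{TheoremE} once I control how thin the region $\Omega(\pi/2+\delta)$ — on which $E$ is large — actually is. I first dispose of a degenerate case. By (R2) the function $\varepsilon$ is eventually non-increasing and by (R3) it is bounded, hence $\varepsilon(\rho)\to\varepsilon_\infty$ for some $\varepsilon_\infty\ge 0$, and $\varepsilon(L^{-1}(r))\ge\varepsilon_\infty$ for all large $r$. If $\varepsilon_\infty>0$, I pick $C$ with $C\varepsilon_\infty>\pi$: then the set in the statement is empty for $|z|$ large, while on the remaining compact piece (which I may assume lies in $\{|z|\ge e\}$, by restricting the domain of $L^{-1}$ if necessary) $E$ is bounded, so the bound is immediate. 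Hence from now on I assume $\varepsilon_\infty=0$, equivalently that (R1) holds; in particular $\limsup_{\rho}\varepsilon(\rho)=0<2$, so Theorem \ref{TheoremE} is available.

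Fix $\delta>0$ small enough for Theorem \ref{TheoremE}. The heart of the argument is the claim that there exist $C>0$ and $r_1$ with
\[ \Omega(\pi/2+\delta)\cap\{|z|\ge r_1\}\ \subseteq\ \{\,re^{i\psi}\;:\;|\psi|<C\,\varepsilon(L^{-1}(r))\,\}. \]
To prove it, take $z=re^{i\psi}\in\Omega(\pi/2+\delta)$ and let $s_z=\rho e^{i\theta}$, $|\theta|<\pi/2+\delta$, be the corresponding saddle point, so that $\log z=\log L(s_z)+\varepsilon(s_z)$. By Lemma \ref{lem: LcomplexLemma} (this is where (R8) enters) and (R8),
\[ \log L(s_z)=\log L(\rho)+i\theta\,\varepsilon(\rho)(1+o(1)),\qquad \varepsilon(s_z)=\varepsilon(\rho)(1+o(1)), \]
uniformly in the sector. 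Comparing real parts gives $\log r=\log L(\rho)+\varepsilon(\rho)(1+o(1))$, so $L(\rho)\sim r$; since $L$ and $\varepsilon$ are slowly varying (consequences of (R1) and (R3)), this forces $\rho\asymp L^{-1}(r)$ and hence $\varepsilon(\rho)\asymp\varepsilon(L^{-1}(r))$ — the relevant regularity facts are collected in Lemma \ref{lem: widetildeE} and, more systematically, in \cite{KiroSodin}. Comparing imaginary parts gives $|\psi|\le|\theta|\,\varepsilon(\rho)(1+o(1))+o(\varepsilon(\rho))\le(\pi/2+2\delta)\,\varepsilon(\rho)$ for $\rho$ large. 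Combining the two estimates proves the claim with any fixed $C$ exceeding, say, $4\pi$.

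It follows that $\Sigma:=\{\,re^{i\psi}\;:\;C\varepsilon(L^{-1}(r))\le|\psi|\le\pi,\;r\ge r_1\,\}$ is disjoint from $\Omega(\pi/2+\delta)$. By Theorem \ref{TheoremE}, $zE(z)=o(1)$ uniformly on $\mathbb{C}\setminus\Omega(\pi/2+\delta)$ as $z\to\infty$, so after enlarging $r_1$ we have $|E(z)|\le|z|^{-1}$ on $\Sigma$, whence $\log|E(z)|\le-\log|z|$ there. On the remaining part $\{e\le|z|\le r_1\}$ of the set $E$ is bounded, so $\log|E(z)|=O(1)=O(\log|z|)$ since $\log|z|\ge 1$. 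Together these give the stated estimate. The single genuinely technical point is the comparison $\rho\asymp L^{-1}(r)$ (equivalently $\varepsilon(\rho)\asymp\varepsilon(L^{-1}(r))$) under $L(\rho)\sim r$, which is where the slow-variation regularity of $L$ and $\varepsilon$ is invoked and where the error terms must be tracked with care; everything else is a bookkeeping of the $o(1)$'s in the saddle-point correspondence (guaranteed uniform by (R8)) together with an appeal to Theorem \ref{TheoremE}.
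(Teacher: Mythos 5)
Your argument has the same skeleton as the paper's: reduce to Theorem~\ref{TheoremE}, which says $E$ is small outside $\Omega(\pi/2+\delta)$, and then show via the saddle-point correspondence that the boundary $\theta=\pm(\pi/2+\delta)$ maps to $|\psi|\lesssim\varepsilon(L^{-1}(r))$; the technical crux, in both versions, is the comparison $\varepsilon(\rho)\lesssim\varepsilon(L^{-1}(r))$ where $\rho=|s_z|$ and $r=|z|$.

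Two remarks. First, the observation disposing of the case $\varepsilon_\infty>0$ is correct and worth having (the lemma's hypotheses do not literally include (R1)), though the paper leaves it implicit. Second, and more substantively, the deduction ``$L(\rho)\sim r$; by slow variation, $\rho\asymp L^{-1}(r)$'' is not a valid implication for slowly varying $L$: with $L(\rho)=\log\rho$ and $r=L(\rho)(1+(\log\rho)^{-1/2})$ one has $L(\rho)\sim r$ yet $L^{-1}(r)/\rho\to\infty$, because $L^{-1}$ amplifies small multiplicative perturbations of $r$ into unbounded ones in $\rho$. What actually rescues the step is the refined real part of the saddle-point relation, $\log r=\log L(\rho)+\varepsilon(\rho)(1+o(1))$, i.e.\ $\int_\rho^{L^{-1}(r)}\varepsilon(u)\,du/u=\varepsilon(\rho)(1+o(1))$; combined with $\varepsilon$ decreasing and slowly varying, this pins down $\log(L^{-1}(r)/\rho)$ as bounded, and only then does $\varepsilon(\rho)\asymp\varepsilon(L^{-1}(r))$ follow. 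The paper sidesteps the whole issue by estimating $\log\big(\varepsilon(\rho)/\varepsilon(L^{-1}(r))\big)=-\int_\rho^{L^{-1}(r)}(\varepsilon'(u)/\varepsilon(u))\,du$ directly, using $-\varepsilon'(u)/\varepsilon(u)\le 1/u$ (a consequence of (R2) and (R3)), which gives the needed one-sided bound without first establishing $\rho\asymp L^{-1}(r)$. Also note that Lemma~\ref{lem: widetildeE}, which you cite for this point, does not actually contain the statement you need; the relevant regularity input is (R2)+(R3), as above. With that correction your proof closes and is otherwise the same as the paper's.
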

\subsubsection{Asymptotics of $K$ and $E$ in the set $\Omega(\alpha)$.}
Recall the definition of the domain $\Omega(\alpha)$:
\[ \Omega(\alpha)=\left\{z\colon \log z= \log L(s)+s\frac{L'(s)}{L(s)}, |\arg s|<\alpha,\;\rho>\rho_0\right\}. \]
Further, recall that  $\Psi_+$ and  $\Psi_-$ are  two curves joining 0 and $\infty$ in the first and fourth quadrants respectively, such that  for sufficiently large $r_0$, $\Psi_\pm\cap\{|z|>r_0\}\subset\partial \Omega(\tfrac{\pi}{2})$ (i.e., $\Psi_\pm$  coincide with the upper and lower parts of $\partial \Omega(\tfrac{\pi}{2})$).
\begin{lemma}\label{lem: K and E last}
	Suppose  the function  $L$  satisfies assumptions \emph{(R1)}, \emph{(R2)}, \emph{(R3)} and \emph{(R8)}. For any  $\delta>0$ and  $0<\alpha<\tfrac\pi2$, there exists a constant $C>0$, such that 
	$$\int_{r_0}^\infty |z|^n e^{-\delta \rho_z\varepsilon(\rho_z)}d|z|\leq C 2^n\gamma(n+1),\quad n\geq 0, \;z\in \Omega(\alpha),$$
	where  $s_z=\rho_z e^{i\theta_z}$ is related to $z$ by the saddle--point equation.
\end{lemma}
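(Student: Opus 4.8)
The plan is to estimate the integral $\int_{r_0}^\infty |z|^n e^{-\delta\rho_z\varepsilon(\rho_z)}\,d|z|$ by passing to the variable $\rho=\rho_z$ via the saddle--point parametrization $z=z(\rho)$ on the curve in question. Along the boundary $\partial\Omega(\alpha)$ (or, more generally, on any curve with $\theta_z$ bounded) we have, by the saddle--point equation \eqref{saddlePoint} together with Lemma~\ref{lem: LcomplexLemma} (assumption (R8)), that $\log|z|=\log L(\rho)+\varepsilon(\rho)(1+o(1))$, so $|z|\asymp L(\rho)e^{\varepsilon(\rho)}\asymp L(\rho)$ since $\varepsilon(\rho)=o(1)$ by (R1). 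Differentiating the saddle--point relation and using (R3) to control $\rho\varepsilon'(\rho)$, one gets $d|z|\lesssim |z|\,\bigl(\varepsilon(\rho)/\rho\bigr)\,d\rho \lesssim (|z|/\rho)\,d\rho$. Hence the integral is bounded by a constant times
\[
\int_{\rho_0}^\infty \frac{L(\rho)^{n+1}e^{(n+1)\varepsilon(\rho)(1+o(1))}}{\rho}\,e^{-\delta\varepsilon(\rho)\rho}\,d\rho.
\]

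The next step is to absorb the factor $e^{(n+1)\varepsilon(\rho)(1+o(1))}$ into $L(\rho)^{n}$ via a crude bound: since $\varepsilon(\rho)=o(1)$, for every fixed $c>1$ we have $e^{(n+1)\varepsilon(\rho)(1+o(1))}\le c^{n+1}$ once $\rho$ is large, so up to replacing $2^n$ by $C\cdot c^n$ (which is fine since any constant base below, say, $2$ can be re-absorbed by adjusting $\delta$, or we simply state the lemma with $2^n$ replaced by $C^{n+1}$) it suffices to bound $\int_{\rho_0}^\infty \rho^{-1}L(\rho)^{n+1}e^{-\delta\varepsilon(\rho)\rho}\,d\rho$. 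Recalling $\gamma(n+1)=L(n+1)^{n+1}$, I want to show this is at most $C\cdot 2^n L(n+1)^{n+1}$. The natural device is to split the range of integration at $\rho=\rho_*(n)$, where $\rho_*(n)$ is chosen comparably to $n$ (say $\rho_*=n$ or $\rho_*=\Lambda_L(r)$-type quantity). For $\rho\le \rho_*$, since $L$ is increasing, $L(\rho)^{n+1}\le L(\rho_*)^{n+1}\asymp L(n)^{n+1}$, and the remaining $\int_{\rho_0}^{\rho_*}\rho^{-1}e^{-\delta\varepsilon(\rho)\rho}\,d\rho$ is bounded (in fact $O(\log\rho_*/\delta)$, which is $o(2^n)$) — here one uses that $\rho\varepsilon(\rho)\to\infty$ or at least stays bounded away from $0$ at scale $\rho$, or one simply bounds the exponential by $1$ and uses $\int\rho^{-1}\le C^n$ for the logarithmic loss; a cleaner alternative is to use assumption (R7)-free reasoning since the loss is only logarithmic. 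For $\rho>\rho_*$, the point is that $L(\rho)^{n+1}e^{-\delta\varepsilon(\rho)\rho}$ decays: write $L(\rho)^{n+1}=e^{(n+1)\log L(\rho)}$ and note $\varepsilon(\rho)\rho = \rho^2 L'(\rho)/L(\rho)$, so $\frac{d}{d\rho}\log L(\rho) = \varepsilon(\rho)/\rho$, whence $(n+1)\log L(\rho) - \delta\varepsilon(\rho)\rho$ has derivative $\frac{\varepsilon(\rho)}{\rho}\bigl((n+1)-\delta\rho - \delta\rho^2\varepsilon'(\rho)/\varepsilon(\rho)\bigr)$; by (R3), $\rho\varepsilon'(\rho)=o(\varepsilon(\rho))$ so the bracket is $\le (n+1)-\tfrac\delta2\rho<0$ for $\rho>\rho_*=2(n+1)/\delta$, giving exponential decay of the integrand beyond $\rho_*$ and hence a bounded tail. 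Combining the two ranges yields the bound $C L(n)^{n+1}\cdot(\text{something}\le 2^n)$ after adjusting constants.

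I expect the main obstacle to be bookkeeping the constants so that the final bound genuinely has the form $C\cdot 2^n\gamma(n+1)$ rather than $C^{n+1}\gamma(n+1)$ — concretely, making sure that the factor $e^{(n+1)\varepsilon(\rho)(1+o(1))}$ and the comparison $L(\rho_*)\asymp L(n+1)$ (which costs a factor like $L(\text{const}\cdot n)^{n+1}/L(n+1)^{n+1}$, controlled by Lemma~\ref{lem: widetildeE}(3)–(4) or directly by slow variation) together contribute no more than a factor $2^n$. This is where assumptions (R1)–(R3) do the real work: (R1) gives $\varepsilon=o(1)$ so the extra exponential is subexponential in $n$; (R3) both linearizes the saddle--point equation in $\theta$ (via Lemma~\ref{lem: LcomplexLemma}) and controls $\rho\varepsilon'(\rho)$, which is exactly what makes the integrand decay past $\rho\sim n$; (R2) ensures $\varepsilon$ is eventually monotone, simplifying the split. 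I would also double-check that the bound holds uniformly for $z\in\Omega(\alpha)$ with $\alpha<\pi/2$ fixed: the only place $\theta_z$ enters is through $\log|z|=\log L(\rho)+\re(\text{saddle})$, and by Lemma~\ref{lem: LcomplexLemma} this real part is $\varepsilon(\rho)(1+o(1))$ uniformly for $|\theta|\le\alpha$, so the same estimate goes through with constants depending only on $\alpha$ and $\delta$.
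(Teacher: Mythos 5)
Your approach is genuinely different from the paper's. The paper's proof does not run a Laplace computation at all: it chains Lemma~\ref{lem: Apn} (to get $\rho_z\varepsilon(\rho_z)\gtrsim L^{-1}(\tfrac23|z|)$), Lemma~\ref{lem: K and E ray lemma} (to pass from $L^{-1}$ to $\log E$), and Theorems~\ref{TheoremK}, \ref{TheoremE} (which match $\log E$ with $-\log K$), arriving at the pointwise bound $e^{-\delta\rho_z\varepsilon(\rho_z)}\lesssim K(|z|/2)$; the factorial-scale conclusion then falls out instantly from the exact moment identity $\int_0^\infty t^nK(t)\,dt=\gamma(n+1)$. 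Your route — changing variables $|z|\leftrightarrow\rho$ and splitting at $\rho_*\sim(n+1)/\delta$ — must earn the factor $\gamma(n+1)$ by hand, but it is more self-contained and avoids the asymptotics of $K$ altogether. Both strategies are valid; the paper's is shorter because $\int t^nK$ is already known to equal $\gamma(n+1)$ exactly.

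There is one concrete place where your bookkeeping, as written, does not close. To control $\bigl(L(\rho_*)/L(n+1)\bigr)^{n+1}$ with $\rho_*=2(n+1)/\delta$ you cite Lemma~\ref{lem: widetildeE}(3)--(4). Part~(4) only gives $(n+1)\bigl(\log L(\rho_*)-\log L(n+1)\bigr)\lesssim\rho_*+n\asymp n/\delta$, i.e.\ a bound of order $e^{Cn/\delta}$, which for a fixed small $\delta$ dwarfs $2^n$; and part~(3) concerns $\Lambda_L$, not $L$, so it does not apply. The estimate that works is exactly your fallback ``directly by slow variation'': by (R2) ($\varepsilon$ eventually non-increasing) one has $\log L(\rho_*)-\log L(n+1)=\int_{n+1}^{\rho_*}\varepsilon(u)\,\tfrac{du}{u}\le\varepsilon(n+1)\log\tfrac{2}{\delta}$, and then (R1) ($\varepsilon=o(1)$) gives $\bigl(L(\rho_*)/L(n+1)\bigr)^{n+1}\le e^{(n+1)\varepsilon(n+1)\log(2/\delta)}=e^{o(n)}\le 2^{n/2}$ for $n$ large, with the rate (hence the final $C$) depending on $\delta$, which is allowed. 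A second small point: beyond $\rho_*$ the derivative of the exponent is only $O(\varepsilon(\rho))\to0$, so ``exponential decay'' needs a word of justification — the substitution $v=\int_{2\rho_*}^\rho\varepsilon(u)\,du$ converts the tail into $\lesssim\int_0^\infty(\rho\varepsilon(\rho))^{-1}e^{-\delta v/2}\,dv\lesssim_\delta 1$, using $\rho\varepsilon(\rho)\to\infty$. With these two points made precise, your argument is complete.
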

\begin{lemma} \label{lem:KE strong A for spliting.} 
	Suppose  the function  $L$  satisfies assumptions \emph{(R1), (R2), (R3), (R5)} and \emph{(R9)}. Then for any $\delta>0$ there exists $\delta_1>0$ such that 
	\[ E((1-\delta)|z|)|K(z)|\lesssim E(\delta_1 |z|),\quad z\in\Psi_\pm,\;|z|>1. \]
\end{lemma}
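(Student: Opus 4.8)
The plan is to reduce the complex bound on $\Psi_\pm$ to the asymptotics of $K$ and $E$ supplied by Theorems \ref{TheoremK} and \ref{TheoremE} together with the saddle-point dictionary, and then to control the single ``dangerous'' quantity $\re\bigl(s_z\varepsilon(s_z)\bigr)$ along the curves $\Psi_\pm$. First I would parametrize: for $z\in\Psi_\pm$ with $|z|>r_0$ we have, by construction of $\Psi_\pm$ as the boundary curves of $\Omega(\tfrac\pi2)$, that $z$ is the image under the saddle-point map of a point $s=s_z=\rho e^{\pm i\pi/2}(1+o(1))$ lying (asymptotically) on the imaginary axis, so $\rho=\rho_z\asymp L^{-1}(|z|)$ and, by Lemma \ref{lem: LcomplexLemma} applied with $\theta=\pm\pi/2$, $\varepsilon(s_z)=\varepsilon(\rho)+i\cdot(\pm\tfrac\pi2)\rho\varepsilon'(\rho)(1+o(1))$. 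Hence $s_z\varepsilon(s_z)=\pm i\rho\varepsilon(\rho)-\tfrac\pi2\rho^2\varepsilon'(\rho)+o(\rho^2|\varepsilon'(\rho)|)+o(\rho\varepsilon(\rho))$, and the key observation is that the real part is $-\tfrac\pi2\rho^2\varepsilon'(\rho)(1+o(1))$, which is \emph{negative} (since $\varepsilon$ is eventually non-increasing by (R2)) — this is exactly the sign that makes $K$ small and the estimate possible.

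Next I would plug the asymptotics in. By Theorem \ref{TheoremK}, $|K(z)|\asymp \sqrt{|s_z|/|\varepsilon(s_z)|}\,\exp\bigl(-\re(s_z\varepsilon(s_z))\bigr)$, and by Theorem \ref{TheoremE} (or its positive-ray version, which is all that is needed since $E$ is evaluated on $\mathbb{R}_+$), $\log E((1-\delta)|z|)=\rho'\varepsilon(\rho')(1+o(1))$ where $\rho'=L^{-1}((1-\delta)|z|)$; similarly $\log E(\delta_1|z|)=\rho''\varepsilon(\rho'')(1+o(1))$ with $\rho''=L^{-1}(\delta_1|z|)$. So the claimed inequality $E((1-\delta)|z|)|K(z)|\lesssim E(\delta_1|z|)$ becomes, after taking logarithms and absorbing the polynomial prefactor,
\[
\rho'\varepsilon(\rho')+\tfrac\pi2\rho^2\varepsilon'(\rho)(1+o(1))\;\leq\;\rho''\varepsilon(\rho'')+o(\rho\varepsilon(\rho)),
\]
where $\rho,\rho',\rho''$ are comparable up to the slowly varying factor $L$ (they are the $L^{-1}$-images of $|z|$, $(1-\delta)|z|$, $\delta_1|z|$). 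The point is that $\re(s_z\varepsilon(s_z))$ is negative, so the left side is genuinely smaller than $\rho'\varepsilon(\rho')$; we need a quantitative gain of order at least $\rho\varepsilon(\rho)-\rho''\varepsilon(\rho'')$, i.e. the cost of shrinking the argument of $E$ from $(1-\delta)|z|$ down to $\delta_1|z|$.

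The main obstacle, and where the regularity hypotheses (R5) and (R9) enter decisively, is to show that $\rho^2|\varepsilon'(\rho)|$ dominates this cost, i.e. that $\rho'\varepsilon(\rho')-\rho''\varepsilon(\rho'')\lesssim \tfrac\pi2\rho^2|\varepsilon'(\rho)|$ — equivalently that one may pick $\delta_1=\delta_1(\delta)$ small enough for this to hold eventually. Here I would use that $r\mapsto L^{-1}(r)$ is itself slowly varying, so $\rho',\rho''\sim\rho$ up to bounded multiplicative constants, and write $\rho'\varepsilon(\rho')-\rho''\varepsilon(\rho'')=\int_{\rho''}^{\rho'}\frac{d}{dt}\bigl(t\varepsilon(t)\bigr)\,dt=\int_{\rho''}^{\rho'}\bigl(\varepsilon(t)+t\varepsilon'(t)\bigr)\,dt$; the $\varepsilon(t)$ contribution is $o(\rho\varepsilon(\rho))$ because the interval $[\rho'',\rho']$ has logarithmic length $\log\tfrac{\rho'}{\rho''}\to 0$ is false in general, so instead I would use $\log\tfrac{\rho'}{\rho''}=O(1)$ together with the fact that $\rho\varepsilon(\rho)/\log$-length still beats it only after invoking that the relevant comparison is against $\rho^2|\varepsilon'(\rho)|$ and that, by (R5), $L(\rho^2|\varepsilon'(\rho)|)\sim L(\rho)$, i.e. $\rho^2|\varepsilon'(\rho)|$ is comparable to $\rho$ on the $L^{-1}$-scale, which is precisely what lets the $K$-decay swallow the change in the $E$-argument. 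Assembling these estimates, choosing $\delta_1$ accordingly, and checking the $o(1)$ terms are uniform in $z\in\Psi_\pm$, $|z|>1$ (which follows from the uniformity in Theorems \ref{TheoremK}, \ref{TheoremE} and Lemma \ref{lem: LcomplexLemma}), completes the argument; the bounded range $1<|z|\leq r_0$ is handled trivially by continuity and the non-vanishing of $E$ on $\mathbb{R}_+$.
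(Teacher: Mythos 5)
Your overall strategy — pass to $s_z$, use Theorems~\ref{TheoremK}, \ref{TheoremE} and Lemma~\ref{lem: LcomplexLemma}, identify the decisive quantity $\re(s_z\varepsilon(s_z))\sim\tfrac\pi2\rho^2|\varepsilon'(\rho)|$ on $\Psi_\pm$, and invoke (R5) to relate $\rho^2|\varepsilon'(\rho)|$ to $\rho$ on the $L^{-1}$-scale — is exactly the one the paper takes. But the middle of your argument contains errors that would sink a write-up.

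First, a sign slip: you claim $\re(s_z\varepsilon(s_z))=-\tfrac\pi2\rho^2\varepsilon'(\rho)(1+o(1))$ is \emph{negative} because $\varepsilon$ is non-increasing; it is the opposite. Non-increasing $\varepsilon$ gives $\varepsilon'\le 0$, so $-\tfrac\pi2\rho^2\varepsilon'(\rho)=\tfrac\pi2\rho^2|\varepsilon'(\rho)|\ge 0$. The point is that this quantity is \emph{positive}, hence the exponent $-\re(s_z\varepsilon(s_z))$ in $K$ is negative and $K$ decays. As written, the sentence is internally inconsistent with your own conclusion.

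Second, and more seriously, the claim $\log\tfrac{\rho'}{\rho''}=O(1)$ is false. Here $\rho'=L^{-1}\big((1-\delta)|z|\big)$ and $\rho''=L^{-1}(\delta_1|z|)$ differ by a fixed multiplicative factor in the argument of $L^{-1}$; since $L$ is slowly varying, $L^{-1}$ is rapidly varying, so
\[
\log\frac{L^{-1}\big((1-\delta)r\big)}{L^{-1}(\delta_1 r)}\asymp\frac{1}{\varepsilon\big(L^{-1}(r)\big)}\log\frac{1-\delta}{\delta_1}\longrightarrow\infty .
\]
So neither of your two attempted bounds on the $\int_{\rho''}^{\rho'}\varepsilon$ contribution holds, and the integral step does not close. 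You half-acknowledge the difficulty (``is false in general'') but the replacement you propose does not repair it.

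The repair is simple and it is what the paper does: the $\rho''\varepsilon(\rho'')$ term (equivalently $\log E(\delta_1|z|)$) is nonnegative and can be dropped entirely — the lemma reduces to $E\big((1-\delta)|z|\big)|K(z)|\lesssim 1$. Then use Lemma~\ref{lem: K and E ray lemma} to bound $\log E\big((1-\delta)|z|\big)\le C+L^{-1}\big((1-\delta)|z|\big)$; use Lemma~\ref{lem: Apn} with $L_1=L$ and $L_2(\rho)=1/\big(\rho|\varepsilon'(\rho)|\big)$ (this is where (R5), via $\varepsilon(\rho)\log L_2(\rho)=o(1)$ and Lemma~\ref{lem: subReplacmentLem}(3), enters) to get $\rho^2|\varepsilon'(\rho)|\ge L^{-1}\big((1-\tfrac\delta2)|z|\big)$, i.e.\ $\log|K(z)|\le C-L^{-1}\big((1-\tfrac\delta2)|z|\big)$; and finally use rapid variation of $L^{-1}$ to see that $2L^{-1}\big((1-\delta)|z|\big)-L^{-1}\big((1-\tfrac\delta2)|z|\big)\le C$. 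This gives the stronger bound $E^2\big((1-\delta)|z|\big)|K(z)|\lesssim 1$, with no need for the integral representation of $t\mapsto t\varepsilon(t)$ or any comparison of $\rho'$ with $\rho''$.
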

\begin{lemma} \label{lem:spliting H intgral K small} 
	Suppose  the function  $L$  satisfies assumptions \emph{(R1), (R2), (R3), (R5), (R7)} and \emph{(R9)}. Then there exists a $C>0$, such that
	\[ \int_{\Psi_+\cap\{|z|>1\}} |z|^n \left|E(z)\right|^{-1/|z|}d|z|\leq C^{n+1}\gamma(n+1).\]
\end{lemma}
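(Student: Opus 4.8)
The plan is to parametrise the tail of $\Psi_+$ by the saddle point, to bound $|E(z)|^{-1/|z|}$ there from above via the complex asymptotics of $E$ from Theorem~\ref{TheoremE}, and then to turn the integral into $\gamma(n+1)$ times a convergent, $n$-independent integral by means of the Ostrowski bound $r^{\,x}\le e^{\Lambda_L(r)}\gamma(x)$, which is immediate from the definition of $\Lambda_L$.

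First I would describe the tail of $\Psi_+$. For $|z|>r_0$ the curve coincides with the upper part of $\partial\Omega(\tfrac{\pi}{2})$, so such a $z$ is the image of a point $s_z=i\rho$, $\rho>\rho_0$, under $\log z=\log L(s)+sL'(s)/L(s)$. Since $L$ is slowly varying (assumption (R1)) we have $\varepsilon(\rho)=o(1)$, hence $\limsup_\rho\varepsilon(\rho)<2$ and Theorem~\ref{TheoremE} applies on $\Omega(\tfrac{\pi}{2}+\delta)\supset\Psi_+\cap\{|z|>r_0\}$. Feeding into it Lemma~\ref{lem: LcomplexLemma} (assumptions (R8)--(R9)) and $\varepsilon'\le0$ (assumption (R2)), and noting that, by (R5), $|\ell''(t)|=e^{-o(t)}$, so that the exponential factor is of order $\exp(\rho^{1-o(1)})$ and dominates the power-type prefactors, one gets on this part of the curve
\[ \log|E(z)|=\re\bigl(s_z\varepsilon(s_z)\bigr)+O(\log\rho)=\tfrac{\pi}{2}\,\rho\,|\ell''(\log\rho)|\,(1+o(1)), \]
hence $\log|E(z)|\ge c_0\,\rho\,|\ell''(\log\rho)|$ for $|z|>r_0$ with some $c_0>0$ (in particular $E$ has no zeros there). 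I would also record that $|z|=r(\rho)$ with $\log r(\rho)=\log L(\rho)(1+o(1))$, and, using (R3) in the form $\rho\varepsilon'(\rho)=o(\varepsilon(\rho))$, that $r'(\rho)=\tfrac{\varepsilon(\rho)}{\rho}\,r(\rho)(1+o(1))$ and $\mathrm{d}|z|=(1+o(1))\,r'(\rho)\,\mathrm{d}\rho$ along $\Psi_+$.

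The main step is then the estimate
\[ \int_{\Psi_+\cap\{|z|>r_0\}}|z|^{n}\,|E(z)|^{-1/|z|}\,\mathrm{d}|z|\;\lesssim\;\gamma(n+1)\int_{\rho_0}^{\infty}e^{\Lambda_L(r(\rho))}\,\exp\!\Bigl(-c_0\,\tfrac{\rho\,|\ell''(\log\rho)|}{r(\rho)}\Bigr)\,\tfrac{\varepsilon(\rho)}{\rho}\,\mathrm{d}\rho , \]
obtained from Step~1 together with $r(\rho)^{\,n+1}\le e^{\Lambda_L(r(\rho))}\gamma(n+1)$. It then suffices to show the $\rho$-integral is finite. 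By Lemma~\ref{lem: widetildeE}(1) and slow variation, $\Lambda_L(r(\rho))\asymp r(\rho)/L(r(\rho))\le r(\rho)\asymp L(\rho)=\rho^{o(1)}$; by (R5), $\rho\,|\ell''(\log\rho)|=\rho^{1-o(1)}$; and it is precisely here that the growth restriction (R7) (equivalently $L(\rho)\lesssim\exp(\log^{a}\rho)$ for some $a<\tfrac12$) is used --- it keeps $\rho\,|\ell''(\log\rho)|/r(\rho)\asymp\rho^{1-o(1)}/L(\rho)$ a full power of $\rho$ above $\Lambda_L(r(\rho))\asymp L(\rho)$, so the integrand is $\le\exp(\rho^{o(1)}-c_0\rho^{1-o(1)})$ and the integral converges to some finite $C_1$. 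Hence the contribution of $\{|z|>r_0\}$ is $\le CC_1\gamma(n+1)\le C_2^{\,n+1}\gamma(n+1)$. On the remaining compact arc $\Psi_+\cap\{1<|z|\le r_0\}$ the integrand is bounded except for at most finitely many singularities of the form $|z-z_*|^{-k/|z_*|}$ at zeros of $E$, which are integrable since $|z_*|>1$ (or one deforms $\Psi_+$ inside $\{|z|\le r_0\}$ to avoid them), so that piece contributes $\lesssim r_0^{\,n}\le(C_3r_0)^{\,n+1}\gamma(n+1)$ because $\gamma(n+1)\ge1$; adding the two pieces gives the lemma.

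The hard part is Step~1: transporting the sharp asymptotics of $E$ --- and, via the relation $|zE(z)K(z)|\asymp\rho/\varepsilon(\rho)$ (which follows from Theorems~\ref{TheoremK} and~\ref{TheoremE} once the factors $e^{\pm s\varepsilon(s)}$ cancel in modulus), also of $K$ --- from the positive ray to the complex curve $\Psi_+$ with enough uniformity. This is where the regularity package (R2), (R3), (R5), (R8), (R9) and the estimates of \cite{KiroSodin} are essential; after that, the role of (R7) is merely to tilt the competition in Step~2 the right way, so that the super-exponential decay of $|E(z)|^{-1/|z|}$ on $\Psi_+$ overwhelms the $e^{\Lambda_L}$-loss from the Ostrowski bound --- the same mechanism that underlies the companion estimate $\int_{\Psi_+}|z^{n}K(z)\,\mathrm{d}z|\le C^{n+1}\gamma(n+1)$.
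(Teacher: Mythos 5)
Your proof is correct and takes a genuinely different route from the paper's. Both arguments open identically: on the tail of $\Psi_+$, parametrized by $s_z=i\rho$, Theorem~\ref{TheoremE} together with Lemma~\ref{lem: LcomplexLemma} and $\varepsilon'\le 0$ (from (R2)) give $\log|E(z)|\sim\frac{\pi}{2}\rho^2|\varepsilon'(\rho)|$ and $|z|=L(\rho)\bigl(1+O(\varepsilon(\rho))\bigr)$, whence $|E(z)|^{-1/|z|}\lesssim\exp\bigl(-c_0\rho^2|\varepsilon'(\rho)|/L(\rho)\bigr)$. From there the paper passes through Lemma~\ref{lem: Apn} (with $L_2=L/(\rho|\varepsilon'|)$), Lemma~\ref{lem: K and E ray lemma} and Theorems~\ref{TheoremK}, \ref{TheoremE} to dominate the integrand by $|z|^n K(|z|/2)$, and then invokes the moment identity $\int_0^\infty r^n K(r)\,dr=\gamma(n+1)$. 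You instead pull $\gamma(n+1)$ out at once by the Ostrowski inequality $r^{n+1}\le e^{\Lambda_L(r)}\gamma(n+1)$, and conclude by showing that the remaining $n$-free $\rho$-integral converges because $\Lambda_L(r(\rho))\lesssim r(\rho)\asymp L(\rho)=\rho^{o(1)}$ while $\rho^2|\varepsilon'(\rho)|/L(\rho)=\rho^{1-o(1)}$ (the latter from $\ell(t)=o(t)$ via (R1) and $-\log|\ell''(t)|=o(t)$ via (R5)). Your route is shorter and entirely bypasses the moment kernel $K$ and the comparison chain $L^{-1}\to E\to K$; what it gives up is exactness, replacing the paper's tight matching of decay rates with an order-of-magnitude comparison, which however suffices here.

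Two small caveats. First, your localization of (R7) is not supported by what you actually do: the estimates you use --- $\Lambda_L(r)\lesssim r$ (you never need the finer $\Lambda_L(r)\asymp r/L(r)$) and $\rho^2|\varepsilon'(\rho)|/L(\rho)=\rho^{1-o(1)}$ --- already follow from (R1), (R2), (R3), (R5). This is harmless since (R7) is hypothesized anyway, but the ``precisely here'' claim is misleading; in the paper it is the hypothesis $\varepsilon(\rho)\log L_2(\rho)=o(1)$ of Lemma~\ref{lem: Apn} that calls on (R7), a step your route simply avoids. Second, the assertion that a zero $z_*$ of $E$ of order $k$ on the compact arc gives an integrable singularity ``since $|z_*|>1$'' is true only when $k<|z_*|$; but your fallback --- choosing the compact part of $\Psi_+$, which the paper leaves unspecified, so as to avoid the zeros of $E$ --- is the correct fix, and the paper's own one-line treatment of that arc tacitly makes the same choice.
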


Recall that  $H$ is a positive $C^1$-function, decreasing on $(0,\tfrac{\pi}{2})$,  satisfying $H(\pi-\psi)=H(\psi)$ for  $\psi\in(0,\pi)$, and defined for $\psi\in (0,\delta)$ (with $\delta>0$ sufficiently small) by the equations
\[ \psi=\im\left(\log L(i\rho)+\varepsilon(i\rho)\right),\quad \rho>\rho_0, \]
\[H(\psi)=\re\left(i\rho\varepsilon\left(i\rho\right)\right),\quad 0<\psi<\delta.  \]
\begin{lemma}\label{lem: loglog H} 
		Suppose  the function  $L$  satisfies assumptions \emph{(R2), (R3)} and \emph{(R9)}. Then, there exists $A>0$, s.t.
	\[ \log \log H\left(\psi+\frac{A}{r}\right)\leq \log\log H(\psi)-\frac{3}{r},\quad r\geq1,\; 0<\psi\leq\tfrac{\pi}{2}.\]
\end{lemma}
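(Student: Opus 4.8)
The plan is to reduce the claimed inequality to a lower bound on $-\frac{d}{d\psi}\log\log H(\psi)$ and then to establish that bound, the substantive part being the behaviour of $H$ near $\psi=0$. Write $G(\psi):=\log\log H(\psi)$. Since $H$ is decreasing on $(0,\tfrac\pi2)$ with $H(0^+)=+\infty$, the set on which $G$ is defined is an interval $(0,\psi_*)$ with $\psi_*\le\tfrac\pi2$ (where $G$ is undefined the asserted inequality is read as vacuously true). By the mean value theorem, $G(\psi+\tfrac Ar)-G(\psi)=G'(\xi)\tfrac Ar$ for some intermediate $\xi$, so it suffices to produce $A>0$ with $-G'\ge\tfrac3A$ throughout the domain of $G$.

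First I would reparametrize near $0$. For small $\psi$, let $\rho=\rho(\psi)>\rho_0$ be the unique solution of $\psi=\im\bigl(\log L(i\rho)+\varepsilon(i\rho)\bigr)$; this is a smooth, strictly decreasing function with $\rho(\psi)\to\infty$ as $\psi\to0^+$, and by the definition of $H$ one has $\log H(\psi)=\re\bigl(i\rho(\psi)\,\varepsilon(i\rho(\psi))\bigr)$. Integrating $\frac{d}{d\phi}\varepsilon(\rho e^{i\phi})=i\rho e^{i\phi}\varepsilon'(\rho e^{i\phi})$ and $\frac{d}{d\phi}\log L(\rho e^{i\phi})=i\,\varepsilon(\rho e^{i\phi})$ over $\phi\in[0,\tfrac\pi2]$, replacing $\rho e^{i\phi}\varepsilon'(\rho e^{i\phi})$ by $(1+o(1))\rho\varepsilon'(\rho)$ via assumption (R9), and using (R2)--(R3) to discard the resulting lower-order terms and to fix $\varepsilon'\le0$, one gets, as $\rho\to\infty$,
\[ \psi(\rho)=\tfrac\pi2\,\varepsilon(\rho)(1+o(1)),\qquad \log H(\psi(\rho))=\re\bigl(i\rho\,\varepsilon(i\rho)\bigr)=\tfrac\pi2\,\rho^2|\varepsilon'(\rho)|(1+o(1)). \]
Differentiating these relations in $\rho$ in the same manner gives $\psi'(\rho)=\tfrac\pi2\,\varepsilon'(\rho)(1+o(1))$ and $\frac{d}{d\rho}\log H(\psi(\rho))=\tfrac\pi2\,\rho|\varepsilon'(\rho)|(1+o(1))$, so by the chain rule
\[ G'(\psi)=\frac{1}{\log H(\psi)}\cdot\frac{\frac{d}{d\rho}\log H(\psi(\rho))}{\psi'(\rho)}=-\frac{2}{\pi\,\rho|\varepsilon'(\rho)|}(1+o(1))=-\frac{2}{\pi\,|\ell''(\log\rho)|}(1+o(1)). \]
Since (R3) forces $\ell''(t)\to0$, this tends to $-\infty$ as $\psi\to0^+$; hence there are $\psi_0>0$ and a constant $c_0>0$ (which can be made as large as we like by shrinking $\psi_0$) with $-G'\ge c_0$ on $(0,\psi_0]$.

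On the remaining fixed interval $[\psi_0,\psi_*)$ the function $H$ is the explicitly constructed positive, $C^1$, strictly decreasing function, so $G'=\frac{H'}{H\log H}<0$ is continuous and negative there, and, because $\log H\downarrow0$ as $\psi\uparrow\psi_*$, one has $-G'\to+\infty$ at the right endpoint; therefore $-G'\ge c_1>0$ on $[\psi_0,\psi_*)$ for some $c_1$ (if it happens that $\psi_*=\tfrac\pi2$, one additionally chooses the extension of $H$ on $(\delta,\tfrac\pi2)$ so that this persists up to $\tfrac\pi2$). Taking $A:=3/\min(c_0,c_1)$ then yields $-G'\ge\tfrac3A$ everywhere on the domain of $G$, and the mean value theorem reduction of the first paragraph completes the argument.

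The hard part will be obtaining the expansions of $\psi(\rho)$, of $\log H(\psi(\rho))$, and — most importantly — of their $\rho$-derivatives with enough uniformity, since the whole estimate hinges on the cancellation that produces $G'(\psi)\sim-\tfrac{2}{\pi|\ell''(\log\rho)|}$; this is precisely what assumption (R9), together with (R2)--(R3) and the Lindel\"of-type expansions of Lemma~\ref{lem: LcomplexLemma}, is designed to furnish, and one must also keep careful track of the Jacobian $\psi'(\rho)$ of the parametrization, whose leading term comes from $\im(\log L(i\rho))$ while the $\varepsilon(i\rho)$ contribution is negligible for $\psi$ itself but not a priori for $\psi'$.
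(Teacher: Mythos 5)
Your argument is essentially the same as the paper's: both establish the key asymptotic $\frac{d}{d\psi}\log\log H(\psi)=\dfrac{2}{\pi\rho\varepsilon'(\rho)}(1+o(1))$ by passing to the saddle-point parametrization $\rho=\rho(\psi)$, using the expansions of Lemma~\ref{lem: LcomplexLemma} (which your integral computation re-derives), carefully treating the derivative $\psi'(\rho)$ rather than differentiating an asymptotic, and then concluding via the mean value theorem that $\dfrac{d}{d\psi}\log\log H\to-\infty$ gives the desired bound for small $\psi$. The paper dismisses the compact region $\psi\geq\psi_0$ with the terse remark that it suffices to treat small $\psi$ and large $r$; you spell this out via the freedom in choosing the $C^1$ extension of $H$ away from $0$, which is a reasonable and correct elaboration.
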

The proofs are given in Appendix B. 
\section{Singular transform of the  exponential, the function $E_1$}
Given a function $L$, denote by $E_1$ the singular transform of the exponential function $x\mapsto\exp x$, i.e.,

\[ E_1(z)=\sum_{n\geq 0} \frac{z^n}{n!\gamma(n+1)},\quad\text{where} \quad \gamma(n)=L(n)^n.\]
This is an entire function of zero exponential type. We will  use the following estimates of the function $E_1$:
\begin{lemma}\label{lem: widetildeE trvial bound}
	Suppose that $L$ satisfies assumption \emph{(R1)}. Then
	there exists a $C>0$ such that $\log |E_1(z)|\leq C\Lambda_L(|z|).$ Here, as before, $\Lambda_L(r)=\sup_{x\geq 0} \left[x\log r-x\log(x L(x))\right]$.
\end{lemma}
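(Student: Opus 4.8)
The plan is to express $E_1$ as a convolution-type integral of the exponential against the kernel $K$ (or, more precisely, to run the saddle-point method directly on the power series), and then to recognize the resulting exponent as $\Lambda_L$. The starting point is the representation of the reciprocal Gamma-type factor: since $\gamma(n+1) = L(n+1)^{n+1}$ and, by assumption (R1), $L$ is slowly varying, the coefficients $\frac{1}{n!\,\gamma(n+1)}$ decay roughly like $\frac{1}{n!\,L(n)^n}$. I would first reduce to a bound on the maximum term of the series $\max_{n\ge 0}\frac{r^n}{n!\,\gamma(n+1)}$, using the standard fact that for an entire function with positive coefficients, $|E_1(z)| \le E_1(|z|)$, and that $E_1(r)$ is comparable (up to a factor that is subexponential in the relevant scale) to its maximal term — here one needs the ratio of consecutive coefficients to vary slowly, which again follows from (R1) applied to $L(n+1)/L(n) = 1 + o(1)$. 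So it suffices to estimate $\log \max_{n\ge 0}\left(n\log r - \log n! - (n+1)\log L(n+1)\right)$.

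The core computation is then to show this maximum is $O(\Lambda_L(r))$. Using Stirling, $\log n! = n\log n - n + O(\log n)$, so the exponent is essentially $n\log r - n\log n + n - n\log L(n) + O(\log n)$. Comparing with $\Lambda_L(r) = \sup_{x\ge 0}\big[x\log r - x\log(xL(x))\big] = \sup_x\big[x\log r - x\log x - x\log L(x)\big]$, the only discrepancy is the extra linear term $+n$ coming from Stirling. The point is that $+n$ is absorbed by the factor $C$ in the bound $C\,\Lambda_L(|z|)$: indeed, at the optimal index $n = n(r)$ for $E_1$, one checks that $n(r) \lesssim \Lambda_L(r)$ — this is exactly statement (1) of Lemma~\ref{lem: widetildeE}, namely $\Lambda_L(e\,\rho L(\rho)) \sim \rho$, which says the optimizing $x$ in $\Lambda_L(r)$ grows like $r/L^{-1}(\cdots)$ and in particular is of the same order as $\Lambda_L(r)$ itself. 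Hence the linear correction $n(r) = O(\Lambda_L(r))$, and the maximum term of $E_1(r)$ is at most $(1 + o(1))\Lambda_L(r) + O(\Lambda_L(r)) = O(\Lambda_L(r))$.

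Alternatively, and perhaps more cleanly, I would avoid Stirling entirely by writing $\frac{1}{n!} = \frac{1}{2\pi i}\oint \frac{e^w}{w^{n+1}}\,dw$ or by using the elementary inequality $n! \ge (n/e)^n$, which gives directly $\frac{r^n}{n!\,\gamma(n+1)} \le \frac{(er)^n}{n^n L(n+1)^{n+1}} \le \frac{1}{L(n+1)}\cdot\frac{(er)^n}{(nL(n+1))^n}$. Since $L(n+1) \asymp L(n)$ by slow variation, the right-hand side is at most $\frac{(er)^n}{(nL(n))^n}$ up to a bounded factor, and summing (the extra factor $e^n$ is harmless because replacing $r$ by $er$ changes $\Lambda_L$ only by a bounded multiplicative constant, by statement (3) of Lemma~\ref{lem: widetildeE}, $\Lambda_L(\lambda r)\sim\Lambda_L(r)$) yields $E_1(r) \lesssim \sum_{n\ge 0}\frac{(er)^n}{(nL(n))^n} \asymp \exp\big(\Lambda_L(er)\big) \asymp \exp\big(O(\Lambda_L(r))\big)$, where the middle comparison is the standard estimate of a series by its maximal term together with the slow variation of the ratios.

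The main obstacle is the passage from the maximal term of the series to the sum — i.e., controlling the number of "effective" terms near the peak and showing it contributes only a subexponential (in the $\Lambda_L$-scale) factor. This is where (R1) is genuinely used: it guarantees that $\log\big(\text{coefficient}_n/\text{coefficient}_{n+1}\big)$ varies slowly in $n$, so that the terms decay at a controlled rate away from the peak and the tail is summable with a bound absorbed into the constant $C$. Everything else — Stirling, the inequality $n!\ge(n/e)^n$, the invariance of $\Lambda_L$ under bounded rescaling — is routine, and the identifications with $\Lambda_L$ are exactly Lemma~\ref{lem: widetildeE}, which we are free to quote.
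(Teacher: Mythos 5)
Your proposal is essentially correct and takes the same route as the paper: bound $|E_1(z)|\le E_1(|z|)$, compare the series to its maximal term, and identify the logarithm of the maximal term with $O(\Lambda_L(r))$ via $n!\ge(n/e)^n$ --- this last identification is exactly Lemma~\ref{lemma:estimatcoef2}, which the paper simply quotes rather than re-derives. The only step worth spelling out more carefully is the passage from maximal term to sum: the paper does this by truncating at $n=re$, bounding the block $n\le re$ by $(re+1)\cdot\max_n r^n/(n!\gamma(n+1))$ (with $\log r\lesssim\Lambda_L(r)$) and the tail by $\sum_n\gamma(n+1)^{-1}<\infty$, which is cleaner than the slowly-varying-coefficient-ratios heuristic you mention.
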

\begin{lemma}\label{lem: widetilde E 1st}
	Suppose that $L$  satisfies assumptions \emph{(R1),(R2), (R4)} and \emph{(R8)}.   Then 
	\[\log |E_1(ix)|\asymp \Lambda_L(x)\varepsilon(x),\quad x>1. \]
\end{lemma}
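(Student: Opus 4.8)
The plan is to analyze $E_1(ix) = \sum_{n\ge 0} \frac{(ix)^n}{n!\,\gamma(n+1)}$ via the saddle–point method, treating it as a Laplace-type integral over the positive ray after a Mellin/Borel representation. The key observation is that $E_1$ is the Borel transform (in the classical $n!$ sense) of $E$: writing $E_1(z) = \int_0^\infty e^{-t} E(tz)\,dt$ formally (which converges since $E$ has very rapid but sub-exponential-in-a-power growth on rays and is bounded off the positive ray by Theorem \ref{TheoremE}), one reduces the problem to understanding $\int_0^\infty e^{-t} E(itx)\,dt$. Since $ix$ lies off the positive real axis, along the ray of integration $tx$ traces the positive imaginary axis; by Lemma \ref{lem: E complex lemma} (using (R2), (R3), (R8)), $E(itx)$ has only polynomial growth there, which is far too crude. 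So instead I would deform the contour: write $E_1(ix)=\sum_n \frac{(ix)^n}{n!\gamma(n+1)}$ and apply the saddle–point equation for the combined phase $\varphi(s) = -s\log s + s - s\log\gamma(s) + s\log(ix) = s - s\log s - s\log L(s) + s\log(ix)$ (after Stirling). The relevant saddle $s_*$ solves $\log s_* + \log L(s_*) + \varepsilon(s_*) = \log(ix)$, i.e. roughly $\log s_* \approx \log x - \log L(x) + i\pi/2$, so $s_* \asymp \frac{x}{L(x)} e^{i\pi/2}(1+o(1))$; the $i\pi/2$ comes from the imaginary unit and is what makes the answer genuinely different from $E_1$ on the positive ray (there $E_1(x)$ would be of order $e^{x}$-ish, not $\exp(\Lambda_L(x)\varepsilon(x))$).

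Concretely, I would proceed as follows. \textbf{Step 1:} Establish the integral representation $E_1(z) = \int_{\Psi} e^{-w} E(zw)\,dw$ for a suitable contour $\Psi$ in the right half-plane avoiding the positive ray where $E$ is large; this is legitimate because $\gamma(n+1)^{-1}$ decays fast enough that $E$ is entire of the stated growth, and $\int_\Psi w^n e^{-w}dw = n!$ by Cauchy's theorem (cf. equation \eqref{eq: genrlaized moments}). \textbf{Step 2:} For $z=ix$, deform $\Psi$ to pass through the saddle point $w_* = s_*/z$ of $w \mapsto -w + \log E(zw) \approx -w + \Lambda_L(|zw|)$; using Lemma \ref{lem: K and E ray lemma}, $\log E(r) \asymp \Lambda_L(r)$ and the saddle balances $w \sim \Lambda_L'$-type quantities. \textbf{Step 3:} Evaluate the real part of the phase at the saddle. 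Using Lemma \ref{lem: subReplacmentLem}(1) (which requires (R1),(R2),(R4)) to replace $\varepsilon$ evaluated at $\Lambda_L(x)$, $xL(x)$, etc. by $\varepsilon(x)$ up to $(1+o(1))$, and Lemma \ref{lem: LcomplexLemma}/(R8) to control the complex arguments, one computes that $\re\varphi(s_*) \asymp \Lambda_L(x)\varepsilon(x)$. The upper bound $\log|E_1(ix)| \lesssim \Lambda_L(x)\varepsilon(x)$ follows from Lemma \ref{lem: widetildeE trvial bound} together with the off-ray decay, while the lower bound requires the saddle-point asymptotics to show the main term is not cancelled — here one uses that near the imaginary axis $E$ is small (Lemma \ref{lem: E complex lemma}), so the contribution of $E_1(ix)$ genuinely comes from the interior saddle and the quadratic term in the expansion has the right sign after the $\sqrt{s/\varepsilon(s)}$-type prefactor from Theorems \ref{TheoremK}--\ref{TheoremE} is accounted for.

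\textbf{The main obstacle} will be Step 3: controlling the phase at the complex saddle uniformly and pinning down the constant (in $\asymp$) in $\re\varphi(s_*) \asymp \Lambda_L(x)\varepsilon(x)$. Because $\varepsilon(\rho)\to 0$ slowly, the saddle drifts as $x\to\infty$ and one must verify that the linearizations of $\log L$ and $\varepsilon$ around $|s|$ (provided by (R8), (R9) via Lemma \ref{lem: LcomplexLemma}) stay valid along the deformed contour and that the error terms in Stirling and in Theorems \ref{TheoremK}--\ref{TheoremE} are genuinely $o$ of $\Lambda_L(x)\varepsilon(x)$ — which could fail if $\varepsilon$ decays too fast, hence the need for (R4). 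A secondary difficulty is justifying the contour deformation in Step 1 rigorously (interchanging sum and integral, controlling the tails of $\Psi$), but this is routine given the growth estimates already available: $E$ is bounded on sectors off the positive ray and $\log E(r)\lesssim L^{-1}(r) = o(r)$ on the ray, so $e^{-w}E(zw)$ is integrable on $\Psi$. I expect the upper bound to be comparatively easy (it is essentially Lemma \ref{lem: widetildeE trvial bound} plus a computation of $\Lambda_L$ of the shifted argument) and the lower bound — requiring a genuine saddle-point lower estimate rather than just a term-by-term bound — to carry the weight of the proof.
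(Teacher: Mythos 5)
Your high-level intuition — that the lemma is a saddle-point estimate with saddle near $\arg s = \pi/2$, and that the real part comes from the small deviation $\theta = \pi/2 - O(\varepsilon)$ — matches the paper, and you correctly identify the saddle equation $\log s + \log L(s) + \varepsilon(s) = \log(ix)$ and the replacement lemmas (R4) is needed for. But there are two genuine problems with the write-up.

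First, the integral representation in Step 1 is reversed and would give a completely wrong order of magnitude. If $\int_\Psi w^n e^{-w}\,dw = n!$, then
$\int_\Psi e^{-w}E(zw)\,dw = \sum_n \frac{n!\,z^n}{\gamma(n+1)}$,
which is the \emph{inverse} Borel transform of $E$, not $E_1 = \sum z^n/(n!\gamma(n+1))$. Worse, for $z = ix$ this integral is over $E$ along the imaginary axis, where Lemma \ref{lem: E complex lemma} gives only polynomial growth, so the integral is $O(x^a)$ — i.e.\ $\log$ of size $O(\log x)$, nowhere near the required $\Lambda_L(x)\varepsilon(x)$, which grows like a power of $x$. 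The correct relation is $E(z) = \int_0^\infty e^{-t}E_1(tz)\,dt$; to express $E_1$ via $E$ you need a Hadamard-product (contour) representation, which would require separate justification.

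Second, the claimed route to the upper bound via Lemma \ref{lem: widetildeE trvial bound} is too weak: that lemma gives $\log|E_1(z)| \lesssim \Lambda_L(|z|)$, missing the crucial factor $\varepsilon(x) = o(1)$. So the upper bound cannot be separated from the saddle-point computation the way you propose.

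The paper sidesteps both issues by observing that $E_1$ \emph{is} an $E$-function, namely for the modified weight $\widetilde\gamma(s) = \Gamma(s)\gamma(s)$. Once one checks (via Stirling) that $\widetilde\gamma$ still satisfies (R3), (R8), Theorem \ref{TheoremE} applies directly and gives $\log E_1(ix) \sim s(1+\varepsilon(s))$ with $s = \rho e^{i\theta}$ the saddle. The real part $\rho\cos\theta$ is then computed from $\rho\asymp\Lambda_L(x)$ (Lemma \ref{lem: widetildeE} part 1) and $\theta = \pi/2 - \varepsilon(\Lambda_L(x))(1+o(1))$, and Lemma \ref{lem: subReplacmentLem}(1) converts $\varepsilon(\Lambda_L(x))$ to $\varepsilon(x)$. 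This delivers the two-sided bound in one shot, without any auxiliary integral representation. Your proposal should be recast along these lines.
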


	 \begin{lemma}\label{lem: mainSpilitEstimate+}
Suppose that $L$ satisfies assumptions \emph{(R1), (R2), (R3)} and \emph{(R9)}.  For any $B>0$, there exists $C_B>0$, such that 
	\[ \big|E_1(i t r e^{i\psi})\big|\leq e^{C_B(\Lambda_L(t)+1)}H\left(\psi+\frac{B}{r}\right) ,\quad r>B,\;B/r<\psi<\tfrac{\pi}{2},\; t>1. \]
\end{lemma}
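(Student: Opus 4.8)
The plan is to estimate $E_1(itre^{i\psi})$ by splitting the defining power series $E_1(w)=\sum_{n\ge 0}\frac{w^n}{n!\,\gamma(n+1)}$ at the index $N$ where the terms of $E(w')$ (with $w'=itre^{i\psi}/$something) peak, and to compare the remaining piece with the function $E_1(ix)$ already controlled by Lemma \ref{lem: widetilde E 1st}. More precisely, write $w=itre^{i\psi}$. Because of the extra $n!$ in the denominator, $E_1$ has zero exponential type and its growth is governed by $\Lambda_L$; the content of the lemma is that, in the sector $B/r<\psi<\pi/2$ (slightly below the positive imaginary axis), the growth is actually as small as $H(\psi+B/r)$ up to a factor $e^{C_B(\Lambda_L(t)+1)}$. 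The factor $\Lambda_L(t)$ should come from absorbing the $t$-dependence: roughly, $|E_1(itre^{i\psi})|$ behaves like $\mu_1(tr)$ times a phase-gain factor, where $\mu_1(r)=\max_n r^n/(n!\gamma(n+1))$, and $\log\mu_1(tr)\le \log\mu_1(r)+\Lambda_L(t)\cdot\varepsilon(\cdots)+O(1)$ by the submultiplicativity statements of Lemma \ref{lem: widetildeE}(2) and Lemma \ref{lem: subReplacmentLem}(1).

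The core estimate is the direction-dependence in $\psi$. Here I would use the saddle-point / Mellin representation: under (R8)–(R9), $E_1$ near the imaginary axis can be analyzed through $\frac{1}{2\pi i}\int \frac{w^{-s}}{\Gamma(s)}\gamma(s)\,ds$ or directly by locating the maximal term of $\sum \frac{w^n}{n!\gamma(n+1)}$. Writing $w=itre^{i\psi}=tre^{i(\pi/2+\psi')}$ with $\psi'=\psi-\pi/2<0$ small in modulus, the $n$-th term has modulus $\frac{(tr)^n}{n!\gamma(n+1)}$ and argument $n(\pi/2+\psi')$; the series is essentially $\sum_n c_n e^{in(\pi/2+\psi')}$ with positive $c_n$ peaked at $n\approx n_*$. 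Oscillation of the phase forces cancellation, and the size of $|E_1(w)|$ is controlled by how far $\pi/2+\psi'$ is from a direction of no cancellation. This is precisely where $H$ enters: by its definition via $\psi=\operatorname{Im}(\log L(i\rho)+\varepsilon(i\rho))$ and $H(\psi)=\exp[\operatorname{Re}(i\rho\varepsilon(i\rho))]$, the function $H(\psi)$ records the value of $|E_1(i\rho)|$-type quantities along the ray at argument $\psi$ above $\pi/2$; the bound $H(\psi+B/r)$ rather than $H(\psi)$ absorbs a width-$B/r$ safety margin, which is exactly the content of Lemma \ref{lem: loglog H} (it guarantees $\log\log H$ drops by $\ge 3/r$ when $\psi$ increases by $A/r$, giving room to soak up error terms that are $e^{O(1)}$ per unit of $1/r$).

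Concretely, the steps I would carry out: (i) use Lemma \ref{lem: widetilde E 1st} to get $\log|E_1(i\rho)|\asymp \Lambda_L(\rho)\varepsilon(\rho)$, and express the right-hand side $H(\psi+B/r)$ in the same $\Lambda_L$–$\varepsilon$ language via the defining equations for $H$ and Lemma \ref{lem: LcomplexLemma}; (ii) parametrize $w=itre^{i\psi}$ and compute the saddle point $s_w$ of $s\mapsto s\log(tr)-\log\Gamma(s)-\log\gamma(s)$, showing $\operatorname{Re}(s_w\varepsilon(s_w))$ plus the $\Gamma$-contribution matches $\log H$ at the shifted argument; (iii) pull out the $t$-dependence by the subadditivity $\Lambda_L(tr)\le \Lambda_L(t)\,r+O(1)$ type bound from Lemma \ref{lem: widetildeE}, turning the $tr$-estimate into $e^{C_B(\Lambda_L(t)+1)}$ times the $r$-estimate; (iv) use Lemma \ref{lem: loglog H} to convert the exact argument $\psi$ appearing from the saddle point into $\psi+B/r$ with room to spare for the $o(1)$ errors in (R8)–(R9) and for the $e^{O(1)}$ slack. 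The main obstacle will be step (ii)–(iv): making the saddle-point estimate for $E_1$ uniform as $\psi\downarrow B/r$ and $r\to\infty$ simultaneously, where the saddle $s_w$ moves off toward infinity in the imaginary direction and the phase-cancellation becomes delicate; one must be careful that the Gamma-function factor $1/\Gamma(s)$ (absent in the plain $E$) does not spoil the match with $H$, and that the error from replacing $L(s)$, $\varepsilon(s)$ by their values on $|s|$ via Lemma \ref{lem: LcomplexLemma} is genuinely absorbed by the $B/r$ margin rather than accumulating. I expect this uniformity, rather than any single estimate, to be the crux.
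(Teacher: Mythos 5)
You have the right ingredients (saddle-point analysis of $E_1$, the function $H$, the Legendre transform $\Lambda_L$), but your plan misses the key identity that makes the argument close and misattributes the role of Lemma~\ref{lem: loglog H}. The paper first isolates the saddle-point estimate into an auxiliary lemma (Lemma~\ref{lem: estimateE1}): $\log|E_1(z)|\leq C+\re_+\!\left(s(1+\varepsilon(s))\right)$ with $s$ solving $sL(s)e^{\varepsilon(s)}=z$. Setting $w(s)=s(1+\varepsilon(s))$ and $\widetilde{w}(s)=\log s+\log L(s)+\varepsilon(s)$, the crucial observation --- absent from your proposal --- is the identity $s\,\widetilde{w}'(s)=w'(s)$, which yields $\frac{d}{dr}w(s(r))=\frac{1}{r\,s(r)}$ along the fixed ray $\arg z=\tfrac{\pi}{2}+\psi$. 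Hence $\re\,w(s(r))$ has a unique critical point over $r$ precisely where $s(r)$ lies on the imaginary axis, $s(r)=i\tau$, and its maximum over $r$ is \emph{exactly} $\log H(\psi)$ by the very definition of $H$. This exact computation of the peak is what resolves the uniformity problem you flagged as ``the crux''; without it, your step (ii) stays heuristic.

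Your step (iv) also points in the wrong direction: since $H$ is decreasing, $H(\psi+B/r)<H(\psi)$, so the shifted bound is \emph{stronger}, not a safety margin, and Lemma~\ref{lem: loglog H} (which quantifies the drop of $H$ as $\psi$ increases) cannot supply it --- indeed it is not used here, but in the proof of Theorem~\ref{thm':spliting}. In the paper's proof, the loss $\log H(\psi)-\log H(\psi+B/r)=\re\bigl(w(i\tau)-w(i\tau_B)\bigr)$ (where $\tau,\tau_B$ are the imaginary-axis saddle radii for the angles $\psi$ and $\psi+\tfrac{B}{r}$) is estimated by the mean value theorem as $\leq C_B\,\tau/r$, and then $\tau/r\lesssim\Lambda_L(t)$ follows from the real part of the saddle equation ($rt\asymp\tau L(\tau)$) together with Lemma~\ref{lem: widetildeE}, parts 1 and 2. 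That \emph{coupled} bound is the actual source of the factor $e^{C_B\Lambda_L(t)}$; your steps (iii)--(iv) treat the $t$-dependence and the $\psi$-shift as separate issues, which would not close the gap.
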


The proofs are given in Appendix C.
\section{Singular transforms of polynomials}
In this section, we  prove estimates on the singular transform of bounded polynomials on an interval $(a,b)$. All the estimates  will follow from the next lemma.
\begin{lemma}\label{lemma:estimatpsi} Suppose that $L$ satisfies assumptions \emph{(R2), (R3)} and \emph{(R8)}. 
	Let  $P$ be  a polynomial   of degree $n$ such that $|P|\leq 1$ on $[a,b]$ \emph{(}$a<0<b$\emph{)}.  There exists $R_0>0$, such that  for any $a<c_-<0<c_+<b$ and any $R>\max\left\{R_0,|v|,\frac{u}{c_-},\frac{u}{c_+}\right\}$, we have
	\[\log|\left(S_L P\right)(u+iv)|\leq C_{c_-,c_+}\frac{n}{R}\left(|u|\varepsilon\left(L^{-1}(R) \right)+|v|\right)+\log E(R)+C\log R.\]
\end{lemma}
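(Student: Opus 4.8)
The plan is to estimate the Taylor coefficients $\widehat{S_L P}(k) = \widehat{P}(k)/\gamma(k+1)$ directly and then sum the resulting power series at the point $u+iv$. First I would bound the Taylor coefficients of a polynomial $P$ with $|P|\le 1$ on $[a,b]$: by a Chebyshev/Bernstein-type inequality (cf. Lemma \ref{Bernstein}) one has $|\widehat{P}(k)| \le C_{c_\pm}^{\,k}$ for $0\le k\le n$ whenever $c_-<0<c_+$ lie inside $(a,b)$, with the constant controlled by $\min(1/|c_-|,1/c_+)$; more precisely one wants the refined bound comparing $\widehat P(k)$ against $\big(\tfrac{1}{|c_-|}\big)^k$ or $\big(\tfrac{1}{c_+}\big)^k$ according to the sign of the term being summed. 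So the series $\sum_k \widehat P(k) w^k / \gamma(k+1)$ is a polynomial of degree $n$, and I want to bound it at $w = u+iv$ by splitting $|w|$-type contributions into a real part feeding into $E$ and an imaginary part feeding into the $\varepsilon$-factor.

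The key mechanism is the following: write $|(S_L P)(u+iv)| \le \sum_{k=0}^{n} |\widehat P(k)|\, |u+iv|^k/\gamma(k+1)$ and, using $|u+iv|\le |u| + |v|$ together with the coefficient bound, compare term by term against $\sum_k R^k/\gamma(k+1) \le (n+1)\max_k R^k/\gamma(k+1) = (n+1)\mu(R)$ where $\mu(R)=\max_k R^k/\gamma(k+1)\asymp E(R)$ up to the polynomial factor coming from Lemma \ref{lem: K and E ray lemma} (this accounts for the $\log E(R) + C\log R$ terms). The subtler point is the factor $\exp[C\tfrac{n}{R}(|u|\varepsilon(L^{-1}(R)) + |v|)]$: this should emerge by writing, for each $k\le n$, the ratio $|u+iv|^k \big/ R^k$ and bounding it via $k\log\frac{|u+iv|}{R}$, but one does better by not using the crude $|u|+|v|$ bound. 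Instead I would use the hypothesis $R > \max\{u/c_-, u/c_+, |v|\}$ and the coefficient estimate to route the $u$-dependence through the \emph{quasianalytic-type} gain: the coefficient bound gives $|\widehat P(k)| R^k/\gamma(k+1) \le \big(\tfrac{R}{R}\big)^k \cdot(\text{something}) = 1$ when $R = u/c_\pm$, and the excess $R > u/c_\pm$ only helps; the factor $\varepsilon(L^{-1}(R))$ appears because shifting $u$ by a small amount changes $\log\gamma$-type quantities at the scale $L^{-1}(R)$ by $\varepsilon(L^{-1}(R))$ per unit, which is exactly the content of the saddle-point relation $\log r = \log L(\rho) + \varepsilon(\rho)$ and Lemma \ref{lem: LcomplexLemma}. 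The $|v|$ term is analogous but without the $\varepsilon$ saving, since moving in the imaginary direction costs a full unit in the exponent per unit of $|v|$ — this is the reflection of the fact that $E$ grows at unit rate only along rays, and in the strip $|v|\le Y$ one loses a factor $e^{O(|v|)}$ per ``unit of growth rate'', i.e. $e^{n|v|/R}$.

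Concretely the steps I would carry out are: (1) state and invoke the coefficient bound $|\widehat P(k)| \le C_{c_\pm}(1/|c_-|)^{j}(1/c_+)^{k-j}$-style inequality from Section 8's Bernstein lemma (or prove it via a contour integral of $P$ over an ellipse with foci $a,b$, cf.\ Lemma \ref{ellipse}); (2) fix $R$ as in the hypothesis and write the sum $\sum_{k=0}^n |\widehat P(k)| |u+iv|^k/\gamma(k+1)$; (3) in each term replace $|u+iv|^k$ by $R^k \exp[k\log(|u+iv|/R)]$ and then — crucially — re-expand $\log|u+iv|$ not around $\log R$ but exploit that $|\widehat P(k)|^{1/k}\lesssim 1/|c_\pm|$ so that $|\widehat P(k)||u+iv|^k \lesssim (|u|/|c_\pm| + |v|)^k \cdot C^k$, and compare $(|u|/|c_\pm|+|v|)$ against $R\cdot L(L^{-1}(R)) $-scaled quantities using $R = L(L^{-1}(R))e^{\varepsilon(L^{-1}(R))}$; (4) bound $k/\gamma(k+1)$-weighted sums by $E$ via $\mu(R)\asymp E(R) R^{O(1)}$; (5) collect: the $|c_\pm|^{-1}$ discrepancy against $u/R$ contributes $\exp[\tfrac{n}{R}|u|\varepsilon(L^{-1}(R))(1+o(1))]$ by the mean-value form of $\log L(\rho)+\varepsilon(\rho)$, the $|v|$ contributes $\exp[\tfrac{n}{R}|v|]$ trivially, and the leftover polynomial-in-$R$ loss is absorbed into $C\log R$.

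\textbf{Main obstacle.} The delicate step is (3)–(5): getting the \emph{precise} exponent $\tfrac{n}{R}(|u|\varepsilon(L^{-1}(R)) + |v|)$ rather than a cruder $\tfrac{n}{R}(|u|+|v|)$. Extracting the gain $\varepsilon(L^{-1}(R))$ in the real direction requires carefully tracking how $\log\big(\tfrac{u}{|c_\pm|}\big)$ compares to $\log R = \log L(L^{-1}(R)) + \varepsilon(L^{-1}(R))$ when $u/|c_\pm|$ is merely $\le R$ and not equal to it, and using that $L$ is slowly varying (assumption (R1)) so that $L(L^{-1}(R))\approx L$ of nearby points; the intermediate-value estimate here is where assumptions (R2), (R3), (R8) get used. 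I expect this to be a page or two of careful but routine estimation once the coefficient bound and the $\mu\asymp E$ comparison are in place; the conceptual content is entirely in recognizing that the singular-transform weight $\gamma(k+1)^{-1}$ converts a polynomial bounded on an interval into an entire function whose growth in the real direction is governed by $E$ and in the imaginary direction picks up only an $e^{O(n|v|/R)}$ correction.
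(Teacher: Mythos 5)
Your plan is structurally different from the paper's proof, and unfortunately the central estimate cannot work as you have set it up. You propose bounding $|S_L P(u+iv)| \le \sum_{k=0}^{n}|\widehat P(k)|\,|u+iv|^k/\gamma(k+1)$ and then extracting the exponent $\tfrac{n}{R}\bigl(|u|\varepsilon(L^{-1}(R))+|v|\bigr)$ by a mean-value comparison of $\log(u/|c_\pm|)$ with $\log R=\log L(L^{-1}(R))+\varepsilon(L^{-1}(R))$. But the right-hand side of your bound is a function of $|u+iv|$ only: every term $|\widehat P(k)|\,|u+iv|^k/\gamma(k+1)$ is rotationally symmetric in $w=u+iv$. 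Such a bound cannot distinguish between the real and imaginary directions, and in particular it cannot produce a majorant in which the coefficient of $|u|$ in the exponent is the small quantity $\varepsilon(L^{-1}(R))$ while the coefficient of $|v|$ is a fixed constant. Whatever you obtain for $u+iv$ you would also obtain for the point $i|u+iv|$ on the imaginary axis, which would contradict the much slower growth of $S_L P$ along (near-)real directions. The intermediate manipulation ``$|\widehat P(k)||u+iv|^k\lesssim(|u|/|c_\pm|+|v|)^kC^k$ from $|\widehat P(k)|^{1/k}\lesssim1/|c_\pm|$'' is also incorrect: it would give $(|u+iv|/|c_\pm|)^k$, not $(|u|/|c_\pm|+|v|)^k$, and even if it held the weight on $|v|$ would then be $1$ while the weight on $|u|$ would be $1/|c_\pm|$, which is $O(1)$, not $\varepsilon(L^{-1}(R))=o(1)$.

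The paper's proof avoids the coefficient-sum bound entirely. It writes $(S_LP)(w)=\tfrac{1}{2\pi i}\oint E(s)\,P(w/s)\,\tfrac{ds}{s}$ and deforms the contour to a curve $\Gamma_R$ that stays within an angle $\theta_R=C\varepsilon(L^{-1}(R))$ of the positive ray (on $|s|=R$, where $|E|\le E(R)$) and then escapes to $|s|=R^2$ along the rays $\arg s=\pm\theta_R$ where Lemma \ref{lem: E complex lemma} guarantees $|E(s)|\lesssim|s|^a$. For $s=Re^{i\theta}$ with $|\theta|\le\theta_R$ and $w=u+iv$, one has $\mathrm{Re}(w/s)\in[c'_-,c'_+]$ and $|\mathrm{Im}(w/s)|\le(u\theta_R+|v|)/R$, so $w/s$ lies in a thin rectangle, hence in a Bernstein ellipse $\mathbf{T}_{1+C(u\theta_R+|v|)/R}(a,b)$, and Bernstein's inequality gives $|P(w/s)|\le\exp[C\tfrac{n}{R}(u\theta_R+|v|)]$. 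The $\varepsilon(L^{-1}(R))$ factor is exactly the angular width $\theta_R$ of the sector one must pass through near the positive axis, not a mean-value estimate of $\log L+\varepsilon$. This is the idea your plan is missing: the anisotropy between $u$ and $v$ comes from the geometry of the contour (a narrow sector of opening $\varepsilon$) combined with the conformal map $s\mapsto w/s$, and it cannot be recovered from a modulus-by-modulus Taylor estimate.
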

In order to prove Lemma \ref{lemma:estimatpsi}, we will need the following lemmas. The first one is a classical result of S. Bernstein. 
The proof of Lemma  \ref{lemma:ellipse} follows by inspection of 
 Figure \ref{fig: ellipse}.
\begin{lemma}[Bernstein]\label{lemma:Bernstein}
	Suppose that $P$  is a polynomial of degree $n$ such that $|P|\leq1$ on $[a,b]$. Then 
	\[\left|P(z)\right|<\rho^n, \quad z\in\ \mathbf{T}_{\rho}(a,b),
	\]
	where $\mathbf{T}_{\rho}(a,b)$  is  the ellipse with  foci  $a$ and $b$, and the sum of axes $\rho(b-a)$. 
\end{lemma}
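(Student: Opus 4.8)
The plan is to reduce the statement to the classical one-variable form of Bernstein's inequality for polynomials bounded on $[-1,1]$, by an affine change of variables, and then to invoke the maximum modulus principle together with the Chebyshev extremal property. First I would normalize: apply the affine map $\phi(w)=\tfrac{b-a}{2}w+\tfrac{a+b}{2}$, which sends $[-1,1]$ onto $[a,b]$ and sends the closed unit disk, as well as the Joukowski ellipses $E_R=\{\tfrac12(Rw+R^{-1}w^{-1}):|w|=1\}$ (these have foci $\pm1$ and sum of semi-axes $R$), onto the confocal ellipses $\mathbf{T}_\rho(a,b)$ with foci $a,b$ and sum of axes $\rho(b-a)$, where one takes $R=\rho$ after matching the normalization. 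Setting $Q(w)=P(\phi(w))$, we have $\deg Q=n$ and $|Q|\le 1$ on $[-1,1]$, and it suffices to prove $|Q(w)|<\rho^n$ for $w$ inside (or on) $E_\rho$.

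The key step is the standard argument: consider the Joukowski parametrization $w=\tfrac12(\zeta+\zeta^{-1})$, which maps the exterior of the unit disk $\{|\zeta|>1\}$ conformally onto $\mathbb{C}\setminus[-1,1]$, with $|\zeta|=\rho$ corresponding to the boundary of the region bounded by $E_\rho$. The composition $g(\zeta)=\zeta^{-n}Q\big(\tfrac12(\zeta+\zeta^{-1})\big)$ is analytic for $|\zeta|>1$ and, using that a polynomial of degree $n$ grows like $w^n\sim (\zeta/2)^n$, extends analytically across $\infty$ with $g(\infty)$ finite; moreover $|g(\zeta)|\le 1$ on $|\zeta|=1$ since $|Q|\le1$ on $[-1,1]$ and each point of $[-1,1]$ is the image of a unit-modulus $\zeta$. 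By the maximum modulus principle applied on the annular region $1\le|\zeta|\le\rho$ (or on $\{1\le|\zeta|\le\infty\}$), $|g(\zeta)|\le 1$ there, hence $|Q(w)|\le |\zeta|^n\le\rho^n$ for $w$ on or inside $E_\rho$; the strict inequality in the interior follows because equality in the maximum principle would force $g$ constant, contradicting $|Q|\le1<\rho^n$ unless $Q$ is a scalar multiple of the Chebyshev polynomial, for which one still gets the strict bound at interior (non-focal) points. Undoing the affine change gives $|P(z)|<\rho^n$ on $\mathbf{T}_\rho(a,b)$.

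There is essentially no serious obstacle here — this is a textbook result, and the excerpt itself attributes it to S.\ Bernstein and flags it as classical. The only point requiring a little care is bookkeeping in the normalization: one must check that the confocal ellipse with foci $a,b$ and \emph{sum of axes} $\rho(b-a)$ corresponds under $\phi^{-1}$ exactly to the Joukowski ellipse $E_\rho$ with foci $\pm1$ and sum of semi-axes $\rho$, so that the exponent $n$ and the base $\rho$ come out correctly; this is a direct computation with the relations (semi-axis sum) $=\tfrac12(\rho+\rho^{-1})+\tfrac12(\rho-\rho^{-1})=\rho$. I would therefore present the affine reduction in one line, state the Joukowski map, and give the maximum-modulus argument, citing a standard reference for the classical statement rather than reproving the Chebyshev extremality.
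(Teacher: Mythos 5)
The paper does not prove this lemma at all: it is stated as ``a classical result of S.\ Bernstein'' and used as a black box, so there is no proof in the text to compare against. Your Joukowski-map/maximum-modulus argument is the standard proof of this inequality, the affine normalization matching sum-of-axes $\rho(b-a)$ with the Joukowski parameter $R=\rho$ is bookkept correctly, and the argument is sound; the only nitpick is that on the boundary ellipse one only gets $|P|\le\rho^n$ rather than strict inequality, but that is harmless since in Lemma~\ref{lemma:estimatpsi} only the non-strict bound is used, and in any case the enclosed open region gives the strict bound you claim.
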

\begin{lemma}\label{lemma:ellipse}
	Suppose that $0<\eta<1$ and $a<0<b$. For any $a<c_-<0<c_+<b$, we have 
	\[
	\left[c_-,c_+\right]\times\left[-\eta,\eta\right] \subseteq \mathbf{T}_{1+C_{c_-,c_+}\eta}(a,b).
	\]
	Here  $\mathbf{T}_{\cdot}(\cdot,\cdot)$ is the ellipse defined as in Lemma \emph{\ref{lemma:Bernstein}}. 
\end{lemma}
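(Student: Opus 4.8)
\textbf{Proof proposal for Lemma~\ref{lemma:ellipse}.}

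The plan is to argue by an explicit, elementary computation with the ellipse $\mathbf{T}_\rho(a,b)$, combined with a continuity/compactness argument to extract the constant $C_{c_-,c_+}$. First I would normalize: after an affine change of variables $z\mapsto \frac{2z-(a+b)}{b-a}$, the interval $[a,b]$ becomes $[-1,1]$, the ellipse $\mathbf{T}_\rho(a,b)$ becomes the standard Bernstein ellipse $\mathcal{E}_\rho$ with foci $\pm1$ and semi-axes $\tfrac12(\rho+\rho^{-1})$ and $\tfrac12(\rho-\rho^{-1})$, and the rectangle $[c_-,c_+]\times[-\eta,\eta]$ becomes a rectangle $[c_-',c_+']\times[-\eta',\eta']$ with $-1<c_-'<0<c_+'<1$ and $\eta'=\tfrac{2\eta}{b-a}$. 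So it suffices to show: for $-1<c_-'<0<c_+'<1$ there is $C'$ (depending on $c_\pm'$) such that $[c_-',c_+']\times[-\eta',\eta']\subset\mathcal{E}_{1+C'\eta'}$ for all small $\eta'$, and then translate $C'$ back into $C_{c_-,c_+}$.

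Next I would make the containment in the normalized picture concrete. A point $x+iy$ lies inside $\mathcal{E}_\rho$ precisely when
\[
\frac{x^2}{\bigl(\tfrac12(\rho+\rho^{-1})\bigr)^2}+\frac{y^2}{\bigl(\tfrac12(\rho-\rho^{-1})\bigr)^2}<1.
\]
For the worst corners $x=c_\pm'$, $y=\pm\eta'$, write $\rho=1+t$ with $t=C'\eta'$ small; then $\tfrac12(\rho+\rho^{-1})=1+\tfrac{t^2}{2}+O(t^3)$ and $\tfrac12(\rho-\rho^{-1})=t+O(t^2)$. Plugging in and keeping leading orders, the left side is $(c_\pm')^2\bigl(1+O(t^2)\bigr)+\tfrac{(\eta')^2}{t^2}\bigl(1+O(t)\bigr)$; since $(c_\pm')^2<1$ is bounded away from $1$, choosing $C'$ large enough (depending only on how close $c_\pm'$ are to $\pm1$) makes the term $\tfrac{(\eta')^2}{t^2}=\tfrac1{(C')^2}$ as small as we like, so the sum stays below $1$ for all sufficiently small $\eta'$. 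One then checks that it suffices to verify the inequality at the four corners, because for fixed $\rho$ the region $\{x^2/A^2+y^2/B^2<1\}$ is convex, hence contains the rectangle once it contains its vertices. Finally, to get a clean statement valid for \emph{all} $0<\eta<1$ (not just small $\eta$), I would absorb the remaining compact range of $\eta$ by enlarging $C_{c_-,c_+}$: on the compact set of admissible $(\eta,c_-,c_+)$ with $\eta$ bounded away from $0$, the minimal $\rho$ for which the rectangle sits inside $\mathbf{T}_\rho(a,b)$ is a continuous (hence bounded) function, so it is dominated by $1+C_{c_-,c_+}\eta$ after increasing the constant.

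I do not expect a genuine obstacle here — the lemma is, as the authors say, essentially visible from Figure~\ref{fig: ellipse}. The only point requiring a little care is making the dependence of the constant on $c_\pm$ transparent (it blows up as $c_\pm\to a,b$, since then the rectangle touches the slit and no finite $\rho$ works at $\eta$ near $1$), and handling the full range $0<\eta<1$ rather than just the infinitesimal regime; both are dispatched by the convexity observation plus a compactness argument as above.
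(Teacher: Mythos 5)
Your proof is correct. The paper gives no written argument for this lemma — it is dismissed with a one-line reference to Figure~\ref{fig: ellipse} — so your explicit computation is a sound formalization of what the figure is meant to convey: normalize to $[-1,1]$, reduce to the corners by convexity of the ellipse, and expand in $\rho=1+t$ to see that the minor axis scales like $t$.

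One small remark on economy: the asymptotic expansion plus the compactness step for non-small $\eta$ can be replaced by a one-line uniform estimate. With $A=\tfrac12(\rho+\rho^{-1})$, $B=\tfrac12(\rho-\rho^{-1})$ and $m:=\max(|c_-'|,|c_+'|)<1$, one has $A\geq 1$, hence $\frac{(c_\pm')^2}{A^2}\leq m^2$, and $B=\frac{(\rho-1)(\rho+1)}{2\rho}\geq\frac{\rho-1}{2}$ for every $\rho\geq 1$; so taking $\rho=1+C'\eta'$ with $C'=\frac{2}{\sqrt{1-m^2}}$ gives $\frac{(\eta')^2}{B^2}\leq 1-m^2$ directly, for all $\eta'>0$, with no restriction to small $\eta'$ and no compactness argument. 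Also, the compactness step as you phrased it ranges over ``admissible $(\eta,c_-,c_+)$'', but the set of admissible $(c_-,c_+)$ is open, not compact, and $\rho_{\min}$ is unbounded on it (a blowup you yourself note); what you actually need is compactness in $\eta$ alone, with $c_\pm$ held fixed. Neither issue affects the validity of your argument.
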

	\begin{figure}[h]
		\centering
	\includegraphics{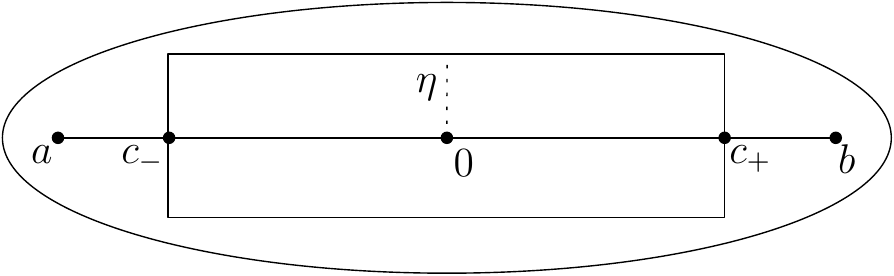}
	\caption{$\left[c_-,c_+\right]\times\left[-\eta,\eta\right] \subseteq \mathbf{T}_{1+C_{c_-,c_+}\eta}(a,b) $}
	\label{fig: ellipse}
\end{figure}
\begin{proof}[Proof of Lemma~\ref{lemma:estimatpsi}] 
	Fix $a<c_-<0<c_+<b$  and let $[c_-,c_+]\subset(c'_-,c'_+)\subset[a,b]$. Put
	\[	Q(w):=\left(S_L P\right)(w)=\sum_{k=0}^{n} \widehat{P}(k) \frac{w^k}{\gamma(k+1)}.
	\]
	By the Cauchy integral formula, 
	\[ Q(w)=\frac{1}{2\pi i} \int E(s)P\left(\frac{w}{s} \right)\frac{ds}{s},
	\]
	where the contour of integration encloses the origin. Given a sufficiently large positive $R$, we deform the contour to the one as in Figure \ref{fig: Curve for polynomials}, i.e., 
	\[\Gamma_R= \big\{Re^{i\theta}:\;|\theta|\leq \theta_R \big\} \cup\big\{R'e^{i\theta_R}:\;R\leq R'\leq R^2 \big\} \cup\big\{R^2 e^{i\theta}:\;\theta_R\leq|\theta|\leq\pi \big\},
	\]
	where, $\theta_R=C\varepsilon(L^{-1}(R))$ and $C$ is chosen as in Lemma~\ref{lem: E complex lemma}. By Lemma~\ref{lem: E complex lemma},
	\begin{equation}
	|E(s)|\lesssim |s|^a,\quad s\in \Gamma_R\setminus \big\{Re^{i\theta}:\;|\theta|\leq \theta_R \big\} \label{firstE}
	\end{equation}
	for some $a>0$. 
	Since the Taylor coefficients   of $E$ are all positive,
	\begin{equation}
	\max_{|\theta|\leq\theta_R}|E(R e^{i\theta})|\leq E(R). \label{seoundE}
	\end{equation}
	If $w=u+iv$ is such that  $\displaystyle u>0, \quad R>\max\big\{R_0,\;\tfrac{u}{c_+},\;|v|\big\},$ where $R_0$ is  large  enough
	and $s=Re^{i\theta}$, $|\theta|\leq \theta_R$,  then 
	\[\re \frac{w}{s}=\frac{u\cos\theta+v \sin\theta}{R}<c_+ +\theta_R \leq c'_+,\]
	\[\re \frac{w}{s}>-\theta_R>-c'_-,\]
	and
	\[\left|\im \frac{w}{s}\right|=\frac{u\sin\theta-v\cos\theta}{R}\leq \frac{u\theta_R}{R}+\frac{|v|}{R}.\]
	If $s=R^2 e^{i\theta}$, $\theta_R\leq|\theta|\leq \pi$, then clearly 
	\[ c'_- \leq\re \frac{w}{s}\leq c'_+\quad \text{and} \quad  \left|\im \frac{w}{s}\right|\leq \frac{u\theta_R}{R}+\frac{|v|}{R}.\]
	Since the set $\left[ c'_-, c'_+\right]\times 
	\left[-\frac{u\theta_R+|v|}{R},\frac{u\theta_R+|v|}{R}\right]$ is convex, we conclude that
	\begin{figure}[t]
		\includegraphics[scale=0.75]{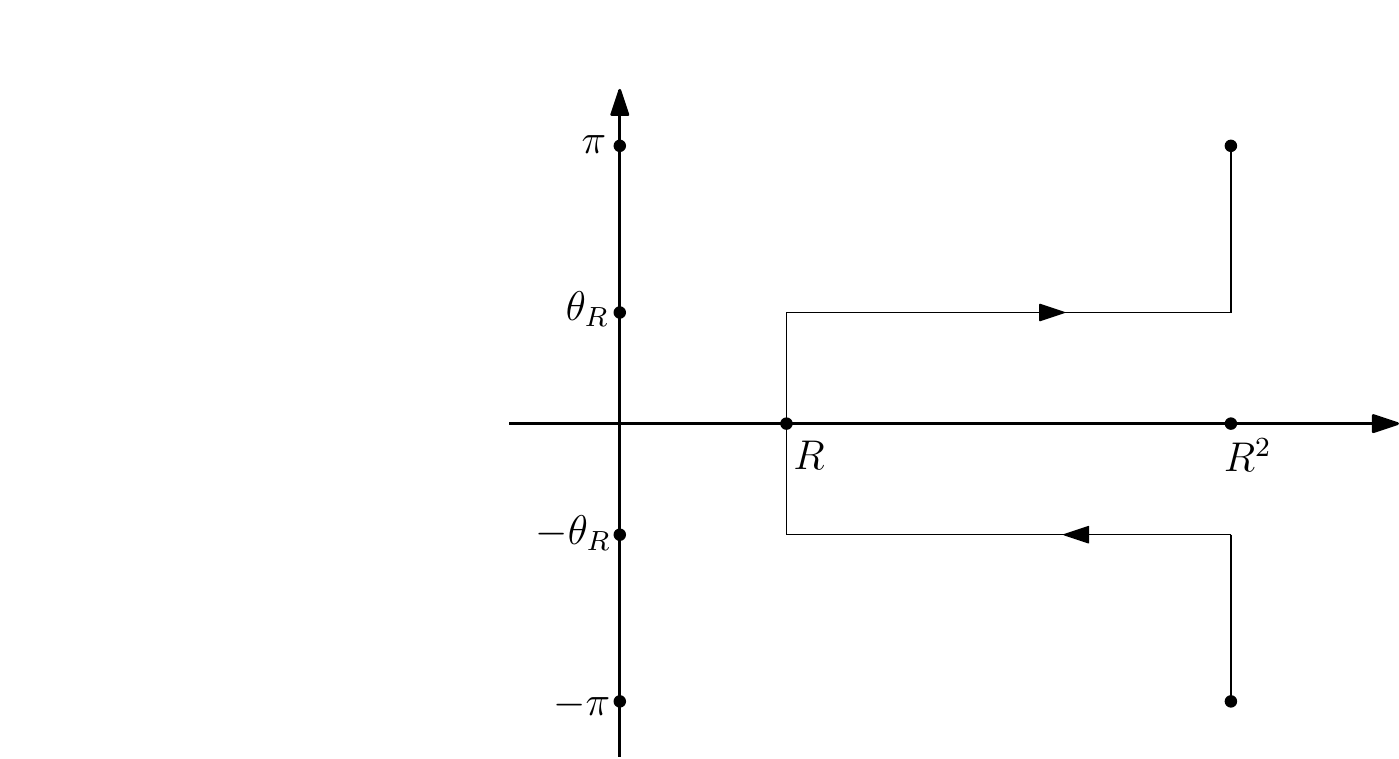}
		\caption{$\Gamma_{R} $}
		\label{fig: Curve for polynomials}
	\end{figure}
	\begin{equation}
	\frac{w}{s}\in\left[ c'_-, c'_+\right]\times 
	\left[-\frac{u\theta_R+|v|}{R},\frac{u\theta_R+|v|}{R}\right],\quad s\in\Gamma_R .\label{w/s}
	\end{equation}
	Therefore, by Lemma \ref{lemma:ellipse}
	\begin{equation}
	\frac{w}{s} \in \mathbf{T}_{1+C_{c_+,c_-}(u\theta_R+|v|)/R}(a,b). \label{Change}
	\end{equation}
	Then, according to Lemma \ref{lemma:Bernstein} applied to $P(\tfrac{w}{s})$, we have
	\[| Q(w)|\leq \exp\left[C_{c_+,c_-}\frac{n}{R}\left(u\theta_R+|v|\right)\right] \int_{\Gamma_R} |E(s)|\,\frac{ds}{|s|}.
	\]
	For $R $ large enough, estimates \eqref{firstE} and \eqref{seoundE}  yield
	\[	
	\int_{\Gamma_R} |E(s)|\frac{ds}{|s|} \leq 2\pi R \theta_R E(R)+C R^{2a}.
	\]
	Therefore
	\[\log | Q(u+iv)| \leq C_{c_+,c_-}\frac{n}{R} \left(u\theta_R+|v|\right)+\log E(R)+2(a+1)\log R+C.\]
	The last   estimate finishes the proof for  $u\geq 0$. The proof for   $u<0$ is similar.
\end{proof}
We finish this section with three estimates for  singular transforms of polynomials, which are based on Lemma \ref{lemma:estimatpsi} 
\begin{lemma}\label{lemma:Co1}
Suppose that the function $L$ satisfies assumptions  \emph{(R1),  (R2), (R4)} and \emph{(R8)}.
	If  $P$ is a polynomial of degree $n$ with $|P|\leq 1$ on $[a,b]$ ($a<0<b$), then for any  $a<c_-<0<c_+<b$, $Y>0$ and $\delta >0$,
	\[|\left(S_L P\right)(u+iv)|\lesssim e^{C_{c_\pm,Y,\varepsilon}\Lambda_L(n)}E\left(\frac{u}{c_\pm}\right)\widetilde{E}(\delta|u|),\quad 0<\pm u\;,|v|<Y,\]
	where $\widetilde{E}(z)=\sum_{n\geq 0} \varepsilon(n+1)^{n+1}z^n$.
\end{lemma}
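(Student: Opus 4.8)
My plan is to derive the estimate by optimizing the free radius $R$ in Lemma~\ref{lemma:estimatpsi}. First I would reduce to $u>0$ (for $u<0$ replace $P(x)$ by $P(-x)$, which is bounded by $1$ on $[-b,-a]$, and use $c_-$ in place of $c_+$). Fix $c_+\in(0,b)$, $Y>0$, $\delta>0$, write $Q=S_LP$, bound $|v|$ by $Y$ and discard the negative quantity $u/c_-$; then Lemma~\ref{lemma:estimatpsi} gives, for all $R>\max\{R_0,Y,u/c_+\}$,
\[
\log|Q(u+iv)|\ \le\ C_{c_+}\,\frac nR\bigl(u\,\varepsilon(L^{-1}(R))+Y\bigr)+\log E(R)+C\log R,
\]
and everything comes down to choosing $R=R(u,n)$ so that the right-hand side is $\lesssim_{c_+,Y,\varepsilon}\Lambda_L(n)+\log E(u/c_+)+\log\widetilde E(\delta u)$.

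I would split on the size of $u$ relative to the threshold $L(\Lambda_L(n))$, which by Lemma~\ref{lem: widetildeE}(1) satisfies $\Lambda_L(n)L(\Lambda_L(n))\asymp n$, so $n/L(\Lambda_L(n))\asymp\Lambda_L(n)$. For $u$ at least of this order, I would apply Lemma~\ref{lemma:estimatpsi} with an auxiliary point $\tilde c_+\in(c_+,b)$ and take $R=u/c_+$ (then $R>u/\tilde c_+$, while bounded $u$ forces $n$ bounded and is trivial); this makes $\log E(R)=\log E(u/c_+)$ exactly, $C\log R\lesssim\log u\lesssim\log E(u/c_+)$, $\tfrac nR Y\asymp_{c_+}nY/u\lesssim_Y\Lambda_L(n)$, leaving only the ``$\varepsilon$-term'' $\tfrac nR u\,\varepsilon(L^{-1}(R))=c_+n\,\varepsilon(L^{-1}(u/c_+))$. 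For $u$ below the threshold, I would take $R:=L(\Lambda_L(n))$, so $L^{-1}(R)=\Lambda_L(n)$, $\tfrac nR\asymp\Lambda_L(n)$ by the above, and $\varepsilon(L^{-1}(R))=\varepsilon(\Lambda_L(n))\sim\varepsilon(n)$ by Lemma~\ref{lem: subReplacmentLem}(1) (using R1, R2, R4); now $\log E(R)\lesssim L^{-1}(R)=\Lambda_L(n)$ (Lemma~\ref{lem: K and E ray lemma}), $C\log R\lesssim\log n\lesssim\Lambda_L(n)$ and $\tfrac nR Y\lesssim_Y\Lambda_L(n)$ are harmless, leaving the ``$\varepsilon$-term'' $\asymp\Lambda_L(n)\,u\,\varepsilon(n)$.

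In the second case $u\le L(\Lambda_L(n))$ gives $L^{-1}(u)\le\Lambda_L(n)$, hence $\varepsilon(L^{-1}(u))\gtrsim\varepsilon(n)$ and $\Lambda_L(n)u\varepsilon(n)\lesssim\Lambda_L(n)L(\Lambda_L(n))\varepsilon(n)\asymp n\varepsilon(n)\lesssim n\varepsilon(L^{-1}(u))$, so both cases reduce to the single inequality
\[
n\,\varepsilon(L^{-1}(w))\ \lesssim_{\delta}\ \Lambda_L(n)+\log\widetilde E(\delta w)\qquad(w,n\ \text{large})
\]
(with $w=u/c_+$ and $\delta$ rescaled by $c_+$ in the first case, $w=u$ in the second; for bounded $u$ the $\varepsilon$-term is $O(\Lambda_L(n)u\varepsilon(n))=o(\Lambda_L(n))$ since $\varepsilon(n)\to0$, so nothing is needed). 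If $L$ has at most logarithmic growth this is immediate, since there $L(\rho)\varepsilon(\rho)=O(1)$, hence $n\varepsilon(L^{-1}(w))\lesssim n\varepsilon(n)\asymp\Lambda_L(n)$ and the $\widetilde E$-term plays no role. The substance is the super-logarithmic case, equivalently $\rho L'(\rho)=L(\rho)\varepsilon(\rho)\to\infty$. Here I would lower-bound $\log\widetilde E(\delta w)$ by keeping the single term of $\widetilde E(z)=\sum_k\varepsilon(k+1)^{k+1}z^k$ with $k+1\approx\varepsilon^{-1}(2/(\delta w))$, which yields $\log\widetilde E(\delta w)\gtrsim\varepsilon^{-1}(2/(\delta w))-\log(\delta w)\gtrsim\varepsilon^{-1}(2/(\delta w))$ for $w$ large; and the condition $\rho L'(\rho)\to\infty$ forces $\varepsilon^{-1}(c/L(\rho))$ to grow \emph{super-polynomially} in $\rho$, whereas the $\varepsilon$-term is only $O(n\varepsilon(n))=o(n)$. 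Lining up the arguments with Lemma~\ref{lem: subReplacmentLem}(1) and the slow variation of $\varepsilon$ then gives the inequality, except in the borderline range where $\varepsilon(L^{-1}(w))=O(\Lambda_L(n)/n)$ and the left-hand side is already $O(\Lambda_L(n))$.

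The main obstacle is this last (super-logarithmic) estimate: it is the one place where the new factor $\widetilde E$, built from the ``dual'' weight $1/\varepsilon$ rather than from $L$ (compare the $A(\tfrac1\varepsilon;\mathbb R)$ in Theorem~\ref{thm: Main thm,sing}), genuinely enters, and carrying the comparison through for an arbitrary Denjoy weight rather than a model weight such as $L=\log^a$ requires juggling the regularity lemmas of Sections~5--6 and some bookkeeping of how the constants and the admissible range of $u$ depend on $\delta$. The remaining ingredients (the estimates of $\tfrac nR|v|$, $\log E(R)$, $C\log R$, and the various passages between $\varepsilon$, $L$ and $\Lambda_L$ at comparable arguments) are routine consequences of slow variation.
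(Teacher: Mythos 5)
Your outline agrees with the paper's: invoke Lemma~\ref{lemma:estimatpsi} with a slack parameter, split on $u$ against a threshold, take $R=L(\Lambda_L(n))$ below the threshold and $R=u/(\text{const}\cdot c_+)$ above it, and observe that the $\widetilde E$ factor must come from the $\varepsilon$-term at large $u$. That much is right.

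The gap is in the reduction to the ``single inequality'' $n\,\varepsilon(L^{-1}(w))\lesssim_\delta\Lambda_L(n)+\log\widetilde E(\delta w)$. In Case~1 you replace the actual $\varepsilon$-term $\asymp\Lambda_L(n)\,u\,\varepsilon(n)$ by the larger quantity $n\varepsilon(L^{-1}(u))$, and this enlargement is fatal in the intermediate range. Take $L(\rho)=\log^2\rho$ and $u\asymp\log n$: then $\Lambda_L(n)u\varepsilon(n)\asymp n/\log^2 n\asymp\Lambda_L(n)$, so the genuine Case~1 bound is already of the correct size; but $n\varepsilon(L^{-1}(u))\asymp n/\sqrt{\log n}$, while $\Lambda_L(n)+\log\widetilde E(\delta u)\asymp n/\log^2 n+n^{\,O(\delta)}$, so the ``single inequality'' fails by a factor $\log^{3/2}n$. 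This regime is not the bounded-$u$ case you set aside, and it is not your ``borderline range'' either (there $\varepsilon(L^{-1}(u))\asymp(\log n)^{-1/2}\gg\Lambda_L(n)/n\asymp(\log n)^{-2}$). So the reduction is false as stated, and the super-logarithmic step you leave to ``juggling the regularity lemmas'' cannot be completed along this route.

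The paper circumvents this with a second threshold that is calibrated to $\widetilde E$ rather than to $E$: split also on $u\lessgtr\tfrac{2e}{\delta\varepsilon(n)}$. When $u\le\tfrac{2e}{\delta\varepsilon(n)}$, the $\varepsilon$-term is $\lesssim_\delta\Lambda_L(n)$ in both choices of $R$: with $R=L(\Lambda_L(n))$ because $u\varepsilon(\Lambda_L(n))\asymp u\varepsilon(n)\lesssim_\delta 1$; and with $R=u/(\eta c_+)$ (when also $u\ge\eta c_+L(\Lambda_L(n))$) because the pair of constraints forces $L(\Lambda_L(n))\varepsilon(n)\lesssim_\delta 1$, hence $n\varepsilon(L^{-1}(R))\le n\varepsilon(\Lambda_L(n))\lesssim\Lambda_L(n)$. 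When $u>\tfrac{2e}{\delta\varepsilon(n)}$, the threshold is exactly what gives $n<(1/\varepsilon)^{-1}(\delta u/(2e))$ by the definition of the inverse, and Lemma~\ref{lem: K and E ray lemma} applied to $\widetilde E$ (with $1/\varepsilon$ in the role of $L$) gives $(1/\varepsilon)^{-1}(\delta u/(2e))\le C+\log\widetilde E(\delta u/2)$, after which the surviving factor $\varepsilon(L^{-1}(R))\le1$ is harmless. This makes the $\widetilde E$ bound a one-liner, requires no case distinction between logarithmic and super-logarithmic growth, and also absorbs the $|v|$-term; the bookkeeping you anticipated never arises.
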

\begin{proof}
	Fix the parameters $Y$, $a$, $b$ ,$\delta,$ $c_-$ and $c_+$ and  let $\eta>1$ be such that $[\eta  c_-,\eta c_+]\subset (a,b)$.  By Lemma~\ref{lemma:estimatpsi} applied with the  parameters $\eta c_+$ and $\eta c_-$ (instead of $c_-$ and $c_+$), there exists a large numerical constant $R_0>0$, such that 
	\[\log|\left(S_L P\right)(u+iv)|\leq C_{c_-,c_+,\eta}\frac{n}{R}\left(|u|\varepsilon(L^{-1}(R))+|v| \right)+\log E(R)+C\log R,\]
	for $R\geq\max\left\{R_0,|v|,\frac{u}{\eta c_-},\frac{u}{\eta c_+}\right\}$.
	Suppose that $u>0$ and   $|v|\leq Y$. First, we consider the case when  $u\leq\min\left\{ \eta c_+ L\left(\Lambda_L(n)\right),\tfrac{2e}{\delta \varepsilon(n)}\right\}$. Then, we choose $R=L\left(\Lambda_L(n)\right)$ and find that  
	\[\log|\left(S_L P\right)(u+iv)|\leq C_{c_-,c_+,\eta}\frac{n}{L\left(\Lambda_L(n)\right)}\left(|u|\varepsilon\left(\Lambda_L(n)\right)+|v|\right)+2\log E\left(L\left(\Lambda_L(n)\right)\right).\]
Since assumption (R4) holds,  so does assumption (R3). Thus, by Lemma \ref{lem: widetildeE}, assertion 1,
	 we find that
	\[\log|\left(S_L P\right)(u+i v)|\leq C_{Y,c_-,c_+,\eta,\varepsilon} \Lambda_L(n)\left(\frac{\varepsilon\left(\Lambda_L(n)\right)}{\varepsilon(n)}+1\right)+2\log E\left(L\left(\Lambda_L(n)\right)\right).\] 
	By Lemma \ref{lem: subReplacmentLem}, assertion 1, 
 we have $\varepsilon(n)\asymp \varepsilon\left(\Lambda_L(n)\right)$ and   Lemma~\ref{lem: K and E ray lemma} yields, $$\log E\left(L\left(\Lambda_L(n)\right)\right)\lesssim \Lambda_L(n).$$ Therefore, in this case 
	\[\log|\left(S_L P\right)(u+i v)|\lesssim_{c_-,c_+,\eta,\varepsilon} \Lambda_L(n). \]
	Now, if $u\geq \min\left\{ \eta c_+ L\left(\Lambda_L(n)\right),\tfrac{2e}{\delta \varepsilon(n)}\right\}$, then we choose $R=\frac{u}{\eta c_+}$ and find that 
	\[\log|\left(S_L P\right)(u+i v)|\leq C_{c_-,c_+,\eta}\frac{n}{u}\left(u\varepsilon(L^{-1}(\tfrac{u}{\eta c_+}))+|v|\right)+\log E\left(\frac{u}{\eta c_+}\right)+C_{c_-,c_+,\eta} \log u .\]
	If  $u>\tfrac{2e}{\delta \varepsilon(n)}$,  then 
	\[ C_{c_-,c_+,\eta}  \varepsilon\left(L^{-1}\left(\frac{u}{\eta c_+}\right)\right) n\leq  C_{c_-,c_+,\eta}  \varepsilon\left(L^{-1}\left(\frac{u}{\eta c_+}\right)\right) \left(\frac{1}{\varepsilon}\right)^{-1}\left(\frac{\delta u}{e}\right). \]
	where $\left(\frac{1}{\varepsilon}\right)^{-1}$ is the inverse function to $\rho\mapsto\tfrac{1}{\varepsilon}(\rho)$ (defined for $\rho>\rho_0$ large enough).
	Applying  Lemma~\ref{lem: K and E ray lemma} to the  function $\widetilde{E}$ (instead of $E$), we get
	\[  \left(\frac{1}{\varepsilon}\right)^{-1}\left(\frac{\delta u}{2e}\right)\leq  C+ \log \widetilde{E}(\delta u/2). \] 
	Therefore, in this case
	\[ \log|\left(S_L P\right)(u+i v)|\leq C(1+\log u)  +\log \widetilde{E}(\delta u/2)+ 2\log E\left(\frac{ u}{\eta c_+}\right). \]
	On the other hand, if $u\leq \tfrac{2e}{\delta \varepsilon(n)}$, then 
	\begin{multline*}
\log|\left(S_L P\right)(u+i v)|\lesssim_{Y,c_-,c_+,\eta,\delta} \frac{n}{L\left(\Lambda_L(n)\right)}\left(\frac{\varepsilon\left(\Lambda_L(n)\right)}{\varepsilon(n)}+1\right)+\log E\left(\frac{ u}{\eta c_+}\right)\\\lesssim \Lambda_L(n)+\log E\left(\frac{ u}{\eta c_+}\right),
	\end{multline*}
	where we have used once again that $\varepsilon(n)\asymp \varepsilon\left(\Lambda_L(n)\right)$.
	
	We have established that
	\[ |\left(S_L P\right)(u+iv)|\lesssim e^{C_0\Lambda_L(n)}E^2\left(\frac{u}{\eta c_+}\right)\widetilde{E}\left(\delta|u|/2\right)u^{C},\quad 0< u\;,|v|<Y. \]  
	By \ Lemma~\ref{lem: K and E ray lemma} applied separately to the functions $\widetilde{E}$ and $E$, we find that 
 $$\widetilde{E}\left(\delta|u|/2\right)u^{C}\lesssim_{C,\delta} \widetilde{E}\left(\delta|u|\right),\quad  E^2\left(\frac{u}{\eta c_+}\right)\lesssim_\eta E\left(\frac{u}{ c_+}\right).$$
We conclude that
	\[|\left(S_L P\right)(u+iv)|\lesssim e^{C_0\Lambda_L(n)}E\left(\frac{u}{ c_+}\right)\widetilde{E}\left(\delta|u|\right),\quad 0< u\;,|v|<Y. \]  
	This completes the proof of Lemma \ref{lemma:Co1}  for  $u\geq 0$. The proof for  $u\leq0$ is similar.\end{proof}
 \begin{lemma}\label{lemma:Co2}
 	Suppose that the function $L$ satisfies assumptions  \emph{(R1),  (R2), (R4)} and \emph{(R8)}. If $P$ 
 	is a polynomial of degree $n$ with $|P|\leq 1$ on $[a,b]$ ($a<0<b$), then, for any  $a<c_-<0<c_+<b$ and  $0<\delta<1$, there exists  $\Delta>0$ such that  
 	
 	\[|(S_L P)(u+iv)|\lesssim_{\delta,c_-,c_+} e^{\delta n}E\left(\frac{u}{c_\pm}+ \Delta |v|\right),\quad \pm u\geq0.\] 
 	
 \end{lemma}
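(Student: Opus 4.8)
The goal is to upgrade the bound of Lemma~\ref{lemma:Co1}, which controls $(S_L P)(u+iv)$ in a \emph{horizontal strip} $|v|\le Y$, to a bound valid in the \emph{whole plane} but with the horizontal variable $u$ replaced by the slanted quantity $u/c_\pm+\Delta|v|$. The mechanism is the same as in the proof of Lemma~\ref{lemma:estimatpsi}: start from the Cauchy integral representation
\[
Q(w):=(S_L P)(w)=\frac{1}{2\pi i}\int_{\Gamma} E(s)\,P\!\left(\frac{w}{s}\right)\frac{ds}{s},
\]
deform $\Gamma$ to the contour $\Gamma_R$ (a circular arc of radius $R$ around the positive ray, opening angle $\theta_R=C\varepsilon(L^{-1}(R))$, joined through segments along $\arg s=\pm\theta_R$ to the big circle of radius $R^2$), and use Lemma~\ref{lem: E complex lemma} together with positivity of the Taylor coefficients of $E$ to bound $|E(s)|$ on $\Gamma_R$ by $E(R)$ on the near arc and by $|s|^a$ elsewhere.

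\textbf{Key steps.} First I would redo the geometric estimate \eqref{w/s}, but now without assuming $|v|\le Y$: for $w=u+iv$ with $u\ge 0$ and $s\in\Gamma_R$ with $R\ge\max\{R_0,\,u/(\eta c_+)\}$, one gets
\[
\re\frac{w}{s}\le c'_+,\qquad \re\frac{w}{s}\ge -c'_-,\qquad
\Bigl|\im\frac{w}{s}\Bigr|\le \frac{u\theta_R+|v|}{R},
\]
so $w/s$ lies in the convex rectangle $[c'_-,c'_+]\times[-(u\theta_R+|v|)/R,(u\theta_R+|v|)/R]$, hence by Lemma~\ref{lemma:ellipse} in the Bernstein ellipse $\mathbf{T}_{1+C_{c_\pm}(u\theta_R+|v|)/R}(a,b)$. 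Bernstein's inequality (Lemma~\ref{lemma:Bernstein}) then gives
\[
|Q(w)|\le \exp\!\Bigl[C_{c_\pm}\tfrac{n}{R}\bigl(u\theta_R+|v|\bigr)\Bigr]\int_{\Gamma_R}|E(s)|\frac{|ds|}{|s|}
\le \exp\!\Bigl[C_{c_\pm}\tfrac{n}{R}\bigl(u\theta_R+|v|\bigr)\Bigr]\bigl(2\pi R\theta_R E(R)+CR^{2a}\bigr).
\]
The second step is the choice of the free parameter $R$, now depending on both $u$ and $v$. I would take $R=\tfrac{u}{\eta c_+}+\Delta'|v|$ for a suitable constant $\Delta'>1$ (and fall back to $R=L(\Lambda_L(n))$ when $u$ and $|v|$ are both small, exactly as in Lemma~\ref{lemma:Co1}). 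With this choice $R\gtrsim |v|$, so the term $\tfrac{n}{R}|v|\le n$; splitting off a factor $e^{\delta n/2}$ as in Lemma~\ref{lemma:Co1} requires the auxiliary term, handled via the substitution $\widetilde E$ and Lemma~\ref{lem: K and E ray lemma}, exactly as before. The term $\tfrac{n}{R}u\theta_R$ is controlled as in Lemma~\ref{lemma:Co1}: when $u$ is large, $\theta_R=C\varepsilon(L^{-1}(R))$ is small and $n\varepsilon(L^{-1}(R))\lesssim (\tfrac1\varepsilon)^{-1}(\delta u)\lesssim \log\widetilde E(\delta u)$; invoking assumption (R4) (hence (R2),(R3)), Lemma~\ref{lem: widetildeE} and Lemma~\ref{lem: subReplacmentLem} (so that $\varepsilon(n)\asymp\varepsilon(\Lambda_L(n))$) lets one absorb what remains into $e^{\delta n}$ after a final application of Lemma~\ref{lem: K and E ray lemma} to replace $E^2$ and extra polynomial factors by a single $E((1+\text{small})\cdot)$. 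Collecting, $2\pi R\theta_R E(R)+CR^{2a}\lesssim_\delta e^{\delta n}E\bigl(\tfrac{u}{c_+}+\Delta|v|\bigr)$ for $\Delta$ chosen after $\Delta'$ and $\delta$, which is the claim for $u\ge0$; the case $u\le0$ is symmetric.

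\textbf{Main obstacle.} The delicate point is the simultaneous handling of the two contributions to the Bernstein exponent, $\tfrac{n}{R}u\theta_R$ and $\tfrac{n}{R}|v|$, while keeping the $v$-dependence on the right-hand side linear inside the argument of $E$ (not merely additive as a separate factor). This forces the coupled choice $R=\tfrac{u}{\eta c_+}+\Delta'|v|$ rather than the purely $u$-dependent choice of Lemma~\ref{lemma:Co1}, and one must check that with this $R$ the inequality $\tfrac{n}{R}|v|\le n$ is indeed what one wants (so $\delta n$ is attainable) and that $\theta_R=C\varepsilon(L^{-1}(R))$ remains small enough along the whole range of $(u,v)$ — this is exactly where super-slow variation of $\varepsilon$ (assumption (R3), via Lemma~\ref{lem: subReplacmentLem}(1)) is used, to guarantee $\varepsilon(L^{-1}(\tfrac{u}{\eta c_+}+\Delta'|v|))\asymp\varepsilon(L^{-1}(\tfrac{u}{\eta c_+}))$ up to constants, so that the slant parameter $\Delta$ can be fixed uniformly. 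Everything else is a bookkeeping repetition of the estimates in Lemma~\ref{lemma:Co1}.
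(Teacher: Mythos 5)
Your starting point and the key parameter choice are the same as the paper's: apply Lemma~\ref{lemma:estimatpsi} with $\eta c_\pm$ in place of $c_\pm$ and take $R=\frac{u}{\eta c_+}+\frac{\Delta|v|}{2}$ so that $R$ grows linearly in both $u$ and $|v|$. But from there you import the machinery of Lemma~\ref{lemma:Co1}, and this is where the argument goes wrong. Lemma~\ref{lemma:Co1} needs the case split and the bound $n\,\varepsilon(L^{-1}(R))\lesssim(\tfrac1\varepsilon)^{-1}(\delta u)\lesssim\log\widetilde E(\delta u)$ precisely because its conclusion has the $u$-dependent factor $\widetilde E(\delta|u|)$ on the right. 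Lemma~\ref{lemma:Co2}'s conclusion has $e^{\delta n}$ instead: a factor depending on the degree $n$, not on $u$. For fixed $n$ and $u\to\infty$ with $v=0$, $\widetilde E(\delta u)\to\infty$ while $e^{\delta n}$ stays bounded, so the $\widetilde E(\delta u)$ term cannot be ``absorbed into $e^{\delta n}$'' as you claim, and the argument as written would prove Lemma~\ref{lemma:Co1} again rather than Lemma~\ref{lemma:Co2}.

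The correct observation — which is what the paper actually uses — is much simpler and makes the whole detour unnecessary. With $R=\frac{u}{\eta c_+}+\frac{\Delta|v|}{2}$ one has $\frac{u}{R}\le\eta c_+$ and $\frac{|v|}{R}\le\frac{2}{\Delta}$, so the Bernstein exponent is
\[
C_{c_\pm,\eta}\,\frac nR\bigl(u\,\theta_R+|v|\bigr)\ \le\ C_{c_\pm,\eta}\,n\Bigl(\eta c_+\,\theta_R+\frac{2}{\Delta}\Bigr),\qquad \theta_R=C\,\varepsilon\bigl(L^{-1}(R)\bigr).
\]
Since $\varepsilon$ is eventually decreasing (R2) and tends to $0$ (R1), $\theta_R\to 0$ as $R\to\infty$; so for $|u+iv|$ large enough the first summand is $<\delta n/2$, and choosing $\Delta\gtrsim 1/\delta$ makes the second $<\delta n/2$. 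That gives $\delta n$ directly, with no appeal to $\widetilde E$, no threshold $u\lessgtr 2e/(\delta\varepsilon(n))$, no fallback to $R=L(\Lambda_L(n))$, and in fact no use of Lemma~\ref{lem: subReplacmentLem} at all. Your invocation of ``super-slow variation'' to compare $\varepsilon(L^{-1}(\frac{u}{\eta c_+}+\Delta'|v|))$ with $\varepsilon(L^{-1}(\frac{u}{\eta c_+}))$ is likewise not needed — one only needs $\varepsilon(L^{-1}(R))$ to be eventually small, which follows from monotonicity of $\varepsilon$ and $L^{-1}$, not from any asymptotic equivalence. Finally the compact region $|u+iv|\le C$ is handled by the implicit constant in $\lesssim_{\delta,c_\pm}$, so there is no need to re-enter the $R=L(\Lambda_L(n))$ regime there either.
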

 \begin{proof}
 	Fix the parameters $a,b,c_-,c_+$ and $\delta$. Set $Q=S_L P$ and let $\eta>1$ be such that $[\eta  c_-,\eta c_+]\subset (a,b)$.  Lemma~\ref{lemma:estimatpsi} with parameters $\eta c_+$ and $\eta c_-$ (instead of $c_-$ and $c_+$) shows that there exists a sufficiently large numerical constant $R_0$ such that
 	\[\log|Q(u+iv)|\leq C_{c_-,c_+,\eta}\frac{n}{R}\left(|u|{\varepsilon(L^{-1}(R))}+|v|\right)+\log E(R)+C\log R,\]
 	for $R>\max\left\{R_0,|v|,\frac{u}{\eta c_-},\frac{u}{\eta c_+}\right\}$.
 	Suppose that $u\geq 0$ , and that $\frac{u}{\eta c_+}+|v|$ is large enough.
 	For $\Delta>2$ that will be chosen later, we choose $R=\frac{u}{\eta c_+}+\frac{\Delta |v|}{2}$
 	and find that 
 	\small{
 		\begin{multline*}
 		\log|Q(u+iv)|\\\leq C_{c_-,c_+,\eta}\frac{n}{\frac{u}{\eta c_+}+\frac{\Delta |v|}{2}}\left({|u|}{ \varepsilon\left(L^{-1}\left(\frac{u}{\eta c_+}+\frac{\Delta|v|}{2}\right)\right)}+|v|\right)+\log E\left(\frac{u}{\eta c_+}+\frac{\Delta |v|}{2}\right)+C\log(u+\Delta|v|).
 		\end{multline*}}
 	The function $R\mapsto \varepsilon(L^{-1}(R))$ tends to zero as $R\to\infty$. Therefore, we can choose $\Delta$ so large, such that 
 	\[\log|Q(u+iv)|\leq\delta n+\log E\left(\frac{u}{\eta c_+}+\frac{\Delta |v|}{2}\right)+C\log(u+\Delta|v|), \quad |u+iv|>C,\quad u\geq 0,\]
 	and therefore
 	\[\log|Q(u+iv)|\leq\delta n+\log E\left(\frac{u}{ c_+}+\Delta |v|\right), \quad |u+iv|>C,\quad u\geq 0.\]	
 	This finishes the proof for $u\geq 0$, the proof for $u\leq 0$ is similar.
 \end{proof}
\begin{lemma}\label{lemma:estimatpsi--for nqa}
	Suppose that the function $L$ satisfies assumptions  \emph{(R1),  (R2), (R3)} and \emph{(R8)}.
	If  $P$ is  a polynomial   of degree $n$ such that $|P|\leq 1$ on $[-1,1]$, then there exists a constant $C>0$ such that  
	\[ \left|S_L P(u+iv)\right|\leq \exp\left(C\cdot\left(\Lambda_{L/\varepsilon}(n)+1\right)\right),\quad |u|\leq 1,\; |v|\leq \varepsilon\left(\Lambda_{L/\varepsilon}(n)\right) \]
\end{lemma}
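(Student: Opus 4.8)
The plan is to deduce the bound directly from the master estimate of Lemma~\ref{lemma:estimatpsi}, applied with $[a,b]=[-1,1]$ and evaluated at the single scale $R=L(\Lambda_{L/\varepsilon}(n))$. Fix $c_\pm=\pm\tfrac12$, so that $-1<c_-<0<c_+<1$. Since $|u|\le1$ and, $L$ being slowly varying, the function $\varepsilon$ is bounded (so $|v|\le\varepsilon(\Lambda_{L/\varepsilon}(n))\le C$), the admissibility requirement $R>\max\{R_0,|v|,u/c_-,u/c_+\}$ of Lemma~\ref{lemma:estimatpsi} reduces to $R>\max\{R_0,2\}$; as $L/\varepsilon\to\infty$ we have $\Lambda_{L/\varepsilon}(n)\to\infty$ and hence $R\to\infty$, so this holds for all large $n$. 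The finitely many small values of $n$ are harmless: there $P$ has bounded degree, its Taylor coefficients are bounded in terms of the degree, so $|S_LP|$ is bounded on $\{|u|\le1,\,|v|\le C\}$ and can be absorbed into the final constant since $\Lambda_{L/\varepsilon}(n)+1\ge1$.

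The arithmetic heart of the proof is the elementary inequality $n\le e\,(L/\varepsilon)(\Lambda_{L/\varepsilon}(n))$, valid for all large $n$. Indeed, writing $\Lambda_M(r)=\sup_{x>0}[x\log r-x\log M(x)]$ with $M=L/\varepsilon$ and testing the supremum at the point $x=\lambda:=\Lambda_M(n)$, which for large $n$ is a large positive number at which $L/\varepsilon$ is well-defined, one gets $\lambda\ge\lambda\log n-\lambda\log M(\lambda)$, that is $\log n\le1+\log M(\lambda)$. Combined with $L^{-1}(R)=\Lambda_{L/\varepsilon}(n)$ (legitimate for large $n$ because $L$ is eventually increasing), this bounds both halves of the first term in Lemma~\ref{lemma:estimatpsi}: using $|u|\le1$,
\[
\frac{n}{R}\,|u|\,\varepsilon(L^{-1}(R))\ \le\ \frac{n\,\varepsilon(\Lambda_{L/\varepsilon}(n))}{L(\Lambda_{L/\varepsilon}(n))}\ =\ \frac{n}{(L/\varepsilon)(\Lambda_{L/\varepsilon}(n))}\ \le\ e,
\]
and likewise $\frac{n}{R}\,|v|\le e$ since $|v|\le\varepsilon(\Lambda_{L/\varepsilon}(n))$. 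Hence the whole first term is $O(1)$, with an implicit constant depending only on the fixed choice of $c_\pm$.

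For the two remaining terms, Lemma~\ref{lem: K and E ray lemma}, which uses only assumption (R1), gives $\log E(R)\lesssim L^{-1}(R)=\Lambda_{L/\varepsilon}(n)$, and slow variation of $L$ gives $\log R=\log L(\Lambda_{L/\varepsilon}(n))=o(\log\Lambda_{L/\varepsilon}(n))\le\Lambda_{L/\varepsilon}(n)$ for large $n$. Adding the four contributions yields $\log|S_LP(u+iv)|\le C(\Lambda_{L/\varepsilon}(n)+1)$, uniformly over $|u|\le1$ and $|v|\le\varepsilon(\Lambda_{L/\varepsilon}(n))$, which is the assertion.

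The only point that requires genuine thought is the choice of the scale $R$: it must be picked so that the ``Bernstein'' factor $\tfrac{n}{R}(|u|\varepsilon(L^{-1}(R))+|v|)$, the value $\log E(R)$, and the logarithmic correction $\log R$ are all simultaneously of order $\Lambda_{L/\varepsilon}(n)$. The choice $R=L(\Lambda_{L/\varepsilon}(n))$ does this, after which the inequality $n\le e\,(L/\varepsilon)(\Lambda_{L/\varepsilon}(n))$ carries the argument; pleasantly, no regularity of the auxiliary weight $L/\varepsilon$ is needed, only the hypotheses (R1), (R2), (R3), (R8), which are already required in order to invoke Lemmas~\ref{lemma:estimatpsi} and~\ref{lem: K and E ray lemma}.
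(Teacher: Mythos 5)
Your overall route is exactly the paper's: apply Lemma~\ref{lemma:estimatpsi} with $c_\pm=\pm\tfrac12$, pick the single scale $R=L(\Lambda_{L/\varepsilon}(n))$ so that $L^{-1}(R)=\Lambda_{L/\varepsilon}(n)$, and then bound $\log E(R)\lesssim L^{-1}(R)=\Lambda_{L/\varepsilon}(n)$ via Lemma~\ref{lem: K and E ray lemma}. That is precisely what the paper does.

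However, the ``elementary inequality'' you insert in the middle is false, and the error traces back to the formula you wrote for $\Lambda$. You use $\Lambda_M(r)=\sup_{x>0}[x\log r-x\log M(x)]$, but the function actually used throughout the paper is $\Lambda_M(r)=\sup_{x>0}\bigl[x\log r-x\log\bigl(xM(x)\bigr)\bigr]$; the extra $x\log x$ is visible in the proof of Lemma~\ref{lem: widetildeE} in Appendix A, in the statement of the Carleson--Ehrenpreis theorem in Section~9, and implicitly in Lemma~\ref{lemma:estimatcoef2}, where the $n!$ in $\sup_{n}\frac{r^n}{n!\gamma(n+1)\delta^n}$ supplies that term. (The displayed formula in Section~5.1 appears to have dropped this factor, which is likely what misled you.) With the correct formula, testing the supremum at $x=\lambda:=\Lambda_M(n)$ yields $1\ge\log n-\log\lambda-\log M(\lambda)$, i.e.\ $n\le e\,\lambda\,M(\lambda)$, so that $\dfrac{n}{(L/\varepsilon)\bigl(\Lambda_{L/\varepsilon}(n)\bigr)}\le e\,\Lambda_{L/\varepsilon}(n)$, not $\le e$. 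In fact Lemma~\ref{lem: widetildeE}, part~1, applied to $L/\varepsilon$ gives the two-sided estimate $n\asymp\Lambda_{L/\varepsilon}(n)\cdot(L/\varepsilon)\bigl(\Lambda_{L/\varepsilon}(n)\bigr)$, so the quantity you claim is $O(1)$ actually tends to infinity like $\Lambda_{L/\varepsilon}(n)$. Fortunately this is harmless here: the corrected bound $\tfrac{n}{R}\bigl(|u|\varepsilon(L^{-1}(R))+|v|\bigr)\lesssim\Lambda_{L/\varepsilon}(n)$ is still of the right order, and combined with $\log E(R)\lesssim\Lambda_{L/\varepsilon}(n)$ and $\log R=o\bigl(\Lambda_{L/\varepsilon}(n)\bigr)$ it gives the stated estimate $\log\bigl|S_LP(u+iv)\bigr|\le C\bigl(\Lambda_{L/\varepsilon}(n)+1\bigr)$, which is what the paper proves.
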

\begin{proof}
	Put $Q=S_L P$. Lemma~\ref{lemma:estimatpsi} with parameters $c_\pm=\pm\tfrac12$  shows that there exists a sufficiently large numerical constant $R_0$ such that
		\[\log|Q(u+iv)|\leq C\frac{n}{R}\left(|u|{\varepsilon(L^{-1}(R))}+|v|\right)+\log E(R)+C\log R,\]
	for $R>\max\left\{R_0,|v|,2|u|\right\}$.
	Choosing $R=L(\Lambda_{L/\varepsilon}(n))$ , and assuming that $n$ is sufficiently large, we find that 
	\[  \log|Q(u+iv)|\leq  C\frac{n}{L(\Lambda_{L/\varepsilon}(n))}\cdot\varepsilon\left(\Lambda_{L/\varepsilon}(n)\right) +2\log E(L(\Lambda_{L/\varepsilon}(n))), \]
	where $|u|\leq 1$ and $|v|\leq \varepsilon(\Lambda_{L/\varepsilon}(n))$. By Lemma \ref{lem: K and E ray lemma},  $ \log E(L(\Lambda_{L/\varepsilon}(n)))\leq \Lambda_{L/\varepsilon}(n)$, and by Lemma \ref{lem: widetildeE}, assertion 1, $\frac{n}{L(\Lambda_{L/\varepsilon}(n))}\cdot\varepsilon\left(\Lambda_{L/\varepsilon}(n)\right)\asymp \Lambda_{L/\varepsilon}(n)$.
	Therefore,
	\[\log |Q(u+iv)|\lesssim \Lambda(n),\quad  |u|\leq 1\; |v|\leq \varepsilon(\Lambda_{L/\varepsilon}(n)), \]
 which is the desired estimate.
\end{proof}
\section{Proof of theorems}\label{sec:proof of Theorem 1'}
Here we   state and prove stronger analogues of Theorems \ref{thm': Main thm,reg}--7$^\prime$. The theorems are stated using the regularity assumptions of Section 5, and they are somewhat stronger then the corresponding Theorems 1--7.
\subsection{Theorem \ref{thm: Main thm,reg}}\label{subsec:proof of Theorem 1' part 1}
 \begin{thm}\label{thm': Main thm,reg}
	Suppose that the function $L$ satisfies assumptions \emph{(R3)} and \emph{(R8)}, and that $I$  is an interval containing the origin, such that $I\cap(0,\infty)$ and $I\cap(-\infty,0)$ are open. Then, the  regular transform $R_L$ maps $A(L;I)$ into $C_0(L;I)$.	
\end{thm}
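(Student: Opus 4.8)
The plan is to fix $F\in A(L;I)$ and show that the regular transform $R_LF(x)=\int_0^\infty F(xt)K(t)\,dt$ lies in $C_0(L;I)$ by proving, for every closed subinterval $J\subset I$, a bound of the shape $\max_J|(R_LF)^{(n)}|\le C^{\,n+1}\gamma(n+1)$ with $C=C(F,J)$. This already suffices: since $\gamma(n+1)=L(n+1)^{n+1}$ and $L$ is slowly varying, $\frac{C^{n+1}\gamma(n+1)}{(\delta nL(n))^n}=CL(n+1)\bigl(\frac{C}{\delta n}\cdot\frac{L(n+1)}{L(n)}\bigr)^n\to0$ for every $\delta>0$, so a bound with a \emph{fixed} geometric constant forces membership in the Beurling class. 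In particular one need not chase the sharp constant, but one \emph{must} avoid an extra factor $n!$, since $n!\,C^n\gamma(n+1)$ is \emph{not} $\lesssim_\delta(\delta nL(n))^n$ when $\delta$ is small.

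First I would check that the defining integral converges absolutely and locally uniformly on $I$, so that $R_LF$ is well defined. Given $J\subset I$, choose $c_\pm\in I$ with $\overline J\subset(c_-,c_+)$; for $t$ large one has $|F(xt)|\lesssim_{c_\pm}E((1-\delta_0)t)$ with $\delta_0>0$ by \eqref{inq1}, while $\int_0^1|K(t)|\,dt<\infty$ (shifting the Mellin contour in the definition of $K$ toward the imaginary axis gives $|K(t)|\lesssim_\eta t^{-\eta}$ near $0$) and $E((1-\delta_0)t)\,|K(t)|\lesssim E(\delta_1 t)^{-1}$ for $t$ large by Lemma~\ref{lem: KE-A}. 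Differentiating under the integral sign, justified the same way after a Cauchy estimate of $F'$, shows $R_LF\in C^\infty(I)$ and, at $x=0$, $(R_LF)^{(n)}(0)=F^{(n)}(0)\int_0^\infty t^nK(t)\,dt=n!\,\widehat F(n)\,\gamma(n+1)$ by \eqref{eq: genrlaized moments}; this agrees with $\mathcal BR_L=\widehat R_L\mathcal B$ and, since $F$ is entire, is already of the required size.

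The main step is a representation of the higher derivatives that transfers all differentiations off $F$. Starting from $(R_LF)^{(n)}(x)=\int_0^\infty t^nF^{(n)}(xt)K(t)\,dt$ and integrating by parts $n$ times in $t$ (using $F^{(n)}(xt)=\frac1x\frac{d}{dt}F^{(n-1)}(xt)$), the boundary terms vanish: $K$ and all its derivatives decay superpolynomially on $\Omega$ by Theorem~\ref{TheoremK} together with Cauchy's formula on disks comparable to the width of $\Omega$, while $F$ grows at most like $E$, and $\frac{d^j}{dt^j}(t^nK(t))=O(t^{\,n-j-\eta})$ as $t\to0^+$ by the Mellin representation. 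One obtains, for $x>0$,
\[(R_LF)^{(n)}(x)=\frac{(-1)^n}{x^n}\int_0^\infty F(xt)\,\frac{d^n}{dt^n}\bigl(t^nK(t)\bigr)\,dt,\]
and a symmetric identity for $x<0$. Now I would estimate $\frac{d^n}{dt^n}(t^nK(t))$ via its Mellin transform: from $t^nK(t)=\frac1{2\pi i}\int_{(c)}t^{n-s}\gamma(s)\,ds$ one gets $\frac{d^n}{dt^n}(t^nK(t))=\frac1{2\pi i}\int_{(c)}\frac{\Gamma(n+1-s)}{\Gamma(1-s)}t^{-s}\gamma(s)\,ds$, whose integrand is holomorphic for $0<\re s<n+1$; shifting the contour to the right and using the complex asymptotics of $\gamma$ (Lemma~\ref{lem: LcomplexLemma}, assumption (R8)) and of $K$ (Theorem~\ref{TheoremK}) controls this kernel in terms of $\rho_t$, $\varepsilon(\rho_t)$ and $\gamma$ \emph{without} a factorial. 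Inserting $|F(xt)|\lesssim_{c_+}E\bigl(\frac{x}{c_+}t\bigr)$ and using $x/c_+<1$, so that this factor is negligible against the exponential decay of $K$, reduces the whole estimate to an integral of the type $\int_{r_0}^\infty t^n e^{-\delta\rho_t\varepsilon(\rho_t)}\,dt$, which is $\le C2^n\gamma(n+1)$ by Lemma~\ref{lem: K and E last}; Lemmas~\ref{lem: widetildeE} and \ref{lem: subReplacmentLem} are used along the way to compare $\varepsilon$, $L$ and $\Lambda_L$ at the relevant scales. Running this for $x>0$ and $x<0$ on each $J\subset I$ gives the bound $\max_J|(R_LF)^{(n)}|\le C^{n+1}\gamma(n+1)$ and hence $R_LF\in C_0(L;I)$.

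The hard part is precisely this last estimate. A crude bound $\bigl|\frac{d^n}{dt^n}(t^nK(t))\bigr|\lesssim n!\,t^n|K(t)|$ followed by passing to absolute values only yields the Carleman-type estimate $\max_J|(R_LF)^{(n)}|\lesssim n!\,C^n\gamma(n+1)$, which is too weak for the Beurling class. To remove the factorial one must either exploit the cancellation in $\frac{d^n}{dt^n}(t^nK(t))$ — equivalently, push the Mellin contour far enough to the right, where $\Gamma(n+1-s)/\Gamma(1-s)$ no longer contributes a full $n!$ — or use the whole family of bounds $|F|\lesssim_{c_+}E(\cdot/c_+)$, $c_+\in I$, to gain the room that upgrades the Carleman bound to the sharp $C^{n+1}\gamma(n+1)$; matching these two sources of decay uniformly in $n$ is the delicate point of the argument.
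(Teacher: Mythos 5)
There is a genuine gap, and it is precisely where you flag the ``hard part.'' Your plan rests on the claim that the Mellin-shifted kernel $\frac{d^n}{dt^n}(t^nK(t))$ can be bounded ``without a factorial,'' but you never establish this, and it is far from clear. The Cauchy estimate for $K$ inside $\Omega$ (which at radius $t$ has angular width $\asymp\varepsilon(\rho_t)$, hence contains a disk of radius $\asymp t\varepsilon(\rho_t)$ about $t$) only gives $|K^{(n)}(t)|\lesssim n!\,(t\varepsilon(\rho_t))^{-n}\sup|K|$, so the leading term $t^nK^{(n)}(t)$ in the Leibniz expansion of $\frac{d^n}{dt^n}(t^nK(t))$ still carries a full $n!$; and the assertion that shifting the contour far to the right in $\frac{1}{2\pi i}\int_{(c)}\frac{\Gamma(n+1-s)}{\Gamma(1-s)}t^{-s}\gamma(s)\,ds$ cancels $\Gamma(n+1-s)/\Gamma(1-s)$ against $\gamma(s)$ is a conjecture, not an estimate — you would need to quantify both sides against $\gamma(n+1)$ uniformly in $n$ and $t$, which you do not do. Separately, your identity $(R_LF)^{(n)}(x)=\frac{(-1)^n}{x^n}\int_0^\infty F(xt)\frac{d^n}{dt^n}(t^nK(t))\,dt$ carries a factor $x^{-n}$ that blows up as $x\to0^+$, so even with the kernel estimate in hand you would still need a separate argument on a neighbourhood of the origin to get the bound uniformly on a compact $J\ni 0$.

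More fundamentally, you mis-diagnose the difficulty in your first paragraph. The paper's proof keeps the factor $n!$ and never differentiates $K$ at all. The point is that the membership criterion for $A(L;I)$ gives bounds in horizontal strips of \emph{arbitrary} width $Y$. One applies Cauchy's estimate on a disk of radius $Y$ to obtain $|F^{(n)}(u)|\lesssim_{Y,c_+}\frac{n!}{Y^n}E\bigl(\frac{u}{c_+\eta}\bigr)$, then bounds $(R_LF)^{(n)}(x)=\int_0^\infty t^nF^{(n)}(xt)K(t)\,dt$ using Lemma~\ref{lem: KE-A} to kill $E(t/\eta)|K(t)|$ and the trivial lower bound $E(u)\ge u^{n+2}/\gamma(n+3)$, getting $\max_{[0,c_+]}|(R_LF)^{(n)}|\lesssim_{c_+,Y,\eta}\frac{n!}{Y^n}\delta_1^{-n-1}\gamma(n+3)$ with $\delta_1=\delta_1(\eta)$ independent of $Y$. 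Since $Y$ is a free parameter and $n!\,Y^{-n}\asymp(n/(eY))^n$, the effective geometric base $1/(eY\delta_1)$ can be taken smaller than any $\delta>0$, so this \emph{is} a Beurling-class estimate despite the $n!$. Your premise that ``one must avoid an extra factor $n!$'' is what leads you off the simple path: a factor $n!$ is harmless when it comes paired with $Y^{-n}$ and $Y$ is unbounded, and Definition~\ref{def: the space  A} is set up to produce exactly that pair via Cauchy's formula.
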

Note that  Theorem  \ref{thm': Main thm,reg} is valid not only for slowly growing functions $L$, but also for functions that grow faster, such as $L(\rho)=\rho^a $, for some $a>0$ (Gevrey classes).
\begin{proof}[Proof  of  Theorem \ref{thm': Main thm,reg}]
	Fix an interval $I$ such that $0\in I$ and  $I\cap (0,\infty)$ and $I\cap (-\infty,0)$ are open. The proof treats the intervals  $I\cap [0,\infty)$ and $I\cap [-\infty,0)$ separately; the two cases are analogous. Thus there is no loss of generality in assuming that $I\subset [0,\infty)$. Fix $F\in A(L;I)$ and put 
	\[f(x)=\left(R_LF\right)(x)=\int_0^\infty F(xt)K(t)dt\]
	(convergence will follow from the proof).
	We wish to show that $f\in C_0(L;I)$. Fix $0<c_+ \in I$ and let $\eta>1$ such that $\eta^2 c_+\in I$.
	By the definition of $A(L;I)$, 
	\[|F(u+iv)|\lesssim_{Y,c_+} E\left(\frac{u}{c_+\eta^2}\right),\quad u>0\;, |v|<Y.\]
	For $u\geq 0$ and $n \geq  0$, Cauchy's formula yields
	\[|F^{(n)}(u)|=\frac{ n!}{2\pi} \left|\int_{|w-u|=Y} \frac{F(w)dw}{(w-u)^{(n+1)}}\right|\lesssim_{Y,c_+} \frac{n!}{Y^n} E\left(\frac{u+Y}{c_+ \eta^2}\right)\lesssim_{Y,c_+} \frac{n!}{Y^n} E\left(\frac{u}{c_+\eta}\right).\] 
For $x\in[0,c_+]$, we further obtain
\begin{equation}
|f^{(n)}(x)|=\left|\int_0^{\infty} t^nF^{(n)}(xt)K(t)dt\right|\lesssim_{c_+,Y} \frac{ n!}{Y^n}\int_0^{\infty} t^n E\left(\frac{t}{\eta} \right)|K(t)|dt .\label{f deri esti}
\end{equation}
By Lemma~\ref{lem: KE-A}, applied with $1-\delta=\tfrac{1}{\eta}$, there exists $\delta_1:=\delta_1(\eta)>0$, such that 
\[  E(\delta_1 t)E\left(\frac{t}{\eta} \right) |K(t)|\lesssim_\eta 1.\]
Taking this into account in \eqref{f deri esti}, we obtain
\[ |f^{(n)}(x)|\lesssim_{c_+,Y} \frac{ n!}{Y^n}\int_0^\infty t^n  \frac{dt}{E ( t\delta_1)}\stackrel{(t\delta_1=u )}{=}\frac{ n!}{Y^n} \delta_1^{-n-1}\int_0^\infty u^n \frac{du}{E(u)} \leq \frac{ n!}{Y^n} \delta_1^{-n-1} \left[C+ \int_1^\infty u^n \frac{du}{E(u)}\right]. \]
By the definition of $E$,
\[ E(u)=\sum_{k\geq 0 }\frac{u^k}{\gamma(k+1)}\geq\frac{u^{n+2}}{\gamma(n+3)}.  \]
Therefore 
\[ |f^{(n)}(x)|\lesssim_{a_+,Y,\eta} \frac{ n!}{Y^n} \delta_1^{-n-1}\left[C+\gamma(n+3)\right]\lesssim_{a_+,Y}\frac{ n!}{Y^n} \delta_1^{-n-1} \gamma(n+3).  \]
Since $\left|\frac{\gamma(n+3)}{\gamma(n)}\right|^{1/n}\sim 1$   and $Y$ can be taken arbitrarily large, the last inequality shows that $f\in C_0(L;[0,c_+])$. Since $c_+\in I$ was arbitrary, we conclude that $f\in C_0(L;I)$. This completes  the proof of  Theorem \ref{thm: Main thm,reg}.	\end{proof}
\subsection{Theorem \ref{thm: Main thm,sing}}\label{subsec:proof of Theorem $1'$ part 2}
 \begin{thm} \label{thm': Main thm,sing}
	Suppose that the function  $L$ satisfies assumptions \emph{(R1), (R2), (R4)} and \emph{(R8)}, and that $I$  is an open interval containing the origin. Then,
	the singular transform $S_L$ maps $C_0(L;I)$ into $\bigcap_{\delta>0}A\left(\frac{L(\rho)}{\delta\rho L'(\rho)+1};I\right)$
\end{thm}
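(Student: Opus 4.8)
The plan is to fix $\delta_0>0$ and show that $S_L$ maps $C_0(L;I)$ into $A(L_{\delta_0};I)$ with $L_{\delta_0}(\rho)=L(\rho)/(\delta_0\rho L'(\rho)+1)$; since $\delta_0$ is arbitrary this gives the asserted intersection. Writing $\gamma_{\delta_0}(\rho)=L_{\delta_0}(\rho)^\rho$ and $E_{\delta_0}(z)=\sum_{n\ge0}z^n/\gamma_{\delta_0}(n+1)$ for the objects attached to $L_{\delta_0}$, the task is to prove $\max_{|v|\le Y}|S_Lf(u+iv)|\lesssim_{c_\pm,Y}E_{\delta_0}(u/c_\pm)$ as $u\to\pm\infty$, for every $f\in C_0(L;I)$, every $c_-<0<c_+$ in $I$ and every $Y>0$ (the case $u\to-\infty$ is symmetric). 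The first thing I would record is that the product majorant delivered by Lemma~\ref{lemma:Co1} is the right one. Indeed $\widetilde E$ is exactly the function ``$E$'' of the weight $\rho\mapsto1/\varepsilon(\rho)$, which is slowly varying by (R3); so Lemma~\ref{lem: K and E ray lemma} applied to $L$ and to $1/\varepsilon$ gives $\log E(x)\asymp L^{-1}(x)$ and $\log\widetilde E(x)\asymp(1/\varepsilon)^{-1}(x)$, while $\rho L'(\rho)=L(\rho)\varepsilon(\rho)$ yields the elementary identity $1/L_{\delta_0}(\rho)=\delta_0\varepsilon(\rho)+1/L(\rho)$ and hence $L_{\delta_0}^{-1}(x)\asymp\max\{L^{-1}(x),(1/\varepsilon)^{-1}(\delta_0x)\}$. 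Consequently, for every small enough $\delta>0$ and every $\eta>1$,
\begin{equation}
E\!\left(\tfrac{x}{\eta c_\pm}\right)\widetilde E(\delta x)\ \lesssim_{c_\pm,\eta,\delta}\ E_{\delta_0}\!\left(\tfrac{x}{c_\pm}\right),\qquad x>0,\label{eq:plan-product}
\end{equation}
using openness of $I$ to shrink $c_\pm$ harmlessly when needed.

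Next I would fix $f\in C_0(L;I)$, $c_\pm$, $Y$, a compact $[a,b]\subset I$ and $\eta>1$ with $[\eta c_-,\eta c_+]\subset(a,b)$, and set $b'=\max\{|a|,|b|\}$. For $z=u+iv$ with $|v|\le Y$, $u>0$, $r=|z|$, I would split the entire (because $\widehat f\in\mathcal{F}_0(L)$) function $S_Lf$ as $S_Lf=S_LP_N+T_N$, where $P_N(x)=\sum_{n<N}\widehat f(n)x^n$ is the truncated Taylor polynomial and $T_N(z)=\sum_{n\ge N}\widehat f(n)z^n/\gamma(n+1)$, with the cut $N=N(r)$ to be chosen. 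The tail is the easy piece: from $\widehat f\in\mathcal{F}_0(L)$ and $L(n)\asymp L(n+1)$ one gets $r\,|\widehat f(n)|^{1/n}/L(n+1)\to0$, so there is a threshold $N_*(r)$ with $|\widehat f(n)|r^n/\gamma(n+1)\le2^{-n}$ for $n\ge N_*(r)$, whence $|T_N(z)|\lesssim1$ as soon as $N\ge N_*(r)$. For the head, Taylor's formula with integral remainder together with the Beurling bounds $\max_{[a,b]}|f^{(n)}|\le C_\delta(\delta nL(n))^n$ and $n!\ge(n/e)^n$ give $\|P_N\|_{C^0([a,b])}\le\max_{[a,b]}|f|+C_\delta(e\delta b'L(N))^N=:M_N$ for any $\delta>0$; applying Lemma~\ref{lemma:Co1} to $P_N/M_N$ (with the points $\eta c_\pm$ and the strip $|v|\le Y$) then yields $|(S_LP_N)(u+iv)|\lesssim M_N\,e^{C\Lambda_L(N)}E(u/(\eta c_+))\widetilde E(\delta|u|)$. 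With \eqref{eq:plan-product} in hand, it remains to pick $N=N(r)$ and the auxiliary $\delta$ so that $N\ge N_*(r)$ while the overhead $M_Ne^{C\Lambda_L(N)}$ is absorbed into $E_{\delta_0}(u/c_+)$; since on the strip $u\asymp r$ and $\log E_{\delta_0}(x)\asymp\max\{\Lambda_L(x),\Lambda_{1/\varepsilon}(\delta_0x)\}$, this pins $N$ down to roughly $L_{\delta_0}^{-1}(r)$, and combining the two estimates finishes the bound.

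The hard part will be exactly this last step — the simultaneous choice of $N$ and $\delta$. Enlarging $N$ tames the tail but inflates the head through both $M_N\sim(e\delta b'L(N))^N$ and the factor $e^{C\Lambda_L(N)}$; conversely the tail forces $N\ge N_*(r)$, a quantity depending not merely on $\widehat f\in\mathcal{F}_0(L)$ but on how fast $|\widehat f(n)|^{1/n}/L(n)\to0$, i.e.\ on $f$ itself. To fit both contributions under $E_{\delta_0}(u/c_\pm)$ I expect one must use the full strength of $f\in C_0(L;I)$ quantitatively (the Beurling constants $C_\delta$ bound how large $N_*(r)$ can be in terms of $r$), and probably replace the single split by a decomposition of $S_Lf$ into polynomial blocks $S_LQ_k$, with $Q_k$ carrying the Taylor coefficients of $f$ in a dyadic range of orders, each block estimated by Lemma~\ref{lemma:Co1} (or the sharper Lemma~\ref{lemma:Co2}) via the genuine decay of $\widehat f(n)$, arranged so that $\sum_k|(S_LQ_k)(z)|$ converges and stays $\lesssim E_{\delta_0}(u/c_\pm)$. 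A secondary, essentially routine, point is the reduction \eqref{eq:plan-product} of the product $E\cdot\widetilde E$ coming from Lemma~\ref{lemma:Co1} to the single majorant $E_{\delta_0}$, which rests on Lemma~\ref{lem: K and E ray lemma} and the identity $1/L_{\delta_0}=\delta_0\varepsilon+1/L$.
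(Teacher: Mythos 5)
Your reduction \eqref{eq:plan-product} of the product $E\cdot\widetilde E$ to the single majorant $E_{\delta_0}$ is correct and is essentially the same elementary manipulation the paper carries out at the end of its proof (using $ab\le a^2+b^2$, Lemma~\ref{lem: K and E ray lemma}, and $\tfrac1{\gamma_{\delta_0}(n)}=(\delta_0\varepsilon(n)+1/L(n))^n\ge\max\{\delta_0^n\varepsilon(n)^n,1/\gamma(n)\}$). The gap is in the main step, and you diagnose it yourself: the Taylor-truncation splitting $S_Lf=S_LP_N+T_N$ does not close. The sup norm of the Taylor polynomial on $[a,b]$ is genuinely of size $M_N\approx C_\delta(e\delta b'L(N))^N$, i.e.\ roughly $\gamma(N)$ up to the $\delta^N$ factor, because the individual Taylor coefficients of a $C_0(L)$-function are themselves as large as $(\delta L(n))^n$ and there is no cancellation in the $C^0$-norm of the partial sum. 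After normalizing and applying Lemma~\ref{lemma:Co1} the overhead is $M_N e^{C\Lambda_L(N)}$, and with $N\asymp L_{\delta_0}^{-1}(r)$ this contributes a factor of order $\exp(N\log L(N))$, whereas the target $E_{\delta_0}(u/c_\pm)$ is only of order $\exp(N)$; the two cannot be reconciled by tuning $\delta$, because the Beurling constants $C_\delta$ are $f$-dependent and uncontrolled. Your suggested fix — a dyadic decomposition into polynomial blocks — runs into exactly the same obstruction, since each block still has $C^0$-norm comparable to the size of its Taylor coefficients.

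The idea you are missing is to expand in Chebyshev polynomials rather than to truncate the Taylor series. After the affine change of variables $\chi$ to $[-1,1]$, the function $f\circ\chi\in C_0(L;[-1,1])$ has Chebyshev expansion $\sum c_nT_n$ whose coefficients (Lemma~\ref{lemma:estimatcoef2}) satisfy $|c_n|\lesssim_{\delta_1}e^{-\delta_1^{-1}\Lambda_L(n)}$ for \emph{every} $\delta_1>0$. This superpolynomial decay is the Chebyshev analogue of the cancellation that the Taylor truncation fails to exploit, and it dominates outright the factor $e^{C\Lambda_L(n)}$ that Lemma~\ref{lemma:Co1} costs you per degree-$n$ block. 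The paper then sets $F:=\sum_n c_n\,S_L(T_n\circ\chi^{-1})$, uses Lemma~\ref{lemma:Co1} on each (bounded-by-one) polynomial $T_n\circ\chi^{-1}$, sums the absolutely convergent series to get $|F(u+iv)|\lesssim E(u/c'_\pm)\widetilde E(\delta_2|u|)$ on strips, identifies $F$ with $S_Lf$ by equality of Taylor coefficients, and finishes with the product-to-$E_{\delta_0}$ reduction you already have. So your scaffolding and your use of Lemma~\ref{lemma:Co1} and Lemma~\ref{lem: K and E ray lemma} point in the right direction, but the decomposition must be by Chebyshev modes, not Taylor truncation; with the Taylor split the head term is simply too large.
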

Note that if $L$ is a Denjoy weight, then 
$$\bigcap_{\delta>0}A\left(\frac{L(\rho)}{\delta\rho L'(\rho)+1};I\right)=A(L;I)\cup A(\tfrac{1}{\varepsilon},\mathbb{R}).$$
So Theorem \ref{thm: Main thm,sing} follows from Theorem \ref{thm': Main thm,sing}.
\subsubsection{Chebyshev polynomials expansion for functions in $C_0(L;[-1,1])$.}\label{subsubsec:Chebyshev}
Denote by $T_n(x)=\cos\left((n\arccos(x)\right)$ the Chebyshev polynomials. We will use the following lemma (see \cite[pp.44]{Mandelbrojt}).
\begin{lemma}\label{lemma:estimatcoef}
	 If   $f \in C_0(L;[-1,1])$ with the Chebyshev expansion $f=\sum_{n\geq 0}c_n T_n$. Then  for any $\delta>0$,
	\[|c_n|\lesssim_{f,\delta} \inf_{n\leq r} \frac{n!\gamma(n+1)\delta^n}{r^n}.\]
	Moreover, if $f \in C^\omega([-1,1])$, then there exists $\delta=\delta_f>0$ such that 
	\[\log |c_n|\lesssim e^{-\delta n},\quad n\geq 0 \]
\end{lemma}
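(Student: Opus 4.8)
The plan is to follow the classical argument (following \cite[p.~44]{Mandelbrojt}): express the Chebyshev coefficients as weighted moments and estimate them by integrating by parts the optimal number of times, carrying the integrations out \emph{in the variable $x$} rather than in the angle, and to treat the real-analytic case separately by a Bernstein-ellipse argument. Concretely, I would start from $c_n=\frac{2}{\pi}\int_{-1}^{1}\frac{f(x)T_n(x)}{\sqrt{1-x^2}}\,dx$ ($n\ge1$; the integrand is integrable since $(1-x^2)^{-1/2}\in L^1(-1,1)$) and use the elementary identities
\[
\frac{T_n(x)}{\sqrt{1-x^2}}=-\frac1n\,\frac{d}{dx}\sin(n\arccos x),\qquad
\int\sin(k\arccos x)\,dx=-\frac12\left(\frac{\sin((k-1)\arccos x)}{k-1}-\frac{\sin((k+1)\arccos x)}{k+1}\right),
\]
so that all antiderivatives appearing in the process are bounded on $[-1,1]$ and vanish at $x=\pm1$ (being values of $\sin$ at integer multiples of $0$ and of $\pi$), as long as the frequencies involved stay $\ge2$. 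Integrating by parts $m$ times, and restricting to $1\le m\le n/2$ so that all frequencies stay in $[n/2,3n/2]$, one gets with no surviving boundary terms
\[
c_n=\frac{2}{\pi}\int_{-1}^{1}f^{(m)}(x)\,W_m(x)\,dx,\qquad W_m(x)=\sum_{|j|\le m}a_{m,j}\,\sin\big((n+j)\arccos x\big),
\]
where each integration by parts at most doubles the number of summands while dividing every coefficient by a quantity of order $n$; hence $\sum_j|a_{m,j}|\le (C_0/n)^m$ for an absolute constant $C_0$, and $|c_n|\le C(C_0/n)^m\max_{[-1,1]}|f^{(m)}|$ for $1\le m\le n/2$.

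Inserting the defining bound of $C_0(L;[-1,1])$, namely $\max_{[-1,1]}|f^{(m)}|\le C_{f,\delta}(\delta mL(m))^m$ for every $\delta>0$, and using $m^m\le e^mm!$ together with $\gamma(m)=L(m)^m\le\gamma(m+1)$, I obtain $|c_n|\le C_{f,\delta}\,\delta^m\,m!\,\gamma(m+1)\,n^{-m}$ for $1\le m\le n/2$ (absorbing $C_0$ and $e$ into $\delta$ after renaming), and taking the infimum over admissible $m$ gives the asserted estimate; since the optimizing $m$ is of order $n/(\delta L(n))=o(n)$, the restriction $m\le n/2$ is harmless for large $n$, and for small $n$ the estimate is trivial. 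For the real-analytic case, $f\in C^\omega([-1,1])$ extends holomorphically to a complex neighbourhood of $[-1,1]$, hence to a Bernstein ellipse $\mathbf{T}_\rho(-1,1)$ with $\rho>1$, where $|f|\le M$; the Joukowski map $z\mapsto\tfrac12(z+z^{-1})$ carries the annulus $\{\rho^{-1}<|z|<\rho\}$ onto $\mathbf{T}_\rho(-1,1)$, so $\phi(z):=f\big(\tfrac12(z+z^{-1})\big)$ is holomorphic and bounded by $M$ there, its Laurent coefficients $\alpha_k$ satisfy $|\alpha_k|\le M\rho^{-|k|}$, and $c_n=2\alpha_n$; thus $|c_n|\le 2M\rho^{-n}$, i.e.\ $\log|c_n|\le-n\log\rho+O(1)$, the claimed exponential decay.

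The delicate point is the integration-by-parts step, and the trap to avoid is to differentiate $f(\cos\phi)$ directly and integrate by parts in the angle: Fa\`a di Bruno then brings in Bell-number coefficients $B_m\le m!$, costing an extra factorial and yielding only $|c_n|\lesssim (m!)^2\delta^m\gamma(m)\,n^{-m}$, whose optimum decays merely like $\exp\big(-c\sqrt{n/L(\sqrt n)}\big)$ and so fails to beat the $e^{C\Lambda_L(n)}$-type growth of $S_LT_n$ that enters the later estimate $S_Lf=\sum_n c_n S_LT_n$ (proof of Theorem~\ref{thm': Main thm,sing}). Doing the integrations by parts in $x$, against the explicit bounded antiderivatives above, is exactly what keeps $\sum_j|a_{m,j}|$ of the correct size $(C_0/n)^m$; the bookkeeping to watch is that no boundary term survives and that the frequencies never drift below a fixed multiple of $n$.
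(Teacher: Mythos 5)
The paper does not prove this lemma; it cites Mandelbrojt's Rice Institute pamphlet and uses the statement as a black box, so there is no in-paper proof to compare your argument against. Your self-contained proof is correct. The integration-by-parts argument in the $x$-variable, using the exact antiderivative $\int\sin(k\arccos x)\,dx=-\tfrac12\big(\tfrac{\sin((k-1)\arccos x)}{k-1}-\tfrac{\sin((k+1)\arccos x)}{k+1}\big)$ with vanishing endpoint values, does give $|c_n|\le C(C_0/n)^m\max_{[-1,1]}|f^{(m)}|$ for $1\le m\le n/2$ with $C_0$ absolute, and inserting the Beurling bound and Stirling gives $|c_n|\lesssim_{f,\delta}\delta^m m!\,\gamma(m+1)\,n^{-m}$ after absorbing $C_0e$ into $\delta$. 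Your observations about the restriction $m\le n/2$ (harmless since the optimizer is $o(n)$) and about the Fa\`a di Bruno trap when one naively integrates by parts in the angle variable are both sound and worth making explicit. The Bernstein-ellipse argument for the real-analytic case is the standard one and gives the exponential decay $|c_n|\lesssim\rho^{-n}$.

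Two remarks on the statement itself, which your proof tacitly repairs. First, the displayed infimum uses $n$ simultaneously as the Chebyshev index and as the running variable; the intended meaning, as you correctly read it and as required by how Lemma 9.2 deduces $|c_n|\lesssim e^{-\delta^{-1}\Lambda_L(n)}$ from it, is $|c_N|\lesssim_{f,\delta}\inf_{m\le N}\tfrac{m!\,\gamma(m+1)\,\delta^m}{N^m}$ with two distinct indices. Second, the second assertion as printed reads ``$\log|c_n|\lesssim e^{-\delta n}$'', which is vacuous (a large negative number is trivially $\lesssim$ a small positive one); the intended and used statement, as confirmed in the proof of Theorem 7$'$ where the paper writes ``$|c_n|\lesssim_\delta e^{-\delta n}$'', is $|c_n|\lesssim e^{-\delta n}$, and this is exactly what your Joukowski/Bernstein argument delivers.
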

The next lemma, enables us to express the majorant of the coefficients of the Chebyshev expansion in terms of the function $\Lambda_L(r)=\sup_{x>0}\left[x\log r-x\log(xL(x))\right]$.
\begin{lemma}\label{lemma:estimatcoef2}
	Suppose that the function $L$ satisfies assumptions \emph{(R1)} and \emph{(R3)}. Then
	\[ \Lambda_L(r)\asymp \log\left(\sup_{n\leq r} \frac{r^n} {n!\gamma(n+1)\delta^n}\right)\asymp\log\left( \sup_{n\in \mathbb{N}} \frac{r^n} {n!\gamma(n+1)\delta^n}\right). \]
	In particular, if 
  $f \in C_0(L;[-1,1])$ with the Chebyshev expansion $f=\sum_{n\geq 0}c_n T_n$, then
	\[ |c_n|\lesssim_{f,\delta}  e^{-\delta^{-1}\Lambda_L(n)}. \] 
\end{lemma}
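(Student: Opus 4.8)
The plan is to reduce, via Stirling's formula, the two discrete maxima to the continuous extremal problem defining $\Lambda_L$, and then to check that every error introduced is negligible in the $\asymp$--sense, using that $L$ is slowly varying and that $\Lambda_L$ grows faster than $\log$. Set $\phi_\rho(x):=x\log\rho-x\log\bigl(xL(x)\bigr)$, so that $\Lambda_L(\rho)=\sup_{x>0}\phi_\rho(x)$. First I would record, by Stirling's formula together with the slow variation of $L$ (which permits replacing $L(n+1)$ by $L(n)$ inside $n\log L(\cdot)$ at the cost of an $o(1)$ term, plus the $\tfrac12\log(2\pi n)$ coming from $n!$), that
\[
\log\frac{r^{n}}{n!\,\gamma(n+1)\,\delta^{n}}=\phi_{er/\delta}(n)+O\bigl(\log\bigl((n+1)L(n+1)\bigr)\bigr),\qquad n\ge1,
\]
uniformly in $n$. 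Hence $\log\sup_{n}\frac{r^{n}}{n!\gamma(n+1)\delta^{n}}=\sup_{n\in\mathbb{N}}\bigl(\phi_{er/\delta}(n)+O(\log((n+1)L(n+1)))\bigr)$, and likewise for the truncated supremum.

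Next I would analyse $\phi_\rho$. Under (R1) and (R3) one computes $\phi_\rho''(x)=-\tfrac1x\bigl(1+o(1)\bigr)<0$ for large $x$, so $\phi_\rho$ is eventually strictly concave and has a unique maximizer $x_0(\rho)$, determined by $\rho=e\,x_0(\rho)L(x_0(\rho))e^{\varepsilon(x_0(\rho))}$; in particular $x_0(\rho)<\rho$ and $\phi_\rho(x_0(\rho))=\Lambda_L(\rho)$. Since $\phi_{er/\delta}(n)\to-\infty$ like $-n\log n$, the supremum over $\mathbb{N}$ above is attained for $n$ in a bounded neighbourhood of $x_0(er/\delta)$, where the error term is $O(\log r)$; evaluating at $n=\lceil x_0(er/\delta)\rceil$ and using the concavity estimate $\phi_\rho(n)=\phi_\rho(x_0(\rho))+O(1)$ yields
\[
\log\sup_{n\in\mathbb{N}}\frac{r^{n}}{n!\,\gamma(n+1)\,\delta^{n}}=\Lambda_L\bigl(er/\delta\bigr)+O(\log r).
\]
By Lemma \ref{lem: widetildeE}(3) one has $\Lambda_L(er/\delta)\sim\Lambda_L(r)$, and by Lemma \ref{lem: widetildeE}(1) together with the slow variation of $L$ one has $\Lambda_L(r)\gg\log r$; hence the right-hand side equals $\Lambda_L(r)(1+o(1))$, which is the second $\asymp$ of the lemma.

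It remains to bound the truncated supremum from below by a constant multiple of $\Lambda_L(r)$ (the upper bound being trivial). For this I would pick an admissible test index $n_0\le r$ and evaluate $\phi_{er/\delta}(n_0)+O(\log r)$ there. If $\delta<e$, take $n_0=\lceil x_0(r)\rceil<r$; then $\phi_{er/\delta}(n_0)=\phi_r(n_0)+n_0(1-\log\delta)\ge\phi_r(x_0(r))+O(1)=\Lambda_L(r)+O(1)$ since $1-\log\delta>0$. If $\delta\ge e$, take $n_0=\lceil x_0(er/\delta)\rceil$, which is $\le r$ because $x_0(er/\delta)<er/\delta\le r$; then $\phi_{er/\delta}(n_0)=\Lambda_L(er/\delta)+O(1)\asymp\Lambda_L(r)$. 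In both cases $\log\sup_{n\le r}\frac{r^{n}}{n!\gamma(n+1)\delta^{n}}\gtrsim\Lambda_L(r)$, which completes the $\asymp$--chain. Finally, the ``in particular'' follows by combining the chain with Lemma \ref{lemma:estimatcoef}: that lemma gives $|c_n|\lesssim_{f,\delta}\inf_{k}\frac{k!\,\gamma(k+1)\,\delta^{k}}{n^{k}}=\exp\bigl(-\log\sup_{k}\tfrac{n^{k}}{k!\gamma(k+1)\delta^{k}}\bigr)$, and applying the first part with $r$ replaced by the coefficient index $n$ turns the exponent into $-c\,\Lambda_L(n)$, with $c$ arbitrarily large as $\delta\downarrow0$; this is exactly $|c_n|\lesssim_{f,\delta}e^{-\delta^{-1}\Lambda_L(n)}$.

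The step I expect to be the main obstacle is the bookkeeping that makes the discrete-to-continuous passage lossless up to $\asymp$: one must check that the Stirling and rounding errors, as well as the gap $\Lambda_L(er/\delta)-\Lambda_L(r)$, are all $o(\Lambda_L(r))$, which relies precisely on $\Lambda_L(\lambda r)\sim\Lambda_L(r)$ and $\Lambda_L(r)\gg\log r$ from Lemma \ref{lem: widetildeE}; and, for small $\delta$, on centring the lower-bound test index at the $\delta$--free saddle $x_0(r)<r$ rather than at $x_0(er/\delta)$, which may exceed $r$.
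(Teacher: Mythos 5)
Your overall route — reduce via Stirling to the continuous extremal problem $\sup_x \phi_{er/\delta}(x)$, exploit the eventual concavity of $\phi_\rho$ and the fact that $\Lambda_L \gg \log$ to absorb the discretisation errors — is the same as the paper's, so the $\asymp$ chain itself is fine in spirit. However, there is a genuine error at the crucial scaling step. You invoke ``Lemma~\ref{lem: widetildeE}(3)'' to claim $\Lambda_L(er/\delta)\sim\Lambda_L(r)$; the statement printed there is a typo, and what the appendix actually proves (and what the paper's own proof of this lemma uses, citing part~1) is $\Lambda_L(\lambda r)\sim\lambda\Lambda_L(r)$. So $\Lambda_L(er/\delta)\sim(e/\delta)\Lambda_L(r)$, and your assertion that the supremum over $\mathbb N$ equals $\Lambda_L(r)(1+o(1))$ independent of $\delta$ is false. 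This is not cosmetic: the ``in particular'' clause requires knowing that the implicit constant grows like $\delta^{-1}$, and your parenthetical ``with $c$ arbitrarily large as $\delta\downarrow 0$'' is stated without proof and in fact contradicts the $\Lambda_L(r)(1+o(1))$ you derived two lines earlier.

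A second, related symptom of the same error shows up in your lower bound for the truncated supremum. Evaluating at $n_0=\lceil x_0(r)\rceil$ gives $\phi_{er/\delta}(n_0)=\phi_r(n_0)+n_0(1-\log\delta)\gtrsim (2-\log\delta)\Lambda_L(r)$, since $n_0\sim x_0(r)\sim\Lambda_L(r)$. For $\delta<1/e$ this already exceeds the supposed upper bound $\Lambda_L(r)(1+o(1))$, so your upper and lower bounds are mutually inconsistent; and in any case $2-\log\delta = O(\log(1/\delta))$ is far short of the required $\delta^{-1}$ rate. The clean fix is the paper's: prove the $\delta$-free estimate $\sup_{n\le r}\log\frac{r^n}{n!\gamma(n+1)}\asymp\Lambda_L(r)$ first (Stirling plus a Taylor expansion of $\phi_r$ at its saddle $x_r$, using (R3) to bound $f_{xx}$), and then substitute $r\mapsto r/\delta$ together with the correct relation $\Lambda_L(r/\delta)\sim\delta^{-1}\Lambda_L(r)$. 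Incidentally, the case distinction $\delta<e$ versus $\delta\ge e$ is unnecessary: since $L$ is unbounded, $x_0(er/\delta)\sim r/(\delta L(x_0(er/\delta)))\le r$ eventually for every fixed $\delta>0$, so the shifted saddle is always admissible for the truncated sup once $r$ is large.
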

The proof of this lemma is given in Appendix A.
\subsubsection{Proof of   Theorem \ref{thm': Main thm,sing}}\label{subsubsec:proof of Theorem $1'$ part 2}
\begin{proof}
We fix an open interval $I$ and a function $f\in C_0(L;I)$. Let  $[c'_-,c'_+]\subset I$,   and let $a,b\in\mathbb{R}$, such that $[c'_-,c'_+]\subset( a, b)\subset[a,b]\subset I$. Denote by $\chi=\chi(a,b)$ the linear function that maps the interval $[a,b]$ onto the interval $[-1,1]$. 

The function $f\circ\chi$ belongs to $C_0\left(L;[-1,1]\right)$. By Lemma \ref{lemma:estimatcoef2}, it can be expanded into a series of Chebyshev polynomials with rapidly decaying coefficients, namely
\[ f\circ\chi =\sum_{n\geq 0} c_n T_n,\quad \text{where, for any } \delta_1>0,\quad |c_n|\lesssim_{\delta_1} e^{-\delta_1^{-1}\Lambda_L(n)}. \] 
We claim that the function $F:=\sum_{n \geq 0 }c_n S_L(T_n\circ\chi^{-1}) $ is the singular transform  of $f=\sum_{n\geq 0} c_n T_n\circ\chi^{-1} $. To show this we will use Lemma \ref{lemma:Co1}. 

Note that $T_n\circ\chi^{-1}$ are  polynomials of degree $n$ which are bounded by $1$ on the interval $[a,b]$. Therefore, by Lemma~\ref{lemma:Co1}, for any $Y>0$ and $\delta_2>0$, there exists  a constant $C>0$ such that 
\begin{equation}
|F(u+i v)|\leq E\left(\frac{u}{c'_\pm}\right)\widetilde{E}\left(\delta_2|u|\right) \sum_{n\geq 0} |c_n| e^{C \Lambda_L(n)}\lesssim E\left(\frac{u}{c'_\pm}\right)\widetilde{E}\left(\delta_2|u|\right),\quad |v|<Y, 0\leq u, \label{eq: Last estimate Th1'}
\end{equation}
where we are using $c'_+$ when $+u\geq0$ and $c'_-$ when $-u\geq0$. The same inequality also holds for $u\leq 0$, with $c'_+$ replaced by $c'_-$. 
In particular, this shows that the function $F$ is analytic in any strip $|v|<Y$ (and therefore  is entire). The function $S_L f$ is also entire and has the same Taylor coefficients at the origin as $F$, therefore $S_L f=F$. 

 Using  Lemma \ref{lem: K and E ray lemma}, we find that
\[ E\left(\frac{u}{c'_\pm}\right)\widetilde{E}\left(\delta_2|u|\right)\leq E^2\left(\frac{u}{c'_\pm}\right)+\widetilde{E}^2\left(\delta|u|\right)\lesssim_{\delta_2,\delta} E\left(\frac{u}{c'_\pm}(1+\delta_2)\right)+\widetilde{E}\left(2\delta_2|u|\right). \]

Since $a$, $b$, $c'_\pm$, , $Y$, and $\delta_2$ were arbitrary,   for any $\delta>0$ and  $c_\pm \in I $ (with $c_-<0<c_+$), we get
\[|S_Lf (u+iv) |\lesssim_{\delta,Y,c_\pm} \sum_{n\geq 0}\left(\frac{\delta(n+1)L'(n+1)+1}{L(n+1)}\right)^{n+1}\left(\frac{u}{c_\pm}\right)^n \] 
for all $|v|<Y$ and $\pm u\geq 0$ (where we are using $c_+$ when $+u\geq0$ and $c_-$ when $-u\geq0$). The latter inequality yields   $$S_L f\in \bigcup_{\delta>0} A\left(\frac{L(\rho)}{\delta \rho L'(\rho)+1};I\right).$$  
 This complete the proof of Theorem \ref{thm': Main thm,sing}. \end{proof}
\subsection{Theorem \ref{thm: Real analytic}}\label{subsec:Theorem 4}
\begin{thmRW}
	\label{thm': Real analytic}
	Suppose  the function  $L$ satisfy assumptions \emph{(R1), (R2), (R4)} and \emph{(R8)}, and  $I$  is an open interval containing the origin. Then the singular transform maps the class $C^\omega(I)$ of real analytic functions  bijectively onto the set
	$A^\omega(L;I)$. 
	In particular, $R_L S_L f=f$ for any $f\in C^\omega(I)$.
\end{thmRW}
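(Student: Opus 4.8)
\textit{Proof proposal.} The plan is to establish three facts and then assemble them using the intertwining identities $\widehat{S}_L\mathcal B=\mathcal BS_L$, $\mathcal BR_L=\widehat R_L\mathcal B$ and $\widehat R_L=\widehat S_L^{-1}$ recorded in Section~\ref{subsec:Beurling's approach}:
\textbf{(a)} $S_L$ maps $C^\omega(I)$ into $A^\omega(L;I)$;
\textbf{(b)} for every $F\in A^\omega(L;I)$ one has $R_LF\in C^\omega(I)$;
\textbf{(c)} an identity-theorem step: an entire function is determined by its Taylor coefficients at $0$, and a real-analytic function on the connected interval $I$ is determined by its Taylor coefficients at $0$.

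For \textbf{(a)} I would repeat, almost verbatim, the proof of Theorem~\ref{thm': Main thm,sing}, replacing the sub-Gaussian decay of Chebyshev coefficients by a geometric one. Fix $f\in C^\omega(I)$, a compact $[c'_-,c'_+]\subset I$, an interval $[a,b]\subset I$ with $[c'_-,c'_+]\subset(a,b)$, and let $\chi$ be the affine bijection $[-1,1]\to[a,b]$. Then $f\circ\chi\in C^\omega([-1,1])$, so the second part of Lemma~\ref{lemma:estimatcoef} gives Chebyshev coefficients $c_n$ with $|c_n|\lesssim_f e^{-\delta_f n}$ for some $\delta_f>0$. Applying Lemma~\ref{lemma:Co2} to the degree-$n$ polynomials $T_n\circ\chi^{-1}$ (which are bounded by $1$ on $[a,b]$) with $\delta=\delta_f/2$ yields a single $\Delta>0$ (allowed to depend on $f$ and $c'_\pm$) such that, for $\pm u\ge0$,
\[
|F(u+iv)|\le\sum_{n\ge0}|c_n|\,\bigl|S_L(T_n\circ\chi^{-1})(u+iv)\bigr|\lesssim\Bigl(\sum_{n\ge0}e^{-\delta_f n/2}\Bigr)\,E\!\left(\tfrac{u}{c'_\pm}+\Delta|v|\right),
\]
where $F:=\sum_{n\ge0}c_nS_L(T_n\circ\chi^{-1})$. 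In particular $F$ is entire; since $f=\sum_n c_nT_n\circ\chi^{-1}$ near $0$, the function $F$ has the same Taylor coefficients at $0$ as $S_Lf$, hence $F=S_Lf$. As $[a,b]$ and $c'_\pm$ were arbitrary inside $I$, the same small enlargement of the interval as at the end of the proof of Theorem~\ref{thm': Main thm,sing} upgrades $c'_\pm$ to an arbitrary $c_\pm\in I$, giving $S_Lf\in A^\omega(L;I)$. (That $S_Lf$ is well defined, i.e.\ $C^\omega(I)\subset C_0(L;I)$, is immediate from $|\widehat f(n)|^{1/n}=O(1)=o(L(n))$.)

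For \textbf{(b)}, fix $F\in A^\omega(L;I)$ and $x_0\in I$; pick $c_+\in I\cap(0,\infty)$, $c_-\in I\cap(-\infty,0)$ and $\Delta$ as in the definition of $A^\omega(L;I)$. For $z=u+iv$ in a small enough complex neighbourhood of $x_0$ one has $\tfrac{u}{c_\pm}+\Delta|v|\le1-\delta$ for some $\delta>0$ (with $c_+$ if $u\ge0$, $c_-$ if $u\le0$), so $|F(zt)|\lesssim E\bigl((1-\delta)t\bigr)$ for $t\ge0$; by Lemma~\ref{lem: KE-A} there is $\delta_1>0$ with $E((1-\delta)t)\,|K(t)|\lesssim E(\delta_1t)^{-1}$, and $\int_0^\infty E(\delta_1t)^{-1}\,dt<\infty$ since $E$ grows faster than any power of $t$. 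Hence $\int_0^\infty F(zt)K(t)\,dt$ converges uniformly on a complex neighbourhood of $x_0$, so $R_LF$ is holomorphic near $x_0$; as $x_0\in I$ was arbitrary, $R_LF\in C^\omega(I)$.

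Finally I would assemble \textbf{(a)}, \textbf{(b)}, \textbf{(c)}. Injectivity of $S_L$ on $C^\omega(I)$: $S_Lf=S_Lg$ gives $\widehat S_L\mathcal Bf=\widehat S_L\mathcal Bg$, hence $\mathcal Bf=\mathcal Bg$, hence $f=g$ by \textbf{(c)}. The identity $R_LS_Lf=f$: by \textbf{(a)}--\textbf{(b)}, $R_LS_Lf\in C^\omega(I)$, and $\mathcal B(R_LS_Lf)=\widehat R_L\widehat S_L\mathcal Bf=\mathcal Bf$, so \textbf{(c)} forces $R_LS_Lf=f$. Surjectivity: given $F\in A^\omega(L;I)$, put $f:=R_LF\in C^\omega(I)$ by \textbf{(b)}; then $S_Lf$ is entire (Taylor coefficients of a real-analytic function decay geometrically) and $\mathcal B(S_LR_LF)=\widehat S_L\widehat R_L\mathcal BF=\mathcal BF$, so $S_LR_LF=F$. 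Thus $S_L$ is a bijection of $C^\omega(I)$ onto $A^\omega(L;I)$ with inverse $R_L$. I expect the only genuinely computational point to be \textbf{(a)}: one must observe that the constant $\Delta$ produced by Lemma~\ref{lemma:Co2} is uniform in $n$ once $\delta<\delta_f$ is fixed, so that the series defining $F$ is majorised by one copy of $E(\tfrac{u}{c'_\pm}+\Delta|v|)$, and then carry out the interval-enlargement bookkeeping of Theorem~\ref{thm': Main thm,sing}; everything else is formal.
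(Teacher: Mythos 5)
Your proposal is correct and follows essentially the same route as the paper: part (a) is the paper's own Chebyshev-expansion argument (geometric decay of coefficients from Lemma~\ref{lemma:estimatcoef} combined with Lemma~\ref{lemma:Co2}), and part (b) rests on the same decisive estimate (Lemma~\ref{lem: KE-A}), the only variation being that you deduce holomorphy of $R_LF$ directly from local uniform convergence of $\int_0^\infty F(zt)K(t)\,dt$ on a complex neighbourhood of each $x_0\in I$, whereas the paper first invokes Theorem~\ref{thm': Main thm,reg} to get $R_LF\in C_0(L;I)$ and then bounds $(R_LF)^{(n)}(x)$ via Cauchy's formula (treating $x=0$ separately). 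The explicit bijectivity bookkeeping in (c), via the intertwining relations $\widehat S_L\mathcal B=\mathcal BS_L$, $\mathcal BR_L=\widehat R_L\mathcal B$, $\widehat R_L=\widehat S_L^{-1}$ and the identity theorem for entire and real-analytic functions, is sound and simply spells out what the paper leaves implicit.
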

The proof uses  ideas similar to the ones we used in the proofs of Theorems \ref{thm': Main thm,reg} and \ref{thm': Main thm,sing}.
First we show that if an entire function $F\in A^\omega(L;I)$, then its regular transform belongs to $C^\omega(I)$. Then we use Chebyshev polynomials expansion of a functions  belonging to $C^\omega(I)$ together with Lemma \ref{lemma:Co2}, to show that $S_L f\in A^{\omega}(L;I)$, whenever $f\in C^\omega(I)$. Throughout this section, we fix a function $L$ satisfying the assumptions of Theorem 7$^\prime$. 
\subsubsection{Proof that $R_L F\in C^\omega(I)$ for any entire function $F\in A^{\omega}(L;I)$ }
\begin{proof}
 Fix the  open interval $I$. Let $F\in A^\omega(L;I)$,  that is,  for every $c_-\in I\cap (-\infty,0),\: c_+ \in  I\cap (0,\infty)$,   there exists  $\Delta=\Delta_{c_\pm}>0$ such that 
\[  |F(u+iv)|\lesssim_{c_+,c_-}  E \left(\frac{u}{c_\pm}+\Delta|v|\}\right),\quad \pm u\geq 0. \]
Put 
	\[f(x)=\left(R_LF\right)(x)=\int_0^\infty F(x t)K(t)dt.\] By  Theorem~\ref{thm': Main thm,reg}, $f\in C_0(L;I)$. We will show that $f\in C^\omega(I)$ by showing that it has a positive radius of convergence at any point $x\in I$. Fix $c_-,c_+\in I $ and $\Delta=\Delta_{c_\pm}$ as above.
		For any $0<\delta<1 $, the Cauchy formula yields
		\[|F^{(n)}(u)|=\frac{ n!}{2\pi} \left|\int_{|w-u|=\delta(|u|+1)} \frac{F(w)dw}{(w-u)^{(n+1)}}\right|\lesssim_{\delta,c_+,c_-} \frac{n!}{\delta^n (|u|+1)^n} E\left(\frac{ \pm(1+\delta)u+1}{c_\pm}+\delta \Delta|u|\right),\] 
 for any $\pm u\geq 0$. For any $x\in I$, 
	\[ |f^{(n)}(x)|=\left|\int_0^{\infty} t^nF^{(n)}(xt)K(t)dt\right|.\]
	Therefore, for $x\in (0,c_+)$,
	\[  |f^{(n)}(x)|	\lesssim_{\delta,c_+,c_-} n! \int_0^{\infty} \frac{t^n}{\delta^n (tx+1)^n} E\left(\frac{ (1+\delta)x t+1}{c_+}+\delta \Delta tx\right)|K(t)|dt . \]
We choose $\delta$ so small  that 
	\begin{equation}
	|f^{(n)}(x)|	\lesssim_{c_+,c_-,x}\frac{n!}{\delta^n x^n} \int_0^\infty E((1-\delta)t)|K(t)|dt.\label{eq: Realanalyticmid}
	\end{equation}
	By Lemma~\ref{lem: KE-A}, there exists an $\delta_1>0$ such that  
	$E((1-\delta)t)|K(t)|\lesssim\frac{1}{E(\delta_1 t)}$. In particular, the integral in the RHS of estimate \eqref{eq: Realanalyticmid} converges. We obtain 
\[ |f^{(n)}(x)|	\lesssim_{c_+,c_-,x}\frac{n!}{\delta_1^n x^n},  \]
and hence $f\in C^\omega(0,c_+)$. 
	
To show analyticity at the point $x=0$, we  use  Cauchy's formula once again, and find that
		\[|F^{(n)}(0)|=\frac{ n!}{2\pi} \left|\int_{|w|=\rho} \frac{F(w)dw}{w^{n+1}}\right|\lesssim_{\delta,c_+,c_-} \frac{n!}{ \rho^n} E\left(\frac{2\rho}{ \min\{c_+,-c_-,\Delta^{-1}\}}\right),\quad \rho>0.\]
		Therefore, by choosing $\rho=\delta t$ with $\delta>0$ sufficiently small, we find that 
\[ |f^{(n)}(0)|=\left|\int_0^{\infty} t^nF^{(n)}(0)K(t)dt\right|\lesssim_{\delta}\frac{n!}{\delta^n}\int_0^{\infty} E(\delta t) |K(t)|dt\lesssim_{\delta} \frac{n!}{\delta^n}.\]
We have shown that $f\in C^\omega[0,c_+)$. The proof that $f\in C^\omega(c_-,0]$ is similar. Since $c_-,c_+\in I$ were arbitrary, we conclude that  $f\in C^\omega (I)$.
\end{proof}

\subsubsection{Proof that  $S_L f\in A^\omega(L;I)$ for any $f\in C^\omega(I)$. }\label{subsubsec:Proof of Theorem 4 part 2.}
Now we finish the proof of Theorem 7$^\prime$.
\begin{proof}
Fix an open interval $I$ and a function $f\in C^\omega(I)$. Let  $[c_-,c_+]\subset I$, $\delta>0$  and let $a<c_-<0<c_+<b$, such that $[ a, b]\subset I$. Denote by $\chi=\chi(a,b)$ the linear function that maps the interval $[a,b]$ onto the interval $[-1,1]$. 

The function $f\circ\chi$ belongs to $C^\omega[-1,1]$. By Lemma \ref{lemma:estimatcoef}, it can be expanded into a series of Chebyshev polynomials with fast decaying coefficients, namely
\[ f\circ\chi =\sum_{n\geq 0} c_n T_n,\quad |c_n|\lesssim_\delta e^{-\delta n},\] 
for some $\delta>0$. 

We claim that then the function $F:=\sum_{n \geq 0 }c_n S_L(T_n\circ\chi^{-1}) $ is the singular transform  of $f=\sum_{n\geq 0} c_n T_n\circ\chi^{-1} $. To show this, we will use Lemma \ref{lemma:Co2}. 

Note that $T_n\circ\chi^{-1}$ are indeed polynomials of degree $n$ which are bounded by $1$ on the interval $[a,b]$. Therefore,
\[ |F(u+i v)|\lesssim_{c_-,c_+}  E\left(\frac{u}{c_\pm}+ \Delta |v|\right)\sum_{n\geq 0} |c_n| e^{\delta n/2}\lesssim_{c_+,c_-} E\left(\frac{u}{c_\pm}+ \Delta |v|\right),\quad \pm u>0. \]
In particular, this shows that the functions $F$ entire. The function $S_L f$ is also entire and has the same Taylor coefficients at the origin as $F$, therefore $S_L f=F$. This completes the proof of Theorem 7$^\prime$.
\end{proof}
\subsection{Theorem \ref{thm: example3}}
 \begin{thm}\label{thm': example3}
	Suppose  the function $L$ satisfies assumption \emph{(R1), (R2), (R4)} and \emph{(R8)} and that $\lim_{\rho\to+\infty} \rho L'(\rho)=+\infty$. Then for any function $L_2$ satisfying 	$\tfrac{1}{\varepsilon(\rho)}=o(L_2(\rho))$ as $\rho\to\infty$ and any $\delta>0$,  $S_L C_0(L;(-\delta,\delta))\nsubseteq A(L_2;\mathbb{R})$. In particular  $S_L C_0(L;(-\delta,\delta))\nsubseteq A(L;\mathbb{R})$.
\end{thm}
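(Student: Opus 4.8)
\medskip

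\noindent\textit{Strategy.} Fix $L_2$ with $1/\varepsilon=o(L_2)$ and $\delta>0$, and write $E_{2}$ for the $E$--function of $L_2$. If $F\in A(L_2;\mathbb R)$ then for every $c>0$ and $Y>0$ one has $\max_{|v|\le Y}|F(u+iv)|\lesssim_{c,Y}E_{2}(u/c)$ as $u\to\pm\infty$, and $\log E_{2}(r)\asymp\Lambda_{L_2}(r)$ by Lemma~\ref{lem: K and E ray lemma}; moreover $\Lambda_{L_2}(\lambda r)\sim\Lambda_{L_2}(r)$ (Lemma~\ref{lem: widetildeE}(3)). Hence it suffices to produce one $f\in C_0(L;(-\delta,\delta))$ for which $\log S_Lf(u)\gg\Lambda_{L_2}(u)$ as $u\to+\infty$, since then already the value at $v=0$ breaks membership in $A(L_2;\mathbb R)$. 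The function $f$ must be non--analytic: by Theorem~\ref{thm': Real analytic} the singular transform maps $C^\omega((-\delta,\delta))$ onto $A^\omega(L;(-\delta,\delta))$, whose members grow only like $E(u/c_\pm+\Delta|v|)$, and in the super--logarithmic case $1/\varepsilon=o(L)$, so this $E$--scale is $\ll\exp(\Lambda_{L_2})$ once $1/\varepsilon=o(L_2)$. So the task is to realise, as a jet of a genuine (non--analytic) element of $C_0(L;(-\delta,\delta))$, a sequence that sits between the forbidden boundary of $\mathcal F_0(L)$ and the envelope of Corollary~\ref{cor: subsuplog}(2).

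\medskip

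\noindent\textit{Construction.} Choose an eventually increasing, unbounded weight $\kappa$ with $\gamma_\kappa(\rho)=\kappa(\rho)^\rho$ log--convex and with $1/\varepsilon\lesssim\kappa=o(\min(L,L_2))$ — this is possible because in the super--logarithmic case $1/\varepsilon=o(L)$, and $1/\varepsilon=o(L_2)$, so there is room to interpolate $\kappa$ between $1/\varepsilon$ and $\min(L,L_2)$. Set
\[
a_n:=\Bigl(\frac{L(n)}{\kappa(n)}\Bigr)^{n}\!>0,\qquad \delta_n:=\min\Bigl(\delta,\ \frac{\kappa(n)}{2L(n)}\Bigr),
\]
so that $a_n^{1/n}=L(n)/\kappa(n)=o(L(n))$ (hence $(a_n)\in\mathcal F_0(L)$) while $\limsup_n a_n^{1/n}=\infty$ (since $\kappa=o(L)$), so the formal series $\sum a_nx^n$ has radius of convergence $0$; and $a_n\delta_n^{\,n}\le 2^{-n}$ for $n$ large, $\delta_n\downarrow0$. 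Fix a cut--off $\eta\in C_c^\infty(-1,1)$, equal to $1$ near $0$, with $\|\eta^{(m)}\|_\infty\le A^m m!$, and put
\[
f(x):=\sum_{n\ge1}a_n\,x^{n}\,\eta(x/\delta_n).
\]
Since each $x^n\eta(x/\delta_n)$ coincides with $x^n$ near the origin, $\widehat f(n)=a_n$ for every $n$.

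\medskip

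\noindent\textit{Membership and lower bound.} Using the Leibniz estimate $\|(x^n\eta(x/\delta))^{(k)}\|_\infty\lesssim n^k\delta^{\,n-k}$ for $k\le n$ and $\lesssim k!\,(A/\delta)^{\,k-n}$ for $k>n$ (up to polynomial factors), one splits $\|f^{(k)}\|_{[-\delta',\delta']}\le\sum_n a_n\|(x^n\eta(x/\delta_n))^{(k)}\|_\infty$ into the ranges $n\ge k$ and $n<k$. In the first range the summand is $(a_n\delta_n^{\,n})(n/\delta_n)^k\le 2^{-n}(2nL(n)/\kappa(n))^k$, which peaks near $n\asymp k$ and is therefore $\lesssim(\mathrm{poly}\,k)\cdot(CkL(k)/\kappa(k))^k\le(\delta_0kL(k))^k$ for $k\ge k_0(\delta_0)$, because $\kappa(k)\to\infty$; in the second range the summand is $k!\,(2A)^{k-n}(L(n)/\kappa(n))^k$, and choosing the ``cut index'' large enough that $\kappa$ there exceeds $4A/(\delta_0 e)$ makes its sum $\lesssim k!\,(\delta_0 eL(k))^k\asymp(\delta_0kL(k))^k$ for $k$ large (the finitely many remaining $k$ being absorbed into a $\delta_0$--dependent constant). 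Thus $f\in C_0(L;(-\delta,\delta))$. Now $S_Lf(z)=\sum_{n\ge1}\frac{a_n}{\gamma(n+1)}z^n$ is entire (its coefficients are $\asymp\kappa(n)^{-n}/L(n)$), all its coefficients are positive, so for $u>0$
\[
S_Lf(u)\ \ge\ \max_{n\ge1}\frac{a_n}{\gamma(n+1)}u^n\ \ge\ \exp\!\Bigl(\Lambda_\kappa(u)\bigl(1+o(1)\bigr)\Bigr),
\]
since $\log\frac{a_n}{\gamma(n+1)}=-n\log\kappa(n)-\log L(n)+o(n)$. Finally, $\kappa=o(L_2)$ gives $\kappa^{-1}(u)\ge L_2^{-1}(Nu)\gg L_2^{-1}(u)$ for every $N$, hence (comparing the maxima of $r^x/\kappa(x)^x$ and $r^x/L_2(x)^x$, and using Lemma~\ref{lem: widetildeE}) $\Lambda_{L_2}(u)=o(\Lambda_\kappa(u))$. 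Therefore $\log S_Lf(u)\ge\Lambda_\kappa(u)(1+o(1))\gg\Lambda_{L_2}(u)\asymp\log E_2(u/c)$ for every $c>0$, so $S_Lf\notin A(L_2;\mathbb R)$. The ``in particular'' follows by taking $L_2=L$, admissible since $1/\varepsilon=o(L)$.

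\medskip

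\noindent\textit{Main obstacle.} The delicate point is the membership step: one wants $\widehat f(n)$ to sit as high as $(L(n)/\kappa(n))^n$ — barely inside $\mathcal F_0(L)$ — yet the mollified series must still obey \emph{all} the Beurling derivative bounds $\|f^{(k)}\|_{[-\delta',\delta']}\le C_{\delta_0}(\delta_0kL(k))^k$ for \emph{every} $\delta_0>0$; the naive choice of the cut--off scales $\delta_n$ overshoots by a fixed multiplicative constant per derivative, which is not allowed, so the tuning of $\delta_n$ against $a_n$, $L$, and the index range in the Leibniz sum has to be done carefully (it is exactly the classical non--analytic construction, but performed at the edge of the class and with the extra constraint that $\kappa\gtrsim1/\varepsilon$ must not be violated, so that $S_Lf$ does not exceed the envelope of Corollary~\ref{cor: subsuplog}(2)). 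Once $f$ is in the class, the lower bound on $S_Lf$ via its maximum term and the comparison $\Lambda_{L_2}=o(\Lambda_\kappa)$ coming from $1/\varepsilon=o(L_2)$ are comparatively soft.
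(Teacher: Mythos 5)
Your construction hinges on the existence of a nontrivial cut--off $\eta\in C_c^\infty(-1,1)$, equal to $1$ near $0$, with $\|\eta^{(m)}\|_\infty\le A^m m!$. No such function exists: the bound $\|\eta^{(m)}\|_\infty\le A^m m!$ places $\eta$ in the Carleman class $C(1;\mathbb R)$, which is quasianalytic by the Denjoy--Carleman theorem ($\int^\infty du/u=\infty$), so a compactly supported member is identically zero. This is not a cosmetic slip: the whole Leibniz estimate in the range $k>n$ rests on $\|\eta^{(k-n)}\|_\infty\le A^{k-n}(k-n)!$, and the ``choose the cut index large enough'' absorption you invoke uses exactly that bound.

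Replacing the cut--off by an honest one with $\|\eta^{(m)}\|_\infty\lesssim A^m m!\,M(m)^m$ for some non--quasianalytic $M$ does not save the argument in general. Redoing the $j=n$ Leibniz term with $a_n=(L(n)/\kappa(n))^n$ and $\delta_n\approx\kappa(n)/(2L(n))$ gives, after multiplying by $a_n$, a contribution of order
$$
k!\,(2A)^{k-n}\Bigl(\tfrac{L(n)}{\kappa(n)}\Bigr)^{k}M(k-n)^{\,k-n},
$$
and taking $n\asymp k/2$ (where $L(n)/\kappa(n)\sim L(k)/\kappa(k)$ by slow variation) forces $M^{1/2}/\kappa\to 0$, i.e.\ $M=o(\kappa^2)$. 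But $\kappa$ is squeezed between $1/\varepsilon$ and $o(\min(L,L_2))$; when $L_2$ is only slightly larger than $1/\varepsilon$ (which the theorem allows), $\kappa$ is forced to be only slightly larger than $1/\varepsilon$, and $\kappa^2$ can then be quasianalytic. A concrete instance: $L(\rho)=\exp(\log^\alpha\rho)$ with $\tfrac12<\alpha<1$, so $1/\varepsilon\asymp\log^{1-\alpha}\rho$ with $2(1-\alpha)<1$; if $L_2\asymp\log^{1-\alpha}\rho\cdot\log\log\rho$ then every admissible $\kappa$ has $\kappa^2=o(\log\rho)$, hence any $M=o(\kappa^2)$ is quasianalytic and no such cut--off exists. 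So the mollified--power--series route cannot cover the full range of weights the theorem addresses. (The Leibniz bound ``$\lesssim n^k\delta^{n-k}$'' in the range $k\le n$ also drops the $j=0$ term $\|\eta^{(k)}\|_\infty\,\delta^{n-k}$, which dominates near $n\asymp k$, but that is secondary.)

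The paper avoids all of this by taking $f$ to be a lacunary Fourier series $f(x)=\sum_k e^{-\omega_{n_k}\Lambda_L(n_k)}e^{in_kx}$ with $\omega_n\to\infty$ and $n_k$ sparse: such an $f$ is $2\pi$--periodic, globally smooth, and its membership in $C_0(L;\mathbb R)$ is immediate from the coefficient decay with no cut--off needed. Lower bounds on $S_Lf$ at carefully chosen points then come from the asymptotics of $E_1=S_L\exp$ (Lemmas~\ref{lem: widetildeE trvial bound} and~\ref{lem: widetilde E 1st}), with the lacunarity ensuring a single term dominates. Your lower--bound step (positivity of coefficients plus the max--term and the comparison $\kappa^{-1}\gg L_2^{-1}$ from $\kappa=o(L_2)$) is sound in spirit, but the witness $f$ you need does not exist by the construction you propose, and the theorem's generality requires a construction that, like the paper's, sidesteps compactly supported building blocks entirely.
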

Here we prove Theorem \ref{thm': example3}. For a function  $L$ satisfying $\lim_{\rho \to \infty }\rho L'(\rho)=\infty$, and another function $L_2$ with $\tfrac{1}{\varepsilon(\rho)} =o(L_2(\rho))$,  we will construct a lacunary Fourier  series
\[ f(x)=\sum_{k\geq 0} e^{-\omega_{n_k} \Lambda_L(n_k)}e^{i n_k x },\quad n_k \in\mathbb{N},\]
where, as before, $ \Lambda_L(r):=\sup_{x>0}\left[ x\log r-x\log (x L(x))\right]$. This Fourier  series defines an element in the Beurling class $C_0(L;\mathbb{R})$ provided that $\omega_{n_k}\to \infty$ as $k\to\infty$ (the proof is below). Then using the linearity of the singular transform, we will show that 
\[ F(z)=\left(S_L f\right)(z)=\sum_{k\geq 0} e^{-\omega_{n_k} \Lambda_L(n_k)} \left(S_L e^{i n_k x }\right)(z) =\sum_{k\geq 0} e^{-\omega_{n_k} \Lambda_L(n_k)} E_1(i n_k z), \]
where $ E_1(z):= S_L(\exp)(z)=\sum_{n\geq 0}\frac{z^n}{n!\gamma(n+1)}.$
The plan is now to choose the sequence $n_k$ so lacunary and $\omega_k$ increasing to $\infty$ so slowly,  that  on a special sequence of points $r_k\uparrow\infty$, we will have 
\[|F(r_k)|\asymp  e^{-\omega_{n_k} \Lambda_L(n_k)} |E_1(i n_k r_k)|> e^{L_2^{-1}(r_k)}.   \]
\begin{proof}
Fix a function  $L$  satisfying  the assumptions of Theorem~\ref{thm': example3}, and another function $L_2$ with $\tfrac{1}{\varepsilon}= o(L_2(\rho))$  as $\rho\to\infty$. 
 Let $L_3$ be yet another function  satisfying  the assumptions of Theorem~\ref{thm: Main thm,sing} and such that 
 $$\tfrac{1}{\varepsilon(\rho)}= o(L_3(\rho)),\quad L_3(\rho)=o\left(L_2(\Lambda_L(\rho))\right),\quad L_3(\rho)=o(L(\rho)),\quad \rho\to\infty. $$
 The function $L_3$ exists, since by Lemma \ref{lem: subReplacmentLem}, assertion 1, $\varepsilon(\rho)\sim \varepsilon(\Lambda_L(\rho))=o\left(L_2(\Lambda_L(\rho))\right)$ as $\rho\to\infty$. 
For a sequence $\omega_n\to\infty$, and a sequence of natural numbers $n_k\uparrow\infty$ that will be chosen later,  put 
\[ f(x)=\sum_{k\geq 0} e^{-\omega_{n_k} \Lambda_L(n_k)}e^{i n_k x }. \]

Clearly $f$ is a $2\pi$--periodic function in $C^\infty(\mathbb{R})$. Moreover, 
\[ f^{(j)}(x)=\sum_{k\geq 0} (in_k)^j e^{-\omega_{n_k} \Lambda_L(n_k)}e^{i n_k x }, \]
which yields
\[ \Vert f^{(j)} \Vert_\infty\leq  \sum_{k\geq 0} n_k^j e^{-\omega_{n_k} \Lambda_L(n_k)}. \]
Since, $\omega_{n}\to\infty$, the definition of $\Lambda_L$  yields
\[e^{-\omega_{n} \Lambda(n)} \leq C_\delta \frac{\delta^{j+2}\gamma(j+2)(j+2)^{j+2}}{n^{j+2}},\quad \delta>0,\; n\in\mathbb{N}.  \]
Thus, 
\[  \Vert f^{(j)} \Vert_\infty \lesssim_\delta \delta^{j+2}\gamma(j+2)(j+2)^{j+2} \lesssim_\delta \delta^{j}\gamma(j)j^{j} ,\quad \delta>0, \]
which means that $f\in C_0(L;\mathbb{R})
$.
 
Now put
\[ F(z)=\sum_{k\geq 0} e^{-\omega_{n_k} \Lambda_L(n_k)} S_L(e^{i n_k x })(z) =\sum_{k\geq 0} e^{-\omega_{n_k} \Lambda_L(n_k)} E_1(i n_k z). \]
By  Lemmas  \ref{lem: widetildeE trvial bound} and \ref{lem: widetildeE}, assertion 2,  there exists $C>0$ such that  
\[ \log |E_1(i n_k z)|\leq C|z| \Lambda_L(n_k)+C,\quad k\geq 0,\; z\in\mathbb{C}.  \]
Since $ \omega_{n_k}\to\infty$, the sum that defines $F$ converges  uniformly on compact subsets of $\mathbb{C}$. Thus, $F$ is an entire function. The entire functions $F$ and $S_L f$ have the same Taylor coefficient, and therefore $F=S_L f$.

Put $r_n=L_3(n)$.  
By Lemma \ref{lem: widetilde E 1st}, there exists  $\delta>0$ such that for sufficiently large $n$ we have 
\[ \log  |E_1(i n r_n)| \geq 2\delta \Lambda_L(n r_n) \varepsilon(n r_n). \]
Fixing this value of $\delta>0$, we put $\omega_n:= \delta  \frac{\Lambda_L(n r_n) \varepsilon(n r_n)}{ \Lambda_L(n)}$.
Let as verify that the sequence $\omega_n$   tends to $\infty$. Indeed, by Lemma \ref{lem: widetildeE}, part 2, for sufficiently large $r$ we have,
\[ \frac{\Lambda_L(n r_n) \varepsilon(n r_n)}{ \Lambda_L(n)}\geq r_n  \varepsilon(n r_n)\geq r_n\varepsilon(n)\to\infty,\quad n\to\infty.  \]
We conclude that 
\begin{equation}
-\omega_{n} \Lambda_L(n)+\log |E_1(i n r_n)|\geq \delta\Lambda_L(n r_n) \varepsilon(n r_n).\label{eq:main term}
\end{equation}
Making use of Lemma \ref{lem: widetilde E 1st} once again, we see that there exists  $A>0$ such that for sufficiently large $n$,
\[ \log  |E_1(i n r_n)| \leq A\cdot \Lambda_L(n r_n) \varepsilon(n r_n). \]
Now,choose the sequence $n_k\uparrow\infty$ so sparse  that 
\[ A\cdot\Lambda_L(n_{k+1}r_{n_k})\varepsilon(n_{k+1}r_{n_{k}}) \leq \frac{1}{2} \Lambda(n_{k+1}r_{n_{k+1}}) \varepsilon(n_{k+1}r_{n_{k+1}}), \]
and 
\[ A\cdot\Lambda_L(n_{k-1}r_{n_k})\varepsilon(n_{k-1}r_{n_k}) \leq \delta \Lambda(n r_{n_k}) \varepsilon(n r_{n_k})-\log(2k).\]

Combining this with \eqref{eq:main term}, we find that 
\begin{align*}
|F(r_{n_m})|&=\left|\sum_{k\geq 0} e^{-\omega_{n_k} \Lambda_L(n_k)} E_1(i n_k r_{n_m})\right|\\&=\left|\sum_{k<m} e^{-\omega_{n_k} \Lambda_L(n_k)} E_1(i n_k r_{n_m})+e^{-\omega_{n_m} \Lambda_L(n_m)} E_1(i n_m r_{n_m})+\sum_{k>m} e^{-\omega_{n_k} \Lambda_L(n_k)} E_1(i n_k r_{n_m})\right|\\
&\geq \left|e^{-\omega_{n_m} \Lambda_L(n_m)} E_1(i n_m r_{n_m})\right|-\left|\sum_{k<m} e^{-\omega_{n_k} \Lambda_L(n_k)} E_1(i n_k r_{n_m})\right|-\left|\sum_{k>m} e^{-\omega_{n_k} \Lambda_L(n_k)} E_1(i n_k r_{n_m})\right|\\
&\geq  e^{\delta \Lambda_L(n_m r_{n_m}) \varepsilon(n_m r_{n_m})}-\frac{m}{2m} e^{\delta \Lambda_L(n_m r_{n_m}) \varepsilon(n_m r_{n_m})}-\sum_{k>m} e^{-\tfrac{\omega_{n_k}}{2} \Lambda_L(n_k)} +O(1)\\
&= \frac{1}{2} e^{\delta \Lambda_L(n_m r_{n_m}) \varepsilon(n_m r_{n_m})}+O(1).
\end{align*}
By Lemma \ref{lem: widetildeE}, part 2, for sufficiently large $m$ we have 
 \[ |F(r_{n_m})|\geq \frac{1}{2} e^{\delta \Lambda_L(n_m r_{n_m}) \varepsilon(n_m r_{n_m})}+O(1)= e^{\delta \Lambda_L(n_m r_{n_m}) \varepsilon(n_m r_{n_m})}\geq e^{\Lambda_L(n_m)}. \]
 Since $n_m=L_3^{-1}(r_{n_m})$, we conclude
 \[ \log|F(r_{n_m})|\geq \Lambda_L\left(L_3^{-1}(r_{n_m})\right) \] 
Fix a large constant $M>0$. Since $L_3(\rho)=o\left(L_2(\Lambda_L(\rho))\right)$, we have
\[ L_2^{-1}(2 M r_{n_m})=o\left(\Lambda_L\left(L_3^{-1}(r_{n_m})\right)\right) ,\quad r\to\infty. \]
Thus,  for sufficiently large $m$, Lemma \ref{lem: K and E ray lemma} yields 
\[ \log|F(r_{n_m})|\geq L_2^{-1}(2M r_{n_m})\geq \log E_2(M r_{n_m}),\quad \text{where } E_2(z):=\sum_{n\geq 0} \frac{z^n}{L_2(n+1)^{n+1}}.  \]
We conclude that $F=S_L f\notin A(L_2,I)$ for any open interval $I$, which completes the proof Theorem \ref{thm': example3}.\end{proof}
\medskip
Note that our construction of the function $f$  is analytic in the upper-half plane and therefore we actually proved that
\[ S_L (C_0^+(L;\mathbb{R}))\nsubseteq A(L_2;I), \]
where the classes $C_0^+(L;\mathbb{R})$ are defined in Section 3.4.
\subsection{Theorem \ref{thm:NqaBeu}.}
 Throughout this section, given a non-quasianalytic eventually  growing function $L$, we put 
\[ \widetilde{L}(\rho)=L(\rho)\int_{\rho }^{\infty}\frac{du}{uL(u)}, \quad \rho>1,\]
and 
\[ \gamma(\rho)=L(\rho)^\rho,\quad \widetilde{\gamma}(\rho)=\widetilde{L}(\rho)^\rho. \]
 \begin{thm}\label{thm':NqaBeu}
Let $L$ be a non-quasianalytic function and let  $I$ be an open interval that contains the origin. Suppose that the function $\rho\mapsto L(\rho)/\widetilde{L}(\rho)$  satisfies assumptions \emph{(R1), (R2), (R4), (R6)} and \emph{(R8)} ,  and that $I$ is an open interval containing the origin. Then
	\[ S_L C_0(L;I)=S_{\widetilde{L}} C_0(\widetilde{L};\mathbb{R}). \]
\end{thm}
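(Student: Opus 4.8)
The plan rests on the multiplicative factorization of the weight. Write $M=L/\widetilde{L}$ and $\gamma_M(\rho)=M(\rho)^\rho$, so that $\gamma=\gamma_M\widetilde{\gamma}$; differentiating the identity $\tfrac{\rho\widetilde L'(\rho)+1}{\widetilde L(\rho)}=\tfrac{\rho L'(\rho)}{L(\rho)}$ shows $\tfrac{\rho M'(\rho)}{M(\rho)}=\tfrac1{\widetilde L(\rho)}$, i.e. $\widetilde L=1/\varepsilon_M$ and $\varepsilon=\varepsilon_{\widetilde L}+\varepsilon_M$ with $\varepsilon_M=\rho M'/M$. In particular the assumptions on $M$ force $M\to\infty$, hence $L$ is super-logarithmic, and (a routine but necessary check) they propagate to all the regularity of $L$ and of $\widetilde L$ used below. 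On the level of Taylor jets $\widehat S_L=\widehat S_M\circ\widehat S_{\widetilde L}$ and $\widehat R_L=\widehat R_M\circ\widehat R_{\widetilde L}$; dually, the kernel $K_M$ attached to $M$ is analytic and rapidly decaying in the petal $\Omega_M$ (Theorem~\ref{TheoremK}, which applies since $M$ obeys (R3) and (R8)), and $K_L$ is the Mellin convolution of $K_M$ with $K_{\widetilde L}$, so $R_L=R_{\widetilde L}\circ R_M$. Thus the theorem is the assertion that the coefficient rescaling $\widehat f(n)\leftrightarrow M(n+1)^{n+1}\widehat g(n)$ carries $\mathcal B C_0(\widetilde L;\mathbb R)$ onto $\mathcal B C_0(L;I)$, and I would prove the two inclusions by realizing this rescaling through contour integrals of the modified regular transform of Section~3.

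For the inclusion $\supseteq$, take $g\in C_0(\widetilde L;\mathbb R)$, affinely map a small subinterval of $I$ about $0$ onto $[-1,1]$, and expand $g$ in Chebyshev polynomials, $g\circ\chi=\sum_n c_nT_n$ with $|c_n|\lesssim_\delta e^{-\delta^{-1}\Lambda_{\widetilde L}(n)}$ (Lemma~\ref{lemma:estimatcoef2}). Then $F:=S_{\widetilde L}g=\sum_n c_nS_{\widetilde L}(T_n\circ\chi^{-1})$ is entire, and Lemmas~\ref{lemma:Co1} and \ref{lemma:estimatpsi--for nqa} applied with the weight $\widetilde L$ control $S_{\widetilde L}(T_n\circ\chi^{-1})$ on horizontal strips and near $[-1,1]$; summing in $n$ (taking $\delta$ small enough that $\delta^{-1}$ beats the constant $C$ in the factor $e^{C\Lambda_{\widetilde L}(n)}$), and using Lemma~\ref{lem: E complex lemma}, one obtains that $F$ is controlled on strips by the $A$-class of $\widetilde L$ and is only of polynomial growth \emph{outside} the narrow petal $\Omega_{\widetilde L}$. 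Since $\varepsilon_{\widetilde L}<\varepsilon$, the curves $\Psi_\pm=\partial\Omega_L$ lie outside $\Omega_{\widetilde L}$, so $F(xz)$ is small for $z\in\Psi_\pm$ and $x$ real; I would then set $f:=R_L^\pm F=\int_{\Psi_\pm}F(xz)K_L(z)\,dz$. By $\int_{\Psi_\pm}z^nK_L(z)\,dz=\gamma(n+1)$ this has $\widehat f(n)=\gamma(n+1)\widehat F(n)=M(n+1)^{n+1}\widehat g(n)$, so $S_Lf=F$; and differentiating under the integral, $f^{(n)}(x)=\int_{\Psi_\pm}z^nF^{(n)}(xz)K_L(z)\,dz$, Cauchy estimates for $F^{(n)}$ on disks of radius $\asymp|xz|/\widetilde L(|xz|)$ (which keep $F$ outside $\Omega_{\widetilde L}$) together with $\int_{\Psi_\pm}|z|^n|K_L(z)|\,d|z|\le C^{n+1}\gamma(n+1)$ (Lemmas~\ref{lem: K and E last} and \ref{lem:spliting H intgral K small}) give $|f^{(n)}(x)|\lesssim_\delta(\delta n L(n))^n$ on compacts of $I$, i.e. $f\in C_0(L;I)$.

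For the inclusion $\subseteq$, run the construction in reverse. Given $f\in C_0(L;I)$, Theorem~\ref{thm': Main thm,sing} gives $F:=S_Lf\in\bigcap_{\delta>0}A\!\left(\tfrac{L(\rho)}{\delta\rho L'(\rho)+1};I\right)$, and the same Chebyshev/polynomial analysis pins down the size of $F$ on strips and off the real axis. Put $g:=R_{\widetilde L}^\pm F=\int_{\Psi'_\pm}F(xz)K_{\widetilde L}(z)\,dz$, where $\Psi'_\pm$ runs along $\partial\Omega_{\widetilde L}$; then $\widehat g(n)=\widetilde\gamma(n+1)\widehat F(n)$, so $S_{\widetilde L}g=F$. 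The subtle point is that one must get $g\in C_0(\widetilde L;\mathbb R)$, i.e. the derivative bounds on \emph{every} compact of $\mathbb R$, not merely at the origin; this is where the $E_1$-estimates of Section~7 (Lemmas~\ref{lem: widetildeE trvial bound}, \ref{lem: widetilde E 1st}, \ref{lem: mainSpilitEstimate+}) enter, to control $F^{(n)}(xz)$ uniformly as $x$ ranges over all of $\mathbb R$, while Lemmas~\ref{lem: KE-A} and \ref{lem:KE strong A for spliting.} supply the matching decay of $K_{\widetilde L}$ on $\Psi'_\pm$. Since the two transfers invert one another on jets, the two inclusions combine to the equality $S_LC_0(L;I)=S_{\widetilde L}C_0(\widetilde L;\mathbb R)$.

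The hard part is the third step. Near $\mathbb R_+$ the function $F=S_Lf$ attains its full (super-exponential) size precisely in a petal $\Omega_L$ that \emph{contains} $\Omega_{\widetilde L}$, so the contour $\Psi'_\pm$ for $R_{\widetilde L}^\pm$ necessarily runs through the region where $F$ is large, and one must balance this growth against the decay of $K_{\widetilde L}$ \emph{uniformly} in the evaluation point $x$ as $x$ runs over an arbitrary compact of $\mathbb R$. This uniform, global-on-$\mathbb R$ control is exactly what distinguishes the Beurling statement from its Carleman counterpart (where only germs at $0$ occur), and is the reason Sections~6--8 are developed with sharp, regularity-dependent asymptotics of $K$, $E$ and $E_1$ rather than with crude bounds. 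A secondary difficulty, flagged above, is verifying that the hypotheses on $M=L/\widetilde L$ genuinely yield all the regularity of $L$ and $\widetilde L$ needed to invoke Theorems~\ref{TheoremK}, \ref{TheoremE}, \ref{thm': Main thm,reg} and \ref{thm': Main thm,sing}.
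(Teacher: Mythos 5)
Your plan is genuinely different from the paper's, and in both directions it has gaps that the paper's proof is specifically designed to avoid.

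For the inclusion $S_LC_0(L;I)\supseteq S_{\widetilde L}C_0(\widetilde L;\mathbb R)$, you set $F=S_{\widetilde L}g$ and $f=R_L^\pm F=\int_{\Psi_\pm}F(xz)K_L(z)\,dz$, and then invoke the bound $\int_{\Psi_\pm}|z|^n|K_L(z)|\,d|z|\le C^{n+1}\gamma(n+1)$ via Lemma~\ref{lem:spliting H intgral K small}. But that lemma needs assumption (R7), which is \emph{not} among the hypotheses of Theorem~\ref{thm':NqaBeu}. The point is quantitative: on $\Psi_\pm=\partial\Omega(\pi/2)$ one has $\re\big(s\varepsilon(s)\big)\sim\frac{\pi}{2}\rho^2|\varepsilon'(\rho)|$, which by (R3) is $o(\rho\varepsilon(\rho))$ --- so $K_L$ decays much more slowly on the boundary of the petal than on $\mathbb R_+$, and the moment integral over $\Psi_\pm$ does \emph{not} reproduce $\gamma(n+1)$ up to a geometric factor without the extra hypothesis (R7). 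The paper sidesteps this entirely by never touching $\partial\Omega$: it applies the Ehrenpreis representation to split $g=g_e+g_++g_-$, handles the entire part $g_e$ with the Carleson--Ehrenpreis theorem (inclusion \eqref{eq: C&E2}, which you do not use at all in this direction), and transfers the half-plane-analytic pieces $g_\pm$ by integrating against $K_*=K_{L/\widetilde L}$ along the \emph{interior} rays $\Psi_{\pm\pi/3}$, where Lemma~\ref{lem: K and E last} gives the needed moment bound under (R1)--(R3),(R8) alone. You would also need to justify that $F$ is of polynomial growth off $\Omega_{\widetilde L}$ to make $R_L^\pm F$ converge; Theorem~\ref{thm': Main thm,sing} only controls $F$ on horizontal strips, and the cited Lemma~\ref{lem: E complex lemma} is about $E$, not about an arbitrary $S_{\widetilde L}g$.

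For the inclusion $S_LC_0(L;I)\subseteq S_{\widetilde L}C_0(\widetilde L;\mathbb R)$, you propose $g=R_{\widetilde L}^\pm(S_Lf)$ integrated over $\Psi'_\pm=\partial\Omega_{\widetilde L}$, and you yourself flag that $\Psi'_\pm$ runs \emph{inside} $\Omega_L$ where $F=S_Lf$ attains its superexponential size. You call this ``the hard part'' and leave it unresolved, gesturing at Sections~6--8 for asymptotics that would settle it; but there is no such lemma in the paper that controls $\int_{\Psi'_\pm}F(xz)K_{\widetilde L}(z)\,dz$ uniformly over $x$ in an arbitrary compact of $\mathbb R$, and indeed the paper never takes this route. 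Instead, the paper's argument is elementary and avoids contour integrals of $F$ altogether: it rescales $f_A=f(A\cdot)$, expands $f_A\circ\chi^{-1}$ in Chebyshev polynomials, applies $S_{L/\widetilde L}$ term-by-term (controlled on a thin rectangle by Lemma~\ref{lemma:estimatpsi--for nqa} together with part~4 of Lemma~\ref{lem: widetildeE}), obtains a germ $g_A\in C(\widetilde L;[-\tfrac12,\tfrac12])$ with a constant independent of $A$, rescales back to $g(x)=g_A(x/A)$, and lets $A\to\infty$; the quasianalyticity of $\widetilde L$ identifies all these germs and yields a single $g\in C_0(\widetilde L;\mathbb R)$ with the desired jet. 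None of the scaling trick, the lemma for $S_{L/\widetilde L}$ of polynomials, or the quasianalyticity step appears in your outline. So while your multiplicative factorization $\gamma=\gamma_M\widetilde\gamma$ and the jet relation $\widehat f(n)=M(n+1)^{n+1}\widehat g(n)$ are correct starting points, the two inclusions need very different machinery than you propose, and the key steps in both directions are missing.
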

Note that $$\rho\frac{d}{d\rho}\log \frac{L(\rho)}{\widetilde{L}(\rho)}=\frac{1}{\widetilde{L}(\rho)},$$
and so, assumption (R2) implies that $\widetilde{L}$ is eventually increasing and part 1 of Lemma \ref{lem: subReplacmentLem} (i.e., assumptions (R1) and (R4)) implies that $\widetilde{L}(\rho)\sim\widetilde{L}(\rho L(\rho)/\widetilde{L}(\rho))$. In particular, under these assumptions $\widetilde{L}$ is eventually slowly growing.

 In this section we will study the singular and regular transforms $S_{L/\widetilde{L}}$ and $R_{L/\widetilde{L}}$. The relevant functions associated with these transforms are
\[ E_*(z)=\sum_{n\geq 0} \frac{\widetilde{\gamma}(n+1)}{\gamma(n+1)} z^n,\quad K_* (z):=\frac{1}{2\pi i}\int_{c-i\infty}^{c+i\infty}\frac{\gamma(s)}{\widetilde{\gamma}(s)}z^{-s}ds,\quad c>0. \]
\subsubsection{Proof of the inclusion $S_L C_0(L;I)\subseteq S_{\widetilde{L}}C_0(\widetilde{L};\mathbb{R})$.}
\begin{proof}
The first observation we make is that the set $S_L C_0(L;I)$ does not depends on the interval $I$. Indeed, for  $\delta>0$, consider the function $\xi_\delta\in C_0(L;\mathbb{R})$ which is  identically  $1$ in the interval $(-\delta,\delta)$ and identically  $0$ outside the interval $(-2\delta,2\delta)$. If $f\in C_0(L;I)$, then $\xi_\delta f\in C_0(L;I)$ and $S_L (f)\equiv S_L(f\cdot \xi_\delta)$. By choosing $\delta$ so that $[-2\delta,2\delta]\subset I$, the function  $f\cdot \xi_\delta$ can be extended to an element of $ C_0(L;\mathbb{R})$.

Fix   $f\in C_0(L;\mathbb{R})$. For $A>0$, we put $f_A(x)=f(Ax)$. Since $f\in C_0(L;\mathbb{R})$, so does $f_A$. We can consider  the Chebyshev series expansion of the function $f_A$ in the interval $[-1,1]$,
\[ f_A=\sum_{k\geq 0} c_{A,k}\cdot T_k. \]
By Lemma \ref{lemma:estimatcoef},
\[ \lim_{k\to\infty}\frac{\log|c_{A,k}|}{\Lambda_L(k)}
=-\infty,\]
where as before $$\Lambda_L(k) = \sup_{x\geq 0} \left[x\log k-n\log(xL(x))\right].$$
We put 
\[ g_A:= \sum_{k\geq 0 } c_{A,k}\cdot S_{L/\widetilde{L}} \left(T_k\right)=\sum_{k\geq 0 } c_{A,k}\cdot Q_k. \]
and notice that, formally, 
\[ \frac{\widehat{f_A}(n)}{\gamma(n+1)}=\frac{\widehat{g_A}(n)}{\widetilde{\gamma}(n+1)},\quad n\geq 0. \]
Next, we   show that  $g_A\in C(\widetilde{L};[-\tfrac{1}{2},\tfrac{1}{2}])$. By Lemma \ref{lemma:estimatpsi--for nqa} (applied with  the functions $L/\widetilde{L}$ and $1/\widetilde{L}$  instead of $L$ and $\varepsilon$),
\[|Q_k(x+iy)|\leq e^{C\Lambda_L(k)} ,\quad |x|\leq 1,\; |y|\leq \frac{1}{\widetilde{L}\left(\Lambda_L(k)\right)}.  \]
Therefore, the Cauchy estimates for the derivatives yield
\begin{equation}
|Q^{(n)}_k(x)|\leq C n! \cdot\widetilde{L}^n\left(\Lambda_L(k)\right)\cdot e^{C\Lambda_Ls(k)} ,\quad |x|\leq \frac{1}{2},\; 0\leq n\leq k.\label{eq: Q der 1st estimate}
\end{equation}
By Lemma \ref{lem: widetildeE}, part 4, there exists $C>0$, such that,
\[n\log \widetilde{L}(k)-n\log \widetilde{L}(n)\leq C(n+k),\quad  0\leq n\leq k, \]
which in turn implies that 
\[n\log \widetilde{L}\left(\Lambda_L(k)\right)-n\log \widetilde{L}(n)\leq C\left(n+\Lambda_L(k)\right),\quad  0\leq n\leq k. \]
Substituting this estimate into \eqref{eq: Q der 1st estimate}, we get \[|Q^{(n)}_k(x)|\leq C_1^{n+1} e^{C\Lambda_L(k)} n! \cdot\widetilde{L}^n(n)  ,\quad |x|\leq \frac{1}{2},\; 0\leq n\leq k\]
Therefore, 
\[ |g_A^{(n)}(x)| \leq C_A\cdot C_1^{n} \cdot n!\cdot \widetilde{\gamma}(n+1),\quad |x|\leq \frac{1}{2},\; n\geq 0,\]
where the constant $C_1$ is independent of $A$, i.e., $g_A\in C(\widetilde{L};[-\tfrac{1}{2},\tfrac{1}{2}])$.

For $x\in [-\tfrac{A}{2},\tfrac{A}{2}]$, put  $g(x)=g_A(\tfrac{x}{A})$. Note that $g$ belongs to the quasianalytic class $C(\widetilde{L},[-\tfrac{A}{2},\tfrac{A}{2}])$, and 
\[ \frac{\widehat{g}(n)}{\widetilde{\gamma}(n+1)}=A^{-n}\frac{\widehat{g_A}(n)}{\widetilde{\gamma}(n+1)}=A^{-n}\frac{\widehat{f_A}(n)}{\gamma(n+1)}=\frac{\widehat{f}(n)}{\gamma(n+1)}. \]
 Since the class $C(\widetilde{L};\mathbb{R})$ is quasianalytic, we conclude that  $g\in C(\widetilde{L};\mathbb{R})$.

Let $B>0$. Taking $A>2B$, we find that
\[ \max_{|x|\leq B} |g^{(n)}(x)|\leq C_A\cdot \left(\frac{C_1}{A}\right)^{n} \cdot n!\cdot \widetilde{\gamma}(n+1) .\]
Since $A$ can be taken arbitrarily large, we have $g\in C_0(\widetilde{L};\mathbb{R})$, with 
\[ \frac{\widehat{f}(n)}{\gamma(n+1)}=\frac{\widehat{g}(n)}{\widetilde{\gamma}(n+1)},\quad n\geq 0. \]
This finishes the proof the inclusion $S_L C_0(L;I)\subseteq S_{\widetilde{L}}C_0(\widetilde{L};\mathbb{R})$. 
\end{proof}
\subsubsection{The inclusion $S_L C_0(L;I)\supseteq S_{\widetilde{L}}C_0(L;\mathbb{R})$.}
\paragraph{The theorem of Carleson and Ehrenpreis.}\label{sec: Car and Ehr thm}
Our proof relies on a theorem independently proven by  Carleson \cite{Carleson} and Ehrenpreis \cite{Ehrenpreis9}. Here we give the statement of this theorem and discuss its relation to our results.

Recall that for a non-decreasing function $L:[0,\infty)\to[1,\infty)$,   $$\Lambda_L(r)= \sup_{\rho \geq 0}\left( \rho\log r-\rho \log (\rho L (\rho)) \right).$$
Note that if the function $\rho\mapsto \rho  \log (\rho L (\rho))$ is a convex function of $\log \rho$, then $L$ can be recovered from $\Lambda_L$ by the relation
\[ \rho \log (\rho L (\rho))=\sup_{r>0}\rho \log r-\Lambda_L(r).\]

\begin{theorem*}
Suppose that   $L$  is  a  non--quasianalytic, eventually increasing and unbounded from above function such that  $\rho\mapsto \rho  \log (\rho L (\rho))$  is a convex function of $\log \rho$. Let  $L_1$ be an increasing function.
	Then 
	\begin{equation}
	\mathcal{F}_0(L_1)=\{(a_n)_n\;:\; |a_n|^{1/n}=o(L_1(n)),\:\: n\to\infty\;\}\subseteq \mathcal{B} C_0(L;\mathbb{R}) \label{eq: C&E}
	\end{equation}
	if and only if 
	\[ \int_0^\infty \frac{r}{r^2+t^2}\Lambda_L(t)dt=O(\Lambda_{L_1}(r)),\quad r\to\infty. \]
\end{theorem*}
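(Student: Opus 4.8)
Since $L$ is non-quasianalytic, multiplying by a fixed $C_0(L;\mathbb{R})$-cutoff equal to $1$ near the origin shows that $\mathcal{B}C_0(L;\mathbb{R})$ is the Borel image of the subclass of functions supported in $[-1,1]$. Passing to Fourier transforms $g=\widehat f$, the Paley--Wiener theorem, the Beurling bounds and Stirling's formula identify this subclass with the space $\mathcal{E}_L$ of entire functions $g$ of exponential type $\le1$ with $|g(\xi)|\lesssim_\delta e^{-\Lambda_L(\delta|\xi|)}$ on $\mathbb{R}$ for every $\delta>0$, and the Taylor jet becomes a moment sequence, $\widehat f(n)=\frac{i^n}{2\pi n!}\int_{\mathbb{R}}g(\xi)\,\xi^n\,d\xi$. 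Equivalently, in the notation of the present paper, $S_Lf(z)=\frac{1}{2\pi}\int_{\mathbb{R}}g(\xi)E_1(i\xi z)\,d\xi$, so the argument may be run with the kernel $E_1$ of Section~7 in place of the exponential. I use the hypothesis that $\rho\mapsto\rho\log(\rho L(\rho))$ is convex in $\log\rho$ to know that $\Lambda_L$ is exactly the Legendre transform of that function: then $L$ and $\Lambda_L$ determine each other, $\Lambda_L$ is convex and increasing in $\log r$, and the estimates below are reversible. Non-quasianalyticity of $L$ is equivalent to $\int_1^\infty t^{-2}\Lambda_L(t)\,dt<\infty$, so $\int_0^\infty\frac{r}{r^2+t^2}\Lambda_L(t)\,dt$ is finite for every $r$; note it automatically dominates a multiple of $\Lambda_L(r/2)$, so under either hypothesis of the theorem one has $\Lambda_L\lesssim\Lambda_{L_1}$ and hence $L_1\lesssim L$.

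\textbf{The weight function, and sufficiency.} The pivotal fact is that the integral in question is, up to a constant, the value on the imaginary axis of the outer function $\Phi$ in the upper half-plane with boundary modulus $e^{-\Lambda_L(|t|)}$:
\[
\int_0^\infty\frac{r}{r^2+t^2}\Lambda_L(t)\,dt=-\tfrac{\pi}{2}\log|\Phi(ir)|.
\]
Because $L$ is non-quasianalytic there is also an entire function $\Theta$ of exponential type $\le1$ with $|\Theta(\xi)|\asymp e^{-\Lambda_L(|\xi|)}$ on $\mathbb{R}$ (a Beurling--Malliavin/Khinchin--Ostrowski-type construction, possible precisely because the logarithmic integral converges); $\Theta\in\mathcal{E}_L$ is the Fourier transform of a ``model'' element of $C_0(L;[-1,1])$, and its growth off the real axis is governed by the same Poisson integral. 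For sufficiency, assume $\int_0^\infty\frac{r}{r^2+t^2}\Lambda_L(t)\,dt=O(\Lambda_{L_1}(r))$ and fix $(a_n)\in\mathcal{F}_0(L_1)$; set $\mathcal{A}(\zeta)=\sum_{n\ge0}\frac{a_n}{\gamma(n+1)}\zeta^n$, entire since $|a_n|^{1/n}=o(L_1(n))$ and $L_1\lesssim L$. The naive candidate $f=R_L\mathcal{A}$ need not lie in $C_0(L;\mathbb{R})$, because $\mathcal{A}$ need not have controlled growth on horizontal strips -- this is precisely the reason $\mathcal{B}C_0(L;\mathbb{R})\subsetneq\mathcal{F}_0(L)$ for slowly growing $L$. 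The point is to correct $\mathcal{A}$ by $\Theta$, representing $f$ by a contour integral of the schematic form $\frac1{2\pi i}\int_\Gamma\mathcal{A}(\zeta)\,\Theta(\tfrac1\zeta)\,e^{x/\zeta}\,\tfrac{d\zeta}{\zeta}$: the decay of $\Theta$ restores the missing strip-control and puts $f$ in $C_0(L;\mathbb{R})$; the quantitative amount of control $\Theta$ provides is exactly $-\log|\Phi(ir)|$, so the hypothesis is the matching condition under which the integral converges against coefficients of $\Lambda_{L_1}$-size; and contour deformation together with the moment structure of the kernel gives $\widehat f(n)=a_n$.

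\textbf{Necessity.} I would argue by contraposition. Suppose the Poisson bound fails, so that $\int_0^\infty\frac{r_k}{r_k^2+t^2}\Lambda_L(t)\,dt\ge k\,\Lambda_{L_1}(r_k)$ along a sequence $r_k\to\infty$. Using $\Phi$ (equivalently, a reproducing/extremal element of $\mathcal{E}_L$) together with this failure, I would construct a single sequence $(a_n)\in\mathcal{F}_0(L_1)$ for which any $f\in C_0(L;\mathbb{R})$ with $\widehat f=(a_n)$ would have $g=\widehat f\in\mathcal{E}_L$ with moments $\int_{\mathbb{R}}g\,\xi^n$ so large as to be incompatible with the growth of $\mathcal{E}_L$-functions on the imaginary axis -- which is governed by $-\log|\Phi(ir)|$, assumed to be $\gg\Lambda_{L_1}$. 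This contradiction yields $(a_n)\notin\mathcal{B}C_0(L;\mathbb{R})$. (Equivalently, one can package the necessity via the closed graph theorem and duality: the inclusion, being continuous, is quantitative, and dualizing the resulting surjectivity gives lower bounds for the distributions $T_c=\sum_n\frac{(-1)^nc_n}{n!}\delta_0^{(n)}$ on Banach pieces of $C_0(L;[-1,1])$ whose extremizers are again built from $\Phi$; testing on $c$ concentrated near one large index recovers the Poisson bound $\Lambda_{L_1}(N)\gtrsim\int_0^\infty\frac{N}{N^2+t^2}\Lambda_L(t)\,dt$.)

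\textbf{Main obstacle.} In both directions the crux is the construction and the \emph{uniform} control of the extremal function $\Theta$ (equivalently of $\Phi$): one must simultaneously respect the Paley--Wiener constraint (exponential type $\le1$, forced by compact support) and the weighted decay $e^{-\Lambda_L}$ on $\mathbb{R}$ (whose very existence already encodes non-quasianalyticity), and one must show that the value $\int_0^\infty\frac{r\,\Lambda_L(t)}{r^2+t^2}\,dt$ is the \emph{exact} cost, with no slack left in the sufficiency construction or in the necessity counterexample. A secondary difficulty is that $L_1$ is merely an increasing function, so $\Lambda_{L_1}$ may be irregular and the argument cannot use any smoothness of $L_1$; it is the convexity of $\Lambda_L$, i.e. the hypothesis on $L$, that keeps all the comparisons robust and reversible.
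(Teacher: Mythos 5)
This theorem is not proved in the paper. The paper quotes it as a known result of Carleson \cite{Carleson} and Ehrenpreis \cite{Ehrenpreis9}, remarking only that the necessity is due to Ehrenpreis and that Carleson's proof, stated for Carleman classes, also applies to Beurling classes. There is therefore no in-paper proof to compare against; what I can do is assess your sketch against the published arguments it would have to reproduce.

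Your reductions and the central identity are correct and on-target: the Paley--Wiener description of $C_0(L)$-functions supported in $[-1,1]$ as type-$\le 1$ entire functions with $|g(\xi)|\lesssim_\delta e^{-\Lambda_L(\delta|\xi|)}$ on $\mathbb R$, the observation that $\int_0^\infty \frac{r}{r^2+t^2}\Lambda_L(t)\,dt=-\frac{\pi}{2}\log|\Phi(ir)|$ for the outer function $\Phi$ with boundary modulus $e^{-\Lambda_L}$, and the equivalence of non-quasianalyticity with $\int^\infty t^{-2}\Lambda_L(t)\,dt<\infty$ all check out and are indeed the starting points of Carleson's proof. Your necessity sketch by contraposition/duality (closed graph theorem, testing on concentrated coefficient sequences) is recognizably Ehrenpreis's analytically-uniform-space approach. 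So the plan is the Carleson--Ehrenpreis one in spirit.

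The concrete gap is the sufficiency construction. The displayed candidate
\[
f(x)=\frac{1}{2\pi i}\int_\Gamma \mathcal A(\zeta)\,\Theta\!\left(\tfrac1\zeta\right)e^{x/\zeta}\,\frac{d\zeta}{\zeta}
\]
does not work as written: on any small circle around the origin both $\Theta(1/\zeta)$ and $e^{x/\zeta}$ have essential singularities, and the residue/Laurent bookkeeping does not return $\widehat f(n)=a_n$. The correction by $\Theta$ must instead be implemented on the Fourier side, by solving the moment problem $\int_{\mathbb R} g(\xi)\xi^n\,d\xi = i^{-n}n!\,a_n$ with $g=\Theta\cdot(\text{a biorthogonal/dual-basis series})$; the Poisson integral condition is exactly what controls the growth of the dual basis and makes the series converge in $\mathcal E_L$. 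Your sketch gestures at this (``the quantitative amount of control $\Theta$ provides is exactly $-\log|\Phi(ir)|$'') but leaves the decisive step schematic, and the one explicit formula it offers is incorrect. So: right strategy, right key identity, right tools, but the heart of the sufficiency argument --- the construction of the extremal/biorthogonal system and the verification that the Poisson bound is necessary and sufficient for its convergence --- is missing.
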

We remark that the ``only if'' part is  due to Ehrenpreis and that Carleson proved this theorem for the Carleman classes (though his proof also works for the Beurling classes).

The assertion \eqref{eq: C&E} can  be recast as 
\[ \left\{\left(\frac{a_n}{\gamma(n+1)}\right)_n\;:\; |a_n|^{1/n}=o(L_1(n)),\:\: n\to\infty\;\right\}\subseteq \mathcal{B} S_L C_0(L;\mathbb{R}),  \]
which under the assumption \[ \frac{L_1(\rho+1)}{L_1(\rho)}<C \]
can be also written as
\[ S_{L_1/L} \text{Hol}(\mathbb{C})\subseteq S_L C_0(L;\mathbb{R}). \]
The next lemma shows the connection between the theorem of Carleson and Ehrenpreis and our results.
\begin{lemma}\label{lem: L_2}
	Suppose that $L$ is   non-quasianalytic and  slowly growing. Then 
	\[ \int_0^\infty \frac{r}{r^2+t^2}\Lambda_L(t)dt\asymp\Lambda_{L/\widetilde{L}}(r),\]
	where, as before, $\widetilde{L}(\rho):=\int_1^\infty \frac{du}{uL(u)}$.
\end{lemma}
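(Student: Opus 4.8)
The strategy is to compute both sides explicitly in terms of the convex conjugate structure of $\Lambda_L$ and then match them. First I would recall that $\Lambda_L(t)=\sup_{\rho\geq0}(\rho\log t-\rho\log(\rho L(\rho)))$, so writing $t=e^\tau$ and $\rho=e^\sigma$, the function $\tau\mapsto\Lambda_L(e^\tau)$ is the Legendre–Fenchel transform of $\sigma\mapsto e^\sigma\log(e^\sigma L(e^\sigma))=\rho\log(\rho L(\rho))$ (viewed as a function of $\log\rho$), which is convex by the slowly growing hypothesis. Under this convexity, $\Lambda_L$ is the indefinite integral of the inverse saddle-point function: more precisely, the saddle point $\rho=\rho(t)$ solving $\log L(\rho)+\varepsilon(\rho)=\log t$ satisfies $\Lambda_L(t)\asymp\int^t\frac{\rho(u)}{u}\,du$, and $\Lambda_L'(t)\cdot t=\rho(t)$ up to the lower-order $1$ in the exponent. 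The key analytic input here is Lemma \ref{lem: widetildeE}, assertion (1), namely $\Lambda_L(erL(r))\sim r$, which I will use to identify the asymptotic inverse relationship between $\Lambda_L$ and $rL(r)$.

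Next I would evaluate the Poisson-type integral $\int_0^\infty\frac{r}{r^2+t^2}\Lambda_L(t)\,dt$. The kernel $\frac{r}{r^2+t^2}$ is concentrated near $t\asymp r$ (on a logarithmic scale it is a bump of width $O(1)$ around $\log t=\log r$), but $\Lambda_L$ is only slowly (sub-polynomially) growing, so the integral is not dominated by a single scale; instead, integrating by parts, $\int_0^\infty\frac{r}{r^2+t^2}\Lambda_L(t)\,dt=\int_0^\infty \arctan(t/r)'\cdot(\text{something})$ — better: use $\int_0^\infty\frac{r}{r^2+t^2}\Lambda_L(t)\,dt=\int_0^\infty\Lambda_L'(t)\left(\frac{\pi}{2}-\arctan\frac{t}{r}\right)dt$ after an integration by parts (boundary terms vanish since $\Lambda_L(t)=o(t^\delta)$). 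Since $\Lambda_L'(t)\asymp\rho(t)/t$ and $\frac{\pi}{2}-\arctan(t/r)\asymp r/(r+t)$, this becomes $\asymp\int_0^\infty\frac{\rho(t)}{t}\cdot\frac{r}{r+t}\,dt=r\int_0^\infty\frac{\rho(t)}{t(r+t)}\,dt$. Splitting at $t=r$ and using that $\rho$ is slowly varying (assumption (R3) for $L$, hence for its saddle function), the $t<r$ part gives $\asymp\rho(r)$ and the $t>r$ part gives $\asymp r\int_r^\infty\frac{\rho(t)}{t^2}\,dt$. Thus the left-hand side is $\asymp\rho(r)+r\int_r^\infty\frac{\rho(t)}{t^2}\,dt$.

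On the right-hand side, I would compute $\Lambda_{L/\widetilde L}(r)$ the same way: its saddle point $\rho_*=\rho_*(r)$ solves $\log\frac{L(\rho_*)}{\widetilde L(\rho_*)}+\rho_*\frac{d}{d\rho}\log\frac{L}{\widetilde L}(\rho_*)=\log r$, and by the identity $\rho\frac{d}{d\rho}\log\frac{L(\rho)}{\widetilde L(\rho)}=\frac{1}{\widetilde L(\rho)}$ noted in the excerpt, this is $\log L(\rho_*)-\log\widetilde L(\rho_*)+\frac{1}{\widetilde L(\rho_*)}=\log r$. Comparing with the definition $\widetilde L(\rho)=L(\rho)\int_\rho^\infty\frac{du}{uL(u)}$, one sees $\log L-\log\widetilde L=-\log\int_\rho^\infty\frac{du}{uL(u)}$, so the saddle relation linearizes nicely and, after the change of variables $\rho\mapsto\Lambda_L(\rho)$ (legitimate by Lemma \ref{lem: widetildeE}(1) and Lemma \ref{lem: subReplacmentLem}(1)), I expect $\Lambda_{L/\widetilde L}(r)\asymp\rho(r)+r\int_r^\infty\frac{\rho(t)}{t^2}\,dt$ — the same quantity. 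Concretely I would verify this by checking it against the definition $\Lambda_{L/\widetilde L}(r)=\sup_\rho(\rho\log r-\rho\log(\rho L(\rho)/\widetilde L(\rho)))$, substituting $\log(L/\widetilde L)=\log L-\log\widetilde L$ and using $\Lambda_L(\rho\widetilde L(\rho))\asymp$ (a reparametrization of $\rho$) to reduce it to an integral identical to the one obtained for the left-hand side.

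The main obstacle I anticipate is \emph{bookkeeping the additive $O(1)$'s and $(1+o(1))$ factors inside exponentials}: all the relations above hold up to $\asymp$, meaning up to bounded multiplicative constants in $\Lambda$, which is exactly the tolerance the lemma allows, but one must be careful that the reparametrizations $\rho\leftrightarrow\Lambda_L(\rho)$ and $\rho\leftrightarrow\rho L(\rho)$ and $\rho\leftrightarrow\rho\widetilde L(\rho)$ all preserve this tolerance — this is guaranteed by Lemma \ref{lem: widetildeE} parts (1)–(3), Lemma \ref{lem: subReplacmentLem} part (1), and the slow variation of $\varepsilon$, but assembling them in the right order is where the care is needed. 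A secondary subtlety is justifying the integration by parts and the convergence of $\int_r^\infty\rho(t)t^{-2}\,dt$, which follows since $\rho(t)=\Lambda_L(\cdot)$-type growth is $o(t)$ (indeed $o(t^\delta)$ for all $\delta$ when $L$ is slowly growing). Once the tolerance issues are handled, both sides reduce to the common expression $\rho(r)+r\int_r^\infty\frac{\rho(t)}{t^2}\,dt$ and the proof is complete.
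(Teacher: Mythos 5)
There are two issues here: one concrete error and one genuine gap. The error: your saddle-point equation for $\Lambda_L$ is wrong. If $\Lambda_L(t)=\sup_{\rho\geq 0}\big(\rho\log t-\rho\log(\rho L(\rho))\big)$, the stationarity condition is $\log t = 1+\log\rho+\log L(\rho)+\varepsilon(\rho)$; what you wrote, $\log L(\rho)+\varepsilon(\rho)=\log t$, is the stationarity condition for $\sup_{x}\big(x\log t-x\log L(x)\big)$, i.e.\ the saddle-point equation for $E$, a different object. The missing $\log\rho$ is not lower order: the root of your equation is $\sim L^{-1}(t)$, which is incomparably larger than the true maximizer $t\Lambda_L'(t)\asymp\Lambda_L(t)$ (for $L=\log$, $L^{-1}(t)=e^{t}$ whereas $\Lambda_L(t)\asymp t/\log t$). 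You also assert that $\rho$ is slowly varying; it is regularly varying of index~$1$, and the near-field bound $\int_0^r\rho(t)\,t^{-1}\tfrac{r}{r+t}\,dt\asymp\rho(r)$ is a Karamata estimate for positive index. Fortunately your integration-by-parts reduction to the tail $r\int_r^\infty\Lambda_L(t)\,t^{-2}\,dt$ does not depend on the explicit saddle-point formula, and it is a pleasant elementary substitute for the paper's appeal to Bingham's monograph for $\Lambda_L(r)=o\big(\int_0^\infty\tfrac{r}{r^2+t^2}\Lambda_L(t)\,dt\big)$.

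The genuine gap is the identification $r\int_r^\infty\Lambda_L(t)\,t^{-2}\,dt\asymp\Lambda_{L/\widetilde L}(r)$, which is where the content of the lemma lies and which your proposal only says you ``expect'' to hold. The paper carries it out by substituting $t=uL(u)$, so that by Lemma~\ref{lem: widetildeE}, part~1, $\Lambda_L(t)\asymp u$, $dt/t^2\asymp du/(uL(u))$, and the lower limit becomes $u\asymp\Lambda_L(r)$; this yields $r\int_{\Lambda_L(r)}^\infty\frac{du}{uL(u)}=r\,\widetilde L(\Lambda_L(r))/L(\Lambda_L(r))\asymp\Lambda_L(r)\widetilde L(\Lambda_L(r))$. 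It then shows $\Lambda_L(r)\widetilde L(\Lambda_L(r))\asymp\Lambda_{L/\widetilde L}(r)$ by inverting via Lemma~\ref{lem: widetildeE}, part~1, once more and applying Lemma~\ref{lem: Bingham} with $L_1=L/\widetilde L$ and $L_2=\widetilde L$, whose hypothesis $\varepsilon_1(\rho)\log L_2(\rho)=\tfrac{\log\widetilde L(\rho)}{\widetilde L(\rho)}\to 0$ is automatic because $L$ is non-quasianalytic. Note that the regularity lemma you cite, Lemma~\ref{lem: subReplacmentLem}, part~1, requires assumptions (R1), (R2), (R4), none of which Lemma~\ref{lem: L_2} assumes; the correct tool is Lemma~\ref{lem: Bingham}, whose hypotheses are satisfied here for free. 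Until this change-of-variables argument and the regularity lemma are spelled out, your proposal establishes only the elementary reduction and not the equivalence itself.
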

In particular, the above lemma and theorem of Carleson and Ehrenpreis imply that
\begin{equation}
S_{\widetilde{L}} \text{Hol}(\mathbb{C})\subseteq S_L C_0(L;\mathbb{R}),\label{eq: C&E2}
\end{equation}
and that $\widetilde{L}$ in the left-hand side of \eqref{eq: C&E2} cannot be replaced by any function $L_2$ with $L_2(\rho)=o(\widetilde{L}(\rho)),\; \rho \to \infty$, while Theorem \ref{thm':NqaBeu} states that $S_{\widetilde{L}} C_0(\widetilde{L};\mathbb{R})=S_L C_0(L;\mathbb{R})$, but under additional regularity conditions. The proof of Lemma \ref{lem: L_2} is given in Appendix A.

\paragraph{Ehrenpreis representation.}
We will  use  the following representation of functions in the Beurling class,   due  to Ehrenpreis.
\begin{theorem*}
Let $L:[0,\infty)\to (0,\infty)$ be a function such that $\lim_{\rho\to\infty}L(\rho)=\infty$ and  the function $\rho\mapsto\rho\log (\rho L(\rho))$ is eventually strictly convex. If $g\in C_0(L;\mathbb(R))$, then there exists a representation
\[ g(t)=\iint_{\mathbb{C}} e^{iwt}\frac{d\mu(w)}{k(w)}, \]
where $\mu$ is a
finite complex-valued measure  and  $k\in C(\mathbb{C})$ is a non-negative function such that, for any $a,\;b>0$,
\[ \lim_{|w|\to \infty} \frac{k(w)}{\exp\left(a|\im w|+\Lambda_{L}(b|w|)\right)}=\infty.  \] 
Here, as before,
\[ \Lambda_{L}(r)=\sup_{\rho\geq 0}\left[\rho\log r-\rho\log\left(\rho L(\rho)\right)\right]. \]
\end{theorem*}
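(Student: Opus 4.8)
The plan is to prove this by the Fourier--Laplace duality underlying Ehrenpreis's theory of analytically uniform spaces, of which this statement is a special case. First I would fix the topology on $C_0(L;\mathbb{R})$ given by the seminorms $g\mapsto\sup_{|x|\le R}\sup_{n\ge0}|g^{(n)}(x)|\big/(\delta nL(n))^{n}$ over all $R>0$, $\delta>0$; this is a reflexive (Fréchet--Schwartz) space, and for every $w\in\mathbb{C}$ the exponential $x\mapsto e^{-iwx}$ lies in it, since its $n$-th derivative is $(-iw)^{n}e^{-iwx}$ and $|w|^{n}\le(\delta nL(n))^{n}$ for $n$ large. So to each continuous functional $T$ on $C_0(L;\mathbb{R})$ one attaches the entire function $\widehat{T}(w)=\langle T,e^{-iw\,\cdot}\rangle$, and by reflexivity the theorem becomes a statement about the range of $T\mapsto\widehat{T}$.

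Next I would pin down that range. The two parts of the weight in the statement come straight from Paley--Wiener estimates: if $|T(\varphi)|\le C\sup_{|x|\le R}\sup_{n}|\varphi^{(n)}(x)|/(\delta nL(n))^{n}$, then testing against $\varphi=e^{-iw\,\cdot}$ gives
\[
|\widehat{T}(w)|\ \le\ C\,e^{R|\im w|}\sup_{n\ge0}\frac{|w|^{n}}{(\delta nL(n))^{n}}\ \le\ C\,e^{R|\im w|+\Lambda_{L}(|w|/\delta)},
\]
by the very definition of $\Lambda_{L}$; the factor $e^{R|\im w|}$ with $R$ unbounded produces the $a|\im w|$ for every $a$ (it records that the functions occupy all of $\mathbb{R}$, not a compact interval), while $e^{\Lambda_{L}(b|w|)}$ records the Denjoy--Carleman smoothness. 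The converse, that every entire $\Phi$ with $|\Phi(w)|\lesssim e^{R|\im w|+\Lambda_{L}(b|w|)}$ for some $R,b>0$ is of the form $\widehat{T}$, is where I would use the hypothesis that $\rho\mapsto\rho\log(\rho L(\rho))$ be eventually strictly convex: it makes $\Lambda_{L}$ the honest Legendre transform of that function, so the correspondence is involutive ($\rho\log(\rho L(\rho))=\sup_{r}[\rho\log r-\Lambda_{L}(r)]$) and $L$ is recoverable from the growth of $\widehat{T}$. Thus $T\mapsto\widehat{T}$ identifies the strong dual of $C_0(L;\mathbb{R})$ with the $(LB)$-algebra $\mathcal{A}=\bigcup_{R,b>0}\mathcal{A}_{R,b}$, $\mathcal{A}_{R,b}=\{\Phi\ \text{entire}:\ \sup_{w}|\Phi(w)|e^{-R|\im w|-\Lambda_{L}(b|w|)}<\infty\}$; dually, $C_0(L;\mathbb{R})$ is the strong dual of $\mathcal{A}$, and the functional attached to $g\in C_0(L;\mathbb{R})$ takes the value $g(t)$ at the entire function $w\mapsto e^{iwt}\in\mathcal{A}$ (up to the harmless reflection $w\mapsto-w$).

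The representation then amounts to the assertion that $\mathcal{A}$ --- equivalently $C_0(L;\mathbb{R})$ --- is \emph{boundedly} analytically uniform: there is one nonnegative $k\in C(\mathbb{C})$ dominating every weight $e^{a|\im w|+\Lambda_{L}(b|w|)}$ through which all functionals on $\mathcal{A}$ are represented. To produce $(k,\mu)$ I would run a Hahn--Banach/Riesz argument: enlarging the weight slightly ($R'>R$, $b'>b$), the inclusion $\mathcal{A}_{R,b}\hookrightarrow\mathcal{A}_{R',b'}$ sends each $\Phi$ to a function whose normalization $\Phi(w)e^{-R'|\im w|-\Lambda_{L}(b'|w|)}$ vanishes at infinity (because $\Lambda_{L}(b'r)-\Lambda_{L}(br)\to\infty$), hence embeds $\mathcal{A}_{R,b}$ into $C_{0}(\mathbb{C})$; a functional on $\mathcal{A}$, restricted to $\mathcal{A}_{R,b}$, then extends and is represented by a finite measure, and after absorbing the normalizing exponential one gets it as $\Phi\mapsto\iint\Phi\,d\mu$ with $\mu$ finite. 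Letting the scales run to infinity and using that consecutive weights are genuinely separated --- again $\Lambda_{L}(2br)-\Lambda_{L}(br)\to\infty$, which follows from $L(\rho)\to\infty$ forcing $\Lambda_{L}(r)/\log r\to\infty$ --- one interpolates a single $k$ outgrowing $e^{a|\im w|+\Lambda_{L}(b|w|)}$ for all $a,b$ while staying summable against each of the representing measures; rewriting the pairing as $\Phi\mapsto\iint\Phi(w)\,d\mu(w)/k(w)$ and applying it to the functional of $g$ with $w\mapsto e^{iwt}$ as test element yields $g(t)=\iint_{\mathbb{C}}e^{iwt}\,d\mu(w)/k(w)$.

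The hard part is exactly this last construction --- the boundedly-AU property of $C_0(L;\mathbb{R})$ --- because it forces one to reconcile two incommensurable scales: $k$ must grow faster than $e^{a|\im w|}$ in the imaginary direction \emph{and} faster than $e^{\Lambda_{L}(b|w|)}$ in the real direction for every $a,b$, yet remain summable against the representing measures of all bounded subsets of the dual; this balancing needs precisely the regularity of $L$ (convexity of $\rho\log(\rho L(\rho))$, hence Legendre involutivity and the super-logarithmic growth of $\Lambda_{L}$) and is the technical core of Ehrenpreis's argument. In the non-quasianalytic case there is a more concrete alternative, sufficient for the applications here: cut $g$ off by some $g_{0}\in C_0(L;\mathbb{R})$ of compact support, note that $\widehat{g_{0}}$ is entire with $|\widehat{g_{0}}(\xi)|\lesssim_{B}e^{R_{0}|\im\xi|-\Lambda_{L}(B|\xi|)}$ for every $B$, push the Fourier-inversion contour off $\mathbb{R}$ and average the shifted contours against a rapidly decaying weight in the imaginary variable so that the inversion measure spreads into $\mathbb{C}$, then absorb $k$ by exploiting the gap $\Lambda_{L}(Br)-\Lambda_{L}(br)\to\infty$ to keep $k\,|\widehat{g_{0}}|$ integrable; on the sub-interval where the cutoff is $1$ this reproduces $g$.
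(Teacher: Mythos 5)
The paper itself does not prove this theorem; it records it as the ``Ehrenpreis representation'' and defers to Ehrenpreis \cite[\S V.6]{Ehrenpreis9} and Taylor \cite{Taylor}, remarking only that the construction of $(\mu,k)$ goes through Hahn--Banach. So your sketch is a reconstruction of the cited proof rather than something to be measured against an argument appearing in the paper. That said, the outline you give is the right one and matches the cited argument: Fr\'echet--Schwartz reflexivity of $C_0(L;\mathbb{R})$, the Fourier--Laplace identification of the strong dual with the $(LB)$-algebra $\mathcal{A}=\bigcup_{R,b}\mathcal{A}_{R,b}$ of entire functions with growth $O\bigl(e^{R|\im w|+\Lambda_L(b|w|)}\bigr)$, and Hahn--Banach plus Riesz to convert functionals on the pieces $\mathcal{A}_{R,b}$ into finite measures. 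Your Paley--Wiener estimate for $e^{-iw\,\cdot}$, the role of strict convexity of $\rho\mapsto\rho\log(\rho L(\rho))$ via Legendre involutivity, and the separation $\Lambda_L(b'r)-\Lambda_L(br)\to\infty$ are all used appropriately.

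What you have not actually carried out, and what you rightly call ``the hard part,'' is the interpolation of the local representing measures and weights obtained on the different $\mathcal{A}_{R,b}$ into a single continuous $k$ that dominates every $e^{a|\im w|+\Lambda_L(b|w|)}$ while keeping the representation finite. This is precisely the (boundedly) analytically-uniform property and is the technical core of Ehrenpreis's and Taylor's proofs; the gesture toward ``letting the scales run to infinity'' and ``interpolating a single $k$'' does not discharge it. Note also that the theorem as stated only asks for a pair $(\mu,k)$ depending on the given $g$, which is weaker than the universal-$k$ (boundedly-AU) statement you are aiming at, but the diagonal construction involved is no easier for a single functional. Finally, the ``concrete alternative'' you outline for the non-quasianalytic case --- Fourier-inverting a compactly supported cutoff of $g$ and spreading the inversion contour into $\mathbb{C}$ --- only reproduces $g$ on the subinterval where the cutoff equals $1$, whereas the theorem asserts the representation on all of $\mathbb{R}$; removing that dependence on the cutoff requires a gluing that is not materially simpler than the interpolation you bypassed, so this cannot be viewed as an elementary replacement for the AU argument.
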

From here on, we will refer to such representation of functions in $C_0(L;\mathbb{R})$, as the Ehrenpreis representation.
 Such representations are not unique (the construction of $\mu$ and $k$ uses the Hahn–Banach theorem). The proof can be found in \cite[\S V.6]{Ehrenpreis9} or in \cite{Taylor}.

It is worth mentioning that it is possible to study singular transforms of Beurling classes via the Ehrenpreis representation: observing that if $g\in C_0(L;\mathbb{R})$ has the representation
\[ g(t)=\iint_{\mathbb{C}} e^{iwt}\frac{d\mu(w)}{k(w)}, \] 
then 
\[ \left(S_Lg\right)(z)=\iint_{\mathbb{C}} \left(S_L \exp\right)(iwz)\frac{d\mu(w)}{k(w)}:=\iint_{\mathbb{C}} E_1(iwz)\frac{d\mu(w)}{k(s)}, \]
where 
\[ E_1(z)=\sum_{n \geq 0 } \frac{z^n}{n!\gamma(n+1)}. \]
For instance, in this way, one could  prove Theorem~\ref{thm: Main thm,sing} in the case $I=\mathbb{R}$. The drawback of such an approach is its inability to treat  intervals $I$ which are different from the whole real line. On the other hand, it has the nice feature that its  easily extends to Beurling classes in several variables. We will not pursue this approach here. 
 	\paragraph{The functions $K_*$ and $E_*$}\label{par: K_* def}
We fix a non--quasianalytic $L$ satisfying the assumptions of Theorem \ref{thm':NqaBeu}. Recall the definitions of the associated functions: 
\[ \widetilde{L}(\rho)=L(\rho)\int_{\rho }^{\infty}\frac{du}{uL(u)}, \quad \rho>1,\]

\[ \gamma(\rho)=L(\rho)^\rho,\quad \widetilde{\gamma}(\rho)=\widetilde{L}(\rho)^\rho \]
and 
\[ E_*(z)=\sum_{n\geq 0} \frac{\widetilde{\gamma}(n+1)}{\gamma(n+1)} z^n,\quad K_* (z)=\frac{1}{2\pi i}\int_{c-i\infty}^{c+i\infty}\frac{\gamma(s)}{\widetilde{\gamma}(s)}z^{-s}ds,\quad c>0. \]
 Theorems \ref{TheoremK} and \ref{TheoremE} provide us with the asymptotics  of $K_*$ and $E_*$. Note that 
\[ \rho\frac{d}{d\rho} \log \frac{L(\rho)}{\widetilde{L}(\rho)}=\frac{1}{\widetilde{L}(\rho)}, \]
and so  the corresponding saddle point equation is 
\[ \log z= \log \frac{L(s)}{\widetilde{L}(s)}+\frac{1}{\widetilde{L}(s)}. \]
For $z\in \Omega(\pi/2)$, $|z|>r_0$, we denote by $s=s_z=\rho_z e^{i\theta_z}$ the unique solution to this saddle point equation. It follows from Theorem \ref{TheoremK}, 
\begin{equation}
\log K_*(z)\sim -\cos\theta \frac{\rho_z}{\widetilde{L}(\rho_z)} ,\quad |z|\to\infty,\quad z\in \Omega(\pi/2).\label{eq: K assymptotic }
\end{equation}
\begin{figure}[h]
	\centering
	\includegraphics[scale=0.75]{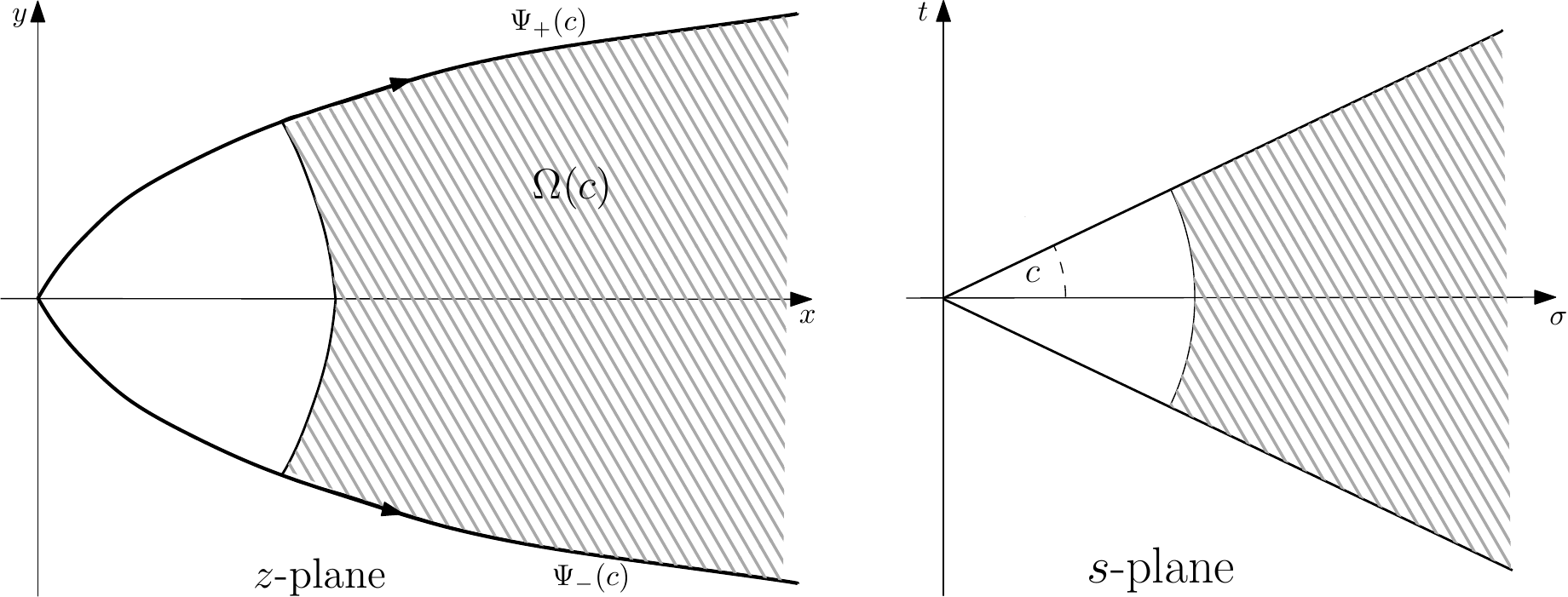}
	\caption{The curves  $\Psi_{\pm }(c)$}
	\label{fig: psi_{+}(c)}
\end{figure}
Given   $c\in[0,\tfrac{\pi}{2})$, we denoted by $\Psi_{\pm }(c)$ curves joining $0$ with $\infty$ in the 1st and 4th quadrants, respectively, and such that, for $|z|$ sufficiently large,  $\Psi_{\pm }(c)$ coincide with the curves $\left\{z=\frac{L(s)}{\widetilde{L}(s)}\exp\left(\frac{1}{\widetilde{L}(s)}\right) : \arg(s)=\pm c\right\}$. In particular, the above asymptotic formula together with Cauchy's theorem yields
\begin{equation}
\int_{\Psi_{\pm }(c)}z^n K_*(z)dz=\int_0^\infty r^n K_*(r)dr=\frac{\gamma(n+1)}{\widetilde{\gamma}(n+1)},\quad 0<c<\pi/2.\label{eq: moments on curve}
\end{equation}

\paragraph{Proof of the inclusion $S_L C_0(L;I)\supseteq S_{\widetilde{L}}C_0(\widetilde{L};\mathbb{R})$.}
\begin{proof}
	As we already mentioned, since $L$ is non-quasianalytic, the set $S_L C_0(L;I)$ does not depend on the interval $I$ (as long as $I$  contains the origin). From here on,  we will assume that  $I=(-1,1)$.
	
	Let $g\in C_0(\widetilde{L};\mathbb{R})$. Then,
	according to Ehrenpreis, there exists a representation
	\[ g(t)=\iint_{\mathbb{C}} e^{iwt}\frac{d\mu(w)}{k(w)}, \]
	where $\mu$ is a
	finite  measure  and  $k\in C(\mathbb{C})$ is a non-negative function such that, for every $a,\;b>0$,  
	\[ \lim_{|s|\to \infty} \frac{k(w)}{\exp\left(a|\im w|+\Lambda_{\widetilde{L}}(b|w|)\right)}=\infty.  \]
	Fix a sufficiently small constant $\delta>0$  that will be chosen later,  set (see Figure \ref{fig: AB_+}) \[A:= \{w: |\im w|>\tfrac{2\delta}{\pi} \Lambda_{\widetilde{L}}(|w|)\},\quad B^\pm :=\{w: |\im w|\leq \tfrac{2\delta}{\pi} \Lambda_{\widetilde{L}}(|w|),\pm \re w> 0\}\]
	and split 
	\[ g(t)=\iint_{\mathbb{C}} e^{iwt}\frac{d\mu(w)}{k(w)}=\iint_{A}+\iint_{B^+}+\iint_{B^-}=g_{e}(t)+g_+(t)+g_-(t). \]
	\begin{figure}[h]
		\centering
		\includegraphics[scale=0.75]{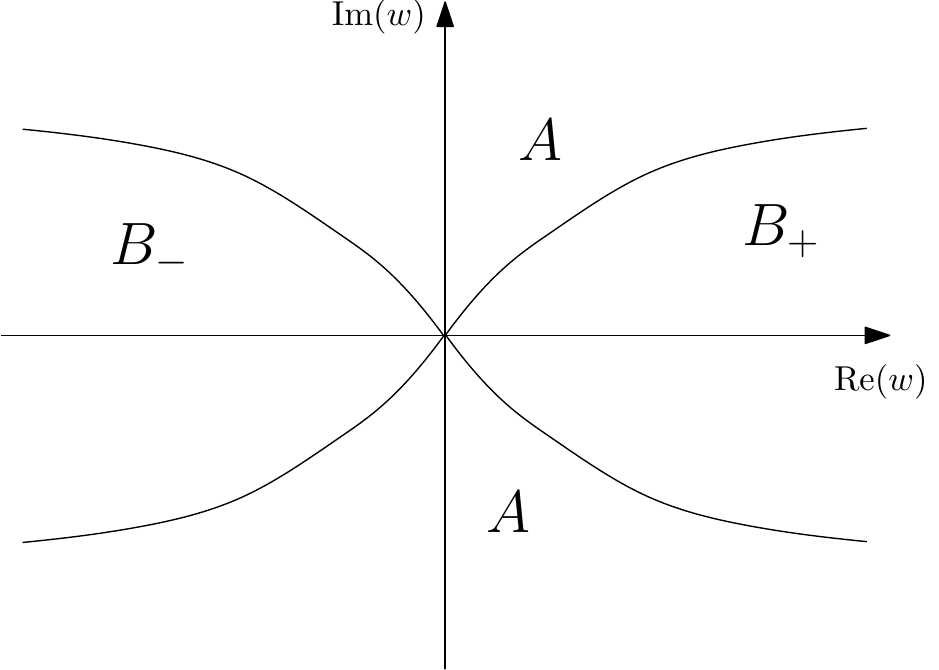}
		\caption{}
		\label{fig: AB_+}
	\end{figure}
We aim to find  functions $f_e,\;f_+,\;f_-\in C_0(L;I)$ such that 
\[ \widehat{f}_e(n)=\widehat{g}_e(n)\frac{\gamma(n+1)}{\widetilde{\gamma}(n+1)}\quad\text{and}\quad  \widehat{f}_\pm(n)=\widehat{g}_\pm(n)\frac{\gamma(n+1)}{\widetilde{\gamma}(n+1)},\quad n\geq 0. \]
	First we treat $g_{e}$. By the definition of the set $A$,
	\[ g_{e}(t)=\iint_{\mathbb{C}} e^{iwt}\frac{d\mu_e(w)}{k_e(w)}, \]
	where $\mu_e$ is a
	finite  measure  and a $k_e\in C(\mathbb{C})$ is a non-negative function such that 
	\[ \lim_{|w|\to \infty}k_e(w)e^{-a|w|}=\infty ,\quad \forall a>0.  \]
	Differentiation yields
	\[  |g_{e}^{(n)}(t)|\leq C_{M,a} \iint_{\mathbb{C}}|w|^n e^{-a|w|}|d\mu_e(w)|,\quad  a>0, |t|<M.\]
	Therefore, 
	\[ |g_{e}^{(n)}(t)|\leq \frac{C_{M,a}}{a^n}n!,\quad  a>0,\; |t|<M, \]
	which means that $g_{e}$ is an entire function. Now, by the Carleson--Ehrenpreis Theorem (we use its corollary stated as \eqref{eq: C&E2}), there exists $f_e\in C_0(L;\mathbb{R})$ such that 
	\[ \widehat{f_e}(n)=\widehat{g_e}(n)\frac{\gamma(n+1)}{\widetilde{\gamma}(n+1)},\quad n\geq 0. \]

	Now we  treat the function $g_+$. Clearly, the function $g_+$ is holomorphic in the upper half-plane and smooth up to its boundary and is represented therein by the same integral. Let us  estimate  the derivatives of  $g_+$ in the upper half-plane.  	Fix a large parameter $b>0$; then for $0<\psi<\pi/2$, we have 
	\[ 	\big|g_+^{(n)}(r e^{i\psi})\big|\leq\iint_{B^+} |w|^n \exp\big(-\im(re^{i\psi}w)\big)\frac{|d\mu(w)|}{k(w)}.\]
	By definition, $\exp\left(2b\Lambda_{\widetilde{L}}(|w|\right)\lesssim_b k(s)$, and for any   $w\in B_+$, we have $|\arg w|\leq \delta\frac{\Lambda_{\widetilde{L}}(|w|)}{|w|}$. Therefore,
	\begin{equation}
	\big|g_+^{(n)}(r e^{i\psi})\big|\lesssim_b \iint_\mathbb{C} |w|^n\exp\left(r|w|\sin\left(\frac{\delta\Lambda_{\widetilde{L}}(|w|)}{|w|}-\psi\right)-2b\Lambda_{\widetilde{L}}(|w|)\right)d|w|.\label{eq: g_+1st}
	\end{equation}
	By Lemma \ref{lem: subReplacmentLem}, part 4 (with $L/\widetilde{L}$ and $1/\widetilde{L}$ instead of $L$ and $\varepsilon$), we have 
\begin{equation}
\widetilde{L}\big(e\widetilde{L}(\rho)\rho\big)\sim\widetilde{L}(\rho)  ,\quad \rho\to\infty.\label{eq: LtildeasympLtilde}
\end{equation}
	Thus, by the first part of Lemma \ref{lem: widetildeE},  
	\[ \Lambda_{\widetilde{L}}(\rho)\sim \frac{\rho}{e\widetilde{L}(\rho)},\quad \rho\to\infty. \]
	Choosing $\delta$ in estimate  \eqref{eq: g_+1st} sufficiently small, we get
	\[\big|g_+^{(n)}(r e^{i\psi})\big|\lesssim_b    \frac{n!\widetilde{\gamma}(n+1)}{b^n} \sup_{\tau>0} \exp\left[\frac{r\tau}{2}\left(\frac{1}{\widetilde{L}(\tau)}-2\psi\right)\right].\]
	By Lemma \ref{lem: subReplacmentLem}, part 1, 
	\[ \widetilde{L}\left(\rho\frac{L(\rho)}{\widetilde{L}(\rho)}\right)\sim\widetilde{L}(\rho),\quad \rho\to\infty. \]
	Combining this   with \eqref{eq: LtildeasympLtilde}, we get 
	\[\widetilde{L} \left(\rho\right)\sim\widetilde{L} \left(\rho L(\rho)\right)\sim \widetilde{L} \left(\rho L^2(\rho)\right),\quad \rho\to\infty,\]
	which in turn yields
	\begin{equation}
	\big|g_+^{(n)}(r e^{i\psi})\big|\lesssim_b \frac{n!\widetilde{\gamma}(n+1)}{b^n} \exp\left[\frac{r\tau_\psi}{2\widetilde{L}(\tau_\psi)}\right],\quad \tau_\psi=\sup\left\{\tau:\;\widetilde{L} \left(\tau L^2(\tau)\right)\leq \psi^{-1} \right\}.\label{eq: g_+2nd}
	\end{equation}

	From here on we assume that $z=r e^{i\psi}\in \Psi_{\pi/3}$ with $r$ sufficiently large,  and that $s=\rho e^{i \pi/3}=s_z$ is related to $s$ by the saddle point equation
	\[ z=\frac{L(s)}{\widetilde{L}(s)}\exp\left(\frac{1}{\widetilde{L}(s)}\right) .\]
	By \eqref{eq:1stLasym} (applied to the function $L/\widetilde{L}$ instead of $L$), 
	\[ \rho\asymp \frac{L(\rho)}{\widetilde{L}(\rho)},\quad  \frac{\pi}{3}\cdot\frac{1}{\widetilde{L}(\rho)}\sim \psi,\quad r\to\infty. \]
In particular, if $r$ is sufficiently large, then 
\[\widetilde{L}\left(\tau_\psi L^2(\tau_\psi)\right)\leq \widetilde{L}(\rho)\quad \Longrightarrow\quad  \tau_\psi\leq \frac{\rho}{L(\rho)}. \]	
	Thus, for $0\leq t\leq 1$, the latter and inequality \eqref{eq: g_+2nd} yield 
	\[ \big|g_+^{(n)}(t z)\big|\lesssim_b \frac{n!\widetilde{\gamma}(n+1)}{b^n} \exp\left[\frac{r\rho}{ L(\rho)\widetilde{L}(\rho)}\right]\lesssim_b  \frac{n!\widetilde{\gamma}(n+1)}{v^n} \exp\left[\frac{1}{4}\cdot\frac{\rho}{  \widetilde{L}(\rho)}\right]. \]

	Set
	\[ f_+(t):=\int_{\Psi_{\pi/3}} g_+(zt) K_*(z)dz,\quad 0\leq t\leq 1.  \]
	By \eqref{eq: K assymptotic },
	\[\log K_*(z)\sim -\frac{1}{2}\cdot\frac{\rho}{\widetilde{L}(\rho)}. \] 
	Therefore, the function $f_+$ is well defined and satisfies
	\[\left|f_+^{(n)}(t)\right|\leq\left|\int_{\Psi_{\pi/3}} z^n g^{(n)}_+(zt) K_*(z)dz\right|\lesssim_b \frac{n!\widetilde{\gamma}(n+1)}{b^n} \left(1+\int_{r_0}^\infty r^n \exp\left[-\frac{1}{10}\cdot\frac{\rho}{\widetilde{L}(\rho)}\right]dr\right), \]
	for any $0\leq t\leq 1$. 	By Lemma \ref{lem: K and E last},  the integral in the right-hand side is $\leq C 2^n\frac{\gamma(n+1)}{\widetilde{\gamma}(n+1)}$. Therefore,
	\[\left|f_+^{(n)}(t)\right|\lesssim_b\left(\frac{2}{b}\right)^n n!\gamma(n+1),\quad 0\leq t\leq 1.  \]
	Since $b>0$ can be taken arbitrarily large, we conclude $f_+\in C_0(L;[0,1])$. 
	
	For $-1\leq t<0$, we define
	\[ f_+(t):=\int_{\Psi_{-\pi/3}} g_+(zt) K_*(z)dz. \]
	The same reasoning as above yields, $f_+\in C_0(L;[-1,0))$. Furthermore,
	\[f_+^{(n)}(0)=g^{(n)}_+(0) \int_{\Psi_{\pm\pi/3}}  z^n K_*(z)dz\,\stackrel{\eqref{eq: moments on curve}}{=}\,g^{(n)}_+(0) \int_{0}^\infty  z^n K_*(z) dz=g^{(n)}_+(0) \frac{\gamma(n+1)}{\widetilde{\gamma}(n+1)}.\]
	Therefore, $f_+\in C_0(L;[-1,1])$ and it is the sought for function. 
	
	For $g_-$ we define
	\[  f_-(t):=\int_{\Psi_{\mp \pi/3}} g_+(zt) K_*(z)dz ,\quad 0\leq \pm t\leq 1. \]
	The method of estimating the derivatives of $f_-$ is the same as the one of $f_+$. This finishes the proof of Theorem \ref{thm:NqaBeu}. 
\end{proof}
	\subsection{Carleman classes, Theorem \ref{thm:NqaCar}.}
	 \begin{thm}\label{thm':NqaCar}
		Let $L$ be a non-quasianalytic function and   $I$ be an open interval that contains the origin. Suppose that the function $\rho\mapsto L(\rho)/\widetilde{L}(\rho)$  satisfies assumptions \emph{(R1), (R2), (R3)} and \emph{(R8)},  and that $I$ is an open interval containing the origin. Then
		\[ S_L C(L;I)=S_{\widetilde{L}} C(\widetilde{L};0). \]
	\end{thm}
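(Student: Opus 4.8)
The plan is to follow the proof of Theorem~\ref{thm':NqaBeu} and to point out the two places where Carleman classes force a different treatment. First, a reduction: since $L$ is non‑quasianalytic, $C_0(L;\mathbb{R})$ contains cut‑off functions, so multiplying by a cut‑off equal to $1$ near $0$ and noting that restriction does not change the singular transform, one checks (as in the first paragraph of the proof of Theorem~\ref{thm':NqaBeu}) that $S_L C(L;I)$ is independent of $I$; write $S_LC(L;0)$ for its common value. It therefore suffices to prove the two inclusions of the statement with $I$ replaced by an arbitrarily small symmetric interval. The role played by the ``for every $\delta>0$'' quantifier of Beurling classes is now taken by the freedom to shrink the interval: if $f\in C(L;[-\delta_0,\delta_0])$ and $h(y):=f(\delta_0 y)$, then $h\in C(L;[-1,1])$ with Carleman constant $O(\delta_0)$, and hence, by the Carleman analogue of Lemma~\ref{lemma:estimatcoef2}, the Chebyshev coefficients of $h$ satisfy $|c_n|\lesssim_f\exp(-\Lambda_L(n/\delta_0))$; by Lemma~\ref{lem: widetildeE}(1) and the slow variation of $L$ one has $\Lambda_L(n/\delta_0)=(1+o(1))\delta_0^{-1}\Lambda_L(n)$, so $\delta_0$ can be chosen to make this decay beat any fixed multiple of $\Lambda_L(n)$.

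For $S_LC(L;0)\subseteq S_{\widetilde L}C(\widetilde L;0)$: with $h=\sum_n c_n T_n$ as above put $g:=\sum_n c_n\,(S_{L/\widetilde L}T_n)(\cdot/\delta_0)$. By hypothesis $L/\widetilde L$ satisfies (R1)--(R3) and (R8), so Lemma~\ref{lemma:estimatpsi--for nqa}, applied with the pair $(L/\widetilde L,\,1/\widetilde L)$ in the role of $(L,\varepsilon)$ (note that then $\tfrac{L/\widetilde L}{\varepsilon}=L$, so $\Lambda_{L/\varepsilon}=\Lambda_L$), gives $|(S_{L/\widetilde L}T_n)(u+iv)|\le e^{C(\Lambda_L(n)+1)}$ for $|u|\le1$, $|v|\le \widetilde L(\Lambda_L(n))^{-1}$, with $C$ a fixed numerical constant. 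Choosing $\delta_0$ so small that $\delta_0^{-1}>2C$, the series for $g$ converges; Cauchy's estimates on $[-\delta_0/2,\delta_0/2]$ bound $g^{(m)}$ term by term by $m!\,(2\widetilde L(\Lambda_L(n)))^m e^{C\Lambda_L(n)}$, and $\widetilde L(\Lambda_L(n))^m\le C_1^m\,\widetilde L(m)^m e^{C\Lambda_L(n)}$ for all $m$ (using that $\widetilde L$ is increasing when $m>\Lambda_L(n)$, and Lemma~\ref{lem: widetildeE}(4) for $\widetilde L$ when $m\le\Lambda_L(n)$); summing in $n$ gives $g\in C(\widetilde L;[-\delta_0/2,\delta_0/2])$, and $\widehat g(n)=\widehat f(n)\,\gamma(n+1)/\widetilde\gamma(n+1)$, whence $S_Lf=S_{\widetilde L}g$.

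For the reverse inclusion the Ehrenpreis‑representation argument used in Theorem~\ref{thm':NqaBeu} is unavailable, since $\widetilde L$ is quasianalytic and $C_0(\widetilde L;\mathbb{R})$ has no cut‑offs. Instead I decompose the germ directly, $g=g_++g_-$ with $g_\pm$ germs extending holomorphically to a domain $\Pi_\pm$ abutting a small interval $(-\delta',\delta')$ from the upper, resp.\ lower, half‑plane — the germ analogue of the splitting $C_0=C_0^++C_0^-$ of \S\ref{subsubsec: dec}. Let $K_*$ be the Mellin kernel of $\gamma/\widetilde\gamma$; its analyticity and the asymptotics $\log K_*(z)\sim-\cos\theta_z\,\rho_z/\widetilde L(\rho_z)$ in $\Omega(\pi/2)$, cf.\ \eqref{eq: K assymptotic }, come from Theorems~\ref{TheoremK} and~\ref{TheoremE} applied to $L/\widetilde L$ (legitimate, as the corresponding ``$\varepsilon$'', namely $1/\widetilde L$, tends to $0$). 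As in the $g_\pm$‑part of the proof of Theorem~\ref{thm':NqaBeu}, set
\[
 f_\pm(t):=\int_{\Psi_\pm(\pi/3)}g_\pm(tz)\,K_*(z)\,dz ,
\]
with the contour rotated so that $tz\in\Pi_\pm\cup(-\delta',\delta')$ for $|t|$ small; estimating the derivatives of $g_\pm$ in the cusp $\Pi_\pm$ by Cauchy's formula from the Carleman‑$\widetilde L$ bound valid on the real segment, and combining with the above decay of $K_*$ and with Lemma~\ref{lem: K and E last}, one obtains $f_\pm\in C(L;(-\delta'',\delta''))$; finally \eqref{eq: moments on curve} gives $\widehat{f_\pm}(n)=\widehat{g_\pm}(n)\,\gamma(n+1)/\widetilde\gamma(n+1)$, so $f:=f_++f_-\in C(L;0)$ satisfies $\widehat f(n)=\widehat g(n)\,\gamma(n+1)/\widetilde\gamma(n+1)$ and $S_{\widetilde L}g=S_Lf$. (When $g$ is the germ of an entire function the same conclusion follows, more cheaply, from Carleson's theorem and Lemma~\ref{lem: L_2}, cf.\ \eqref{eq: C&E2}; this is the point of contact with the proof of Theorem~\ref{thm':NqaBeu}.)

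The main obstacle is this last inclusion, and within it two points: establishing the splitting $g=g_++g_-$ inside the quasianalytic class $C(\widetilde L)$ — which cannot be done by a partition of unity and must instead rely on a Cauchy‑transform‑type construction carried out carefully enough that the pieces stay in $C(\widetilde L)$ near $0$ — and then controlling the growth of the derivatives of $g_\pm$ throughout the cusp $\Pi_\pm$, not merely on the real segment, and matching it against the sharp (not merely exponential‑type) decay of $K_*$ along the rotated contours $\Psi_\pm(\pi/3)$, so that the integral $f_\pm$ lands exactly in the Carleman class $C(L)$ and not in a larger one.
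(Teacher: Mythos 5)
Your first inclusion $S_LC(L;I)\subseteq S_{\widetilde L}C(\widetilde L;0)$ is sound and matches the paper's remark that it ``follows the same lines as the Beurling case''; the rescaling by a small $\delta_0$ and the observation that $\Lambda_L(n/\delta_0)\approx\delta_0^{-1}\Lambda_L(n)$ correctly replace the Beurling ``for every $\delta$'' quantifier.

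For the reverse inclusion $S_{\widetilde L}C(\widetilde L;0)\subseteq S_LC(L;I)$, however, the paper takes a decomposition that is genuinely different from yours and much cheaper to run. Instead of splitting the germ $g$ into pieces $g_\pm$ that extend holomorphically to domains abutting a small interval from the upper and lower half‑planes, the paper invokes Dynkin's almost‑holomorphic extension theorem to write $g=g_1+g_2$ where $g_1$ is real‑analytic near $0$ and $g_2$ obeys the \emph{global} Carleman bound $\sup_{x\in\mathbb{R}}|g_2^{(n)}(x)|\le C^{n+1}n!\,\widetilde\gamma(n+1)$ for all $n$ (this is Lemma~\ref{lem: borichev}: the cut‑off is applied in the $\mathbb{C}$‑plane, in the Dynkin representation, not on $\mathbb{R}$, so quasianalyticity of $\widetilde L$ is no obstruction). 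The piece $g_1$ is handled exactly as in your parenthetical remark, by the Carleson--Ehrenpreis theorem via Lemma~\ref{lem: L_2}. The piece $g_2$ is then handled without any contour rotation at all: the paper sets $f_2(t)=\int_0^\infty g_2(tx)K_*(x)\,dx$ and estimates
\[
|f_2^{(n)}(t)|\le \int_0^\infty x^n|g_2^{(n)}(xt)|\,|K_*(x)|\,dx\le C^{n+1}n!\,\widetilde\gamma(n+1)\int_0^\infty x^n|K_*(x)|\,dx\le C^{n+1}n!\,\gamma(n+1),
\]
using only that $K_*$ is eventually positive so that the moment integral $\int_0^\infty x^n|K_*(x)|dx$ is comparable to $\gamma(n+1)/\widetilde\gamma(n+1)$. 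The global bound on $g_2$ over all of $\mathbb{R}$ is precisely what makes the straight‑line integral converge and stay in $C(L)$; all the work is pushed into Dynkin's lemma.

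Your proposed decomposition $g=g_++g_-$ is therefore not what the paper does, and more importantly you do not prove it: you flag the splitting and the cusp estimates as ``the main obstacle'' but offer no construction. That is a genuine gap. Concretely: you would need to (i) produce germs $g_\pm$ holomorphic in domains $\Pi_\pm$ with $g=g_++g_-$ near $0$, while keeping Carleman‑$\widetilde L$ bounds with a \emph{fixed} constant (the Carleman analogue of Corollary~\ref{cor: dec+} does not come for free); and (ii) control the derivatives of $g_\pm$ throughout $\Pi_\pm$ sharply enough to beat the decay of $K_*$ along $\Psi_\pm(\pi/3)$ and land in $C(L)$, not merely in some larger Carleman class. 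Neither step is carried out. By contrast, the paper's $g_1+g_2$ split dissolves both difficulties at once.
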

Throughout this section we fix a non-quasianalytic function  $L$ that satisfies  the assumptions of Theorem \ref{thm':NqaCar}.

The proof of the inclusion 
$$S_L C(L;I)\subseteq S_{\widetilde{L}} C(\widetilde{L};0)$$
follows the same lines as the analogous inclusion in the Beurling case. Therefore,  we will prove only the opposite direction. 

Recall that in order to prove that
\[S_L C(L;I)\supseteq S_{\widetilde{L}} C(\widetilde{L};\mathbb{R}),  \]
we have used Ehrenpreis representation for functions in the Beurling class $C_0(\widetilde{L};\mathbb{R})$. We are not aware of an  analogous representation for functions in the corresponding Carleman class.   So, in order to show the inclusion 
$$S_L C(L;I)\supseteq S_{\widetilde{L}} C(\widetilde{L};0)$$
we will take a different approach, based on asymptotically holomorphic extensions of smooth functions. We begin with some preliminaries. 

\subsubsection{A decomposition of functions in $C(\widetilde{L};0)$.}
Denote by $C^\omega(0)$ the sets of all function analytic in some neighborhood of the origin.  
\begin{lemma}\label{lem: borichev}
	Let $g\in C(\widetilde{L};0)$. Then there exists functions $g_1,g_2$ such that 
	\begin{enumerate}
		\item[\emph{1.}] $g\equiv g_1+g_2$ is some neighborhood of the origin;
		\item[\emph{2.}] $g_1\in C^\omega(0)$;
		\item[\emph{3.}]  There exists $C>0$ such that 
		\[ \sup_{x\in\mathbb{R}} |g_2^{(n)}(x)|\leq C^{n+1} n!\widetilde{\gamma}(n+1) ,\quad n\geq 0.\]
	\end{enumerate}
\end{lemma}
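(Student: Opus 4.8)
The plan is to obtain the decomposition $g = g_1 + g_2$ by subtracting from $g$ an asymptotically holomorphic extension of $g$ to a complex neighbourhood of the origin, in the spirit of Dyn'kin's and Borichev's work on quasianalytic classes. Concretely, I would first fix a germ $g\in C(\widetilde{L};0)$, so that $g\in C^\infty([-\delta_0,\delta_0])$ for some $\delta_0>0$ and $\max_{[-\delta_0,\delta_0]}|g^{(n)}|\le C_0^{n+1}n!\widetilde{\gamma}(n+1)$. Since $\rho\mapsto\rho\log(\rho\widetilde{L}(\rho))$ is (eventually) convex under our regularity hypotheses on $L/\widetilde{L}$ — recall $\widetilde{L}$ is eventually increasing and slowly growing by the remark after Theorem \ref{thm':NqaBeu} — the class admits an associated weight for asymptotically holomorphic extension. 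I would invoke the standard construction (Dyn'kin; see also \cite{Taylor}) producing $G\in C^\infty(\mathbb{C})$, supported in a small disc $\{|z|<\delta_1\}$, with $G|_{\mathbb{R}}$ agreeing with $g$ in a neighbourhood of $0$, and with the $\bar\partial$-estimate
\[
|\bar\partial G(z)|\le C^{n+1}\,n!\,\widetilde{\gamma}(n+1)\,|{\im z}|^{\,n}\qquad(n\ge 0,\ z\in\mathbb{C}),
\]
equivalently $|\bar\partial G(z)|\le C\exp\bigl(-\Lambda_{\widetilde{L}}(c/|{\im z}|)\bigr)$ near the real axis, where $\Lambda_{\widetilde{L}}$ is the Legendre-type transform appearing throughout Section~5.

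Next I would set
\[
g_2(x):=-\frac{1}{\pi}\iint_{\mathbb{C}}\frac{\bar\partial G(w)}{w-x}\,dm(w),\qquad g_1:=g-g_2,
\]
with $dm$ planar Lebesgue measure. By the Cauchy–Pompeiu formula, $g_1$ extends holomorphically to the disc $\{|z|<\delta_1\}$ minus the support of $\bar\partial G$; but since $\bar\partial G$ vanishes in a neighbourhood of $0$, $g_1$ is in fact holomorphic near $0$, giving statement~2, and statement~1 holds in that neighbourhood. The content is statement~3: I must bound the real-line derivatives of the Cauchy transform $g_2$. Differentiating under the integral sign $n$ times gives $g_2^{(n)}(x)=-\frac{n!}{\pi}\iint \bar\partial G(w)(w-x)^{-n-1}\,dm(w)$, and the task reduces to the elementary-but-delicate estimate
\[
\iint_{\mathbb{C}}\frac{|\bar\partial G(w)|}{|w-x|^{\,n+1}}\,dm(w)\le C^{\,n+1}\,\widetilde{\gamma}(n+1),
\]
uniformly in $x\in\mathbb{R}$. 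Here I would split the integral into the regions $|{\im w}|\gtrsim 1$ (trivial, $\bar\partial G$ compactly supported) and $|{\im w}|$ small, where I plug in the $\bar\partial$-estimate, integrate first in $\re w$ (producing a factor $\sim|{\im w}|^{-n}$) and then optimize the resulting one-dimensional integral $\int_0^{1} y^{-n}\min\{y^{m},\ \text{all }m\}\,dy$ in the choice of $m$ — this is exactly the Legendre duality that turns the $|{\im z}|^m$ family of bounds into $\widetilde{\gamma}(n+1)$, using $\Lambda_{\widetilde{L}}(e\rho\widetilde{L}(\rho))\sim\rho$ from Lemma~\ref{lem: widetildeE}(1) together with $\widetilde{L}(\rho)\sim\widetilde{L}(\rho\widetilde{L}(\rho))$, which is precisely part~4 of Lemma~\ref{lem: subReplacmentLem} applied to $L/\widetilde{L}$ and $1/\widetilde{L}$.

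The main obstacle I anticipate is not the Cauchy-transform bookkeeping but the construction of the asymptotically holomorphic extension $G$ with the sharp $\bar\partial$-decay adapted to the weight $\widetilde{\gamma}$: one needs the convexity of $\rho\mapsto\rho\log(\rho\widetilde{L}(\rho))$ (available here from (R2) for $L/\widetilde{L}$, which forces $\widetilde{L}$ eventually increasing, and from the slow-variation assumptions) to run Dyn'kin's lemma, and one must check that the regularity hypotheses (R1), (R2), (R3), (R8) imposed on $L/\widetilde{L}$ in Theorem~\ref{thm':NqaCar} indeed transfer to the auxiliary quantities governing $\widetilde{L}$ so that $\Lambda_{\widetilde{L}}$ behaves well. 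Once $G$ is in hand, everything else is a routine application of the Cauchy–Pompeiu formula and the lemmata of Section~5. I would therefore structure the write-up as: (i) cite/construct the extension $G$; (ii) define $g_1,g_2$ and verify 1 and 2; (iii) carry out the Legendre-duality estimate for 3.
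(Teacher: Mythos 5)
Your plan is in the same Dyn'kin spirit as the paper's, but the specific decomposition you choose creates two problems, one cosmetic and one substantive.

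First, the claim that ``$\bar\partial G$ vanishes in a neighbourhood of $0$'' is false: an asymptotically holomorphic extension of a (non-analytic) Carleman-class germ has $\bar\partial G$ vanishing to infinite order on the real axis but nowhere else, and if $\bar\partial G$ were identically zero near $0$ then $g$ would be real-analytic there. Your conclusion that $g_1\in C^\omega(0)$ does hold, but for an unrelated reason: since $G$ is compactly supported, Cauchy--Pompeiu forces $g_2(z)=G(z)$ identically, so $g_1=g-G\equiv 0$ near the origin. The decomposition degenerates — $g_1$ is the zero germ, and the entire content of the lemma becomes the bound on $g_2=G$.

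The substantive gap is in statement~3. You invoke the $\bar\partial$-bound $|\bar\partial G(w)|\lesssim C^{m+1}m!\,\widetilde\gamma(m+1)|\im w|^m$ for all $w$ in the support, but this is correct only where $\re w$ lies over the interval $J$ on which $g$ is defined; the correct Dyn'kin decay is in $\mathrm{dist}(w,J)$, which equals $|\im w|$ only over $J$. To make $G$ compactly supported one has to cut it off in the annular transition region, and there $\bar\partial G$ picks up a term $G\cdot\bar\partial\psi$ of size $O(1)$ at points with $\im w=0$. Consequently, for a real $x$ in or near the transition region, $\iint|\bar\partial G(w)|\,|w-x|^{-n-1}\,dm(w)$ diverges for $n\geq1$: the formula $g_2^{(n)}(x)=-\tfrac{n!}{\pi}\iint\bar\partial G(w)(w-x)^{-n-1}\,dm$ simply does not hold there, and the Legendre-duality computation you sketch never gets started. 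The paper sidesteps this by working with the Cauchy \emph{density} $G$ (with a pointwise bound $|G(z)|\lesssim h(c\,\mathrm{dist}(z,[-a,a]))$, not a $\bar\partial$-bound) and inserting a spatial cutoff $\xi$ \emph{before} taking the Cauchy transform: $g_1=\iint G\xi/(z-t)$ is holomorphic near $0$ because $G\xi$ vanishes there, while $g_2=\iint G(1-\xi)/(z-t)$ has compactly supported, localized density, giving the Carleman bound near $0$ via Dyn'kin again and a trivial $C^{n+1}n!$ bound for $t$ away from the interval. Either keep the density formulation and cut off the density (as the paper does), or, if you insist on the $\bar\partial$-picture, localize by subtracting the Cauchy transform of $\bar\partial G$ over a shrunken region and control the rest separately; as written your estimate does not close.
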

The proof is based on the theory of almost holomorphic extensions \cite{Dynkin}. The idea to use almost holomorphic extensions was suggested  by Alexander Borichev.
\begin{proof}
	Fix $g\in C(\widetilde{L};0)$. Let $[-a,a]$ ($a>0$) be an interval such that  $g\in C(\widetilde{L};[-a,a])$. According to Dynkin \cite{Dynkin}, $g\in C(\widetilde{L};[-a,a])$ if and only if it can be represented as 
	
	\[g(t)=\iint_{\mathbb{C}}G(z)\frac{dx dy}{z-t},\]
	where $G:\mathbb{C}\to \mathbb{C}$ is  a continuous and compactly supported function such that 
	\[ |G(z)|\leq C_1h\left(C_2\; \text{dist}(z,[-a,a])\right),\quad \text{where}\quad h(r)=\inf_{n\geq 0} \widetilde{\gamma}(n+1) r^n. \]

	Let $\delta<a/8$ be a small parameter and   $\xi$ be a continues function such that $\xi \equiv 0$ for all $z\in \{z: \text{dist}(z,[-a/4,a/4])\leq \delta\}$ and  $\xi\equiv 1 $  for all $z\in\{z:\text{dist}(z,[-a/4,a/4]) \geq 2 \delta\}$.
	Finally,  set 
	\[g(t)=\iint_{\mathbb{C}}G(z) \xi(z)\frac{dx dy} {z-t}+\iint_{\mathbb{C}}G(z)(1-\xi(z) ) \frac{dx dy}{z-t}:=g_1(t)+g_2(t).\]
	Clearly $g_1\in C^{\omega}([-a/4,a/4])$, and by  Dynkin's theorem $g_2\in C(\widetilde{L};[-3a/4 ,-3a/4 ])$.  The function $g_2$ also satisfies 
	\[|g_2^{(n)}(t)|\leq n! \iint |G(z)\xi (z)| \frac{dx dy}{|z-t|^{n+1}}\leq \frac{C n!}{\text{dist}^n (t,[-\tfrac{a}{2},\tfrac{a}{2}])}\leq C^{n+1}n! ,\quad t\notin [-a/2,a/2],\;n\in \mathbb{Z_+} .\]
	Since $\widetilde{L}\uparrow\infty$, the last estimate completes the proof of Lemma \ref{lem: borichev}.
\end{proof}
\subsubsection{}
We will also need a statement similar to  \eqref{eq: C&E2}:
\begin{equation}
S_{\widetilde{L}} C^\omega(0)\subseteq S_L C(L;\mathbb{R}).\label{eq: C&E3}
\end{equation}
It   follows from the theorem of Carleson and Ehrenpreis  and Lemma \ref{lem: L_2}. The proof is  similar to the one given in the case of Beurling classes, so  we will omit it. 
\begin{proof}[Proof of the inclusion $S_L C(L;I)\supseteq S_{\widetilde{L}} C(\widetilde{L};0)$.]
Fix an element in $g\in C(\widetilde{L};0)$, and fix functions $g_1,g_2$ as in Lemma \ref{lem: borichev}.
By  \eqref{eq: C&E3}, there exists a function $f_1\in C(L;\mathbb{R})$ such that 
\[ S_{\widetilde{L}} g_1= S_L f_1.  \]
Put
\[ f_2(t)=\int_0^\infty g_2(tx)K_*(x)dx, \]
with the analytic function $K_*$ defined in the beginning of \ref{par: K_* def}.
Clearly
\[  |f_2^{(n)}(t)|=\int_0^\infty x^n|g_2^{(n)}(xt)||K_*(x)|dx\leq C^{n+1} n!\widetilde{\gamma}(n+1)\int_0^\infty x^n|K_*(x)|dx. \]
Since $K_*$ is eventually positive, this estimate yields
\[ |f_2^{(n)}(t)|\leq C^{n+1} n!\widetilde{\gamma}(n+1)\frac{\gamma(n+1)}{\widetilde{\gamma}(n+1)} =C^{n+1} n!\gamma(n+1).\]
Therefore, $f_2\in C(L;\mathbb{R})$ and $S_{\widetilde{L}}g_2=S_L f_2$. Finally, we define $f=f_1+f_2\in C(L;\mathbb{R})$ and conclude that $S_{\widetilde{L}}g=S_L f$.
\end{proof}
\subsection{Theorem \ref{thm:spliting}.}
\begin{thm}\label{thm':spliting}
	Suppose that the function  $L$  satisfies assumptions \emph{(R1), (R2), (R3), (R5), (R7)} and \emph{(R9)}, and that  $I$ is an open interval containing the origin. Then the singular transform $S_L$ maps $C_0^\pm(L;I)$ bijectively onto the space $A^\pm(L;I)$, with the inverse $R_L^\pm$. 
\end{thm}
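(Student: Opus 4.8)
The plan is to treat the two statements — that $S_L$ maps $C_0^+(L;I)$ into $A^+(L;I)$, and that $R_L^+$ maps $A^+(L;I)$ into $C_0^+(L;I)$ — separately, and then to verify that the two maps are mutual inverses; the statements for the ``$-$'' classes follow by applying complex conjugation $z\mapsto\bar z$, which intertwines $C_0^+$ with $C_0^-$ and $A^+$ with $A^-$. The core input from the preceding sections is threefold: the asymptotics of $K$ and $E$ in $\Omega(\alpha)$ and near the curves $\Psi_\pm$ from Section 6 (Theorems~\ref{TheoremK}, \ref{TheoremE} and Lemmas~\ref{lem: K and E last}--\ref{lem: loglog H}), the polynomial estimates of Section 8 (Lemma~\ref{lemma:estimatpsi} and its corollaries), and the bounds on $E_1$ from Section 7 (Lemmas~\ref{lem: widetildeE trvial bound}--\ref{lem: mainSpilitEstimate+}). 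Throughout, the hypothesis $L(\rho)\lesssim\exp(\log^a\rho)$ with $a<1/2$ enters precisely to guarantee, via (R7), that $\int_{\Psi_+}|z^nK(z)|\,d|z|\le C^{n+1}\gamma(n+1)$, which is what makes $R_L^+$ act on an entire function with coefficients of the right size.

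For the direction $R_L^+A^+(L;I)\subseteq C_0^+(L;I)$: given $F\in A^+(L;I)$, I would first show that $(R_L^+F)(t)=\int_{\Psi_+}F(tz)K(z)\,dz$ converges for $t>0$ and defines a function holomorphic in a domain $\Pi\subset\{\im z>0\}$ whose real boundary is $\mathrm{clos}(I)$ — this uses the two-term majorant in \eqref{eq:A^+def} together with the fact that on and above $\Psi_+$ the decay of $K$ from Theorem~\ref{TheoremK} (roughly $|K(z)|\asymp\exp(-\re(s\varepsilon(s)))$, matching $1/H$) beats the growth coming from the $H$-term, while on/below $\Psi_+$ the $E$-term is controlled by the moment bound of Lemma~\ref{lem: K and E last}. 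Estimating $\partial_t^n(R_L^+F)$ by differentiating under the integral and bounding $|F^{(n)}|$ via Cauchy's formula on small circles (exactly as in the proof of Theorem~\ref{thm': Main thm,reg}), one gets $|(R_L^+F)^{(n)}(t)|\le C\,(n!)\,\delta_1^{-n}\gamma(n+3)$ for $t$ in any compact subinterval of $I\cap(0,\infty)$, hence membership in $C_0(L;\cdot)$; the holomorphic extension into the upper half-plane, obtained by pushing the contour $\Psi_+$ and exploiting that $F(tz)$ stays in the region where \eqref{eq:A^+def} holds for complex $t$ with $\im t>0$, yields $R_L^+F\in C_0^+(L;I)$.

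For the direction $S_LC_0^+(L;I)\subseteq A^+(L;I)$: given $f\in C_0^+(L;I)$, holomorphic in $\Pi_f\subset\{\im z>0\}$, I would represent $f$ near $[c_-,c_+]$ by a Chebyshev expansion $f\circ\chi=\sum c_nT_n$ with $|c_n|\lesssim_{\delta}e^{-\delta^{-1}\Lambda_L(n)}$ (Lemma~\ref{lemma:estimatcoef2}), so that $S_Lf=\sum c_n S_L(T_n\circ\chi^{-1})$, and then apply the polynomial bound — here the relevant version is a refinement of Lemma~\ref{lemma:estimatpsi} that, for a polynomial bounded on an interval and on a small upper half-disk, produces exactly the two-term majorant $H(\psi+2B/r)+E(r/|c_\pm|+\Delta r\sin\psi)$ with the right dependence on the degree through $\Lambda_L$; this is where Lemmas~\ref{lem:spliting H intgral K small} and \ref{lem: loglog H} are used to absorb the $H$-term and to handle the shift $\psi\mapsto\psi+2B/r$. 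Summing over $n$ against $|c_n|$ gives \eqref{eq:A^+def}. The identities $R_L^+S_L=\mathrm{Id}$ on $C_0^+(L;I)$ and $S_LR_L^+=\mathrm{Id}$ on $A^+(L;I)$ then follow: both maps are linear and continuous in the relevant topologies, they agree with the classical $R_LS_L=\mathrm{Id}$ on polynomials by \eqref{eq: genrlaized moments}, polynomials are dense (for $S_LR_L^+$ one uses density of polynomials among Taylor jets and the uniqueness theorem for analytic functions on $\Pi_f$ for the other composition), so the compositions are the identity.

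\textbf{Main obstacle.} The delicate point is the direction $S_LC_0^+(L;I)\subseteq A^+(L;I)$ in the \emph{super-logarithmic} regime, where the first term $H(\psi+2B/r)$ in \eqref{eq:A^+def} is genuinely present and dominates below $\Psi_+$. Controlling it requires exploiting the holomorphy of $f$ in $\Pi_f$ (not merely $f\in C_0(L;I)$): one must deform the contour in the Cauchy-type integral for $S_Lf$ into the region $\{\im z>0\}$ where $f$ is bounded, and match the gain in $|f|$ there against the loss from $E$ growing along the rotated contour — this is exactly the balance that the curve $\Psi_+$ and the function $H$ are designed to capture, and getting the constant $B$ and the shift $2B/r$ right, uniformly, is the technical heart of the argument. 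Verifying that the estimate of Lemma~\ref{lemma:estimatpsi} survives this deformation with the correct degree-dependence $e^{C\Lambda_L(n)}$ (so that it can be summed against $e^{-\delta^{-1}\Lambda_L(n)}$) is where assumptions (R5), (R7) and (R9) are essential.
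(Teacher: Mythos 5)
Your overall architecture is reasonable, and the direction $R_L^+\,A^+(L;I)\subseteq C_0^+(L;I)$ is sketched essentially along the paper's lines (two-term majorant, Cauchy estimates for $F^{(n)}$, matching decay of $K$ against the $H$- and $E$-terms via Lemmas~\ref{lem: loglog H}, \ref{lem:spliting H intgral K small} and \ref{lem:KE strong A for spliting.}). You do, however, gloss over one genuine subtlety in that direction: $R_L^+F$ is defined by integrating over $\Psi_+$ for $t\ge 0$ and over $\Psi_-$ for $t<0$, so establishing that the two holomorphic extensions into the upper half-plane glue across the imaginary axis is a separate step (the paper handles it by a Morera-type argument using the common majorant of $K$ between $\Psi_-$ and $\Psi_+$). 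Your ``push the contour'' remark does not by itself address this.

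The real gap is in the direction $S_L\,C_0^+(L;I)\subseteq A^+(L;I)$. You propose expanding $f\circ\chi$ in Chebyshev polynomials and invoking ``a refinement of Lemma~\ref{lemma:estimatpsi}'' for polynomials ``bounded on an interval and on a small upper half-disk.'' This cannot work as stated: the coefficients $c_n$ of the Chebyshev series depend only on $f|_{[a,b]}$, hence only on $f\in C_0(L;I)$, and carry no information about holomorphy in $\Pi_f$; the polynomials $T_n\circ\chi^{-1}$ are certainly not bounded on any upper half-disk, so there is no such refinement. The Chebyshev route gives at best the conclusion of Theorem~\ref{thm': Main thm,sing}, which is strictly weaker than membership in $A^+(L;I)$ in the super-logarithmic regime. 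You identify this as the ``main obstacle'' but do not resolve it. The paper's actual mechanism is different: it uses an almost-holomorphic (Dynkin) decomposition, formalized in Lemma~\ref{lem: dec} and Corollary~\ref{cor: dec+}, to write $f = f_\omega + \int_1^\infty e^{ixt}p(t)\,dt$ with $f_\omega\in C^\omega(J)$ and $p$ rapidly decaying; then $S_Lf = S_Lf_\omega + \int_1^\infty E_1(itz)p(t)\,dt$, the real-analytic part is handled by Theorem~\ref{thm': Real analytic}, and the integral part is controlled by Lemma~\ref{lem: mainSpilitEstimate+}, which is precisely what produces the $H(\psi\pm 2B/r)$ term. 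It is the Fourier representation with $t\geq 1$ — not the Chebyshev expansion — that encodes the holomorphy of $f$ in the upper half-plane; this is the missing ingredient in your proposal.
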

\subsubsection{The inclusion $A^\pm(L;I)\subseteq S_L C_0^\pm (L;I)$.}
\begin{proof}
Fix an open  interval $I$ that contains the origin,  and a function $L$  that   satisfies assumptions (R1), (R2), (R3), (R5), (R7) and (R9).
	
	Let $F\in A^+(L;I)$ and put $f=R_L^+ F$. We want to show that $f\in C_0^+(L;I)$. Put $I_+=I\cap [0,\infty)$ and $I_-=I\cap (-\infty,0]$. 
	The proof is broken into three parts: first we show that $f\in C_0(L;I)$, then  that $f\in C^+_0(L;\text{int}(I_\pm) )$, and 	
	finally that $f\in C^+_0(L;I )$.  

\textit{Part 1.} Fix $a_+\in I_+$, and let $c_+\in I_+$ such that $a_+<c_+$.  By the definition of the class $A^+(L;I)$, for any $B>0$ there is $\Delta>0$, such that 
\[|F(r e^{i\psi})|\lesssim_B H\left(\psi+\frac{2B}{r}\right)+ E\left(\frac{r}{c_+}+\Delta r\sin\psi)\right)  \]
for $-\tfrac{B}{r}\leq\psi\leq\tfrac{\pi}{2}+\tfrac{B}{r}$.
By  Cauchy's estimates, we have 
\[ |F^{(n)}(z)|\lesssim \frac {n!}{B^n} \max_{|z-w|=B} |F(w)|. \]
Since $H$ is decreasing and continuous in $(0,\tfrac{\pi}{2})$, we have
\begin{equation}
|F^{(n)}(r e^{i\psi})|\lesssim_B \frac{n!}{B^n} \left[H\left(\psi+\frac{B}{2r}\right)+ E\left(\frac{r}{c_+}+\frac{B\Delta}{c_+}+\Delta r\sin\psi\right) \right] \label{eq: 1st derivative estiamte with H}
\end{equation}
for $0\leq\psi\leq\tfrac{\pi}{r}$.
By the definition of $R_L^+$,
$$f(u)= (R_L^+ F)(u) =\int_{\Psi_+} F(uz)K(z)dz ,\quad 0\leq u<a_+.$$
Thus,
\[|f^{(n)}(u)|\leq  \int_{\Psi_+} |z|^n|F^{(n)}(uz)K(z)||dz| ,\quad 0\leq u<a_+. \]
By \eqref{eq: 1st derivative estiamte with H}, we have  
\[  |f^{(n)}(u)|\lesssim_B \frac{n!}{B^n}\left[1+I_1(u,n)+I_2(u,n) \right]  \]
where 
\[ I_1(u,n)=\int_1^\infty r^n H\left(\psi+\frac{B}{2r\max\{u,1\}}\right)|K(re^{i\psi})|dr, \quad z=re^{i\psi}\in \Psi_+,\]
and
\[\quad I_2(u,n)=\int_1^\infty r^n E\left(\frac{ru}{c_+}+\frac{B\Delta}{c_+}+\Delta ru\sin\psi\right)|K(re^{i\psi})|dr,\quad re^{i\psi}\in \Psi_+.  \]

As a result, in order to show that $f\in C_0(L;[0,a_+])$, it suffices to show that there is a $C>0$, such that   for sufficiently large $B>0$,  
\[ I_1(u,n)\lesssim_B C^{n+1}\gamma(n+1), \quad I_2(u,n)\lesssim_B C^{n+1}\gamma(n+1),\quad 0\leq u\leq a_+. \]

We begin with the integral $I_1$. By Lemma \ref{lem: loglog H}, there exists $A>0$ such that 
\[ H\left(\psi+\frac{A}{r}\right)\leq H(\psi)^{1-\tfrac{3}{r}}. \]
By taking $B$ so large that $B>2A\max\{1,a_+\}$, we get
\[H\left(\psi+\frac{B}{2r\max\{u,1\}}\right)\lesssim_B  H(\psi)^{1-\tfrac{3}{r}}\lesssim|E(z)|^{1-\tfrac{2}{r}},\quad z=r e^{i\psi}\in\Psi_+,\; 0\leq u\leq a_+.  \] 
By Theorems \ref{TheoremK} and \ref{TheoremE}, 
\[ |E(z) K(z)|\sim \frac{\rho_z}{|z|\varepsilon(\rho_z)},\quad z\to\infty,\; z\in \Psi_+.  \]
In particular,
\[ |E(z) K(z)|\lesssim\exp\left(|z|^{-1}\cdot\re\left(i\rho_z\varepsilon(i\rho_z)\right)\right)\lesssim |E(z)|^{\tfrac{1}{|z|}} \,\quad |z|>1, z\in \Psi_+. \]
Thus,
$$I_1(u,n)\leq \int_{\Psi_+\cap\{|z|>1\}} |z|^n|E(z)|^{-\tfrac{1}{|z|}}d|z|,$$
and, by Lemma \ref{lem:spliting H intgral K small},
\[ I_1(u,n)\lesssim_B C^{n+1}\gamma(n+1),\quad u\in[0,a_+]. \]

Now we turn to $I_2$. Since, $c_+>a_+$, and $\psi\to 0$ as $r\to\infty$, $re^{i\psi}\in \Psi_+$, there exists a $\delta=\delta(a_+,c_+)>0$, such that 
\[I_2(u,n) \lesssim_B \int_1^\infty r^n E((1-\delta)r)\left|K(re^{i\psi})\right|dr,\quad u\in [0,a_+].  \] 
By Lemma \ref{lem:KE strong A for spliting.}, there exists $\delta_1>0$ such that 
\[ E((1-\delta)|z|)|K(z)|\lesssim E(\delta_1 |z|),\quad z\in\Psi_\pm,\;|z|>1, \]
whence,
\[ I_2(u,n) \lesssim_B \int_1^\infty \frac{r^n}{E(\delta_1 r)}dr. \]
By definition,  $E(\delta_1 r)\geq \frac{\gamma(n+3)}{\delta_1^{n+1}r^{n+2}}$. Thus,
\[ I_2(u,n) \lesssim_B \delta_1^{-n}\gamma(n+3)\asymp \delta_1^{-n}\gamma(n+1).\]

We have shown that 
\[   |f^{(n)}(u)|\lesssim_B \frac{n!C^n}{B^n}\gamma(n+1),\quad 0\leq u\leq a_+.\]
The constant $B$ can be taken  arbitrarily large, so we conclude that $f\in C_0(L;[0,a_+])$. Since, $0<a_+\in I_+$ was arbitrary, we conclude that $f\in C_0(L;I_+)$. The proof that $f \in C_0(L;I_-)$ is similar.

\textit{Part 2.} Now, we will show that  $f\in C^+_0(L;\text{int}(I_+) )$. Let $u\in \text{int} (I_+)$. We need to show that for sufficiently small $0<v$, $f$ is analytic at $w=u+iv$. 

Let  $c_+\in I_+$ be such that 
\[ \frac{u}{c_+}\leq 1-5\delta, \]
for some $\delta>0$. 
By the definition of the class $A^+(L;I)$, with $B=1$, there is a $\Delta>0$, such that 
\[|F(r e^{i\psi})|\lesssim H\left(\psi\right)+ E\left(\frac{r}{c_+}+\Delta r\sin\psi\right),\quad 0<\psi\leq \tfrac{\pi}{2}.\]
Fix the above $\Delta$ and choose $v>0$ so small  that for $w=u+iv$, we have $\Delta \arg(w)<\delta$ and $\frac{v}{c_+}<\delta.$    Denote by $D_q(w)$ the closed disk $\{w':\; |w-w'|\leq q\}$, and  choose $q$ so small such that  that $v-q>0$ and 
\[\frac{|w'| r}{c_+}+\Delta\sin(\arg w'+\psi) r|w'|\leq (1-\delta)r, \quad   w'\in D_q(w),\, r>r_\delta,\, re^{i\psi}\in \Psi_+. \]
By choosing such a $q$, we have, 

 $$\max_{w'\in D_q(w)}|F(zw')|\lesssim_q  E((1-\delta)|z|) ,\quad z\in\Psi_+.$$
As a result,  Lemma \ref{lem:KE strong A for spliting.} yields
  \[ \max_{w'\in D_q(w)}\int_{\Psi_+} |F(zw')K(z)dz|<\infty. \]
  Thus, the function $f=R_L^+ F$ is analytic in an upper neighborhood of $ \text{int}(I_+)$, and therefore   $f\in C_0^+(L;\text{int}(I_+))$, as claimed.
  The proof that $f\in C_0^+(L;\text{int}(I_-))$ is identical.
  
\textit{Part 3.}  So far we have shown that $f\in C_0(L;I)$, and that $f\in C_0^+(L;\text{int}(I_\pm) )$. We need to show that $f\in C_0^+(L;I)$. This will follow from Morera's theorem, if we can  show that the boundary values  of $f$ on the interval $i[0,\delta)$ as an element of $C_0^+(L;\text{int}(I_+) )$ coincide with the boundary values of $f$ on the same interval as an element of $C_0^+(L;\text{int}(I_-) )$. That is,  we need to show that for  $0\leq v<\delta$, 
\[ \int_{\Psi_+} F(iv z)K(z)dz=\int_{\Psi_-} F(iv z)K(z)dz. \]
To do so, we fix $a_\pm \in \text{int}(I_\pm)$, and   let $0<a<\min\{a_+,-a_-\}$. Denote by $\Omega$ the domain bounded between $\Psi_-$ and $\Psi_+$ (which contains the positive ray). By the definition of the space $A^+(L;I)$, there exists a positive $\delta$, such that 
\[ |F(iu z)|\lesssim E(|z|/2),\quad z\in \Omega, \,0\leq u< \delta. \] 
We fix this value of $\delta$. 
For $z\in \Omega$, we denote by $z^*$ its radial projection on $\Psi_+$, i.e.,  
$|z|=|z^*|, z^*\in \Psi_+$.  By Theorem \ref{TheoremK}, 
\[ |K(z)|\leq |K(z^*)|,\quad |z|>r_0,\; z\in\Omega. \]
Thus,  if $\Psi$ is an arbitrary curve in $\Omega$ joining $0$ and $\infty$, then  by Lemma \ref{lem:KE strong A for spliting.},
\[ \int_\Psi F(ivz)K(z)dz= \int_{\Psi_+} F(ivz)K(z)dz,\quad 0\leq v<\delta. \]
In particular, this is true with $\Psi=\Psi_-$, which is the desired result.

 We have shown that 	 $A^+(L;I)\subseteq S_L C_0^+ (L;I)$, the proof of the second inclusion,   $A^-(L;I)\subseteq S_L C_0^- (L;I)$,  is the same.
\end{proof}
\subsubsection{The inclusion $A^\pm(L;I)\supseteq S_L C_0^\pm (L;I)$}
Here we prove the inclusion $A^\pm(L;I)\supseteq S_L C_0^\pm (L;I)$. We begin with some preliminaries regarding the classes $C_0^\pm (L;I)$.
\paragraph{A decomposition of elements in $C_0(L;I)$.}\label{subsubsec: dec}
\begin{lemma}\label{lem: dec}
	Suppose that $L:[0,\infty)\to[1,\infty)$ is an eventually increasing and  unbounded  function such that the function  $\rho\mapsto \rho\log L(\rho)$ is eventually convex,  and that $I$ is an open interval. Then for any $f\in C_0(L;I)$ and any closed subinterval   $J\subset I$,
	there exists a function $f_\omega \in C^\omega(J)$ and $p\in C(\mathbb{R})$ satisfying  
	\begin{equation}
|p(t)|\lesssim_\delta \frac{\delta^n n! \gamma(n+1)}{|t|^n},\quad \delta>0,\;t\in\mathbb{R} \label{eq: g def}
	\end{equation}
	such that,
	\begin{equation}
	f(x)=f_\omega(x)+\int_{\mathbb{R}} e^{ixt}p(t)dt,\quad x\in J.\label{eq: dec}.
	\end{equation}
\end{lemma}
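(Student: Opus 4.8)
\textbf{Proof plan for Lemma \ref{lem: dec}.}

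The plan is to construct the decomposition \eqref{eq: dec} by a cutoff argument applied to the Fourier transform of a compactly supported version of $f$. First I would localize: fix a closed subinterval $J\subset I$ and choose a slightly larger closed interval $J'$ with $J\subset\text{int}(J')\subset J'\subset I$, together with a cutoff $\chi\in C_0^\infty(\text{int}(J'))$ with $\chi\equiv 1$ on a neighborhood of $J$. Since $f\in C_0(L;I)$, the product $g:=\chi f$ extends by zero to a function in $C_0(L;\mathbb{R})$ with compact support, and $f\equiv g$ on a neighborhood of $J$. The Fourier transform $\widehat{g}(t)=\int_{\mathbb{R}} g(x)e^{-ixt}\,dx$ then satisfies rapid decay controlled by the weight: integrating $t^n\widehat{g}(t)$ by parts $n$ times and using the Beurling bound $\max|g^{(n)}|\lesssim_\delta (\delta n L(n))^n$ (valid for every $\delta>0$ because $g$ has compact support inside $I$) gives
\[ |\widehat{g}(t)|\lesssim_\delta \frac{\delta^n\, n!\,\gamma(n+1)}{|t|^n},\quad \delta>0,\ t\in\mathbb{R}, \]
after absorbing the combinatorial factors and using $\gamma(n)=L(n)^n$; here I would use the convexity of $\rho\mapsto\rho\log L(\rho)$ to pass cleanly between $(\delta n L(n))^n$ and $\delta^n n!\gamma(n+1)$ up to harmless constants, exactly as in the estimates underlying Lemma \ref{lemma:estimatcoef2}. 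By Fourier inversion, $g(x)=\frac{1}{2\pi}\int_{\mathbb{R}} e^{ixt}\widehat{g}(t)\,dt$, which already has the form of the integral term in \eqref{eq: dec} with $p=\tfrac1{2\pi}\widehat{g}$, so on $J$ we may take $f_\omega\equiv 0$; but this $p$ need not be continuous on all of $\mathbb{R}$ in the generality stated — actually $\widehat g$ is continuous since $g\in L^1$, so this is fine, and the decay bound \eqref{eq: g def} is what we just proved.

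The only remaining subtlety is that the statement asks for $f_\omega\in C^\omega(J)$ rather than merely $f_\omega\equiv 0$; this extra flexibility is harmless but I would keep $f_\omega=0$ unless a later application needs to split off an analytic piece. If one does want $f_\omega$ genuinely analytic — for instance to make $p$ decay like $|t|^{-\infty}$ with an exponential rate on part of the real line — the natural move is to split the inversion integral at a frequency cutoff $|t|\le N$ and $|t|>N$: the low-frequency part $\frac1{2\pi}\int_{|t|\le N}e^{ixt}\widehat g(t)\,dt$ is entire in $x$, hence in $C^\omega(J)$, and the high-frequency part retains the bound \eqref{eq: g def}; but since \eqref{eq: g def} already holds for the full inversion integral, no such split is needed for the lemma as stated.

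\textbf{Main obstacle.} The only real work is the Fourier-decay estimate: one must convert the Beurling derivative bounds for $g$ (with free $\delta$) into the claimed bound on $\widehat g$ with free $\delta$, paying attention to how the constants in "$\lesssim_\delta$" depend on the support of $g$ and on $\chi$, and to the passage between $(\delta nL(n))^n$ and $\delta^n n!\,\gamma(n+1)$. This is where the convexity hypothesis on $\rho\mapsto\rho\log L(\rho)$ enters: it guarantees $\gamma$ is log-convex-like so that $\gamma(n+1)\asymp L(n)^n$ up to geometric factors and the two forms of the majorant are interchangeable after adjusting $\delta$. Everything else — the localization, Fourier inversion, continuity of $p$ — is routine.
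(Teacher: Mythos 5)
Your plan breaks down at the very first step. You claim that $g:=\chi f$ ``extends by zero to a function in $C_0(L;\mathbb{R})$ with compact support,'' but Lemma~\ref{lem: dec} makes no non-quasianalyticity assumption on $L$, and the paper needs it precisely for quasianalytic weights (e.g.\ $L(\rho)=\log\rho\,\log\log\rho$, used later in Corollary~\ref{cor: cor5}). When $L$ is quasianalytic, $C_0(L;\mathbb{R})$ contains no nonzero compactly supported function at all --- by the Denjoy--Carleman theorem a function of the class vanishing on an open set vanishes identically --- so $\chi f$ cannot lie in $C_0(L;\mathbb{R})$, and in particular the derivative bounds $\max|g^{(n)}|\lesssim_\delta(\delta n L(n))^n$ on which your Fourier estimate rests are simply false for $\chi f$. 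More concretely, a $C_0^\infty$ cutoff $\chi$ has $\max|\chi^{(n)}|$ growing faster than any quasianalytic majorant, and the Leibniz rule transfers that bad growth to $(\chi f)^{(n)}$. Your argument would be fine if one restricted to non-quasianalytic $L$ (and indeed the paper uses such cutoffs in the non-quasianalytic Theorem~\ref{thm':NqaBeu}), but that is a strictly smaller scope than the lemma's.

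The paper sidesteps this by localizing in the \emph{complex} variable rather than cutting the function $f$ itself on the real line. Via Dynkin's theorem, $f$ admits an almost-holomorphic extension $F$ whose $\bar\partial$-data decays faster than $\inf_n\gamma(n)\,\mathrm{dist}(\cdot,J')^n$ near the interval; one then multiplies $F$ by a cutoff $\xi_\eta$ in $\mathbb{C}$ that vanishes on an $\eta$-neighborhood of $J$ and equals $1$ farther away. The Cauchy--Pompeiu integral against $F\xi_\eta$ is holomorphic on the $\eta$-neighborhood of $J$, giving $f_\omega\in C^\omega(J)$, while the complementary piece $f_1$ (the integral against $F(1-\xi_\eta)$) is represented by a compactly supported $\bar\partial$-density, hence extends to all of $\mathbb{R}$ with $|f_1^{(n)}(x)|\lesssim_\delta \delta^n n!\gamma(n+1)(1+|x|)^{-n-1}$ for every $\delta>0$; taking $p=\widehat{f_1}/2\pi$ and integrating by parts then yields \eqref{eq: g def}. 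The decay in $x$ coming from the Cauchy kernel replaces the compact support you were trying to manufacture with a cutoff, and it works in both the quasianalytic and non-quasianalytic regimes. If you want to keep a Fourier-analytic flavor, the fix is exactly this: obtain a globally defined $f_1$ with $f=f_\omega+f_1$ on $J$ and with pointwise decaying derivative bounds from the almost-holomorphic representation, rather than by multiplying $f$ by a real cutoff.
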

For functions $f\in C_0^+(L;I)$, the above  lemma admits. 
\begin{corollary}\label{cor: dec+}
Suppose that $L:[0,\infty)\to[1,\infty)$ is an eventually increasing and  unbounded  function such that the function  $\rho\mapsto \rho\log L(\rho)$ is eventually convex,  and that $I$ is an open interval. Then for any $f\in C_0^+(L;I)$ and any closed subinterval   $J\subset I$,
	there exists a function $f_\omega \in C^\omega(J)$ and $p\in C(\mathbb{R})$ satisfying  \eqref{eq: g def}, such that 
	\[ f(x)=f_\omega(x)+\int_{1}^\infty e^{ixt}p(t)dt,\quad x\in J. \] 
\end{corollary}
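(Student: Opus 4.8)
The plan is to deduce the statement from Lemma \ref{lem: dec} by splitting the Fourier‑type integral produced there at frequency $t=1$ and showing that the low‑frequency part merges into the real‑analytic summand. Fix $f\in C_0^+(L;I)$ and a closed subinterval $J\subset I$; choose a closed interval $J'$ with $J\subset\text{int}\,J'\subset J'\subset I$ and apply Lemma \ref{lem: dec} on $J'$ to get $f_\omega\in C^\omega(J')$ and $p\in C(\mathbb R)$ satisfying \eqref{eq: g def} with $f(x)=f_\omega(x)+\int_{\mathbb R}e^{ixt}p(t)\,dt$ for $x\in J'$. Set
\[
F_+(z)=\int_1^\infty e^{izt}p(t)\,dt,\qquad F_-(z)=\int_{-\infty}^{1}e^{izt}p(t)\,dt,
\]
so that $f=f_\omega+F_-+F_+$ on $J'$, and put $\varphi:=f-F_+$, which on $J'$ equals $f_\omega+F_-$.

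First I would record the elementary half‑plane analyticity of $F_\pm$, which is where the decay bound \eqref{eq: g def} on $p$ enters. By \eqref{eq: g def} (with, say, $n=2$) $p$ is integrable on $[1,\infty)$, and $t^kp(t)$ is integrable there for every $k$; since $|e^{izt}|=e^{-t\im z}\le1$ for $t\ge1$ and $\im z\ge0$, differentiation under the integral sign shows that $F_+$ is holomorphic in the open upper half‑plane $\{\im z>0\}$ and extends to a bounded $C^\infty$ function on its closure. Symmetrically, for $t\le1$ and $\im z$ in a bounded strip below the real axis the factor $e^{-t\im z}$ stays bounded (and decays as $t\to-\infty$), so $F_-$ is holomorphic in $\{\im z<0\}$ and extends $C^\infty$‑ly up to $\mathbb R$. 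Consequently $\varphi$ has two analytic continuations near $J'$: from below, $\varphi=f_\omega+F_-$ with $f_\omega\in C^\omega(J')$, so $\varphi$ extends holomorphically to a lower half‑neighbourhood of $J'$, continuously up to $J'$; from above, $f$ is holomorphic in the domain $\Pi$ lying over $I$ and $C^\infty$ up to $I$ while $F_+$ is holomorphic in the whole upper half‑plane and $C^\infty$ up to $\mathbb R$, so $\varphi$ also extends holomorphically into $\Pi$ with the \emph{same} boundary values $\varphi|_{J'}=f_\omega+F_-$.

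The last step is the gluing: $\varphi$ is continuous across $J'$ and holomorphic on both sides of it, hence by a Morera‑type argument — entirely analogous to Part~3 of the proof of Theorem~\ref{thm':spliting} (matching boundary values from the two sides) — it extends holomorphically to a neighbourhood of $\text{int}\,J'\supset J$, so $\varphi|_J\in C^\omega(J)$. Setting $\widetilde f_\omega:=\varphi|_J$ and keeping $p$ (which still satisfies \eqref{eq: g def}), we obtain $f(x)=\widetilde f_\omega(x)+F_+(x)=\widetilde f_\omega(x)+\int_1^\infty e^{ixt}p(t)\,dt$ on $J$, as required. I expect this gluing to be the only real obstacle: one must ensure that $f$, hence $\varphi$, is holomorphic in an honest upper half‑neighbourhood of $J'$ so that Morera applies, which may force one first to shrink $\Pi$ to a region of the form $\{x+iy:x\in J',\ 0<y<\eta\}$, or to use the $C^\infty$‑up‑to‑the‑boundary hypothesis to exclude pathological behaviour of $\partial\Pi$ over $\text{int}\,I$; everything else is bookkeeping with the absolutely convergent splitting of the integral from Lemma \ref{lem: dec} and with the bound \eqref{eq: g def}.
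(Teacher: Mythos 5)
Your proposal is correct and follows essentially the same route as the paper: apply Lemma \ref{lem: dec}, split the Fourier integral at $t=1$, and observe that the high‑frequency part $F_+$ is holomorphic in the upper half‑plane while the remainder $f-F_+$ is holomorphic from both sides of $J$, hence real‑analytic. The paper compresses the two‑sided‑gluing step into the already‑stated fact $C_0^+(L;I)\cap C_0^-(L;I)=C^\omega(I)$ (comparing the trivial decomposition $f=f+0$ with $f=\widetilde f_++\widetilde f_-$), whereas you unpack that into an explicit Morera argument; the content is the same, and the concern you raise about $\Pi$ covering an honest upper half‑neighbourhood is already built into the definition of $C_0^+(L;I)$ via $\mathbb R\cap\partial\Pi=\operatorname{clos}(I)$.
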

By  the above, any $f\in C_0(L;I)$ can be written as $f=f_\omega+f_++f_-$, where $f_\omega\in C^\omega(J)$ and $f_\pm\in C_0^\pm(L;\mathbb{R})$. This implies the 
 decomposition $C_0(L;I)=C_0^+(L;I)+C_0^-(L;I)$. The proof of the lemma, is based on the theory of almost holomorphic extensions, and resembles the proof of Lemma \ref{lem: borichev}.
\begin{proof}[Proof of Lemma \ref{lem: dec} ]
	Fix $L$ and the closed interval $J\subset I$.  Let $J'$ be a closed interval such that  $J\subset \text{int}(J')  \subset J'\subset I$.
	According to Dynkin \cite{Dynkin}, $f\in C_0(L;J')$ if and only if $f$ can be represented by 
	\[f(x)=\iint_{\mathbb{C}}F(w)\frac{du dv}{w-x},\quad w=u+iv,\]
	where $F$ is a continuous and compactly supported function such that for every $A>0$, there exists a $C>0$ such that 
	\[ |F(z)|\leq  Ch\left(A\; \text{dist}(z,J')\right),\quad	\text{where}\quad h(r)=\inf_{n\geq 0} \gamma(n) r^n.\]
	For $\eta>0$, let $\xi_\eta:\mathbb{C}\to[0,1]$ be a continuous function such that $\xi_\eta \equiv 0$ on  $ \{w: \text{dist}(w,J)\leq \eta\}$ and  $\xi_\eta \equiv 1 $ on $\{w:\text{dist}(w,J) \geq 2 \eta\}$. Finally, set 
	\[f(t)=\iint_{\mathbb{C}}F(w) \xi_\eta  (w)\frac{du dv} {z-t}+\iint_{\mathbb{C}}F(w)(1-\xi_\eta(w)  ) \frac{du dv}{w-x}:=f_\omega(x)+f_1(x).\]
	By  Dynkin's Theorem,  $f_\omega,f_1 \in C_0(L;J')$ as well. Moreover, $f_\omega\in \text{Hol}\{w: \text{dist}(w,J)\leq \eta\}$ and  $f_1\in \text{Hol}\{w: \text{dist}(w,J)\geq 2\eta\}$. Choosing $\eta $ sufficiently small, we estimate  the derivatives of $f_1$ as follows:
	\[|f_1^{(n)}(x)|\leq n! \iint | F(w)| \frac{du dv}{|w-x|^{n+1}}\leq C^{n+1}\frac{ n!}{(|x|+1)^{n+1}} ,\quad n\in \mathbb{Z_+},\; \]
	for any $x$ such that $x+1,x-1\notin J$. It follows that, for any $\delta>0$, 
	\[ |f_1^{(n)}(x)|\leq C_\delta \delta^n \frac{n!\gamma(n+1)}{(1+|x|)^{n+1}}. \]
	Put
	\[ p(t)=\frac{1}{2\pi} \int_{\mathbb{R}}e^{-ixt}f_1(x)dx. \]
	We will finish the proof by showing that $p$ satisfies \eqref{eq: g def}. Integration by parts yields
	\[   p(t)=\frac{i^n}{2\pi} \int_{\mathbb{R}}\frac{e^{-ixt}}{t^n}f^{(n)}_1(x)dx. \]
	Therefore, for sufficiently large $|t|$ we have
	\[ |p(t)|\leq C_\delta \inf_{n>0} \delta^n\frac{n!\gamma(n+1)}{|t|^n}, \]
	proving the Lemma.
\end{proof}
\begin{proof}[Proof of Corollary \ref{cor: dec+}. ] 
	Fix $f_+\in C_0^+(L;I)$  and a closed interval $J\subset I$. Since $f_+\in C_0(L;I)$,  applying  Lemma \ref{lem: dec} we find $f_\omega\in C^\omega(J)$ and $p$ satisfying \eqref{eq: g def} such that 
	\[ f_+(x)=f_\omega(x)+\int_{\mathbb{R}} e^{ixt}p(t)dt,\quad x\in J. \] 
	The functions $x\mapsto \int_{-\infty}^1 e^{ixt}p(t)dt$ and $x\mapsto \int_{1}^\infty e^{ixt}p(t)dt$ are analytic in the lower and upper half-planes, respectively. Put
	\[ \widetilde{f}_+:= \int_{1}^\infty e^{ixt}p(t)dt,\quad \widetilde{f}_-:=\int_{-\infty}^1 e^{ixt}p(t)dt+f_\omega(x). \] 
	Then $\widetilde{f}_\pm\in C_0^\pm(L;I)$ and $f_+ +0=\widetilde{f}_+ +\widetilde{f}_-$.  We have  found two decompositions of $f_+\in C_0(L;I)$ as a sum of elements in $ C_0^\pm(L;I)$ (one of which is the trivial one,  $f_+=f_+ +0$). Therefore, $f_+$ and $\widetilde{f}_+$ differ by a $C^\omega(J)$ function. This completes the proof.
\end{proof}

To show that $S_L C_0^\pm (L;I)\subset A^\pm(L;I)$ we will proceed by the following plan. Given $f\in C_0^+(L;I)$ and a closed subinterval $J\subset I$, first, we   use the decomposition  of Corollary \ref{cor: dec+}:
	\[ f(x)=f_\omega(x)+\int_{1}^\infty e^{ixt}p(t)dt,\quad x\in J. \]
Then by the linearity of the singular transform, 
	\[ S_L f= S_L f_\omega +S_L \left(\int_{1}^\infty e^{ixt}p(t)dt\right)=S_L f_\omega+\int_1^\infty E_1(xt)p(t)dt,  \]
	where
	\[ E_1(z)=S_L(\exp)(z)=\sum_{n\geq 0} \frac{z^n}{n!\gamma(n+1).} \]
	The treatment of the summands  in the RHS is different.  To obtain estimates for $S_L f_\omega$ we use     Theorem 7$^\prime$, while upper bounds for the second summand will follow from Lemma \ref{lem: mainSpilitEstimate+}.
\paragraph{Proof of the inclusion $A^\pm(L;I)\supseteq S_L C_0^\pm (L;I)$.}
\begin{proof}
Let $f\in C_0(L;I)$, and let $J$ be a compact sub-interval of $I$. By Corollary \ref{cor: dec+}, 
	there exist functions $f_\omega \in C^\omega(J)$ and $p\in C(\mathbb{R})$ satisfying 
	\[ |p(t)|\lesssim_\delta \frac{\delta^n n! \gamma(n+1)}{|t|^n},\quad \delta>0,\;t\in\mathbb{R}, \] 
	such that
\[ f(x)=f_\omega(x)+\int_{1}^\infty e^{ixt}p(t)dt,\quad x\in J. \] 
By the linearity of the singular transform,
\[ S_L f =S_L f_\omega+  S_L\left(\int_{1}^\infty e^{ixt}p(t)dt\right):=F_\omega+F_p.  \]
We claim that 
\[ F_p(z)=\int_1^\infty E_1(itz)p(t)dt ,\quad\text{where}\quad  E_1=S_L (\exp).\]
Indeed, by Lemma \ref{lemma:estimatcoef2}, for any $\delta>0$, 
\[ |p(t)|\lesssim_\delta e^{-\delta^{-1}\Lambda_L( t)}. \]
Therefore, Lemma \ref{lem: widetildeE trvial bound} yields that the function $z\mapsto \int_1^\infty E_1(itz)p(t)dt$ is an entire function. This function, has the same Taylor coefficients at the origin as  $F_p$, and so these two functions coincide, as claimed. 

Fix $B>0$  and $\delta>0$, by Lemma \ref{lem: mainSpilitEstimate+}, 
  \[ |E_1(i t r e^{i\psi})|\leq e^{C_B(\Lambda_L(t)+1)}H\left(\psi+\frac{2B}{r}\right) ,\quad r>2B,\;\,2B/r<\psi\leq\tfrac{\pi}{2},\;\, t>1. \]
 Thus,
 \[ | F_p(re^{i\psi})| \lesssim H\left(\psi+\frac{2B}{r}\right) ,\quad 2B/r<\psi\leq\tfrac{\pi}{2}. \]
In order to show that $S_L f\in A^+(L;I)$, we first  extend  the estimate of $F_p$  to the strip $-\tfrac{B}{r}<\psi<\tfrac{2B}{r}$.

The function $F_p$ is the singular transform of a function in $C_0(L;\mathbb{R})$. Thus, by Theorem \ref{thm': Main thm,sing},  
\[ \max_{|\psi|<\tfrac{3B}{r}}|F_p(r e^{i\psi})|\leq \widetilde{E}\left(\frac{r}{4B}\right)E\left(\delta r\right), \]
	where $\widetilde{E}(z)=\sum_{n\geq 0} \frac{z^n}{\widetilde{\gamma}(n+1)}$.
	By Lemma \ref{lem: K and E ray lemma} (applied both to $E$ and to $\widetilde{E}$), we have,
	\[ \widetilde{E}\left(\frac{r}{4B}\right)E\left(\delta r\right)\lesssim\widetilde{E}^2\left(\frac{r}{4B}\right)+E^2\left(\delta r\right)\lesssim \widetilde{E}\left(\frac{r}{3B}\right)+E(2\delta r)\lesssim H\left(\frac{2B}{r}\right)+E(2\delta r). \]
Combining this with the previous bounds of  $F_p$, we conclude that
\[  | F_p(re^{i\psi})| \lesssim  H\left(\psi+\frac{2B}{r}\right)+E(2\delta r),\quad -\tfrac{B}{r}<\psi\leq\tfrac{\pi}{2}, \]
Due to the symmetry, we obtain
\[  | F_p(re^{i\psi})| \lesssim  H\left(\psi-\frac{2B}{r}\right)+E(2\delta r),\quad \tfrac{\pi}{2}<\psi<\pi+\tfrac{B}{r}. \]

It remains to estimate the function $F_\omega$.
By Theorem \ref{thm: Real analytic}, $F_\omega\in  A^\omega(L;J)$. Thus, for any $c_-<0<c_+$ with  $c_\pm\in J$, there exists a $\Delta>0$, such that 
\[ \left|F_\omega(re^{i\psi})\right|\lesssim E\left(\frac{r}{|c_\pm|}+\Delta r\sin\psi\right).\]
Thus, if   $\delta$ is  so small  that  $2\delta<\max\{c_+,|c_-|\}$,  then,
\[ |S_L f (re^{i\psi}) |\lesssim H\left(\psi\pm\frac{2B}{r}\right)+E\left(\frac{r}{|c_\pm|}+\Delta r\sin\psi\right)\]
whenever $0\leq \pm (\tfrac{\pi}{2}-\psi) \leq \tfrac{\pi}{2}+\tfrac{B}{r}$. Since $J$ is an arbitrary compact subset of $I$, we conclude that $S_L f\in A^+(L;I)$. This finishes the proof of Theorem \ref{thm':spliting}. 
\end{proof}
\section*{Acknowledgment}
First of all  I would like to express my sincere gratitude
to my PhD advisor Mikhail
Sodin for his attentive guidance and many discussions and suggestions. I would also like to thank him for his help in writing this paper and for reading and commenting on many drafts.
I am grateful to   Alexander Borichev for suggesting to use almost holomorphic extensions, to Fedor 
Nazarov for suggesting to study   the spitting in  cases where the punctual image can not be described directly and  to Alon Nishry for  discussions and helpful comments. I thank Andrei Iacob for his help with copy-editing
 this paper.
\begin{appendices} 
	\section{Regular functions}
	\subsection{Proof of Lemma \ref{lem: widetildeE}.}
		\textit{Part 1.} Consider the function 
		\[ f(x,r)= x\log (erL(r)) -x\log\left(x L(x)\right). \]
Differentiation with respect to $x$ yields that $f_x(x,r)=0$ if and only if 
\[ erL(r)=exL(x)\exp(\varepsilon(x)) ,\quad \text{where}\quad \varepsilon(x)=\frac{xL'(x)}{L(x)}.\]
By assumption (R1), $\varepsilon(x)=o(1)$ as $x\to\infty$. Thus 
\[ \sup_{x>0}f(x,r)\sim r,\quad r\to\infty.  \]	
Since
\[ \sup_{x>0}f(x,r)=\Lambda_L(erL(r)), \]
we conclude part 1.

		\vspace{2mm}	\textit{Part 2.} It follows from the definition of $\Lambda_L$, that the function  $r\mapsto r\Lambda_L'(r)$ is the inverse function to $\rho\mapsto \rho L(\rho)\exp(1+\tfrac{\rho L'(\rho)}{L(\rho)})$. Denote the later function by $v(\rho)$. We therefore have, 
		\[ \Lambda_L(r)-\Lambda_L(r_0):=\int_1^r t\Lambda_L'(t) \frac{dt}{t}\stackrel{(t=v(u))}{=}\int_{v^{-1}(r_0)}^{v^{-1}(r)}\frac{uv'(u)}{v(u)}du=\int_{v^{-1}(r_0)}^{v^{-1}(r)}1+\varepsilon(u)+u\varepsilon'(u) du,\quad u\to\infty. \]
		By assumption (R3), there exists $r_0>0$ such that  $\varepsilon(u)+u\varepsilon'(u)>0$, for $u\geq r_0$. With such choice of $r_0$, we obtain 
			\[ \Lambda_L(r)-\Lambda_L(1)\geq \int_{v^{-1}(1)}^{v^{-1}(r)} du=r\Lambda_L'(r)-r_0\Lambda_L'(r_0).\]
			Thus,  $\frac{\Lambda_L'(r)}{\Lambda_L(r)}\leq \frac{1}{r}$ for sufficiently large $r$, and hence the part 2.

			\vspace{2mm}\textit{Part 3.} By part 1, 
		\[ \Lambda_L\left(r L(r)e)\right)\sim r\to \infty.\]
Since the function $L$ is slowly varying, we have $$e(ar)L(ar)\sim a(erL(r)), \quad r\to\infty$$ for any $a>0$. Therefore,
$\Lambda_L(ar)\sim a\Lambda_L(r)$ as $r\to\infty$. This completes the proof of  part 3.

	\vspace{2mm}		\textit{Part 4.} We fix a large $k>0$, and put 
		$g(n) =n \left(\log L(k)-\log L (n)\right)$.
		Since, $L$ is regularly varying, we have
		$g(\tfrac{k}{2})=O(k)$ and $g'(\tfrac{k}{2})=O(1)$ as $k\to\infty.$
		Moreover,  $g$ is eventually concave, and therefore it is bounded by its tangent line at the point $n=\tfrac{k}{2}$.\qed
		\subsection{Lemma \ref{lem: subReplacmentLem}.} The proof of Lemma \ref{lem: subReplacmentLem} is based on the following lemma from \cite[\S3.12.2]{Bingham}:
		\begin{lemma}\label{lem: Bingham}
			Let $L_j:[0,\infty)\to[0,\infty)$, $j=1,2$, be two eventually non-decreasing and $C^1$ smooth functions such that the functions  $\varepsilon_j(\rho)=\tfrac{\rho L_j'(\rho)}{L_j(\rho)}$ are bounded in $[0,\infty)$. Then $L_1(\rho L_2(\rho))\sim L_(\rho)$, as $\rho\to\infty$, if and only if
			\[ \varepsilon_1(\rho)\log L_2(\rho)=o(1),\quad \rho\to\infty. \]   
		\end{lemma}
	\begin{proof}[Proof of Lemma \ref{lem: subReplacmentLem}.]
\textit{Part 1.} The assertion $\varepsilon(\rho L(\rho))\sim \varepsilon(\rho)$, as $\rho\to\infty$ follows from Lemma \ref{lem: Bingham}, by taking $L_1=1/\varepsilon$ and $L_2=L$. The assertion $\varepsilon(\Lambda_L(\rho))\sim \varepsilon(\rho)$, as $\rho\to\infty$ follows from the previous assertion by Lemma \ref{lem: widetildeE}, part 1.

	\vspace{2mm}\textit{Part 2.} The assertion $L(\rho L(\rho))\sim L(\rho)$, as $\rho\to\infty$ follows from Lemma \ref{lem: Bingham}, by taking $L_1=L_2=L$. The assertion $L(\Lambda_L(\rho))\sim L(\rho)$, as $\rho\to\infty$ follows form the previous assertion by Lemma \ref{lem: widetildeE}, part 1.

	\vspace{2mm}\textit{Part 3.} The assertion follows from Lemma \ref{lem: Bingham}, by taking $L_1=L$ and $L_2(\rho)=\tfrac{1}{\rho|\varepsilon'(\rho)|}$.

	\vspace{2mm}\textit{Part 4.} The assertion $L(\rho L(\rho))\sim L(\rho)$, as $\rho\to\infty$ follows from Lemma \ref{lem: Bingham}, by taking $L_1=L_2=1/\varepsilon$.
\end{proof} 
\subsection{Proof of Lemma \ref{lem: LcomplexLemma}.}
\textit{Part 1.} Put $s=\rho e^{i\theta}$.
\[ \log L(s)=\int_0^s\frac{\varepsilon(w)}{w}dw=\int_0^\rho \frac{\varepsilon(u)}{u}du+i\int_0^\theta \varepsilon(\rho e^{i \psi})d\psi. \]
By assumption (R8), 
\[ \int_0^\theta \varepsilon(\rho e^{i \psi})d\psi=i\theta \varepsilon(\rho)(1+o(1)),\quad \rho\to\infty. \]
Thus,
\[ \log L(s)=\log L(\rho)+i\theta \varepsilon(\rho)(1+o(1)),\quad\rho\to\infty, \]
concluding part 1.

\vspace{2mm}\textit{Part 2.} The proof of part 2 is the same as the proof of part 1. \qed
\subsection{Proof of Lemma \ref{lemma:estimatcoef2}.}
First, we  show that 
\[ \sup_{n\leq r} \log\frac{r^n}{n!\gamma(n+1)}\asymp \Lambda_L(r). \]
Put 
\[ f(x,r) =x\log r-x\log (x L(x)).\]
Since $L$ is increasing and unbounded, 
\[ \sup_{n\leq r} \log\frac{r^n}{n!\gamma(n+1)}\asymp \sup_{n\in \mathbb{N}}\log\frac{r^n}{n!\gamma(n+1)}. \]
	By Stirling's formula, the RHS is $\asymp \sup_{n\in \mathbb{N}} f(n,r)$. Thus, it is enough to show that 
		\[\sup_{n\in \mathbb{N}} f(n,r)\asymp  \sup_{x>0} f(x,r)=\Lambda_L(r).\]
		By assumption, the function $x\mapsto x\log (xL(x))$ is an eventually  convex function of $\log x$. Thus, for sufficiently large $r$, the supremum in  $\sup_{x>0} f(x,r)$ is attained in a single point, which we denote by $x_r$.  By Taylor's theorem, there exists $\lfloor x_r \rfloor\leq c\leq x_r$ such that 
		\[ f(\lfloor x_r \rfloor, r)=f(x_r,r)+\frac{f_{xx}(c,r)}{2}(x_r-\lfloor x_r \rfloor). \]
Since, by assumption (R3),
\[ f_{xx}(c,r)=-\frac{1+\varepsilon(x)+x\varepsilon'(x)}{x}=o(1),\quad x\to\infty, \]
we get
\[ \left|f(\lfloor x_r \rfloor, r)-f(x_r,r)\right|=o(1),\quad r\to\infty. \]
Therefore, 
\[ \sup_{n\leq r} \log\frac{r^n}{n!\gamma(n+1)}\asymp \Lambda_L(r), \]
as claimed.  By Lemma \ref{lem: widetildeE}, part 1, 
\[ \Lambda_L(\delta^{-1}r)\sim\delta^{-1} \Lambda_L(r),\quad r\to\infty. \]
Therefore, for any $\delta$, 
\[  \sup_{n\leq r} \log\frac{r^n}{n!\gamma(n+1)\delta^n} \asymp \delta^{-1}\Lambda_L(r), \]
where the implicit constant is independent of $\delta$. This completes the proof of Lemma \ref{lemma:estimatcoef2}.
\qed
\subsection{Proof of Lemma \ref{lem: L_2}.}
	Fix a a  non-quasianalytic and  slowly growing function  $L$. It follows from \cite[\S1.6]{Bingham} that the functions  $\widetilde{L}$ and $L/\widetilde{L}$ are also slowly varying, with 
	$L(\rho)/\widetilde{L}(\rho)\to\infty$ as $\rho\to\infty$, and that 
	\[ \Lambda_L(r)=o\left(\int_0^\infty \frac{r}{r^2+t^2}\Lambda_L(t)dt\right),\quad r\to\infty. \]
	Since
	\[  \Lambda_L(r)\asymp \int_0^r \frac{r}{r^2+t^2}\Lambda_L(t)dt, \]
	we have
	\[ \int_0^\infty \frac{r}{r^2+t^2}\Lambda_L(t)dt\asymp r\int_r^\infty \frac{\Lambda_L(t)}{t^2}dt .\]
	Changing  variables by  $t=uL(u)$ and making use of Lemma \ref{lem: widetildeE}, part 1, yields 
	\[ \int_0^\infty \frac{r}{r^2+t^2}\Lambda_L(t)dt\asymp r\int_{\Lambda_L(r)}^{\infty}\frac{du}{uL(u)} =r\frac{\widetilde{L}(\Lambda_L(r))}{L(\Lambda_L(r))}\asymp \Lambda_L(r)\widetilde{L}(\Lambda_L(r))\]
	Put $\ell_1=L/\widetilde{L}$. It remains to show that there exists a function $L_*$ such that $L_*\asymp \ell_1$ and 

	\[ \Lambda_{L_*}(r)=\Lambda_L(r)\widetilde{L}(\Lambda_L(r)) \]
 By Lemma \ref{lem: widetildeE}, part 1,  applied to the function $L_*$, we have 
	\[ r\asymp \Lambda_L(r)\widetilde{L}(\Lambda_L(r))L_*\big(\Lambda_L(r)\widetilde{L}(\Lambda_L(r))\big). \]
	Applying the same lemma to the function $L$ and then to the function $\widetilde{L}$ yields
	\[ rL(r)\asymp r\widetilde{L}(r)L_*\big(r\widetilde{L}(r)\big)\quad \Rightarrow\quad L_*(r)\asymp\ell_1(\Lambda_{\widetilde{L}}(r))  \]
	
	By definition, $$\frac{\rho\ell_1'(\rho)}{\ell_1(\rho)}\log \widetilde{L}(\rho)=\frac{\log \widetilde{L}(\rho)}{ \widetilde{L}(\rho)}=o(1),\quad \rho\to\infty.$$
	Thus, by Lemma \ref{lem: Bingham} (applied with $L_1=\ell_1$ and $L_2=\widetilde{L}$), 
	\[ \ell_1(\rho)\asymp\ell_1\big(\rho \widetilde{L}(\rho)\big). \]
	By Lemma \ref{lem: widetildeE}, part 1, $\Lambda_{\widetilde{L}}\left(\rho\widetilde{L}(\rho)\right)\asymp \rho$, and therefore
	\[ \ell_1\left(r\right)\asymp \ell_1\left(\Lambda_{\widetilde{L}}(r)\right)\asymp L_*(r). \]	
	This completes the proof.\qed
 	\section{The functions $K$ and $E$}
In this section  we prove the  lemmas of Section 6  related to the asymptotics  of the functions $K$ and $E$.
\subsection{Lemma~\ref{lem: K and E ray lemma}. }
Fix a slowly growing function $L:[0,\infty)\to [1,\infty)$ with $\lim_{\rho\to\infty}L(\rho)=\infty$\footnote{All the assertions of Lemma \ref{lem: K and E ray lemma} are asymptotic, so it suffices to prove it for $L$ increasing and strictly positive.}. Put
\[ E(z)=\sum_{n\geq 0} \frac{z^n}{L(n+1)^{n+1}} \]
on the positive ray. Since the function $E$ depends only on the values of $L$ on the positive integers, we can change it on non-integer values (see \cite[pp. 17--18]{Seneta}) and assume that $L \in C^\infty[0,\infty)$ and satisfies
\begin{equation}
\lim_{\rho\to\infty}\frac{\rho L'(\rho)}{L(\rho)}=\lim_{\rho\to\infty}\frac{\rho^2 L''(\rho)}{L(\rho)}=0,\label{eq: additional ell assum}
\end{equation}
without changing the fact that $L$ is slowly growing. We shall assume so from here on.

Put
\[ \mu(r)=\sup_{\rho\geq 0} \frac{r^\rho}{L(\rho)^\rho}. \]
We claim  that \textit{the function  $\rho \mapsto \rho\log L(\rho)$ is an eventually  convex function of $\log \rho$.}
Indeed,  by \eqref{eq: additional ell assum}, 
\[\lim_{\rho \to \infty} \frac{d^i}{d\rho^i } \log L (e^\rho)=0,\quad i=1,2\]
which in turns implies 
\[ \frac{d^2}{d\rho^2 } \left[e^\rho L \ell(e^{\rho})\right]\sim e^\rho\log L(e^\rho)>0,\quad \rho>\rho_0,\]
as claimed.

Also note that for any fixed $r$, $\lim_{\rho \to \infty }\frac{r^\rho}{L(\rho)^\rho}=0$. So,    
for large enough $\rho$, the supremum in the definition of $\mu$ is achieved at a single point, which we  denote by $\rho_r$.
A simple computation shows that $r$ and $\rho_r$ are related by 
\begin{equation}
r=L(\rho_r)\exp\left(\rho_r\frac{L'(\rho_r)}{L(\rho_r)}\right)\quad \text{and} \quad \log \mu_L(r)=\rho_r^2\frac{L'(\rho_r)}{L(\rho_r)}.\label{eq: rho_r}
\end{equation} 
\subsubsection{Proof of the inequality  $L^{-1}(\eta r)\lesssim_\eta \log E( r)$.}
Set $\nu(r)=\sup_{n\in\mathbb{Z}_+}\frac{r^n}{L(n+1)^{n+1}}$. Clearly, $\nu(r)\leq E(r)$. Since $\lim_{n\to\infty}\log \frac{\log L(n+2)}{\log L (n)}= 1$, we have $\log \mu(r)\lesssim\log \nu(r)$, which in turn implies
$\log \mu(r)\lesssim\log E(r) $. Now for $\eta<1$
$$\log \mu (\eta^{-1} r)=\sup_{\rho>0} \left[\rho\log \eta^{-1} r-\rho\log L(\rho)\right]\stackrel{\left(\rho=L^{-1}(r)\right)}{\geq}  L^{-1}(r) \log \eta^{-1}, $$ and therefore $L^{-1}(\eta r)\lesssim_\eta \log E( r)$.\qed

\subsubsection{Proof of  the inequality $ \log E( r)\lesssim L^{-1}(r)$.}
We will  show that  $ \log \left(L(r) E( L(r))\right)\lesssim r$.
Since $L$ is slowly growing and positive, the function $\rho\mapsto L(\rho)/\rho$ is eventuality decreasing. Thus, for sufficiently large $r$, 
\[ L(r) E( L(r))=\sum_{n\geq 1}\left(\frac{L(r)}{L(n)}\right)^n\lesssim re^{r}+\sum_{n>r} \frac{r^n}{n^n}. \]
By Stirling's formula, $\sum_{n>r} \frac{r^n}{n^n}\leq e^{C r}$, which establishes the lemma. \qed
\subsubsection{Proof of the  inequality $E^2(r)\lesssim_\delta E\left((1+\delta)r\right)$.}
By the previous parts of this lemma, it is enough to show that
\[ 2L^{-1}(r)\leq L^{-1}\left((1+\delta)r\right),\quad r>r_\delta. \]
But this follows immediately from the fact that
\[ \log \frac{L^{-1}\left((1+\delta)r\right)}{L^{-1}(r)} =\int_r^{(1+\delta)r}\frac{1}{\varepsilon\left(L^{-1}(r)\right)}\frac{du}{u}\to\infty ,\quad r\to\infty. \]
\subsection{Proof of  Lemma \ref{lem: KE-A}.} 
For $r>r_0>0$, denote by $\rho(r)$ the solution to the saddle-point equation $r=L(\rho)e^{\varepsilon(\rho)}$. Let $0<\delta<1$. It follows from  Theorems \ref{TheoremK}  and \ref{TheoremE} that  in order to prove  Lemma \ref{lem: KE-A}, it is enough to find  $0<\delta_1$ such that 
 \[\rho(\delta_1 r))\varepsilon\left(\rho((\delta_1 r)\right)+\rho(r(1-\delta))\varepsilon\left(\rho(r(1-\delta))\right)-\rho(r)\varepsilon\left(\rho(r)\right)\lesssim 1,\quad r>r_0. \]
Thus, it is sufficient to show that there exist $\alpha>0$ (independent of $\delta$), such that 
\[ \rho(r(1-\delta))\varepsilon\left(\rho(r(1-\delta)\right)\leq (1-\delta)^\alpha \rho(r)\varepsilon\left(\rho(r)\right).\]

Let us prove the last assertion:
\begin{multline*}
 \log\frac{ \rho(r)\varepsilon\left(\rho(r)\right)}{\rho(r(1-\delta))\varepsilon\left(\rho(r(1-\delta)\right)}=\int_{r(1-\delta)}^{r}\frac{\varepsilon\left(\rho(u)\right)+\rho(u)\varepsilon'\left(\rho(u)\right)}{\rho(u)\varepsilon\left(\rho(u)\right)} \rho'(u) du\\
= (1+o(1))\int_{r(1-\delta)}^{r}\frac{\rho'(u)}{\rho(u)}  du,\quad r\to\infty,
\end{multline*}
  were in the last equality, we have used $\rho|\varepsilon'(\rho)|=o\left(\varepsilon(\rho)\right)$ as $\rho\to\infty$. 
  Differentiating, $\log r=\log L(\rho)+\varepsilon(\rho)$, yields 
 $$\frac{\rho'(r)}{\rho(r)}=\frac{1}{r}\cdot \frac{1}{\varepsilon\left(\rho(r)\right)+\rho(r)\varepsilon'\left(\rho(r)\right)}=\frac{1}{r\varepsilon\left(\rho(r)\right)}(1+o(1)),\quad r\to\infty.$$
 So,
 \[ \log\frac{ \rho(r)\varepsilon\left(\rho(r)\right)}{\rho(r(1-\delta))\varepsilon\left(\rho(r(1-\delta)\right)}=(1+o(1))\int_{r(1-\delta)}^{r}\frac{1}{\varepsilon\left(\rho(u)\right)}  \frac{du}{u}. \]
 Since $\varepsilon$ is bounded from above and positive, there exists $\alpha>0$, such that
 \[  \log\frac{ \rho(r)\varepsilon\left(\rho(r)\right)}{\rho(r(1-\delta))\varepsilon\left(\rho(r(1-\delta)\right)}> \alpha\log  \frac{1}{1-\delta}. \]
The last inequality completes the proof. \qed

\subsubsection{Proof of Lemma \ref{lem: E complex lemma}.}
We assume that $z=r e^{i\psi}$, $s=\rho e^{i\theta}$, are related trough the saddle-point equation
\[ \log z=\log L(s)+\varepsilon(s). \]  
Comparing the real  and imaginary parts of the saddle-point equation and making use of \eqref{eq: L saddele point assym}, we find that
\[ r=L(\rho)e^{\varepsilon(\rho)}(1+o(1)),\quad  \theta \varepsilon(\rho)(1+o(1))=\psi,\quad r\to\infty. \]
Let $\delta>0$ be a small number. By Theorem \ref{TheoremE}, $|E(z)|=O(1)$, as $z\to\infty$, uniformly in the set $\mathbb{C}\setminus\Omega(\tfrac{\pi}{2}+\delta)$. Since,
$$\partial\Omega(\tfrac{\pi}{2}+\delta):= \left\{z\;:\; \theta=\pm\tfrac{\pi}{2}+\delta,\; \rho>\rho_0 \right\},$$
 it is enough to show that 
\[ \varepsilon(\rho)\lesssim \varepsilon\left(L^{-1}(r)\right),\quad r>r_0. \]
Indeed,  write,
\[ \log \frac{\varepsilon(\rho)}{\varepsilon\left(L^{-1}(r)\right)}=-\int_\rho^{\rho e^{\varepsilon\left(L^{-1}(\rho)\right)}(1+o(1))} \frac{\varepsilon'(u)}{\varepsilon(u)}du.\]
By the regularity  assumption (R2), the function  $\varepsilon$ is eventually non-increasing. Thus, by the regularity assumption (R3), we get
\[ -\frac{\varepsilon'(u)}{\varepsilon(u)}\leq \frac{1}{u},\quad u>\rho_0. \]
Therefore
\[  \log \frac{\varepsilon(\rho)}{\varepsilon\left(L^{-1}(r)\right)} \lesssim \varepsilon\left(L^{-1}(\rho)\right)\lesssim 1,  \]
which completes the proof of  Lemma \ref{lem: E complex lemma}.\qed
\subsection{An auxiliary lemma}
Here we give an auxiliary result that will be used in the proofs of Lemmas  \ref{lem: K and E last}, \ref{lem:KE strong A for spliting.}, and \ref{lem:spliting H intgral K small}.

\begin{lemma}\label{lem: Apn}
	Suppose that $L_1$ is a function that satisfies regularity assumptions \emph{(R1)} and \emph{(R2)}, and that $L_2 $ is a function that satisfies regularity assumptions \emph{(R1)}.  Assume further that $\varepsilon_1(\rho) \log L_2(\rho)=o(1)$, as $\rho\to\infty$, where $\varepsilon_1(\rho)=\frac{\rho L_1'(\rho)}{L_1(\rho)}$.
 If 
	$\rho=\rho(r)$ satisfies 
	\[ r=L_1(\rho)(1+O(\varepsilon_1(\rho))),\quad r\to\infty, \]
	then for any $\delta>0$, there exists $r_\delta>0$, such that
	\[ L_1^{-1}\left((1-\delta)r\right) \leq\frac{\rho}{L_2(\rho)}\leq\rho \leq  L_1^{-1}\left((1+\delta)r\right),\quad r>r_{\delta}.
\]  
\end{lemma}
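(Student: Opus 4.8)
The statement to prove is Lemma~\ref{lem: Apn}: under the stated regularity hypotheses on $L_1$ and $L_2$, if $\rho=\rho(r)$ satisfies $r=L_1(\rho)(1+O(\varepsilon_1(\rho)))$ as $r\to\infty$, then for every $\delta>0$ one has
\[ L_1^{-1}\big((1-\delta)r\big)\leq \frac{\rho}{L_2(\rho)}\leq \rho\leq L_1^{-1}\big((1+\delta)r\big) \]
for all sufficiently large $r$. The plan is to turn everything into statements about the slowly varying functions $L_1$ and $L_2$ and then chase the inequalities through the inverse function $L_1^{-1}$.

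First I would record what the hypothesis $r=L_1(\rho)(1+O(\varepsilon_1(\rho)))$ says after applying $L_1^{-1}$: since $L_1$ is slowly varying (assumption (R1)), for any $\eta>0$ we have $L_1((1-\eta)t)\leq L_1(t)\cdot(1-\eta/2)$ eventually, hence $L_1^{-1}$ is also slowly varying and, more precisely, $L_1^{-1}((1\pm\delta)r)$ differs from $\rho$ by a factor tending to $1$ at a logarithmic rate — specifically $\log\big(L_1^{-1}((1+\delta)r)/\rho\big)\asymp_\delta 1/\varepsilon_1(\rho)\to\infty$, exactly as in the proof of the last part of Lemma~\ref{lem: K and E ray lemma}. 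So the comparison $\rho\leq L_1^{-1}((1+\delta)r)$ (for large $r$) is immediate: since $r=L_1(\rho)(1+O(\varepsilon_1(\rho)))\leq L_1(\rho)(1+\tfrac{\delta}{2})\leq L_1\big((1+o(1))\rho\big)(1+\tfrac{\delta}{2})$ and $L_1$ is increasing, applying $L_1^{-1}$ and using slow variation gives $\rho\leq L_1^{-1}((1+\delta)r)$ for $r$ large. This is the easy inequality.

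The substantive content is the lower bound $L_1^{-1}((1-\delta)r)\leq \rho/L_2(\rho)$. By the hypothesis, $r\geq L_1(\rho)(1-\tfrac{\delta}{2})$ eventually, so $L_1^{-1}((1-\delta)r)\leq L_1^{-1}\big(L_1(\rho)(1-\tfrac{\delta}{2})(1-\delta)\big)$, and since $L_1$ is increasing it suffices to show
\[ L_1\!\left(\frac{\rho}{L_2(\rho)}\right)\geq (1-\delta')\,L_1(\rho) \]
eventually, for any $\delta'>0$ — i.e., that $L_1(\rho/L_2(\rho))\sim L_1(\rho)$ as $\rho\to\infty$. This is precisely where the extra hypothesis $\varepsilon_1(\rho)\log L_2(\rho)=o(1)$ enters: I would invoke Lemma~\ref{lem: Bingham} with the first function $L_1$ and the second function $1/L_2$ (which is eventually non-increasing and has $\tfrac{\rho(1/L_2)'}{1/L_2}=-\varepsilon_2$ bounded since $L_2$ satisfies (R1), so $\varepsilon_2=o(1)$; note we also need $\varepsilon_1\log(1/L_2)=-\varepsilon_1\log L_2=o(1)$, which is exactly the hypothesis). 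Lemma~\ref{lem: Bingham} then yields $L_1(\rho\cdot\tfrac{1}{L_2(\rho)})\sim L_1(\rho)$, giving the desired bound. The trivial middle inequality $\rho/L_2(\rho)\leq\rho$ holds because $L_2\geq 1$ (the codomain is $[1,\infty)$). Assumption (R2) for $L_1$ is used to guarantee that $L_1^{-1}$ is well-defined and monotone on the relevant range (eventual concavity of $\ell_1=\log L_1(e^t)$ makes $\varepsilon_1$ eventually non-increasing, hence $L_1$ is eventually strictly increasing), so all the manipulations with $L_1^{-1}$ above are legitimate.

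The main obstacle is not any single hard estimate but organizing the slow-variation bookkeeping carefully: one must track the three nested perturbations $(1\pm\delta)$, the $(1+O(\varepsilon_1))$ from the hypothesis, and the $(1+o(1))$ arising from $L_1(\rho/L_2(\rho))\sim L_1(\rho)$, and check that the multiplicative errors can all be absorbed into a single $\delta$ once $r$ is large enough. Concretely, I would fix $\delta>0$, choose $\delta'$ so small that $(1-\delta')^{-1}(1-\tfrac{\delta}{2})>1-\delta$ is violated in the right direction — i.e. so that $(1-\delta)\leq(1-\delta')(1-\tfrac{\delta}{2})^{-1}\cdot(\text{stuff}<1)$ fails appropriately — then apply Lemma~\ref{lem: Bingham} to get $L_1(\rho/L_2(\rho))\geq(1-\delta')L_1(\rho)$ eventually, and finally apply the increasing function $L_1^{-1}$ to the chain of inequalities. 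The only genuinely external input is Lemma~\ref{lem: Bingham}; everything else is elementary monotonicity plus the definition of slow variation.
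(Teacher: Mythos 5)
Your reduction — show $L_1(\rho/L_2(\rho))\sim L_1(\rho)$ as $\rho\to\infty$ and then push it through $L_1^{-1}$ — is sound and is morally the same as the paper's approach (the paper works instead with $L_1^{-1}$ in the $r$-variable via $\tfrac{d}{du}\log L_1^{-1}(u)=\tfrac{1}{u\,\varepsilon_1(L_1^{-1}(u))}$ and the monotonicity of $\varepsilon_1$ from (R2)). However, the invocation of Lemma~\ref{lem: Bingham} with the pair $(L_1,\,1/L_2)$ is a genuine gap: Lemma~\ref{lem: Bingham} as stated requires both functions to be \emph{eventually non-decreasing}, and $1/L_2$ is eventually non-increasing. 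You notice this but treat it as harmless, and it is not, because the two directions are not symmetric. Indeed, a direct verification gives, using (R2) for $L_1$ (so $\varepsilon_1$ eventually decreasing),
\[
\log\frac{L_1(\rho)}{L_1(\rho/L_2(\rho))}=\int_{\rho/L_2(\rho)}^{\rho}\varepsilon_1(u)\,\frac{du}{u}\ \leq\ \varepsilon_1\!\left(\frac{\rho}{L_2(\rho)}\right)\log L_2(\rho),
\]
and since $\varepsilon_1$ is decreasing, $\varepsilon_1(\rho/L_2(\rho))\geq\varepsilon_1(\rho)$, so the hypothesis $\varepsilon_1(\rho)\log L_2(\rho)=o(1)$ does not apply directly. (In the direction Lemma~\ref{lem: Bingham} actually covers, $L_1(\rho L_2(\rho))/L_1(\rho)$, the supremum of $\varepsilon_1$ over $[\rho,\rho L_2(\rho)]$ sits at the \emph{left} endpoint $\rho$ and the hypothesis applies immediately; that is precisely why the lemma is stated for non-decreasing second function.)

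The gap can be closed, but it requires an extra step you don't supply: set $\sigma=\rho/L_2(\rho)$, apply the hypothesis at $\sigma$, and observe that by (R1) for $L_2$,
\[
\log L_2(\rho)-\log L_2(\sigma)=\int_{\sigma}^{\rho}\varepsilon_2(u)\,\frac{du}{u}\leq \big(\sup_{[\sigma,\rho]}\varepsilon_2\big)\log L_2(\rho)=o(1)\cdot\log L_2(\rho),
\]
so $\log L_2(\rho)\leq(1+o(1))\log L_2(\sigma)$, and therefore $\varepsilon_1(\sigma)\log L_2(\rho)\leq(1+o(1))\,\varepsilon_1(\sigma)\log L_2(\sigma)=o(1)$. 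This comparison of $\log L_2$ at the two scales $\rho$ and $\rho/L_2(\rho)$ is essential and is exactly the subtlety the paper sidesteps by carrying out the whole computation in the $r$-variable with the integral $\int_{(1-\delta)r}^{r}\tfrac{1}{\varepsilon_1(L_1^{-1}(u))}\tfrac{du}{u}$, never passing through Lemma~\ref{lem: Bingham} at all.
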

\begin{proof}
	The inequalities
		\[ L_1^{-1}\left((1-\delta)r\right) \leq \rho \leq  L_1^{-1}\left((1+\delta)r\right),\quad r>r_{\delta}.
	\]  
	follow immediately from
		\[ L_1\left(L_1^{-1}((1\pm\delta)r)\right)(1+O\left(\varepsilon_1\left(L_1^{-1}((1\pm\delta)r)\right)\right))=(1\pm\delta)r+o(1),\quad r\to\infty. \]
		Since $L_2(\rho)\to\infty$ as $\rho\to\infty$, it suffices to show 
	\[ L_1^{-1}\left((1-\delta)r\right)\leq \frac{L_1^{-1}(r)}{L_2\left(L_1^{-1}(r)\right)},\quad r>r_\delta.  \]
	To show this, we begin with
	\[ \frac{L_1^{-1}(r)}{ L_1^{-1}\left((1-\delta)r\right)}=\exp\left(\int_{(1-\delta)r}^{r}\frac{d}{du}\left(\log L_1^{-1}(u)\right) du\right)=\exp\left( \int_{(1-\delta)r}^{r}\frac{1}{\varepsilon_1\left(L_1^{-1}(u)\right)}\frac{du}{u}\right). \]
	Since the function  $\varepsilon$ is eventually decreasing, 
	\[  L_1^{-1}(r)\geq L_1^{-1}\left((1-\delta)r\right)\exp\left(\frac{-\log(1-\delta)}{\varepsilon_1\left(L_1^{-1}(r)\right)}\right),\quad r>r_0.\] 
	Thus, it is enough to show that
	\[   \exp\left(\frac{-\log(1-\delta)}{\varepsilon_1\left(L_1^{-1}(r)\right)}\right)>L_2\left(L_1^{-1}(r)\right),\quad r>r_\delta.\]	
	The latter follows immediately from the assumption $\varepsilon_1(\rho) \log L_2(\rho)=o(1)$, as $\rho\to\infty$.
The proof is complete.
\end{proof}
\subsection{Proof of Lemma \ref{lem: K and E last}.}
Fix $\alpha<\pi/2$ and $0<\delta<1/3$. For $z\in \Omega(\alpha)$ with sufficiently large $|z|$, denote by $s=\rho e^{i \theta}$ the unique  solution to the saddle-point equation. 
\[  \log z= \log L(\rho e^{i\theta})+\varepsilon(\rho e^{i\theta}). \]
By \eqref{eq: L saddele point assym}, 
\[ \log L(\rho e^{i\theta})+\varepsilon(\rho e^{i\theta})=\log L(\rho)+\varepsilon(\rho)\left(1+i\theta+o(1)\right),\quad \rho\to\infty. \]
Thus, comparing the real parts of the saddle-point equation, we obtain
\[ |z|=L(\rho)\exp(\varepsilon(\rho)), \quad |z|\to\infty. \] 
By Lemma \ref{lem: Apn}, applied with $L_1=L$, $L_2=\tfrac{1}{\varepsilon}$, 
\[ -\delta_1 \rho\varepsilon(\rho)\leq C-L^{-1}\left(\frac{2}{3}  |z|\right)  \]
 By Lemma \ref{lem: K and E ray lemma},
\[ -L^{-1}\left(\frac{2}{3}  |z|\right)\leq C-\log E\left(\frac{3}{5} |z|\right).\]
Thus, by the matching between the growth of $E$ and the decay of $K$ (i.e., Theorems \ref{TheoremK} and \ref{TheoremE}), we have 
\[ -\delta_1 \rho\varepsilon(\rho)\leq C+\log K\left(\frac{|z|}{2}\right).  \]
	Therefore, 
	\[ \int_{r_0}^\infty |z|^n e^{-\delta_1 \frac{\rho}{\widetilde{L}(\rho)}}d|z|\leq C\int_{0}^\infty |z|^n K\left(\frac{|z|}{2}\right)d|z| \leq C 2^n\frac{\gamma(n+1)}{\widetilde{\gamma}(n+1)}.\]
	This completes the proof.\qed
	\subsection{Proof of Lemma \ref{lem:spliting H intgral K small}.}
	For $z\in \Psi_+$ sufficiently large, let $i \rho$ be related to $z$ by the saddle-point equation:
	\[ \log z=\log L(i\rho)+\varepsilon (i\rho) .\]
	By \eqref{eq: L saddele point assym},
	\[ \log L(s)+s\frac{L'(s)}{L(s)} =\log L(\rho)+\varepsilon(\rho)+i\left(\theta+o(1)\right)\varepsilon(\rho),\quad s=\rho e^{i\theta},\,\; \rho\to\infty. \]
	Thus, comparing the real parts of the saddle point equation yields
	\begin{equation}
	 |z|=L(\rho)(1+O(\varepsilon(\rho))) ,\quad |z|\to\infty.\label{eq:Lastasym}
	\end{equation}
	By Theorem \ref{TheoremE}, 
	\[ \log |E(z)|\sim \re(i\rho\varepsilon(i\rho)),\quad |z|\to\infty. \]
	By Lemma \ref{lem: LcomplexLemma}, 
	\[ \varepsilon(i\rho)=\varepsilon(\rho)+i\frac{\pi}{2}\rho\varepsilon'(\rho)(1+o(1)),\quad \rho\to\infty. \]
	The function $\varepsilon'$ is eventually negative, therefore
	\begin{equation}
	\log |E(z)|\sim \frac{\pi}{2}\rho^2|\varepsilon'(\rho)| ,\quad z\in \Psi_+,\;|z|\to\infty,\label{eq: EK-critical}
	\end{equation}
		which together and \eqref{eq:Lastasym}  shows that
		\[  -\frac{\log |E(z)|}{z}\leq -\frac{\rho^2|\varepsilon'(\rho)|}{L(\rho)},\quad |z|>r_0. \]
		Put $L_2(\rho):=-\frac{L(\rho)}{\rho|\varepsilon'(\rho)|}$. 
		By assumption, $L_2$ is slowly growing (i.e., satisfies assumption (R1)), and $\varepsilon(\rho)\log L_2(\rho)=o(1)$ as $\rho\to\infty$. Thus,   Lemma \ref{lem: Apn} yields
		\[ -\frac{\rho^2|\varepsilon'(\rho)|}{L(\rho)}\leq -L^{-1}\left(\frac{3}{4}|z|\right). \]
Finally, by Lemma \ref{lem: K and E ray lemma} and Theorems \ref{TheoremK} and \ref{TheoremE} (the reasoning is the same as in  the previous Lemma),
	\[ -\frac{\rho^2|\varepsilon'(\rho)|}{L(\rho)}\leq -K\left(\frac{|z|}{2}\right). \]
Hence, 
	\[ \int_{\Psi_+\cap\{|z|>1\}} |z|^n \left|E(z)\right|^{-1/|z|}d|z|\leq C+ C\int_{r_0}^\infty K\left(\frac{|z|}{2}\right)\leq C 2^n \gamma(n+1),\]
	which completes finishes the proof.\qed
\subsection{Proof of Lemma \ref{lem:KE strong A for spliting.}.}
Let $\delta>0$. By Lemma \ref{lem: K and E ray lemma},
\[ \log E((1-\delta)|z|)\leq C+ L^{-1}((1-\delta)|z|). \]
If $z\in \Psi_+$, then by theorem Theorems \ref{TheoremK} and \ref{TheoremE}, 
\[ \log |K(z)|\sim -\log |E(z)|\stackrel{\eqref{eq: EK-critical}}{\sim}\frac{\pi}{2}\rho^2|\varepsilon'(\rho)| ,\quad z\in \Psi_+,\;|z|\to\infty. \]
where $\rho(z)= L(|z|)(1+O(\varepsilon(|z|)))$, as $|z|\to\infty$. Put $L_2(\rho)=\tfrac{1}{\rho|\varepsilon'(\rho)|}$. By assumption, $L_2$ is slowly growing (i.e., satisfies assumption (R1)), and $\varepsilon(\rho)\log L_2(\rho)=o(1)$ as $\rho\to\infty$.  Thus, by  Lemma \ref{lem: Apn}, 
\[ \log |K(z)|\leq C-L^{-1}\left((1-\tfrac{\delta}{2}|z|)\right). \]
Since $L$ is slowly varying, 
\[  2L^{-1}\left((1-\delta)|z|\right)-L^{-1}\left((1-\tfrac{\delta}{2})|z|\right)\leq C\]
We obtained,
\[ E^2((1-\delta)|z|)||K(z)|\leq C,\quad z\in\Psi_+, |z|>1, \]
and hence the lemma.\qed

	\subsection{Proof of Lemma  \ref{lem: loglog H}. }
	Put $q=\log \log H$. It is enough to prove the lemma for sufficiently small $\psi>0$ and sufficiently large $r$. By definition, 
	\[ \log H(\psi)=\re\left(i\rho\varepsilon(i\rho)\right), \]
	where 
	\[ \psi=\im\left(\log L(i\rho)+\varepsilon(i\rho)\right). \] 
	Differentiation with respect to $\psi$ yields
	\[ 1=\im\left(\frac{\varepsilon(i\rho)+\rho\varepsilon(i\rho)}{\rho}\right)\frac{d\rho}{d\psi}.  \]
	By Lemma \ref{lem: LcomplexLemma}, the RHS is $\sim \frac{\pi}{2}\varepsilon'(\rho)\frac{d\rho}{d\psi}$ as $\rho\to\infty$. The same lemma also gives
	\[ \re\left(i\rho\varepsilon(i\rho)\right)\sim\frac{\pi}{2}\rho^2|\varepsilon'(\rho)|,\quad \frac{d}{d\rho}\re\left(i\rho\varepsilon(i\rho)\right)=\frac{\pi}{2}\rho|\varepsilon'(\rho)|,\quad \rho\to\infty. \]
	Therefore,
	\[ \frac{dq}{d\psi}=\frac{dq}{d\rho}\frac{d\rho}{d\psi}=\frac{2}{\pi\rho\varepsilon'(\rho)}(1+o(1)),\quad \rho\to\infty. \]
	By assumption, the RHS tends to $-\infty$ as $\rho\to\infty$ and thus, as $\psi\to 0$. In particular for sufficiently small $\psi$, $\frac{dq(\psi)}{d\psi}\leq -1$, which completes the proof.\qed
	\section{The function $E_1$}
	\subsection{Proof of Lemma \ref{lem: widetildeE trvial bound}.}
	Put 
	\[ \widetilde{\Lambda}(r):=\log\left(\sup_{n\geq 0}\frac{r^n}{n!\gamma(n+1)}\right). \] 
	By the definition of the function $E_1$,
	\[E_1(r)=\sum_{n\geq 0}\frac{r^n}{n!\gamma(n+1)}=\sum_{n\leq re} \frac{r^n}{n!\gamma(n+1)}+\sum_{n> re} \frac{r^n}{n!\gamma(n+1)}. \]
	The first summand on the RHS is bounded by $(re+1)e^{\widetilde{\Lambda}(r)}$, while the second summand is bounded by $\sum_{n> re} \frac{n^n}{e^n n!\gamma(n+1)}\leq\sum_{n\geq 0} \frac{1}{\gamma(n+1)}<\infty$. We conclude that $\log E_1(z)\lesssim \widetilde{\Lambda}(r)$. 
	By Lemma \ref{lemma:estimatcoef2},  $\widetilde{\Lambda}(r)\lesssim\Lambda_L(r)$, which completes the proof.\qed
\subsection{Proof of  Lemma \ref{lem: widetilde E 1st}.}    Let $x>0$ be sufficiently large. We define $s=\rho e^{i\theta}$ as the solution to the saddle-point equation
\[ \log ix= \frac{\Gamma'(s)}{\Gamma(s)} + \log L(s)+\varepsilon(s).\]
By \eqref{eq: apen 8.6},
\[ x \asymp \rho L(\rho), \quad  \frac{\pi}{2}=\theta\big(1+\varepsilon(\rho)(1+o(1))\big) ,\quad \rho\to\infty.\]
Thus, part 1 of Lemma \ref{lem: widetildeE} yields $\rho\asymp \Lambda_L(x)$. Since the function $\varepsilon$ is slowly varying, we also get
\[ \theta =\frac{\pi}{2}-\varepsilon(\Lambda_L(x))(1+o(1)),\quad x\to\infty.  \]
By Theorem \ref{TheoremE}, 
\[   \log E_1(ix)\sim  s\left(1+\varepsilon(s)\right),\quad x\to\infty. \]
Therefore,
\[   \log|E_1(ix)|\sim \rho\cos \theta \asymp \Lambda_L(x) \varepsilon\left(\Lambda_L(x)\right). \]
By Lemma \ref{lem: subReplacmentLem}, part 1,  $\varepsilon\left(\Lambda_L(x)\right)\asymp \varepsilon(x)$. We get
$\displaystyle \log|E_1(ix)| \asymp \Lambda_L(x)\varepsilon(x),$ and hence the lemma.
\qed
\subsection{Lemma \ref{lem: mainSpilitEstimate+}.}
We will use the following auxiliary result.
\begin{lemma}\label{lem: estimateE1}
	Suppose that $L$ satisfy assumptions \emph{(R1), (R2), (R3)} and \emph{(R9)}. If $\delta>0$ is  sufficiently small, then there exists $R_0>0$, such that 
	\[ \log |E_1 (z)|\leq C+\re_+ \left(s(1+\varepsilon(s)) \right),\quad  |\arg z|\leq \tfrac{\pi}{2}+\delta,\; |z|>R_0,\]
	\[  \log |E_1 (z)|\leq C,\quad   |\arg z|> \tfrac{\pi}{2}+\delta,\; |z|>R_0, \]
	where $s$ and $z$ are related by the saddle-point equation  $sL(s)e^{\varepsilon(s)}=z$ and $\re_+(w)=\max \{\re (w),0\}$.
\end{lemma}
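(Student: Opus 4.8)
The plan is to reduce the lemma to Theorem~\ref{TheoremE} by realizing $E_1$ as the $E$-function of a modified weight, following the strategy of the proof of Lemma~\ref{lem: widetilde E 1st} (see \eqref{eq: apen 8.6}). Put $\gamma_1(s):=\Gamma(s)\gamma(s)$, so that $\gamma_1(n+1)=n!\,\gamma(n+1)$ and $E_1(z)=\sum_{n\ge0}z^n/\gamma_1(n+1)$, and set $L_1(s):=\Gamma(s)^{1/s}L(s)$ (principal branch), so that $L_1(s)^s=\gamma_1(s)$ and $E_1=E^{(L_1)}$. Stirling's expansion of $\log\Gamma$, uniform in any sector $|\arg s|\le\pi-\eta$, gives $\tfrac1s\log\Gamma(s)=\log s-1+O\bigl(\tfrac{\log|s|}{|s|}\bigr)$, whence $L_1(s)\sim\tfrac se L(s)$, $\ell_1(t):=\log L_1(e^t)=t-1+\ell(t)+O(te^{-t})$ and $\varepsilon_{L_1}(s)=\tfrac{\Gamma'(s)}{\Gamma(s)}-\tfrac1s\log\Gamma(s)+\varepsilon(s)=1+\varepsilon(s)+O\bigl(\tfrac{\log|s|}{|s|}\bigr)$; differentiating Stirling once more,
\[ s\varepsilon_{L_1}(s)=s\tfrac{\Gamma'(s)}{\Gamma(s)}-\log\Gamma(s)+s\varepsilon(s)=s\bigl(1+\varepsilon(s)\bigr)+\tfrac12\log s+O(1). \]
From (R1)+(R3) one checks that $\ell_1'$ is bounded above (it tends to $1$) and $\ell_1''(t)=\ell''(t)+O(te^{-t})=o(1)=o(\ell_1'(t))$, so $L_1$ satisfies (R3); moreover (R3)+(R9) together imply, via the second part of Lemma~\ref{lem: LcomplexLemma} and $\rho\varepsilon'(\rho)=o(\varepsilon(\rho))$, that $\varepsilon(s)=(1+o(1))\varepsilon(|s|)$ uniformly in the sector — i.e. that $L$ itself satisfies (R8) — and hence so does $L_1$. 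Since also $\limsup_\rho\varepsilon_{L_1}(\rho)=1<2$ and $L_1$ is analytic and non-vanishing in a sector of opening $>\tfrac\pi2$ (because $\Gamma$ is), Theorem~\ref{TheoremE} applies to $E^{(L_1)}=E_1$. Finally, the $L_1$-saddle equation $\log z=\log L_1(s)+\varepsilon_{L_1}(s)$ reads $\log z=\tfrac{\Gamma'(s)}{\Gamma(s)}+\log L(s)+\varepsilon(s)$, which, since $\tfrac{\Gamma'(s)}{\Gamma(s)}=\log s+O(1/|s|)$, differs from the equation $z=sL(s)e^{\varepsilon(s)}$ in the statement only by a factor $1+O(1/|s|)$; by univalence of the saddle map this perturbs the solution $s$, and hence $\re\bigl(s(1+\varepsilon(s))\bigr)$ and $\re_+\bigl(s(1+\varepsilon(s))\bigr)$, by $O(1)$, so the two forms may be used interchangeably.

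Let $\Omega_1(\alpha)$ denote the set $\Omega(\alpha)$ of Section~6 built from $L_1$ in place of $L$. Writing $s=\rho e^{i\theta}$ and comparing imaginary parts in the saddle equation, Lemma~\ref{lem: LcomplexLemma} gives $\arg z=\theta+\arg L(s)+\im\varepsilon(s)+O(1/\rho)=\theta\bigl(1+\varepsilon(\rho)(1+o(1))\bigr)$, so the saddle $s$ associated with $z$ satisfies $|\arg s|=|\arg z|(1+o(1))$ and $|s|\to\infty$ as $|z|\to\infty$. Hence, after enlarging $R_0$, $\{|\arg z|\le\tfrac\pi2+\delta,\ |z|>R_0\}\subset\Omega_1(\tfrac\pi2+2\delta)$ while $\{|\arg z|>\tfrac\pi2+\delta,\ |z|>R_0\}\subset\mathbb C\setminus\Omega_1(\tfrac\pi2+\tfrac\delta2)$.

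Now apply Theorem~\ref{TheoremE} to $E_1=E^{(L_1)}$. For $|\arg z|>\tfrac\pi2+\delta$ we lie outside $\Omega_1(\tfrac\pi2+\tfrac\delta2)$, so $zE_1(z)=o(1)$ as $z\to\infty$; thus $|E_1(z)|\le1$ for $|z|>R_0$, which is the second inequality. For $|\arg z|\le\tfrac\pi2+\delta$ we lie in $\Omega_1(\tfrac\pi2+2\delta)$, so
\[ zE_1(z)=(1+o(1))\sqrt{\tfrac{2\pi s}{\varepsilon_{L_1}(s)}}\,\exp\bigl(s\varepsilon_{L_1}(s)\bigr)+o(1). \]
Since $\varepsilon_{L_1}\to1$ we have $\bigl|\sqrt{2\pi s/\varepsilon_{L_1}(s)}\bigr|\asymp|s|^{1/2}$, and by the displayed identity for $s\varepsilon_{L_1}(s)$, $\bigl|\exp(s\varepsilon_{L_1}(s))\bigr|=|s|^{1/2}e^{O(1)}\exp\bigl(\re(s(1+\varepsilon(s)))\bigr)$; hence $|zE_1(z)|\le C|s|\exp\bigl(\re(s(1+\varepsilon(s)))\bigr)+C$. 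As $|z|\ge|s|/e$ for $|z|>R_0$ (from $\log|z|=\re\tfrac{\Gamma'(s)}{\Gamma(s)}+\log|L(s)|+\re\varepsilon(s)\ge\log|s|-1$, using $|L(s)|\ge1$ and $\re\varepsilon(s)\ge0$), dividing by $|z|$ yields
\[ |E_1(z)|\le C\exp\bigl(\re(s(1+\varepsilon(s)))\bigr)+C\le 2C\exp\bigl(\re_+(s(1+\varepsilon(s)))\bigr), \]
i.e. $\log|E_1(z)|\le\log(2C)+\re_+\bigl(s(1+\varepsilon(s))\bigr)$; absorbing the $O(1)$ saddle-discrepancy from the first paragraph into the constant gives the first inequality.

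I expect the main difficulty to be the first paragraph: verifying that the modified weight $L_1(s)=\Gamma(s)^{1/s}L(s)$ genuinely satisfies the hypotheses of Theorem~\ref{TheoremE} — analyticity and non-vanishing in a sector of opening $>\tfrac\pi2$, assumption (R3), and assumption (R8) (the latter via the implication (R3)$\wedge$(R9)$\Rightarrow$(R8)) — which rests on the uniformity of Stirling's expansion for $\log\Gamma$ in sectors; and then carrying the Stirling correction $\tfrac12\log s+O(1)$ and the $O(1/|s|)$ discrepancy between the two saddle equations through the estimate carefully enough that the final bound keeps the sharp \emph{additive} form $C+\re_+(\cdot)$ rather than only a multiplicative one. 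These are exactly the ingredients recorded in \eqref{eq: apen 8.6} and already exploited in the proof of Lemma~\ref{lem: widetilde E 1st}.
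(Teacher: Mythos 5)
Your proposal is correct and follows essentially the same route as the paper: apply Theorem~\ref{TheoremE} to the modified weight $\gamma_1(s)=\Gamma(s)\gamma(s)$, use Stirling's expansion to compare $s\varepsilon_{L_1}(s)$ with $s(1+\varepsilon(s))$ and to locate the saddle relative to $\arg z$, and finally pass from the $E_1$-saddle to the stated one $s_*$ at the cost of an $O(1)$ error. Your write-up makes explicit a couple of points the paper leaves implicit (notably that (R3)+(R9) yields (R8) so that Theorem~\ref{TheoremE} is applicable, and the absorption of $|s|\lesssim|z|$ when dividing by $z$), but the underlying argument is the same.
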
	
\begin{proof}
	It is easy to check that if $s\mapsto\gamma(s)$ satisfies assumptions (R3) and (R8), then so does $s\mapsto\Gamma(s)\gamma(s)$, where $\Gamma$ is the Euler Gamma function. Thus, Theorem \ref{TheoremE} is applicable to the function 
	\[ E_1(z)=\sum_{n\geq 0}\frac{z^n}{n!\gamma(n+1)}. \]
	The saddle-point equation for the function $E_1$ is 
	\[ \log z= \frac{\Gamma'(s)}{\Gamma(s)} + \log L(s)+\varepsilon(s).\]
	From here on we assume that $s$ is related to $z$ by the above saddle-point equation.
	By Stirling's formula, $$\frac{\Gamma'(s)}{\Gamma(s)}=\log s+O(|s|^{-1}),$$ uniformly in $|\arg s|<\pi-\delta$ as $s\to\infty$. 
	Thus
	\begin{equation}
	\log z= \log s + \log L(s)+\varepsilon(s)+O(s^{-1}),\quad |s|\to\infty.\label{eq: L saddele point no assym}
	\end{equation}
	Combining the later with \eqref{eq: L saddele point assym}, we find that
	\[ \log z=\log(\rho e^{i\theta})+\log L(\rho)+\varepsilon(\rho)+i\theta\varepsilon(\rho)(1+o(1))+O(\rho^{-1}),\quad s=\rho e^{i\theta},\;s\to\infty, \]
	uniformly  for $|\theta|\leq \tfrac{\pi}{2}+\delta$. 
	Comparing the real and imaginary parts separately, we obtain,
	\begin{equation}
	z \asymp \rho L(\rho), \quad  \arg(z)=\theta(1+\varepsilon(\rho)+o(\varepsilon(\rho))) ,\quad \rho\to\infty. \label{eq: apen 8.6}
	\end{equation}
	
	The function $L$ is slowly varying, therefore, $\varepsilon(\rho)\to 0$ as $\rho\to \infty$. Thus, \eqref{eq: apen 8.6} yields, 
	\[ |\arg(z)|\geq \frac{\pi}{2}+\delta \Rightarrow |\theta|> \frac{\pi+\delta}{2},\quad \rho>\rho_\delta. \]
	In particular, in this case, Theorem \ref{TheoremE}  implies that
	\[ |E_1(z)|\leq C_\delta.\]
	
	On the other hand, if $|\arg(z)|\leq \frac{\pi}{2}+\delta$,
	then the same theorem yields
	\[ \log|E_1(z)|\leq C_\delta+\re_+\left( \frac{\Gamma'(s)}{\Gamma(s)}-\frac{\log \Gamma(s)}{s}+s\varepsilon(s)\right) \leq   C'_\delta +\re_+\left(s(1+\varepsilon(s))\right). \] 
	
	Let $s_*$ be the solution to 
	\[ \log z=\log s_*+\log L(s_*)+\varepsilon(s_*).\]
	By \eqref{eq: L saddele point no assym}, for sufficiently large $|z|$, we have
	\[ \re\left(s(1+\varepsilon(s))\right)\leq \re\left(s_*(1+\varepsilon(s_*))\right) +C. \]
	This completes the proof.
\end{proof}	
\begin{proof}[Proof of Lemma  \ref{lem: mainSpilitEstimate+}]
	Fix  $0<\delta$ (small) and  $\rho_0>0$ (big)  such that    the function $\rho\mapsto \im\big(\log L(i\rho) \\+~\varepsilon( i \rho)\big)$ is decreasing and smaller then $\delta$ in the ray $[\rho_0,\infty)$. 	
	Also  Fix  $t\geq 1$,  and $0<\psi\leq \tfrac{\pi}{2}$. 	
	
	If  $\psi\geq \delta$,  then Lemma \ref{lem: estimateE1} yield
	\[\big|E_1(i t r e^{i\psi})\big|\lesssim 1, \]
	and the Lemma holds. So, we will assume from here on that $0< \psi < \delta$. 
	
	Put
	\[ w(s)=(1+\varepsilon(s))s,\quad \widetilde{w}(s)=\log s+\log L(s)+\varepsilon(s). \]
	Since assumptions (R3) and (R8) hold, for $r>r_0$, there  exists a unique solution to the equation
	\begin{equation}
	\log t+\log r+i\left(\tfrac{\pi}{2}+\psi\right)=\widetilde{w}(s). \label{eq: s(t)def}
	\end{equation}
	We  denote this  solution by $s(r)$. By Lemma \ref{lem: estimateE1},
	\[ \big|E_1(i t r e^{i\psi})\big|\leq C+\re_+ \left(w(s(r))\right) \]
	Our goal  is  to  show that 
	\[ \re \left(w(s(r))\right)\leq  C+ H(\psi).\]
	We begin with showing  that the function $r\mapsto \re \left(w(s(r))\right)$  has a unique maximum in the interval $[r_0,\infty)$. The functions $w$ and $\widetilde{w}$ are related by 
	\[ s\widetilde{w}'(s)=w'(s). \]
	
	By \eqref{eq: L saddele point assym}, the real and imaginary parts of \eqref{eq: s(t)def} yield
	$$ |s(r)|\to\infty,\quad \arg(s(r))\big(1+\varepsilon(|s(r)|)(1+o(1))\big)=\frac{\pi}{2}+\psi $$
	as $r\to\infty$. By assumption, the function $x\mapsto \varepsilon(x)$ is eventually positive and decreasing to zero as $x\to\infty$. Thus, $\re w(s(r)) $ is eventually negative. Differentiation of \eqref{eq: s(t)def}  with respect to $r$ yields
	\[  s'(r)\widetilde{w}'(s(r))=\frac{1}{r}.\]
	Thus,
	\[ \frac{d}{dr}w(s(r))=w'(s(r))s'(r)= \frac{1}{rs(r)}.\]
	In particular, $\frac{d}{dr}\re(w(s(r)))=0 $ if and only if $\re \left(s(r)\right)=0$, or, which is the same, if and only if $s(r)=i\tau$ for some $\tau>0$. Comparing the imaginary parts of \eqref{eq: s(t)def}, such a $\tau$ satisfies
	\[ \im\left( \log L(i \tau) +\varepsilon(i\tau)\right)=\psi. \]
	Since, $0<\psi<\delta$ and $r$ is sufficiently large, such a $\tau$  exists and unique. Moreover,
	\[ \re(w(i\tau)) =\re\left(i\tau\varepsilon(i\tau)\right) >0. \]
We conclude that
	\[ \max_{r\geq r_0} \re \left(w(s(t))\right)=\re\left(i\tau\varepsilon(i\tau)\right) =\log H(\psi). \]
	
	Given $B>0$, we assume that $r_0$ is large enough, and that $\frac{B}{r}<\psi<\delta$. Denote by $\tau_B$ the unique solution to 
	\[ \im\left(\log L(i\tau_B)+\varepsilon( i \tau_B)\right) =\psi+\frac{B}{r} \]
	By \eqref{eq: L saddele point assym}, 
	\[ \im\left(\log L(i\rho)+\varepsilon( i \rho)\right)=\frac{\pi}{2}\varepsilon(\rho)(1+o(1)) ,\quad \rho\to\infty,\]
	while
	\[ \frac{d}{d\rho}  \re\left(w(i\rho) \right)=\re(\varepsilon(i\rho))(1+o(1))=o(\varepsilon(\rho)),\quad \rho\to\infty. \]
	Thus,  by mean value theorem, 
	\[ \re\left(w(i\tau)-w(i\tau_B)\right)\leq C_B\frac{\tau}{r}.  \] Comparing the real parts of \eqref{eq: s(t)def},  and making use of \eqref{eq: L saddele point assym}, yields
	\[r t= \tau L(\tau)(1+O(1)),\quad r\to\infty.   \]
	Thus, by part 1 of Lemma \ref{lem: widetildeE},
	\[ \frac{\tau}{r}\leq C\Lambda_L(t).  \]
	We have established
	\[ \max_{r\geq r_0} \re \left(w(s(t))\right)\leq C_B\left(\Lambda_L(u)+1\right)+\log H\left(\psi+\frac{B}{r}\right), \]
	which completes the proof. 
\end{proof}
 \end{appendices}


\begin{thebibliography}{A}
	\bibitem{Anderson} {\sc J. M. Anderson, K. G. Binmore}, Closure theorems with applications to entire functions with
	gaps. Trans. Amer. Math. Soc. {\bf 161} (1971), 381--400.
	\bibitem{Badalyan}{\sc G. V. Badalyan}, Quasipower Series and Quasianalytic Classes of Functions. Translations of Mathematical Monographs,  American Mathematical Soc., 2002.
	\bibitem{Bang}{\sc  T. Bang}, Om quasi-analytiske {F}unktioner. Thesis, University of Copenhagen, 1946.
		\bibitem{Bang2}{\sc T. Bang}, The theory of metric spaces applied to infinitely
		differentiable functions, Math. Scandinavica {\bf 1} (1953), 137--152.
	\bibitem{Beurling}	{\sc A. Beurling}, The Collected Works of Arne Beurling, Vol. 1, Complex Analysis. Contemp. Math., Birkh{\"a}user Boston, Inc., Boston, MA, 1989.  Edited by
	L. Carleson, P. Malliavin, J. Neuberger and J. Wermer.
	\bibitem{Beurling2}	{\sc A. Beurling}, Om n\aa gra line{\"a}ra operationers framst{\"a}llnig utanf{\"o}r sina konvergensomr\aa den. Preprint, 1936.
			\bibitem{Bingham} {\sc  N. H. Bingham, C. M. Goldie ,  J. L. Teugels},
	Regular Variation. Cambridge University Press, Cambridge, 1989.
	\bibitem{carleman} 	{\sc T. Carleman}, Les Fonctions Quasi-analytiques: Le{\c{c}}ons Profess{\'e}es au Coll{\`e}ge de France. Gauthier-Villars et Cie, 1926.
	\bibitem{Carleson2}{\sc L. Carleson}, On Berstein's approximation problem. Proc. Amer. Math. Soc. {\bf 2} (1951), 953--961.
	\bibitem{Carleson3}{\sc L. Carleson},  Sets of uniqueness for functions regular in the unit circle. Acta Math. {\bf 87} (1952), 325--345.
	\bibitem{Carleson}{\sc L. Carleson}, On universal moment problems. Math. Scandinavica {\bf 9} (1961), 197--206.
	
\bibitem{Vallee Poussin}{\sc C. J. de la Vall\'ee-Poussin}, On the approximation of functions of a real variable and on quasi-analytic functions.  Rice Institute Pamphlet 12 no.2 (1925), Rice University. 

		\bibitem{Dynkin}{\sc E. M. Dyn'kin},  Pseudoanalytic continuation of smooth functions. {U}niform scale. Mathematical programming and related questions ({P}roc.
		{S}eventh {W}inter {S}chool, {D}rogobych, 1974), {T}heory of
		functions and functional analysis ({R}ussian), 40--73. Central \`Ekonom.-Mat. Inst. Akad. Nauk SSSR, Moscow, 1976, AMS
		Transl. (2) 115 (1980), 33--58. 
		
	\bibitem{Ecalle2}{\sc J. {\'E}calle},	 Cohesive functions and weak accelerations. J. d'Analyse Math. {\bf 60} (1993), 71--97.
	\bibitem{Ehrenpreis9}{\sc  L. Ehrenpreis,} Fourier Analysis in Several Complex Variables. Wiley-Interscience Publishers, a Division of John Wiley \&
	Sons, New York-London-Sydney, 1970.
	\bibitem{Hardy}{\sc G. H. Hardy}, Divergent Series.  Clarendon Press, Oxford, 1949.
		\bibitem{KiroThesis}{\sc A. Kiro},  Beurling's method of singular/regular transforms in the theory of quasianalytic functions. Master Thesis, Tel--Aviv University, 2014.
	\bibitem{KiroSodin}{\sc A. Kiro, M. Sodin}, On functions K and E generated by a sequence
	of moments. Expositiones Math. {\bf 35} (2017), 443--477.
	\bibitem{Korenbljum}{\sc B. I. Korenbljum}, Non-triviality conditions for certain classes of functions
	analytic in an angle and problem of quasianalyticity. Dokl. Akad. Nauk SSSR {\bf 166} (1966), 1046--1049.
	\bibitem{Lindelof}{\sc E. Lindel\"of}, Le Calcul des R\'esidus et ses Applications \`a la Th\'eorie des
	Fonctions. Les Grands Classiques Gauthier-Villars, Reprint of the 1905 original (1989).
	\bibitem{Mandelbrojt}{\sc S. Mandelbrojt}, Analytic functions and classes of infinitely differentiable
	functions. Rice Inst. Pamphlet 29.1 (1942).
	\bibitem{Mitjagin}{\sc B. Mityagin}, An infinitely differentiable function with the values of its
	derivatives given at a point. Dokl. Akad. Nauk SSSR {\bf 138} (1961), 289--292.
	\bibitem{Moroz}{\sc A. Moroz}, Novel summability methods generalizing the {B}orel method. Czechoslovak J. Phys. {\bf 40} (1990), 705--726.
		\bibitem{Moroz2}{\sc A. Moroz}, Summability method for a horn-shaped region. Comm. Math. Phys. {\bf 133} (1990), 369--382.
			
		\bibitem{Sodin}{\sc F. Nazarov,  M. Sodin,  A. Volberg},
		Lower bounds for quasianalytic functions. {I}. {H}ow to
			control smooth functions. Math. Scandinavica {\bf 95} (2004), 59--79.
		
\bibitem{Salinas}{\sc B. R. Salinas},
Funciones con momentos nulos. Rev. Acad. Ci. Madrid {\bf 49} (1955), 331–
368.
			\bibitem{Seneta}{\sc E. Seneta}, Regularly Varying Function. Lecture Notes in Mathematics, Vol. 508, Springer-Verlag, Berlin-New York, 1976.
				\bibitem{tacklind1936classes}{\sc S. T{\"a}cklind}, Sur les classes quasianalytiques des solutions des {\'e}quations aux d{\'e}riv{\'e}es partielles du type parabolique. Phd thesis, Almqvist \& Wiksells 1936.
			\bibitem{Taylor}{\sc B. A. Taylor}, Analytically uniform spaces of infinitely differentiable
			functions. Comm. Pure Appl. Math. {\bf 24} (1971), 39--51.
		
\end{thebibliography}
\end{document}